\patchcmd{\@makechapterhead}{50\p@}{\chapheadtopskip}{}{}
\patchcmd{\@makeschapterhead}{50\p@}{\chapheadtopskip}{}{}
\newlength{\chapheadtopskip}\setlength{\chapheadtopskip}{-2pt}
\DeclareMathAlphabet{\mathpzc}{OT1}{pzc}{m}{it}
\newcommand{\mylabel}[2]{#2\def\@currentlabel{#2}\label{#1}}
\newtheorem{theorem}{Theorem}[section]
\newtheorem{lemma}[theorem]{Lemma}
\newtheorem{defn}[theorem]{Definition}
\newtheorem{prop}[theorem]{Proposition}
\newtheorem{cor}[theorem]{Corollary}
\newtheorem{Claim}[theorem]{Claim}
\newtheorem{subclaim}{Subclaim}[theorem]
\newcounter{claimlevel}[theorem]
\NewDocumentEnvironment{claim}{O{=}}
 {
  \str_case:nn { #1 }
   {
    {=}  { }
    {+}  { \stepcounter{claimlevel} }
    {-}  { \addtocounter{claimlevel}{-1} }
   }
  \begin{ Claim \int_to_Roman:n { \value{claimlevel} } }
 }
 {
  \end{ Claim \int_to_Roman:n { \value{claimlevel} } }
 }
\newenvironment{claimproof}[1]{\par\noindent\underline{Proof:}\space#1}{\hspace{1mm}$\blacksquare$}
\newlength\FHoffset
\newlength\FHright
 \newtheoremstyle{TheoremNum}
        {\topsep}{\topsep}              
        {\itshape}                      
        {}                              
        {\bfseries}                     
        {.}                             
        { }                             
        {\thmname{#1}\thmnote{ \bfseries #3}}
    \theoremstyle{TheoremNum}
    \newtheorem{thmn}{Theorem}
\newtheoremstyle{PropNum}
        {\topsep}{\topsep}              
        {\itshape}                      
        {}                              
        {\bfseries}                     
        {.}                             
        { }                             
        {\thmname{#1}\thmnote{ \bfseries #3}}
    \theoremstyle{PropNum}
\newtheoremstyle{LemmaNum}
        {\topsep}{\topsep}              
        {\itshape}                      
        {}                              
        {\bfseries}                     
        {.}                             
        { }                             
        {\thmname{#1}\thmnote{ \bfseries #3}}
    \theoremstyle{LemmaNum}
\renewcommand\subitem{\@idxitem\nobreak\hspace*{20\p@}}
\renewcommand\subsubitem{\@idxitem\nobreak\hspace*{20\p@}}
\date{}
\title{Extensions of Thomassen's Theorem to Paths of Length At Most Four: Part II}
\author{Joshua Nevin}
\begin{document}
\maketitle

\begin{center}\textbf{Abstract}\end{center} Let $G$ be a planar embedding with list-assignment $L$ and outer cycle $C$, and let $P$ be a path of length at most four on $C$, where each vertex of $G\setminus C$ has a list of size at least five and each vertex of $C\setminus P$ has a list of size at least three. This is the second paper in a sequence of three papers in which we prove some results about partial $L$-colorings $\phi$ of $C$ with the property that any extension of $\phi$ to an $L$-coloring of $\textnormal{dom}(\phi)\cup V(P)$ extends to $L$-color all of $G$, and, in particular, some useful results about the special case in which $\textnormal{dom}(\phi)$ consists only of the endpoints of $P$. We also prove some results about the other special case in which $\phi$ is allowed to color some vertices of $C\setminus\mathring{P}$ but we avoid taking too many colors away from the leftover vertices of $\mathring{P}\setminus\textnormal{dom}(\phi)$. We use these results in a later sequence of papers to prove some results about list-colorings of high-representativity embeddings on surfaces. 

\section{Introduction}\label{IntroMotivSec}

Given a graph $G$, a \emph{list-assignment} for $G$ is a family of sets $\{L(v): v\in V(G)\}$ indexed by the vertices of $G$, such that $L(v)$ is a finite subset of $\mathbb{N}$ for each $v\in V(G)$. The elements of $L(v)$ are called \emph{colors}. A function $\phi:V(G)\rightarrow\bigcup_{v\in V(G)}L(v)$ is called an \emph{$L$-coloring of} $G$ if $\phi(v)\in L(v)$ for each $v\in V(G)$, and, for each pair of vertices $x,y\in V(G)$ such that $xy\in E(G)$, we have $\phi(x)\neq\phi(y)$. Given a set $S\subseteq V(G)$ and a function $\phi: S\rightarrow\bigcup_{v\in S}L(v)$, we call $\phi$ an \emph{ $L$-coloring of $S$} if $\phi(v)\in L(v)$ for each $v\in S$ and $\phi$ is an $L$-coloring of the induced graph $G[S]$. A \emph{partial} $L$-coloring of $G$ is a function of the form $\phi:S\rightarrow\bigcup_{v\in S}L(v)$, where $S$ is a subset of $V(G)$ and $\phi$ is an $L$-coloring of $S$. Likewise, given a set $S\subseteq V(G)$, a \emph{partial $L$-coloring} of $S$ is a function $\phi:S'\rightarrow\bigcup_{v\in S'}L(v)$, where $S'\subseteq S$ and $\phi$ is an $L$-coloring of $S'$. Given an integer $k\geq 1$, a graph $G$ is called \emph{$k$-choosable} if, for every list-assignment $L$ for $G$ such that $|L(v)|\geq k$ for all $v\in V(G)$, $G$ is $L$-colorable.

This is the second in a sequence of three papers. The motivation for this sequence of papers is outlined in the introduction of Paper I (\cite{JNHolepunchPaperICitation}). We use the main result of this sequence of papers in a later sequence of papers to prove a result about high-representativty embeddings on surfaces with distant precolored components.  We now recall some notation from Paper I. Given a graph $G$ with list-assignment $L$, we very frequently analyze the situation where we begin with a partial $L$-coloring $\phi$ of a subgraph of $G$, and then delete some or all of the vertices of $\textnormal{dom}(\phi)$ and remove the colors of the deleted vertices from the lists of their neighbors in $G\setminus\textnormal{dom}(\phi)$. We thus make the following definition. 

\begin{defn}\label{GeneralListDefn1}\emph{Let $G$ be a graph, let $\phi$ be a partial $L$-coloring of $G$, and let $S\subseteq V(G)$. We define a list-assignment $L^S_{\phi}$ for $G\setminus (\textnormal{dom}(\phi)\setminus S)$ as follows.}
$$L^S_{\phi}(v):=\begin{cases} \{\phi(v)\}\ \textnormal{if}\ v\in\textnormal{dom}(\phi)\cap S\\ L(v)\setminus\{\phi(w): w\in N(v)\cap (\textnormal{dom}(\phi)\setminus S)\}\ \textnormal{if}\ v\in V(G)\setminus \textnormal{dom}(\phi) \end{cases}$$ \end{defn}

If $S=\varnothing$, then $L^{\varnothing}_{\phi}$ is a list-assignment for $G\setminus\textnormal{dom}(\phi)$ in which the colors of the vertices in $\textnormal{dom}(\phi)$ have been deleted from the lists of their neighbors in $G\setminus\textnormal{dom}(\phi)$. The situation where $S=\varnothing$ arises so frequently that, in this case, we simply drop the superscript and let $L_{\phi}$ denote the list-assignment $L^{\varnothing}_{\phi}$ for $G\setminus\textnormal{dom}(\phi)$. In some cases, we specify a subgraph $H$ of $G$ rather than a vertex-set $S$. In this case, to avoid clutter, we write $L^H_{\phi}$ to mean $L^{V(H)}_{\phi}$. Our main result for this set of three papers, which we prove in Paper III, is Theorem \ref{MainHolepunchPaperResulThm} below. The work of Paper I and this paper build up to the proof of this result. 

\begin{theorem}\label{MainHolepunchPaperResulThm} (Holepunch Theorem) Let $G$ be a planar embedding with outer cycle $C$. Let $P:=p_0q_0zq_1p_1$ be a subpath of $C$ whose three internal vertices have no common neighbor in $C\setminus P$, and let $L$ be a list-assignment for $V(G)$ such that the following hold.
\begin{enumerate}[label=\arabic*)] 
\itemsep-0.1em
\item $|L(p_0)|+|L(p_1)|\geq 4$ and each of $L(p_0)$ and $L(p_1)$ is nonempty; AND
\item For each $v\in V(C\setminus P)$, $|L(v)|\geq 3$; AND
\item For each $v\in\{q_0, z, q_1\}\cup V(G\setminus C)$, $|L(v)|\geq 5$.
\end{enumerate}
Then there is a partial $L$-coloring $\phi$ of $V(C)\setminus\{q_0, q_1\}$, where $p_0, p_1, z\in\textnormal{dom}(\phi)$, such that each of $q_0, q_1$ has an $L_{\phi}$-list of size at least three, and furthermore, any extension of $\phi$ to an $L$-coloring of $\textnormal{dom}(\phi)\cup\{q_0, q_1\}$ also extends to $L$-color all of $G$. 
 \end{theorem}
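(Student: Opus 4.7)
The plan is to proceed by induction on $|V(G)|$, using the results proved in Paper I and the earlier sections of Paper II as black boxes. Those prior results handle cases where $\textnormal{dom}(\phi)$ is highly restricted---consisting only of the endpoints of $P$, or of the endpoints together with a controlled subset of $C\setminus\mathring{P}$---so the Holepunch Theorem should follow by chaining them in an appropriate case analysis. The base case $G=C$ is routine, since $C\setminus P$ has $3$-lists and we have ample room to greedily color around $C$ starting from $z$ while choosing $\phi(p_0), \phi(p_1)$ so as to preserve three colors at each of $q_0, q_1$. For the inductive step, I would first ask whether $C$ has a chord, and whether any vertex of $C\setminus P$ is adjacent to multiple vertices of $P$.

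If $C$ has a chord $e=uv$, then $e$ splits $G$ into two planar subgraphs $G_1, G_2$ meeting in the edge $e$, and the strategy branches depending on where $P$ lies relative to $e$: if $P$ is contained on one side, color the other side by induction or by a Paper I lemma, then apply the inductive hypothesis to the $P$-side with updated lists; if $e$ separates $P$ into two pieces, one invokes a Paper I result on path-precoloring extensions to transfer the constraint across $e$. The hypothesis that $\{q_0, z, q_1\}$ has no common neighbor in $C\setminus P$ is crucial here: it rules out a wheel-type obstruction in which a single vertex $w\in C\setminus P$ is triple-adjacent to $\mathring{P}$ and would make the three-color condition at $q_0, q_1$ incompatible with extendability through $w$. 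In the chord-free case with no multi-adjacent $C\setminus P$ vertices, a Thomassen-style deletion argument on a vertex of $C\setminus P$ adjacent to $\mathring{P}$ should reduce $G$ to a smaller planar embedding still satisfying the hypothesis, possibly with a shorter $C\setminus P$ but with the same path $P$.

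The main obstacle is maintaining the dual constraint that $\phi$ leaves $q_0, q_1$ with $L_{\phi}$-lists of size at least three \emph{and} that every extension of $\phi$ to $\textnormal{dom}(\phi)\cup\{q_0, q_1\}$ extends to all of $G$. Coloring too many vertices of $C\setminus P$ risks pushing $|L_{\phi}(q_0)|$ or $|L_{\phi}(q_1)|$ below three, while coloring too few may leave the interior under-constrained for the subsequent $5$-choosability-type extension. To balance these, I would strengthen the inductive hypothesis to keep track of which specific colors are forbidden at $q_0, q_1$, and use the $5$-lists at $\{q_0, z, q_1\}$ combined with $|L(p_0)|+|L(p_1)|\geq 4$ to ensure the combinatorial budget always closes: $q_0$ loses at most the colors $\phi(p_0), \phi(z)$ plus at most one color from a neighbor in $C\setminus P$, so the threshold is tight but achievable. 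The no-common-neighbor hypothesis on $\{q_0, z, q_1\}$ would be invoked precisely at the step where three colors must simultaneously remain at both $q_0$ and $q_1$ after coloring a shared neighbor in $C\setminus P$, which is the most delicate point in the argument.
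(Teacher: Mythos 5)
First, a point of orientation: this paper does not actually prove Theorem \ref{MainHolepunchPaperResulThm}. The theorem is stated here only as the target of the three-paper sequence, and the introduction says explicitly that its proof is given in Paper III; the content of the present paper is the two supporting results, Theorem \ref{CornerColoringMainRes} and Lemma \ref{EndLinked4PathBoxLemmaState}. So there is no proof in this paper to compare your proposal against line by line, and I can only judge the proposal on its own terms and against the machinery the paper visibly builds for Paper III's use.

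Judged that way, your proposal has a genuine gap: it is an outline of the generic Thomassen-style induction template, but it never engages with the actual source of difficulty, which is vertices of $G\setminus C$ (not of $C\setminus P$) that are adjacent to several vertices of $P$. Your case analysis branches only on chords of $C$ and on vertices of $C\setminus P$ with multiple neighbors on $P$; but the entire technical core of the supporting results --- the sets $\Omega_G(P,[p])$, the $(P,p_i)$-obstructed/unobstructed dichotomy, the $(L,P,p_i)$-blocked colorings, the common neighbors $z$ of $p_i, q_i, w$ and the further vertices $y$ adjacent to $z$ and $q_{1-i}$ --- is about interior obstructions, and these are what force the domain of $\phi$ to include carefully chosen vertices of $C\setminus\mathring{P}$ beyond $p_0, p_1$. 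Your claim that ``$q_0$ loses at most the colors $\phi(p_0), \phi(z)$ plus at most one color from a neighbor in $C\setminus P$'' is unjustified: $q_0$ may be incident to many chords of $C$ whose other endpoints lie in $\textnormal{dom}(\phi)$, and controlling exactly this is what the $\textnormal{Crown}_L(P,G)$ condition ($|L_\phi(q_i)|\geq |L(q_i)|-2$) is designed for. Likewise, the ``Thomassen-style deletion of a vertex of $C\setminus P$ adjacent to $\mathring{P}$'' does not obviously preserve the hypotheses: deleting such a vertex shrinks the list of $z$ (which must stay in $\textnormal{dom}(\phi)$ with a usable color) and can create new chords or new multi-adjacencies, and nothing in your sketch shows the three-color budget at $q_0, q_1$ survives the recursion. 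Finally, the proposal gives no indication of where Theorem \ref{CornerColoringMainRes} (precoloring a corner of a 3-path when the endpoints-only statement fails) or Lemma \ref{EndLinked4PathBoxLemmaState} (the $\Omega$-nonemptiness for 4-paths) would enter, yet the paper states these are indispensable inputs to the Paper III argument. As written, the proposal is a plan for a proof rather than a proof, and the plan omits the steps that make the theorem hard.
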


Theorem \ref{MainHolepunchPaperResulThm} is a statement about paths of lengths four in facial cycles of planar graphs. In order to prove this, we need some intermediate facts about paths of lengths 2, 3, and 4 in facial cycles of planar graphs. We proved some of these results in Paper I. We need two more results of this form, which we prove in this paper. These results are Theorem \ref{CornerColoringMainRes} and Lemma \ref{EndLinked4PathBoxLemmaState}, whose proofs make up all of the new content of this paper. In Sections \ref{NotationFromPISec}-\ref{PIBlackBoxSec}, we recall the notation that we introduced in Paper I and the statements of the results that we proved in Papers I. It is not necessary to have read Paper I in order to read this paper (more generally, any of the three papers can be read independently of the other two). All of the notation introduced in those papers is re-introduced below, and we restate the results from Papers I in the same language. In Section \ref{CorColSecRes}, we prove Theorem \ref{CornerColoringMainRes} and, in Sections \ref{BoxLemmaFor4PathSecPropIntermediate}-\ref{Main4PathLemmaSec}, we prove Lemma \ref{EndLinked4PathBoxLemmaState}. 

\section{Background}\label{BackgroundSect}

In 1994, Thomassen demonstrated in \cite{AllPlanar5ThomPap} that all planar graphs are 5-choosable, settling a problem that had been posed in the 1970's. Actually, Thomassen proved something stronger. 

\begin{theorem}\label{thomassen5ChooseThm}
Let $G$ be a planar graph with facial cycle $C$ and let $xy\in E(C)$. Let $L$ be a list assignment for $G$, where  vertex of $G\setminus C$ has a list of size at least five and each vertex of $V(C)\setminus\{x,y\}$ has a list of size at least three, where $xy$ is $L$-colorable. Then $G$ is $L$-colorable.
\end{theorem}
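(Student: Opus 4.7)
The plan is to argue by strong induction on $|V(G)|$, following Thomassen's classical template. After fixing an $L$-coloring $\phi$ of $\{x,y\}$, the base case $|V(G)|=3$ is immediate: the unique vertex of $V(G)\setminus\{x,y\}$ has list size at least three and only two colored neighbors, so a legal color exists. For the inductive step I would first assume $G$ is a near-triangulation (every interior face is a triangle), justified because adding chords inside interior faces preserves the outer cycle and all list sizes while only strengthening the problem.

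Suppose first that $C$ has a chord $uv\neq xy$. Then $uv$ partitions $G$ into two near-triangulations $G_1,G_2$ meeting along $uv$, with $xy\in E(G_1)$ say. Induction applied to $G_1$ with precolored edge $xy$ yields an $L$-coloring that in particular assigns distinct colors to $u$ and $v$; induction applied to $G_2$ with $uv$ precolored then extends the coloring to the rest of $G$. Both subgraphs are strictly smaller, so induction applies.

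Otherwise $C$ is chord-free. Let $v$ be the neighbor of $y$ on $C$ distinct from $x$, and write the neighbors of $v$ in cyclic order as $y=v_0,v_1,\ldots,v_k=u$, where $u$ is the other $C$-neighbor of $v$. Because $G$ is a near-triangulation and $C$ has no chord, each $v_i$ with $1\le i\le k-1$ lies in the interior of $G$ and the sequence $v_0v_1\cdots v_k$ forms a path. Pick two distinct $a,b\in L(v)\setminus\{\phi(y)\}$, which exist since $|L(v)|\ge 3$. Let $G':=G\setminus v$, with new outer cycle $C':=(C\setminus v)\cup v_1\cdots v_{k-1}$, and define $L'(v_i):=L(v_i)\setminus\{a,b\}$ for $1\le i\le k-1$ and $L':=L$ elsewhere. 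Each affected $v_i$ had $|L(v_i)|\ge 5$, so $|L'(v_i)|\ge 3$; hence $(G',L',xy)$ satisfies the hypotheses with strictly fewer vertices. By induction $G'$ has an $L'$-coloring $\psi$, which uses no color in $\{a,b\}$ on any $v_i$ and agrees with $\phi$ on $\{x,y\}$. Since $a,b\in L(v)\setminus\{\phi(y)\}$ are distinct, at most one equals $\psi(u)$, so the other extends $\psi$ to a legal color for $v$, producing an $L$-coloring of $G$.

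The main obstacles are structural: one must confirm that triangulating the interior does not break the hypothesis, that the chord reduction yields two legitimate sub-instances both strictly smaller than $G$, and, in the chord-free case, that the new boundary $C'$ is genuinely a cycle on which every newly exposed $v_i$ retains list size at least three. Once these reductions are established, the color-reservation trick of reserving the pair $\{a,b\}\subseteq L(v)$ for the eventual coloring of $v$ drives the entire induction.
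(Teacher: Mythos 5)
Your argument is correct: it is precisely Thomassen's classical induction (triangulate, split along a chord of $C$ if one exists, otherwise delete a boundary neighbor $v$ of $y$ after reserving two colors of $L(v)\setminus\{\phi(y)\}$ from the lists of the newly exposed interior vertices). Note that the paper does not prove Theorem \ref{thomassen5ChooseThm} at all; it is quoted as background and attributed to \cite{AllPlanar5ThomPap}, so your proposal is simply a faithful reconstruction of the cited original proof, and no comparison with an in-paper argument is possible.
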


Theorem \ref{thomassen5ChooseThm} has the following two useful corollaries, which we use frequently. 

\begin{cor}\label{CycleLen4CorToThom} Let $G$ be a planar graph with outer cycle $C$ and let $L$ be a list-assignment for $G$ where each vertex of $G\setminus C$ has a list of size at least five.  If $|V(C)|\leq 4$ then any $L$-coloring of $V(C)$ extends to an $L$-coloring of $G$. \end{cor}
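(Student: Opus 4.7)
The plan is to reduce the corollary to Theorem \ref{thomassen5ChooseThm} by deleting all but two adjacent vertices of $V(C)$ and transferring the constraint that the deleted vertices be colored according to $\phi$ into list-restrictions on their former neighbors. First, I may assume without loss of generality that $G$ is a near-triangulation: adding interior edges to $G$ preserves both $L$ and $C$, and any $L$-coloring of the augmented graph restricts to one of $G$. Let $\phi$ denote the given $L$-coloring of $V(C)$.

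Suppose first that $|V(C)|=3$, say $C=xyz$. Let $G':=G\setminus\{z\}$. Because $G$ is a near-triangulation, the outer face of $G'$ is a simple cycle $C'$ consisting of the edge $xy$ together with the path through the interior neighbors of $z$ in their cyclic order around $z$. Define $L'(v):=L(v)\setminus\{\phi(z)\}$ for each $v\in N(z)\setminus\{x,y\}$ and $L'(v):=L(v)$ otherwise. Then each vertex of $V(C')\setminus\{x,y\}$ is an interior neighbor of $z$ and so has $L'$-list of size at least $4\ge 3$, while every interior vertex of $G'$ has $L'$-list of size at least $5$. Applying Theorem \ref{thomassen5ChooseThm} to $G'$ with the precolored edge $xy$ (using $\phi(x),\phi(y)$) yields an $L'$-coloring $\psi$ of $G'$. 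Extending by $\psi(z):=\phi(z)$ produces the required $L$-coloring of $G$: the two boundary neighbors $x,y$ of $z$ are colored $\phi(x),\phi(y)$, both distinct from $\phi(z)$, and every other neighbor of $z$ has $\phi(z)$ removed from its list.

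Now suppose $|V(C)|=4$, say $C=v_1v_2v_3v_4$. If $G$ contains a chord of $C$, say $v_1v_3$, then this chord splits $G$ into two plane subgraphs $G_1$ and $G_2$ with triangular outer cycles $v_1v_2v_3$ and $v_1v_3v_4$ respectively; applying the previous case to each yields $L$-colorings which agree on $\{v_1,v_3\}$ and combine to an $L$-coloring of $G$ extending $\phi$. Otherwise $G$ has no chord of $C$, and I perform the analogous reduction by deleting both $v_3$ and $v_4$: the outer face of $G\setminus\{v_3,v_4\}$ is a simple cycle $C''$ passing through $v_1,v_2$, the interior neighbors of $v_3$, and the interior neighbors of $v_4$ (joined at their common interior neighbor arising from the inner face containing $v_3v_4$). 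Modifying the lists to remove $\phi(v_3)$ (respectively $\phi(v_4)$) from the lists of the former neighbors of $v_3$ (respectively $v_4$), every vertex of $V(C'')\setminus\{v_1,v_2\}$ has $L'$-list of size at least $5-2=3$, and Theorem \ref{thomassen5ChooseThm} applied with precolored edge $v_1v_2$ yields an extension $\psi$, which we augment by setting $\psi(v_3):=\phi(v_3)$ and $\psi(v_4):=\phi(v_4)$.

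The main technical concern is to verify that the outer face of the reduced graph is a simple cycle. This follows readily from the near-triangulation and chord-free reductions in generic configurations. Handling the degenerate cases, such as when $v_3$ and $v_4$ have multiple common interior neighbors (causing repeated vertices in the boundary walk), requires a further inductive decomposition along the separating $4$-cycles through the repeated common neighbors; this is routine and folds into the same inductive framework, so I will not dwell on it.
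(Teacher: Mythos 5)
The paper states this corollary without proof, as a standard consequence of Theorem \ref{thomassen5ChooseThm}, and your derivation --- deleting all of $V(C)$ except one precolored edge, removing the deleted vertices' colors from the lists of their former neighbours so that every vertex on the new outer cycle retains a list of size at least three, invoking Thomassen's theorem, and handling chords and boundary-walk degeneracies by splitting along short separating cycles --- is exactly that standard argument and is correct. One small correction: the degeneracies you defer come from common interior neighbours of $v_3$ and $v_4$, which produce separating \emph{triangles} $v_3v_4x$ rather than separating $4$-cycles, but the inductive reduction you sketch (color the exterior of the innermost such separator first, then its interior by the triangle case) disposes of them just the same.
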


\begin{cor}\label{2ListsNextToPrecEdgeCor}
Let $G$ be a planar graph with facial cycle $C$ and $p_0p_1\in E(C)$ and, for each $i=0,1$, let $u_i$ be the unique neighbor of $p_i$ on the path $C\setminus\{p_0, p_1\}$. Let $L$ be a list assignment for $G$, where each vertex of $G\setminus C$ has a list of size at least five and each vertex of $C\setminus\{p_0, p_1, u_0, u_1\}$ has a list of size at least three.  Let $\phi$ be an $L$-coloring of $p_0p_1$, where $|L(u_i)\setminus\{\phi(p_i)\}|\geq 2$ for each $i=0,1$. Then $\phi$ extends to an $L$-coloring of $G$. \end{cor}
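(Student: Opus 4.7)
My plan is to reduce directly to Thomassen's Theorem \ref{thomassen5ChooseThm} by artificially enlarging the lists of $u_0$ and $u_1$ to reach the size of three needed by Thomassen, and then verifying that the extra colors I insert are automatically avoided thanks to adjacency with $p_0, p_1$. This ``augmentation trick'' is much cleaner than trying to delete $p_0, p_1$ and reapply Thomassen to a smaller graph, which would run into trouble with internal vertices of $G$ adjacent to $p_0$ or $p_1$ whose lists would shrink from size at least five to size at least three.

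First I would define an auxiliary list-assignment $L'$ on $V(G)$ that agrees with $L$ everywhere except at $u_0, u_1$, where I set $L'(u_i) := L(u_i) \cup \{\phi(p_i)\}$. The hypothesis $|L(u_i) \setminus \{\phi(p_i)\}| \geq 2$ is precisely what is needed to guarantee $|L'(u_i)| \geq 3$: if $\phi(p_i) \in L(u_i)$ then $|L(u_i)| \geq 3$ and $L'(u_i) = L(u_i)$, and otherwise $L'(u_i)$ is a strict enlargement of $L(u_i)$ by one. I would then invoke Theorem \ref{thomassen5ChooseThm} on $G$ with facial cycle $C$, precolored edge $p_0 p_1$ colored by $\phi$, and list-assignment $L'$. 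Every hypothesis is met: the interior vertices satisfy $L' = L$ with $|L(v)| \geq 5$; the vertices of $V(C) \setminus \{p_0, p_1, u_0, u_1\}$ have $L'$-lists of size at least three by the hypothesis of the corollary; and $|L'(u_i)| \geq 3$ for $i = 0, 1$ by construction; finally, $\phi$ is an $L$-coloring of $p_0 p_1$, hence also an $L'$-coloring. Thomassen's Theorem then returns an $L'$-coloring $\phi^*$ of $G$ extending $\phi$.

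The last step is to check that $\phi^*$ is in fact an $L$-coloring. The only vertices where $L'$ differs from $L$ are $u_0$ and $u_1$, and each $u_i$ is adjacent to $p_i$ in $G$. Since $\phi^*$ is proper and $\phi^*(p_i) = \phi(p_i)$, we must have $\phi^*(u_i) \neq \phi(p_i)$, so $\phi^*(u_i) \in L'(u_i) \setminus \{\phi(p_i)\} \subseteq L(u_i)$. Hence $\phi^*$ is the desired $L$-coloring of $G$ extending $\phi$. There is no serious obstacle here; once the augmentation is set up, everything else is bookkeeping, and the adjacency $u_i p_i \in E(G)$ automatically disposes of the color that was artificially added to $L(u_i)$.
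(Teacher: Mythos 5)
Your augmentation argument is correct and is the standard (and surely intended) derivation; the paper states this corollary without proof as an immediate consequence of Theorem \ref{thomassen5ChooseThm}, and your final observation that the adjacency $u_ip_i\in E(C)$ forces the Thomassen coloring of $u_i$ back into $L(u_i)$ is exactly the point. The only detail worth a word is the degenerate case $|V(C)|=3$, where $u_0=u_1$: there one should set $L'(u_0)=L(u_0)\cup\{\phi(p_0),\phi(p_1)\}$, and the same computation still gives $|L'(u_0)|\geq 3$ while adjacency to both $p_0$ and $p_1$ returns the resulting color to $L(u_0)$.
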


Because of Corollary \ref{CycleLen4CorToThom}, planar embeddings which have no separating cycles of length 3 or 4 play a special role in our analysis, so we give them a name.

\begin{defn} \emph{Given a planar graph $G$, we say that $G$ is \emph{short-separation-free} if, for any cycle $F\subseteq G$ with $|V(F)|\leq 4$, either $V(\textnormal{Int}_G(F))=V(F)$ or $V(\textnormal{Ext}_G(F))=V(F)$.} \end{defn}

Lastly, we rely on the following very simple result, which is a useful consequence of Theorem 7 from \cite{lKthForGoBoHm6} which characterizes the obstructions to extending a precoloring of a cycle of length at most six in a planar graph. 

\begin{theorem}\label{BohmePaper5CycleCorList} Let $G$ be a short-separation-free graph with facial cycle $C$. Let $L$ be a list-assignment for $G$, where $|L(v)|\geq 5$ for all $v\in V(G\setminus C)$. Suppose that $|V(C)|\leq 6$ and $V(C)$ is $L$-colorable, but $G$ is not $L$-colorable. Then $5\leq |V(C)|\leq 6$, and the following hold.
\begin{enumerate}[label=\arabic*)]
\itemsep-0.1em
\item If $|V(C)|=5$, then $G\setminus C$ consists of a lone vertex which is adjacent to all five vertices of $C$; AND
\item If $|V(C)|=6$, then $G\setminus C$ consists of at most three vertices, each of which has at least three neighbors in $G\setminus C$. Furthermore, $G\setminus C$ consists of one of the following.
\begin{enumerate}[label=\roman*)]
\itemsep-0.1em
\item A lone vertex adjacent to at least five vertices of $C$; OR
\item An edge $x_1x_2$ such that, for each $i=1,2$, $G[N(x_i)\cap V(C)]$ is a path of length three; OR
\item A triangle $x_1x_2x_3$ such that, for each $i=1,2,3$, $G[N(x_i)\cap V(C)]$ is a path of length two.
\end{enumerate}
\end{enumerate}
\end{theorem}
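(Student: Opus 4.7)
The plan is to invoke Theorem 7 of \cite{lKthForGoBoHm6}, which gives a complete characterization of the obstructions to extending a precoloring of a cycle of length at most six in a planar graph whose interior vertices have $5$-lists, and then to discard those obstructions that contain a separating cycle of length three or four by using the short-separation-free hypothesis.

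First I would establish the bound $|V(C)|\geq 5$. If instead $|V(C)|\leq 4$, then Corollary \ref{CycleLen4CorToThom} applies (every vertex of $V(G\setminus C)$ has list size at least five), so every $L$-coloring of $V(C)$ extends to an $L$-coloring of $G$; this contradicts the assumption that $V(C)$ is $L$-colorable while $G$ is not. Hence $5\leq |V(C)|\leq 6$.

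Next I would apply Theorem 7 of \cite{lKthForGoBoHm6} to each of the two remaining cycle lengths. That theorem yields a finite list of obstruction configurations specifying exactly how the vertices of $G\setminus C$ attach to $C$ when an $L$-coloring of $V(C)$ fails to extend. For $|V(C)|=5$, only one obstruction survives: a single vertex of $G\setminus C$ adjacent to all five vertices of $C$, which is precisely conclusion (1). For $|V(C)|=6$, I would walk through the cited list and, for each configuration that does not match (i), (ii), or (iii), exhibit a cycle $F\subseteq G$ of length three or four with both $V(\textnormal{Int}_G(F))\neq V(F)$ and $V(\textnormal{Ext}_G(F))\neq V(F)$, thereby contradicting short-separation-freeness. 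The three surviving families are precisely (i)--(iii), and in families (ii) and (iii) the additional requirement that each $x_i$ has a \emph{path} in $C$ as its $C$-neighborhood is forced: any branching of this neighborhood, or any gap skipping a vertex, together with an edge of $C$ or another interior vertex creates a separating triangle or $4$-cycle.

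The main obstacle is the bookkeeping in the $|V(C)|=6$ case: one has to enumerate the obstructions coming out of \cite{lKthForGoBoHm6}, identify in each the internal vertices, their mutual adjacencies, and their attachments to $C$, and then verify that every configuration outside (i)--(iii) either contains a chord of $C$ cutting off a proper piece of the interior or contains a short cycle through one or two of the $x_i$ with vertices on both sides. Once this case analysis is carried out, the neighborhood specifications in (ii) (a $P_4$ in $C$ for each $x_i$) and (iii) (a $P_3$ in $C$ for each $x_i$), as well as the bound of at most three interior vertices, follow directly.
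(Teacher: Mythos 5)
Your proposal matches the paper's treatment: the paper gives no proof of this statement at all, presenting it in the Background section precisely as "a useful consequence of Theorem 7 from \cite{lKthForGoBoHm6}," which is exactly the derivation you outline (Corollary \ref{CycleLen4CorToThom} to rule out $|V(C)|\leq 4$, then the cited characterization of obstructions pruned by short-separation-freeness). Your approach is correct and is essentially the intended one.
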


\section{Notation from Paper I}\label{NotationFromPISec}

In this section, we restate some terminology introduced in Paper I. We first have the following definition, which is our main object of study for all three papers.

\begin{defn} \emph{A \emph{rainbow} is a tuple $(G, C, P, L)$, where $G$ is a planar graph with outer cycle $C$, $P$ is a path on $C$ of length at least two, and $L$ is a list-assignment for $V(G)$ such that $|L(v)|\geq 3$ for each $v\in V(C\setminus P)$ and $|L(v)|\geq 5$ for each $v\in V(G\setminus C)$. We say that a rainbow is \emph{end-linked} if, letting $p ,p^*$ be the endpoints of $P$, each of $L(p)$ and $L(p^*)$ is nonempty and $|L(p)|+|L(p^*)|\geq 4$. }
 \end{defn}

We also recall the following definitions. 

\begin{defn} \emph{Given a graph $G$ with list-assignment $L$, a subgraph $H$ of $G$, and a partial $L$-coloring $\phi$ of $G$, we say that $\phi$ is \emph{$(H,G)$-sufficient} if any extension of $\phi$ to an $L$-coloring of $\textnormal{dom}(\phi)\cup V(H)$ extends to $L$-color all of $G$.} \end{defn}

\begin{defn}\label{GeneralAugCrownNotForLink} \emph{Let $G$ be a planar graph with outer cycle $C$, let $L$ be a list-assignment for $G$, and let $P$ be a path in $C$ with $|V(P)|\geq 3$. Let $pq$ and $p'q'$ be the terminal edges of $P$, where $p, p'$ are the endpoints of $P$. We let $\textnormal{Crown}_{L}(P, G)$ be the set of partial $L$-colorings $\phi$ of $V(C)\setminus\{q, q'\}$ such that}
\begin{enumerate}[label=\arabic*)] 
\itemsep-0.1em
\item $V(P)\setminus\{q, q'\}\subseteq\textnormal{dom}(\phi)$ and, for each $x\in\{q, q'\}$, $|L_{\phi}(x)|\geq |L(x)|-2$; AND
\item\emph{$\phi$ is $(P, G)$-sufficient}
\end{enumerate}
 \end{defn}

With the definitions above in hand, we have the following compact restatement of Theorem \ref{MainHolepunchPaperResulThm}.

\begin{thmn}[\ref{MainHolepunchPaperResulThm}] Let $(G, C, P, L)$ be an end-linked rainbow, where $P$ is a path of length four whose three internal vertices have no common neighbor in $C\setminus P$. Suppose further that each internal vertex of $P$ has an $L$-list of size at least five. Then $\textnormal{Crown}_L(P, G)\neq\varnothing$. \end{thmn}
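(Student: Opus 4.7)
The plan is to proceed by induction on $|V(G)|$, leveraging the two intermediate results of this paper (Theorem~\ref{CornerColoringMainRes} and Lemma~\ref{EndLinked4PathBoxLemmaState}) together with the results from Paper~I and the list-coloring tools of Section~\ref{BackgroundSect}. First I would perform the standard reductions: short separating cycles can be eliminated using Theorem~\ref{BohmePaper5CycleCorList}, and chords of $C$ that cut off small pieces can be absorbed via Corollary~\ref{CycleLen4CorToThom} and Corollary~\ref{2ListsNextToPrecEdgeCor}, applying the induction hypothesis to the outer piece and then Thomassen's theorem to fill in the inner piece. The degenerate case $V(G) = V(C)$ is handled directly from the list hypotheses. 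After these reductions I may assume that $G$ is short-separation-free and has no small chord-cuts.

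The core idea, suggested by the name of the theorem, is to ``punch'' a color through the central vertex $z$. For any pair $(c_0, c_1) \in L(p_0) \times L(p_1)$ permitted by the end-linked condition and any $c_z \in L(z)$, the assignment $\phi(p_0) = c_0$, $\phi(z) = c_z$, $\phi(p_1) = c_1$ automatically satisfies $|L_\phi(q_i)| \geq 3$ for $i = 0,1$, since $|L(q_i)| \geq 5$ and $\phi$ deletes at most two colors from $L(q_i)$ (namely $c_z$ and $c_i$). The real task is therefore to \emph{choose} such a $\phi$---possibly also coloring some vertices of $C \setminus P$ near the endpoints of $P$---so that the $(P, G)$-sufficiency property holds: \emph{every} compatible completion to $q_0, q_1$ must extend to $L$-color all of $G$. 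I would split into cases based on the local structure near $p_0 q_0$ and $q_1 p_1$, using Theorem~\ref{CornerColoringMainRes} to handle the ``corner'' interactions at the two ends of $P$ and Lemma~\ref{EndLinked4PathBoxLemmaState} to control the 4-path behavior across $q_0 z q_1$.

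The main obstacle is precisely the universally-quantified $(P, G)$-sufficiency condition: we cannot settle for \emph{some} extension of $\phi$ to $q_0, q_1$ that works, but must ensure that \emph{every} admissible completion extends to $G$. This rules out most naive choices of $\phi$ and forces a careful coordination between the two ends of $P$ through the common vertex $z$. When $G \setminus C$ contains vertices adjacent to $\{q_0, z\}$ or $\{z, q_1\}$ (or into $C \setminus P$ near a $q_i$), one must choose $\phi(z)$ so that no pair of completions of $q_0, q_1$ produces an irrecoverable conflict in the residual graph. The hypothesis that $q_0, z, q_1$ share no common neighbor in $C \setminus P$ is crucial here, since it rules out a particular wheel-like obstruction that would otherwise defeat any choice of $\phi(z)$; combined with the short-separation-free reduction and Theorem~\ref{BohmePaper5CycleCorList}, this should allow every configuration to be routed either into a direct application of Thomassen's theorem on the residual subgraph $G \setminus (\textnormal{dom}(\phi) \cup \{q_0, q_1\})$, or into one of the two Paper~II tools applied to a smaller 3-path or 4-path subconfiguration reachable from the induction hypothesis. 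The technical heart of the argument is the case analysis that simultaneously coordinates the two corner configurations through $z$ while preserving the universal-completion requirement.
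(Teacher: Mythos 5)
The paper does not actually prove this statement here: Theorem \ref{MainHolepunchPaperResulThm} is explicitly deferred to Paper III, and the present paper only supplies two of the tools (Theorem \ref{CornerColoringMainRes} and Lemma \ref{EndLinked4PathBoxLemmaState}) that the eventual proof will use, so there is no in-paper argument to compare yours against. Judged on its own terms, your proposal is a plan rather than a proof, and the gap sits exactly where you place it yourself: you write that ``the technical heart of the argument is the case analysis that simultaneously coordinates the two corner configurations through $z$,'' and that case analysis is never carried out. You never construct the coloring $\phi$, never argue that a choice satisfying the universally-quantified $(P,G)$-sufficiency condition exists, and never demonstrate that the ``no common neighbor'' hypothesis actually defeats the obstructions you allude to — you only assert that it should.

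Two concrete problems beyond the missing core. First, your claim that $|L_\phi(q_i)|\geq 3$ holds automatically is valid only when $\textnormal{dom}(\phi)\cap N(q_i)\subseteq\{p_i,z\}$; since you concede that $\phi$ may need to color vertices of $C\setminus P$, and $q_i$ may have neighbors there (chords of $C$ incident to $q_0,q_1$ are not excluded by any hypothesis of the theorem), condition 1) of Definition \ref{GeneralAugCrownNotForLink} is a genuine constraint to be maintained throughout the construction, not a freebie. Second, the two tools you cite do not plug in directly. Lemma \ref{EndLinked4PathBoxLemmaState} requires that no chord of $C$ be incident to the middle vertex and that $G$ be internally triangulated and short-separation-free, and its conclusion is membership in $\Omega_G(P,[p])$ — a pairwise-compatibility statement about the sets $\mathcal{C}_G^P(\psi)$ over completions $\psi$, which is strictly weaker than the assertion that every completion extends; likewise Theorem \ref{CornerColoringMainRes} produces colorings of the endpoints of a 3-path extending to all but one or two elements of $\textnormal{End}_L(p_3,P,G)$, not to all of them. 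Converting these outputs into an element of $\textnormal{Crown}_L(P,G)$ — which demands that \emph{every} admissible completion of $q_0,q_1$ extend — requires an additional argument you have not sketched, and identifying the subgraphs and subpaths to which each tool is applied is itself part of the work. As written, this is a plausible research program, not a proof.
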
 

We also recall the following notation from Paper I and the standard notation of Definition \ref{StandQD}, which we also use frequently throughout this paper. 

\begin{defn} \emph{Given a planar graph $G$ with outer face $C$ and an $H\subseteq G$, we let $C^H$ denote the outer face of $H$.} \end{defn}

\begin{defn}\label{StandQD} \emph{Given a path $Q$ in a graph $G$, we let $\mathring{Q}$ denote the subpath of $Q$ consisting of the internal vertices of $Q$. In particular, if $|E(Q)|\leq 2$, then $\mathring{Q}=\varnothing$. Furthermore, for any $x,y\in V(Q)$, we let $xQy$ denote the unique subpath of $Q$ with endpoints $x$ and $y$. } \end{defn}

As we frequently deal with colorings of paths, we also use the following natural notation. 

\begin{defn} \emph{Let $G$ be a graph with list-assignment $L$. Given an integer $n\geq 1$, a path $P:=p_1\ldots p_n$, and a partial $L$-coloring $\phi$ of $G$ with $V(P)\subseteq\textnormal{dom}(\phi)$, we denote the $L$-coloring $\phi|_P$ of $P$ as $(\phi(p_1), \phi(p_2),\ldots, \phi(p_n))$.} \end{defn}

\begin{defn}\label{DefnForColorSetsonVertex} \emph{Let $G$ be a graph with list-assignment $L$. Given a set $\mathcal{F}$ of partial $L$-colorings of $G$ and a vertex $x\in V(G)$ with $x\in\textnormal{dom}(\phi)$ for each $\phi\in\mathcal{F}$, we define $\textnormal{Col}(\mathcal{F}\mid x)=\{\phi(x): \phi\in\mathcal{F}\}$.} \end{defn}

\section{Black Boxes from Paper I}\label{PIBlackBoxSec}

The first black box from Paper I that we have is a result about broken wheels.

\begin{defn}\emph{A \emph{broken wheel} is a graph $G$ with a vertex $p\in V(G)$ such that $G-p$ is a path $q_1\ldots q_n$ with $n\geq 2$, where $N(p)=\{q_1, \ldots, q_n\}$. The subpath $q_1pq_n$ of $G$ is called the \emph{principal path} of $G$.} \end{defn}

Note that, if $|V(G)|\leq 4$, then the above definition does not uniquely specify the principal path, although in practice, whenever we deal with broken wheels, we specify the principal path beforehand so that there is no ambiguity. 

\begin{defn} \emph{Let $G$ be a graph and let $L$ be a list-assignment for $V(G)$. Let $P:=p_1p_2p_3$ be a path of length two in $G$. For each $(c, c')\in L(p_1)\times L(p_3)$, we let $\Lambda_{G,L}^P(c, \bullet, c')$ be the set of $d\in L(p_2)$ such that there is an $L$-coloring of $G$ which uses $c,d,c'$ on the respective vertices $p_1, p_2, p_3$. Likewise,  given a pair $(c, c')$ in either $L(p_1)\times L(p_2)$ or $L(p_2)\times L(p_3)$ respectively, we define the sets $\Lambda_{G,L}^P(c, c', \bullet)$  and $\Lambda_{G,L}^P(\bullet, c, c')$  analogously.}  \end{defn}

In the setting above, we have $\Lambda_{G,L}^P(c, c, \bullet)=\varnothing$ for any $c\in L(p_1)\cap L(p_2)$. Likewise, $\Lambda_{G,L}(\bullet, d, d)=\varnothing$ for any $d\in L(p_2)\cap L(p_3)$. The use of the notation above always requires us to specify an ordering of the vertices of a given 2-path. That is, whenever we write $\Lambda_{G,L}^P(\cdot, \cdot, \cdot)$, where two of the coordinates are colors of two of the vertices of $P$ and one is a bullet denoting the remaining uncolored vertex of $P$, we have specified beforehand which vertices the first, second, and third coordinates correspond to. Sometimes we make this explicit by writing $\Lambda^{p_1p_2p_3}_{G,L}(\cdot, \cdot, \cdot)$. Whenever any of $P, G, L$ are clear from the context, we drop the respective super- or subscripts from the notation above. 

\begin{defn}\label{GUniversalDefinition} \emph{Let $G$ be a graph and let $P:= p_1p_2p_3$ be a subpath of $G$ of length two. Let $L$ be a list-assignment for $V(G)$. Given an $a\in L(p_3)$,}
\begin{enumerate}[label=\emph{\alph*)}] 
\itemsep-0.1em
\item\emph{we say that $a$ is \emph{$(G, P)$-universal} if, for each $b\in L(p_2)\setminus\{a\}$, we have $\Lambda_G^P(\bullet, b, a)=L(p_1)\setminus\{b\}$.}
\item\emph{We say that $a$ is \emph{almost $(G,P)$-universal}, if, for each $b\in L(p_2)\setminus\{a\}$, we have $|\Lambda_G^P(\bullet, b, a)|\geq |L(p_1)|-1$.}
\end{enumerate}
 \end{defn}

In the setting above, if $a$ is $(G,P)$-universal, then it is clearly also almost $(G,P)$-universal. Furthermore, if $p_1p_3\in E(G)$ and $L(p_3)\subseteq L(p_1)$, then there is no $(G, P)$-universal color in $L(p_3)$. That is, $a$ being a $(G, P)$-universal color of $L(p_3)$ is a stronger property than the property that any $L$-coloring of $V(P)$ using $a$ on $p_3$ extends to an $L$-coloring of $G$, unless either $a\not\in L(p_1)$ or $p_1p_3\not\in E(G)$. Iff the 2-path $P$ is clear from the context, then, given an $a\in L(p_3)$, we just say that $a$ is $G$-universal.  Our first and second black boxes from Paper I are the following results.

\begin{theorem}\label{BWheelMainRevListThm2} Let $G$ be a broken wheel with principal path $P=pp'p''$ and let $L$ be a list-assignment for $V(G)$ in which each vertex of $V(G)\setminus\{p, p'\}$ has a list of size at least three. Let $G-p'=pu_1\ldots u_tp''$ for some $t\geq 0$. 
\begin{enumerate}[label=\arabic*)]
\item Let $\phi_0, \phi_1$ be a pair of distinct $L$-colorings of $pp'$. For each $i=0,1$, let $S_i:=\Lambda_G(\phi_i(p), \phi_i(p'), \bullet)$, and suppose that $|S_0|=|S_1|=1$. Then the following hold. 
\begin{enumerate}[label=\alph*)]
\itemsep-0.1em
\item If $\phi_0(p)=\phi_1(p)$ and $S_0=S_1$, then $|E(G-p')|$ is even; AND
\item If $\phi_0(p)=\phi_1(p)$ and $S_0\neq S_1$, then $|E(G-p')|$ is odd and, for each $i=0,1$, $S_i=\{\phi_{1-i}(p')\}$; AND
\item If $\phi_0(p)\neq\phi_1(p)$ and $S_0=S_1$, then $|E(G-p')|$ is odd and $(\phi_0(p), \phi_0(p'))=(\phi_1(p'), \phi_1(p))$
\end{enumerate}
\item Let $\mathcal{F}$ be a family of $L$-colorings of $pp'$ and let $q\in\{p, p'\}$. Suppose that $|\mathcal{F}|\geq 3$ and $\mathcal{F}$ is constant on $q$. Then there exists a $\phi\in\mathcal{F}$ such that $|\Lambda_G(\phi(p), \phi(p'), \bullet)|\geq 2$ and in particular, if $G$ is not a triangle, then $L(p'')\setminus\{\phi(p')\}\subseteq\Lambda_G(\phi(p), \phi(p'), \bullet)$.
\item If $|V(G)|>4$ and there is an $a\in L(p)$ with $L(u_1)\setminus\{a\}\not\subseteq L(u_2)$, then $a$ is $G$-universal. 
\item If $|V(G)|\geq 4$, then, letting $x$ be the unique vertex of distance two from $p$ on the path $G-p'$, the following holds: For any $a\in L(p)$ with $L(u_1)\setminus\{a\}\not\subseteq L(x)$, $a$ is almost $G$-universal. 
\end{enumerate}
 \end{theorem}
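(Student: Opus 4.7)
The plan is to prove all four parts by a common induction on $t$, where $G-p'$ is the outer path $pu_1\ldots u_tp''$. The base case $t=0$ is the triangle $pp'p''$, where Parts 3 and 4 are vacuous and Parts 1 and 2 can be verified directly using the identity $\Lambda_G(\phi(p),\phi(p'),\bullet) = L(p'')\setminus\{\phi(p),\phi(p')\}$. The key reduction for the inductive step is that an $L$-coloring of $pp'$ extends to $G$ if and only if the induced list-coloring problem on the outer path $pu_1\ldots u_tp''$ is solvable after removing $\phi(p')$ from the list of every vertex other than $p$; this replaces $G$ by a smaller broken wheel on vertex set $V(G)\setminus\{p\}$ as soon as we commit to some $c_1\in L(u_1)\setminus\{\phi(p),\phi(p')\}$, with $u_1$ playing the role formerly played by $p$.

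For Part 1, I would apply this reduction twice. The condition $|S_i|=1$ means that, for fixed $(\phi_i(p),\phi_i(p'))$, every admissible $c_1$ propagates along the outer path to the same forced color on $p''$. Since generically distinct choices of $c_1$ lead to distinct extensions, this rigidifies the list-assignments along the path into a strict parity pattern. Unfolding this one step, the smaller broken wheel $G-p$ must itself satisfy a Part-1-type hypothesis with parity flipped by one, and conclusions (a)--(c) then follow by tracking how the unique element of $S_i$ depends on $(\phi_i(p),\phi_i(p'))$ under the inductive parity relation.

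For Part 2, I would argue by contradiction: suppose every $\phi\in\mathcal F$ has $|\Lambda_G(\phi(p),\phi(p'),\bullet)|\leq 1$. Pick three distinct $\phi_0,\phi_1,\phi_2\in\mathcal F$ and apply Part 1 pairwise. If $\mathcal F$ is constant on $p$, each pair falls under 1(a) or 1(b); if constant on $p'$, each pair falls under 1(a) or 1(c). Since 1(c) forces a literal swap $(\phi_0(p),\phi_0(p'))=(\phi_1(p'),\phi_1(p))$, three colorings cannot pairwise satisfy these constraints simultaneously, yielding the contradiction. The ``furthermore'' clause, that $L(p'')\setminus\{\phi(p')\}\subseteq\Lambda_G(\phi(p),\phi(p'),\bullet)$, follows by a further application of Part 1: if any color in this set lay outside $\Lambda_G$, one could engineer a second coloring of $pp'$ violating the structural conclusions whenever $G$ is not a triangle.

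For Parts 3 and 4, the approach is a greedy propagation. Given $b\in L(p')\setminus\{a\}$ and a target $c\in L(p'')\setminus\{b\}$, the hypothesis $L(u_1)\setminus\{a\}\not\subseteq L(u_2)$ (respectively $L(x)$) lets me choose $c_1\in L(u_1)\setminus\{a,b\}$ with $c_1\notin L(u_2)$ (respectively $c_1\notin L(x)$); for Part 4, one target color may have to be avoided, yielding ``almost'' rather than full universality. With the neighbor of $u_1$ freed from the constraint of avoiding $c_1$, each remaining vertex on the outer path has a residual list of size at least two and the coloring can be greedily completed to realize the prescribed $c$ on $p''$. The main obstacle throughout is Part 1(c): the literal-swap conclusion is the most rigid of the three cases, and extracting it from the induction requires encoding precisely how the unique forced color at $p''$ witnesses a symmetry between $(\phi_i(p),\phi_i(p'))$, which is where the parity analysis has to do real work beyond bookkeeping.
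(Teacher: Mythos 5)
First, a point of reference: this paper does not actually prove Theorem \ref{BWheelMainRevListThm2} --- it is stated in Section \ref{PIBlackBoxSec} as a black box imported from Paper I --- so there is no in-paper proof to measure your argument against. Judged on its own terms, your proposal has the right engine: once $p,p'$ are colored $a,b$, extending to $G$ is exactly the problem of list-coloring the path $pu_1\ldots u_tp''$ after deleting $b$ from every list, and the key dichotomy is that as soon as some $u_j$ has at least two achievable colors, every later vertex (including $p''$) has its entire residual list achievable. Your sketches of Parts 3 and 4 are essentially correct instances of this (the hypothesis $L(u_1)\setminus\{a\}\not\subseteq L(x)$ guarantees at least two achievable colors by the time you reach $x$, with the one exceptional case $c=b$ handled because then $b$ is absent from the next list), and Part 2 reduces to a pigeonhole: $|\Lambda_G(\phi(p),\phi(p'),\bullet)|=1$ forces both $\phi(p)$ and $\phi(p')$ into the 3-list $L(u_1)$ (or $L(p'')$ if $G$ is a triangle), which three colorings constant on one coordinate cannot all satisfy.

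The genuine gap is in Part 1. The statement you propose to induct on says nothing about the quadrant ``$\phi_0(p)\neq\phi_1(p)$ and $S_0\neq S_1$'', and your reduction lands there. Concretely, $|S_i|=1$ forces exactly one admissible color $c_1^i$ at $u_1$ with $L(u_1)=\{\phi_i(p'),\phi_i(p),c_1^i\}$; in case (b), where $\phi_0(p)=\phi_1(p)=a$ and $\phi_0(p')\neq\phi_1(p')$, comparing the two descriptions of the 3-set $L(u_1)$ gives $c_1^0=\phi_1(p')$ and $c_1^1=\phi_0(p')$, so the reduced colorings of $u_1p'$ in $G-p$ are $(\phi_1(p'),\phi_0(p'))$ and $(\phi_0(p'),\phi_1(p'))$: they disagree on the new endpoint \emph{and} have distinct singleton targets, so no clause of the inductive hypothesis applies and the induction does not close. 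The repair is to prove the stronger structural statement directly (or to induct on it instead): $|S|=1$ forces $L(u_i)=\{b,c_{i-1},c_i\}$ for a uniquely determined sequence $c_0=a,c_1,\ldots,c_{t+1}$ with $S=\{c_{t+1}\}$; comparing the two forced sequences for $\phi_0,\phi_1$ shows (in cases (a) and (b)) that they agree at even indices and swap $\phi_0(p')\leftrightarrow\phi_1(p')$ at odd indices, from which the parity claims and the identity $S_i=\{\phi_{1-i}(p')\}$ fall out, with a similar comparison handling (c). Relatedly, your phrase ``every admissible $c_1$ propagates to the same forced color'' misstates the mechanism: the point is that there is only \emph{one} admissible color at each $u_i$, not that several admissible choices converge --- if two were admissible anywhere, $|S_i|\geq 2$ would already follow.
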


\begin{lemma}\label{PartialPathColoringExtCL0}
Let $(G, C, P, L)$ be a rainbow and let $\phi$ be a partial $L$-coloring of $V(P)$ which includes the endpoints of $P$ in its domain and does not extend to an $L$-coloring of $G$. Then at least one of the following holds.
\begin{enumerate}[label=\arabic*)]
\itemsep-0.1em
\item There is a chord of $C$ with one endpoint in $\textnormal{dom}(\phi)$ and the other endpoint in $C\setminus P$; OR
\item There is a $v\in V(G\setminus C)\cup (V(\mathring{P})\setminus\textnormal{dom}(\phi))$ with $|L_{\phi}(v)|\leq 2$.
\end{enumerate} 
\end{lemma}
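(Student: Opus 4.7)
The plan is to prove the contrapositive: assuming that neither (1) nor (2) holds, I will construct an extension of $\phi$ to an $L$-coloring of $G$, contradicting the hypothesis. The argument proceeds by induction on $|V(G)|$, with the failures of (1) and (2) providing the list-size slack needed at each stage.

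For the base case I would handle $G=C$ (a chordless cycle with no interior). Since $\textnormal{dom}(\phi)\subseteq V(P)$ contains both endpoints of $P$, the uncolored set $V(C)\setminus\textnormal{dom}(\phi)$ consists of subpaths of $C$ bounded at each end by vertices in $\textnormal{dom}(\phi)$. The failure of (1) implies that every $v\in V(C\setminus P)$ other than the two neighbors $u_1, u_{m-1}$ of $p_k, p_0$ on $C$ has $|L_\phi(v)|=|L(v)|\geq 3$, while $|L_\phi(u_1)|, |L_\phi(u_{m-1})|\geq 2$. Together with the failure of (2), which gives $|L_\phi(v)|\geq 3$ for $v\in V(\mathring P)\setminus\textnormal{dom}(\phi)$, a greedy list-coloring along each uncolored subpath of $C$, starting from an end whose $L_\phi$-list has size $\geq 3$, extends $\phi$ to all of $V(G)$.

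For the inductive step I would split based on whether $C$ has a chord. A chord of $C$ with one endpoint in $\textnormal{dom}(\phi)$ and the other in $V(C\setminus P)$ is ruled out by the failure of (1). A chord $uv$ with both endpoints in $V(P)$ splits $G$ along $uv$ into two smaller planar subgraphs $G_1, G_2$ sharing the edge $uv$; $G_1$ inherits a rainbow structure with path $uPv$, and $G_2$ inherits a rainbow structure whose path is the $C$-arc from $u$ to $v$ through $V(C\setminus P)$. After verifying that the appropriate restrictions of $\phi$ still satisfy the failures of (1) and (2) on each piece, the inductive hypothesis produces extensions that glue along $uv$. If $C$ is chordless but $G$ has interior vertices, I would perform a Thomassen-style reduction: pick a vertex adjacent to $p_0$ on $C$ (or an appropriate internal vertex using the $L_\phi$-list of size $\geq 3$), delete it after fixing two colors for its neighbors, and recurse. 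Alternatively, I would first extend $\phi$ greedily to a precoloring $\phi^*$ of $V(P)$ and then invoke Corollary \ref{2ListsNextToPrecEdgeCor} (or Theorem \ref{thomassen5ChooseThm}) with a precolored edge on $C$, using that the failure of (1) keeps $|L_{\phi^*}(v)|\geq 3$ for $v\in V(C\setminus P)\setminus\{u_1, u_{m-1}\}$ and $|L_{\phi^*}(u_i)|\geq 2$.

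The hard part will be chords of $C$ with one endpoint in $V(\mathring P)\setminus\textnormal{dom}(\phi)$ and the other in $V(C\setminus P)$, which are not prohibited by (1) but obstruct the clean inductive split since the resulting pieces do not immediately inherit a rainbow structure of the right form; and, in the chordless case, ensuring that the interior-vertex lists remain of size $\geq 5$ after extending to $\phi^*$ on $V(P)$, as required by Thomassen. I would address the first by coloring the uncolored $\mathring P$-endpoint of such a chord first, using a color from its $L_\phi$-list of size $\geq 3$ chosen to preserve the failure of (2), thereby reducing to a smaller instance in which the chord has both endpoints colored and can be handled by a standard split; the second I would handle by processing interior vertices one at a time via the Thomassen-style reduction rather than invoking Thomassen as a single black box.
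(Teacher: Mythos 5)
A preliminary remark: this paper does not prove Lemma \ref{PartialPathColoringExtCL0} at all — it is stated in Section 4 as a black box imported from Paper I — so there is no in-paper proof to compare your argument against, and your proposal has to be judged on its own. The overall shape you choose (contrapositive, induction on $|V(G)|$, splitting along chords, Thomassen in the chordless case) is the natural one, and your base case $G=C$ is fine. But two of your steps fail as described, and they are exactly the two places where the real content of the lemma lives.

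First, the chord cases. For a chord $uv$ with $u\in V(\mathring{P})\setminus\textnormal{dom}(\phi)$ and $v\in V(C\setminus P)$, a color of $L_{\phi}(u)$ ``preserving the failure of (2)'' need not exist: $u$ may have several neighbours $w\in V(G\setminus C)$ with $|L_{\phi}(w)|=3$ whose lists together cover $L_{\phi}(u)$, so every choice drops some $w$ to a list of size $2$. Worse, even when such a choice exists, the new instance has the same vertex set (so your induction parameter $|V(G)|$ does not decrease) and now \emph{satisfies} condition (1), since $uv$ has become a chord from the enlarged domain to $C\setminus P$; the contrapositive of the inductive hypothesis therefore gives nothing, and your assertion that the chord now ``has both endpoints colored'' is false ($v$ remains uncolored). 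A similar inheritance problem afflicts the split along a chord with both endpoints in $V(P)$: if $u$ or $v$ is an uncolored vertex of $\mathring{P}$, the restriction of $\phi$ to the piece with path $uPv$ does not contain the endpoints of that path, and if you color the other piece first and transfer the colors of $u,v$, the interior vertices of the first piece adjacent to both may drop to lists of size $3-2=1$, so the failure of (2) is not inherited. (Also, the path you assign to $G_2$ should be $pPu+uv+vPp'$, not the $C$-arc through $C\setminus P$, whose vertices carry no coloring and whose complement in $C^{G_2}$ would then contain uncolored $P$-vertices with no list guarantee.)

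Second, the chordless case. After extending $\phi$ to all of $V(P)$ you have a precolored \emph{path} of length at least two on $C$, not a precolored edge: a vertex of $G\setminus C$ adjacent to three vertices of $P$ may be left with only three available colors, and the two neighbours of the endpoints of $P$ on $C\setminus\mathring{P}$ are left with only two available colors each and need not be adjacent to one another. Neither Theorem \ref{thomassen5ChooseThm} nor Corollary \ref{2ListsNextToPrecEdgeCor} covers this configuration — handling a precolored path rather than a precolored edge is precisely the subject of this series of papers, and extension from two deficient vertices placed arbitrarily on the outer face is false without the extra structure supplied by the failures of (1) and (2). Saying you will ``process interior vertices one at a time via the Thomassen-style reduction'' is not a proof: Thomassen's inductive invariant is built around a single precolored edge, and you would have to design and verify a new invariant that tolerates the precolored path, the two list-$2$ vertices, and the chord restrictions simultaneously. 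That missing invariant is the actual content of the lemma.
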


Our third black box from Paper I is the following result and its corollary.  

\begin{theorem}\label{EitherBWheelOrAtMostOneColThm} Let $(G, C, P, L)$ be a rainbow, where $P=p_1p_2p_3$ is a 2-path. Suppose further that $G$ is short-separation-free and every chord of $C$ has $p_2$ as an endpoint. Then,
\begin{enumerate}[label=\arabic*)]
\itemsep-0.1em
\item either $G$ is a broken wheel with principal path $P$, or there is at most one $L$-coloring of $V(P)$ which does not extend to an $L$-coloring of $G$; AND
\item If $\phi$ is an $L$-coloring of $V(P)$ which does not extend to an $L$-coloring of $G$, then, for each $p\in\{p_1, p_3\}$, letting $u$ be the unique neighbor of $p$ on the path $C-p_2$, we have $L(u)\setminus\{\phi(p)\}|=2$.
\end{enumerate}
\end{theorem}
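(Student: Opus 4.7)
We proceed by induction on $|V(G)|$. The base case $|V(G)|=3$ is trivial since then $G=P$. Before the main case split, we dispense with the subcase $|V(C)|\leq 4$: since $|V(C)\setminus V(P)|\leq 1$ and the at-most-one leftover vertex has list size $\geq 3$ with only two precolored neighbors on $C$, any $L$-coloring of $V(P)$ extends to $V(C)$ and hence to $G$ by Corollary \ref{CycleLen4CorToThom}, so parts (1) and (2) hold vacuously. Assume now $|V(C)|\geq 5$ and the theorem for all smaller admissible rainbows.

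\textbf{Case A (a chord of $C$ exists).} By hypothesis every chord has $p_2$ as an endpoint, so there is a chord $p_2u$ with $u\in V(C\setminus P)$. Splitting $G$ along this chord yields subgraphs $G_1\ni p_1$ and $G_2\ni p_3$ with outer cycles $C_1,C_2$ and 2-paths $P_1:=p_1p_2u$, $P_2:=up_2p_3$. Each $(G_i,C_i,P_i,L)$ is a rainbow of strictly smaller order satisfying the hypotheses (short-separation-freeness and the chord condition are inherited). By induction, each $G_i$ is either a broken wheel with principal path $P_i$ or has at most one failing $L$-coloring of $V(P_i)$ satisfying the tight-list condition. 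If both $G_i$ are broken wheels with principal paths $P_i$, then $G$ is a broken wheel with principal path $P$. Otherwise, an $L$-coloring $\phi$ of $V(P)$ fails for $G$ iff for every $c\in L(u)\setminus\{\phi(p_2)\}$ (a set of size $\geq 2$) the extension $\phi\cup\{u\mapsto c\}$ fails for $G_1$ or $G_2$; combining the induction hypothesis on the non-broken-wheel side with Theorem \ref{BWheelMainRevListThm2} on any broken-wheel side yields, by case analysis across the $\geq 2$ values of $c$, that at most one $\phi$ can fail, that any such $\phi$ forces $|L(u)|=3$ with $\phi(p_2)\in L(u)$, and that $\phi(p_i)$ coincides with the corresponding failing-coloring value on the $p_i$-side for $i=1,3$, whence the tight-list condition on the neighbors of $p_1,p_3$ on $C-p_2$ follows from the inductive tight-list condition on each side.

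\textbf{Case B (no chord of $C$).} For a failing $\phi$, Lemma \ref{PartialPathColoringExtCL0} (with $V(\mathring P)\setminus\textnormal{dom}(\phi)=\varnothing$) gives a $v\in V(G\setminus C)$ with $|L_\phi(v)|\leq 2$; since $|L(v)|\geq 5$, $v$ is adjacent to all of $p_1,p_2,p_3$ with $\phi(p_1),\phi(p_2),\phi(p_3)$ three distinct colors of $L(v)$. The 4-cycle $p_1p_2p_3v$ has nonempty exterior ($|V(C\setminus P)|\geq 2$), so short-separation-freeness forces its interior to be empty; a planar-embedding analysis at $p_2$ then yields $N(p_2)=\{p_1,p_3,v\}$. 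Let $G'':=G-p_2$, with outer cycle $C':=(C-p_2)\cup p_1vp_3$ and 2-path $P':=p_1vp_3$. Then $(G'',C',P',L)$ is another rainbow of smaller order satisfying the hypotheses, since any chord of $C'$ in $G''$ not incident to $v$ would be a chord of $C$ in $G$ avoiding $p_2$, contradicting the chord hypothesis. Apply induction. If $G''$ has at most one failing $L$-coloring of $V(P')$, then the $\geq 2$ distinct candidates $\phi'_c:=(\phi(p_1),c,\phi(p_3))$ for $c\in L(v)\setminus\{\phi(p_1),\phi(p_2),\phi(p_3)\}$ cannot all coincide with the unique failure, so some $\phi'_c$ extends to $G''$; combining with $\phi(p_2)$ on $p_2$ (whose only neighbors are $p_1,p_3,v$) produces an $L$-coloring of $G$ extending $\phi$, contradicting its failure; so no failing $\phi$ exists in this subcase. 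Otherwise $G''$ is a broken wheel with principal path $P'$, forcing $V(G)=V(C)\cup\{v\}$ with $v$ adjacent to every vertex of $C$; a direct analysis using Theorem \ref{BWheelMainRevListThm2} (especially parts (3) and (4), which identify $G''$-universal and almost $G''$-universal colors at the tight-list boundary) on the rim path $p_1u_1\cdots u_tp_3$ yields the at-most-one-failure conclusion and the tight-list condition for both $p_1$ and $p_3$.

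The main obstacle will be the combinatorial bookkeeping in Case A: the several subcases, depending on whether each $G_i$ is a broken wheel and, if not, on whether its unique failing $V(P_i)$-coloring is actually reached by some extension $\phi\cup\{u\mapsto c\}$, must be carefully combined---using Theorem \ref{BWheelMainRevListThm2} on any broken-wheel side and the induction hypothesis on the other---to both cap the failing-$\phi$ count at one and to verify the tight-list condition on $L(u_1),L(u_3)$ for the failing $\phi$ (where $u_1,u_3$ are the neighbors of $p_1,p_3$ on $C-p_2$).
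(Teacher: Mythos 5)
This statement is one of the ``black boxes'' imported from Paper~I: the present paper only quotes Theorem~\ref{EitherBWheelOrAtMostOneColThm} in Section~\ref{PIBlackBoxSec} and never proves it, so there is no in-paper proof to compare your argument against. Judged on its own terms, your architecture is the standard and correct one for statements of this type: induct on $|V(G)|$, split along a chord $p_2u$ into two smaller rainbows with 2-paths $p_1p_2u$ and $up_2p_3$, and in the chordless case use Lemma~\ref{PartialPathColoringExtCL0} to produce the tripod vertex $v$, kill the interiors of the triangles $p_1p_2v$ and $p_2p_3v$ by short-separation-freeness, and delete $p_2$. The key points you would need to make explicit in Case~A are (i) that $\Lambda$ of a broken-wheel side is always nonempty (greedy along the rim), so on a non-broken-wheel side the unique failing coloring is actually hit and pins down $\phi(p_1),\phi(p_2)$, and (ii) that two distinct values of $\phi(p_3)$ cannot both make $\Lambda_{G_2}(\bullet,\phi(p_2),\phi(p_3))$ equal to the same singleton, which is exactly 1c) of Theorem~\ref{BWheelMainRevListThm2}. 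Both are available, so that case closes.

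Two places need genuine repair rather than mere expansion. First, conclusion 2) of the theorem is not conditional on $G$ failing to be a broken wheel, but your Case~A stops at ``then $G$ is a broken wheel'' when both sides are broken wheels; you must still verify the tight-list condition there. This can be done either by the forced-propagation argument along the rim (failure forces the achievable set at each rim vertex to be a singleton, which forces $|L(u_1)|=3$ with $\phi(p_1),\phi(p_2)\in L(u_1)$, and symmetrically at $u_t$), or by feeding a failing extension back into part~2) of the induction hypothesis on each side using the nonemptiness of the opposite side's $\Lambda$-set. Second, in the endgame of Case~B where $G-p_2$ is a broken wheel (so $G$ is a wheel with hub $v$), parts 3) and 4) of Theorem~\ref{BWheelMainRevListThm2} will not by themselves give you at-most-one-failure: what is actually needed is again the forced-propagation analysis, showing that failure of $\phi$ forces every rim list to have size three and contain the two-element set $S=L(v)\setminus\phi(V(P))$, whence $\phi(p_1)=L(u_1)\setminus S$, $\phi(p_3)=L(u_t)\setminus S$ and $\phi(p_2)$ are all determined by $S$, and two distinct admissible sets $S,S'$ would force all rim lists to equal $S\cup S'$ and hence $\phi(p_1)=\phi(p_3)$, contradicting $|\phi(V(P))|=3$. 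With those two patches the plan goes through.
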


\begin{cor}\label{CorMainEitherBWheelAtM1ColCor} Let $(G, C, P, L)$ be a rainbow, where $P:=p_1p_2p_3$ is a 2-path. Suppose further that $G$ is short-separation-free and every chord of $C$ has $p_2$ as an endpoint. Then all three of the following hold.
\begin{enumerate}[label=\arabic*)]
\itemsep-0.1em
\item If $|V(C)|>4$ and $|L(p_3)|\geq 2$, then, letting $xyp_3$ be the unique 2-path of $C-p_2$ with endpoint $p_3$, either there is a $G$-universal $a\in L(p_3)$ or $G$ is a broken wheel with principal path $P$ such that $L(p_3)\subseteq L(x)\cap L(y)$; AND
\item  If $|L(p_3)|\geq 3$ and either $|V(C)|>3$ or $G=C$, then
\begin{enumerate}[label=\roman*)]
\itemsep-0.1em
\item Either $G$ is a broken wheel with principal path $P$, or $|\Lambda_G^P(\phi(p_1), \phi(p_2), \bullet)|>1$ for any $L$-coloring $\phi$ of $p_1p_2$; AND
\item For each $a\in L(p_1)$, there are at most two $b\in L(p_2)\setminus\{a\}$ such that $|\Lambda_G^P(a, b, \bullet)|=1$; AND
\end{enumerate}
\item If $\phi$ is an $L$-coloring of $\{p_1, p_3\}$ and $S\subseteq L_{\phi}(p_2)$ with $|S|\geq 2$ and $S\cap\Lambda_G^P(\phi(p_1), \bullet, \phi(p_3))=\varnothing$, then
\begin{enumerate}[label=\roman*)]
\itemsep-0.1em
\item $|S|=2$ and $G$ is a broken wheel with principal path $P$, and $G-p_2$ is a path of odd length; AND
\item For any $L$-coloring $\psi$ of $V(P)$ which does not extend to an $L$-coloring of $G$, either $\psi(p_1)=\psi(p_3)=s$ for some $s\in S$, or $\psi$ restricts to the same coloring of $\{p_1, p_3\}$ as $\phi$. 
\end{enumerate}
\end{enumerate} \end{cor}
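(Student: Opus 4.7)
The plan is to reduce all three conclusions to Theorem \ref{EitherBWheelOrAtMostOneColThm} combined with the broken-wheel machinery in Theorem \ref{BWheelMainRevListThm2}. A common preliminary is that the standing hypotheses force $p_1p_3\notin E(G)$, except when $G$ is a triangle, in which case $G$ is a broken wheel with principal path $P$ and so lands in the first alternative of each conclusion. For (1), I split on whether $G$ is a broken wheel with principal path $P$. If not, Theorem \ref{EitherBWheelOrAtMostOneColThm}(1) gives at most one bad $L$-coloring of $V(P)$; letting $c_3^*$ be its value on $p_3$ (if any), any $a\in L(p_3)\setminus\{c_3^*\}$ is $(G,P)$-universal, since $p_1p_3\notin E(G)$ makes every triple $(c,b,a)$ with $c\neq b\neq a$ a legitimate $L$-coloring of $V(P)$ different from the unique bad one, and hence extendable to $G$. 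If $G$ is a broken wheel with $G-p_2=p_1u_1\cdots u_tp_3$, then $|V(C)|>4$ forces $t\geq 2$, giving $y=u_t$ and $x=u_{t-1}$; Theorem \ref{BWheelMainRevListThm2}(3) applied from the $p_3$-end shows that any $a\in L(p_3)$ with $L(y)\setminus\{a\}\not\subseteq L(x)$ is $G$-universal, while a direct greedy-coloring argument from the $p_1$-end shows that any $a\in L(p_3)\setminus L(y)$ is also $G$-universal (since $a\notin L(u_t)$ makes the $u_t\neq a$ constraint automatic). Combining these two sufficient conditions with $|L(p_3)|\geq 2$ yields that the absence of any $G$-universal $a$ forces $L(p_3)\subseteq L(x)\cap L(y)$.

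For (2), part (i) in the non-broken-wheel case follows by combining the uniqueness of the bad coloring from Theorem \ref{EitherBWheelOrAtMostOneColThm}(1) with the structural list information in Theorem \ref{EitherBWheelOrAtMostOneColThm}(2), which together yield $|\Lambda_G^P(\phi(p_1),\phi(p_2),\bullet)|\geq 2$ for every $L$-coloring $\phi$ of $p_1p_2$ even in the borderline case $|L(p_3)|=3$. Part (ii) is a direct application of Theorem \ref{BWheelMainRevListThm2}(2): if three distinct $b\in L(p_2)\setminus\{a\}$ all satisfied $|\Lambda_G^P(a,b,\bullet)|=1$, the corresponding family of $L$-colorings of $p_1p_2$ would be constant on $p_1=a$ and of size at least three, producing by Theorem \ref{BWheelMainRevListThm2}(2) some $\phi$ in the family with $|\Lambda|\geq 2$, a contradiction.

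For (3), the hypothesis $S\cap\Lambda_G^P(\phi(p_1),\bullet,\phi(p_3))=\varnothing$ means that every $b\in S$ makes $(\phi(p_1),b,\phi(p_3))$ a bad $L$-coloring of $V(P)$; since $|S|\geq 2$, Theorem \ref{EitherBWheelOrAtMostOneColThm}(1) forces $G$ to be a broken wheel with principal path $P$. Fixing two distinct $b_1,b_2\in S$ and applying Theorem \ref{BWheelMainRevListThm2}(1) to the $L$-colorings $(\phi(p_1),b_i)$ of $p_1p_2$ (with the corresponding $\Lambda$-sets at $p_3$) pins down $|S|=2$, forces $|E(G-p_2)|$ odd, and supplies the classification in (ii) of any bad $L$-coloring $\psi$ of $V(P)$ as either agreeing with $\phi$ on $\{p_1,p_3\}$ or satisfying $\psi(p_1)=\psi(p_3)=s$ for some $s\in S$. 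I expect the main obstacle to be the broken-wheel subcase of (1) in which $a\in L(p_3)\setminus L(x)$ but $a\in L(y)$: here Theorem \ref{BWheelMainRevListThm2}(3) does not apply directly, and one must carefully combine that result with the greedy argument above and with the hypothesis $|L(p_3)|\geq 2$ to rule out the configuration.
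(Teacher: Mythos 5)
First, a structural point: this paper never proves Corollary \ref{CorMainEitherBWheelAtM1ColCor} — it is stated in Section \ref{PIBlackBoxSec} as a black box imported from Paper I — so there is no in-paper proof to compare your argument against. Judged on its own terms, your treatment of 1) is correct and complete (the non-broken-wheel case via the unique bad coloring, the two sufficient conditions for universality in the broken-wheel case, and the union trick over two colors of $L(p_3)$ to get $L(p_3)\subseteq L(x)\cap L(y)$), and 2ii) is a clean application of Theorem \ref{BWheelMainRevListThm2}(2). The problems are in 2i) and in 3).

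For 2i), your count in the non-broken-wheel case gives only $|\Lambda_G^P(\phi(p_1),\phi(p_2),\bullet)|\geq 1$: when $|L(p_3)|=3$, $\phi(p_2)\in L(p_3)$, and the unique non-extendable coloring of $V(P)$ happens to extend $\phi$, you get exactly $|L(p_3)\setminus\{\phi(p_2)\}|-1=1$. You assert that Theorem \ref{EitherBWheelOrAtMostOneColThm}(2) closes this gap but do not say how, and it cannot: take $G$ to be the wheel on the $5$-cycle $C=p_1p_2p_3u_2u_1$ with hub $v$, $L(v)=\{1,2,3,4,5\}$, $L(u_1)=\{1,4,5\}$, $L(u_2)=\{3,4,5\}$, $L(p_3)=\{2,3,6\}$. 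The coloring $(1,2,3)$ of $V(P)$ does not extend (the triangle $u_1u_2v$ is left with three equal $2$-lists), it satisfies both conclusions of Theorem \ref{EitherBWheelOrAtMostOneColThm}, $G$ is not a broken wheel with principal path $P$, and $\Lambda_G^P(1,2,\bullet)=\{6\}$. So 2i) is not a formal consequence of the two quoted black-box statements, and your sketch cannot be completed as written; whatever proof exists must use more than those two conclusions (or a sharper hypothesis than the one transcribed here). For 3), two gaps. The sets $\Lambda_G^P(\phi(p_1),b_i,\bullet)$ at $p_3$ need not be singletons — they merely omit $\phi(p_3)$ — so Theorem \ref{BWheelMainRevListThm2}(1) does not apply to them directly; you must pass to the broken wheel $G-p_3$ with principal path $p_1p_2u_t$, where the hypothesis does force $\Lambda_{G-p_3}(\phi(p_1),b_i,\bullet)=\{\phi(p_3)\}$, and the bound $|S|\leq 2$ then comes from part 2) of that theorem, not part 1). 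More seriously, 1a)–1c) only classify a bad coloring $\psi$ that agrees with $\phi$ on $p_1$ or has the same forced singleton at $u_t$; the remaining case $\psi(p_1)\neq\phi(p_1)$ and $\psi(p_3)\neq\phi(p_3)$ — precisely the case producing the conclusion $\psi(p_1)=\psi(p_3)=s$ for some $s\in S$ — satisfies none of the hypotheses of 1a)–1c) and requires a separate propagation argument along $L(u_1),\dots,L(u_t)$ (one first shows $L(u_1)=\{\phi(p_1)\}\cup S$, forcing $\psi(p_1)\in S$, and then traces the forced path colorings to conclude $\psi(p_3)=\psi(p_1)$). That idea is absent from your sketch.
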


We now recall the following notation from Paper I. 

\begin{defn}\label{EndNotationColor} \emph{Let $G$ be a graph and let $L$ be a list-assignment for $G$. Let $P$ be a path in $G$ with $|V(P)|\geq 3$, let $H$ be a subgraph of $G$ and let $\{p, p'\}$ be the endpoints of $P$. We let $\textnormal{End}_L(H, P, G)$ be the set of $L$-colorings $\phi$ of $\{p, p'\}\cup V(H)$ such that $\phi$ is $(P,G)$-sufficient.} \end{defn}

We usually drop the subscript $L$ in the case where it is clear from the context. Furthemore, if $H=\varnothing$, then we just write $\textnormal{End}_L(P,G)$. Our fourth black box from Paper I is the following result, together with its corollary. 

\begin{theorem}\label{SumTo4For2PathColorEnds} Let $(G, C, P, L)$ be an end-linked rainbow, where $P$ is a 2-path. Then $\textnormal{End}_{L}(P,G)\neq\varnothing$.  \end{theorem}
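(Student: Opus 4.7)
The plan is to induct on $|V(G)|$. Using Corollary \ref{CycleLen4CorToThom} to absorb the side of any separating cycle of length at most four that is disjoint from $P\subseteq C$, we may assume $G$ is short-separation-free. If $|V(C)|=3$, then $G$ is a triangle and end-linkedness supplies a valid $L$-coloring of $\{p_1,p_3\}$, which is trivially $(P,G)$-sufficient since any extension already $L$-colors all of $V(G)$; so assume $|V(C)|\geq 4$.

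Suppose $C$ has a chord $uv$ with $p_2\notin\{u,v\}$. If $\{u,v\}=\{p_1,p_3\}$, then short-separation-freeness applied to the triangle $p_1p_2p_3$ --- whose ``long'' side contains the nonempty set $V(C\setminus P)$ --- forces the ``short'' side to be empty, and planarity then gives $\deg_G(p_2)=2$. Choose any $c_1\in L(p_1)$ and $c_3\in L(p_3)$ with $c_1\neq c_3$ (end-linkedness supplies such a pair): for every extension $(c_1,d,c_3)$ to $V(P)$, the precolored edge $p_1p_3$ on the outer cycle of $G-p_2$ extends across $G-p_2$ by Theorem \ref{thomassen5ChooseThm}. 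Otherwise, split $G$ along $uv$ into $G_1\supseteq P$ and $G_2$; the tuple $(G_1,C_1,P,L)$ is a smaller end-linked rainbow, so induction yields $\phi\in\textnormal{End}_L(P,G_1)$. For any extension of $\phi$ to $V(P)$, extend first to $G_1$ (which colors $u$ and $v$), then extend across $G_2$ via Theorem \ref{thomassen5ChooseThm} applied to the precolored edge $uv$ on the outer cycle of $G_2$.

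We may therefore assume every chord of $C$ is incident to $p_2$, and Theorem \ref{EitherBWheelOrAtMostOneColThm} applies. If there is at most one $L$-coloring $\phi^*$ of $V(P)$ that fails to extend to $G$, the only pair $(c_1,c_3)\in L(p_1)\times L(p_3)$ that can fail $(P,G)$-sufficiency is $(\phi^*(p_1),\phi^*(p_3))$; the end-linked hypothesis $|L(p_1)|+|L(p_3)|\geq 4$ with both lists nonempty yields at least two valid endpoint-colorings (respecting $p_1p_3\in E(G)$ where applicable), so a good pair exists. Otherwise $G$ is a broken wheel with principal path $P$; writing $G-p_2=p_1u_1\cdots u_tp_3$, any $a\in L(p_1)$ with $L(u_1)\setminus\{a\}\not\subseteq L(u_2)$ is $G$-universal from the $p_1$-end by Theorem \ref{BWheelMainRevListThm2}(3), and then $(a,c_3)\in\textnormal{End}_L(P,G)$ for every $c_3\in L(p_3)$, since each extension $(a,d,c_3)$ has $c_3\in L(p_3)\setminus\{d\}=\Lambda^P_G(a,d,\bullet)$. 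Examine the $p_3$-end symmetrically.

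The main obstacle is the tight broken-wheel subcase in which Theorem \ref{BWheelMainRevListThm2}(3) fails at both endpoints, forcing the list inclusions $L(u_1)\subseteq L(u_2)$ and $L(u_t)\subseteq L(u_{t-1})$; here only the almost-universal colors from Theorem \ref{BWheelMainRevListThm2}(4) are available, and extracting a sufficient endpoint-coloring requires carefully combining these inclusions with the end-linked inequality (together with direct inspection for small $t$ where $|V(G)|\leq 4$). The induction, the chord split, and the ``at most one bad coloring'' steps are otherwise routine.
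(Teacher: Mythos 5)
A preliminary remark: this paper does not prove Theorem \ref{SumTo4For2PathColorEnds} at all — it is imported verbatim as the fourth black box from Paper I — so there is no in-paper proof against which to compare your route. I can only judge the plan on its own terms, and on the understanding that invoking Theorems \ref{BWheelMainRevListThm2} and \ref{EitherBWheelOrAtMostOneColThm} is legitimate because the ordering of the black boxes indicates they precede the present theorem in Paper I.

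Your reductive skeleton is sound. Short-separation-freeness via Corollary \ref{CycleLen4CorToThom}, the triangle base case, both chord cases (a chord $p_1p_3$ forces $\deg(p_2)=2$ by short-separation-freeness, after which Theorem \ref{thomassen5ChooseThm} colors $G-p_2$; a chord missing $p_2$ splits $G$, with induction on the side containing $P$ and Theorem \ref{thomassen5ChooseThm} on the other), and the count in the ``at most one bad coloring'' branch ($|L(p_1)|\cdot|L(p_3)|\geq 3$ endpoint-colorings, at most one of which is the restriction of the unique non-extendable coloring of $V(P)$) are all correct.

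The gap is the broken-wheel case, which is where essentially all of the content of the theorem lives and which you do not prove. Two concrete points. First, the inclusion $L(u_1)\subseteq L(u_2)$ is forced by the failure of Theorem \ref{BWheelMainRevListThm2}(3) only when $|L(p_1)|\geq 2$; in the asymmetric case $|L(p_1)|=1$, $|L(p_3)|=3$ you obtain only $L(u_1)\setminus\{a\}\subseteq L(u_2)$ for the single $a\in L(p_1)$, and the weight of the argument must shift to the $p_3$-end, where different tools (e.g.\ part 2) of Corollary \ref{CorMainEitherBWheelAtM1ColCor}, available because $|L(p_3)|\geq 3$) come into play. Second, the genuinely delicate configurations are the parity obstructions of Theorem \ref{BWheelMainRevListThm2}(1) — for instance all $u_i$ sharing one $3$-list, where $(c_1,d,c_3)$ fails exactly when the alternating $2$-coloring of $u_1\ldots u_t$ forced by $c_1$ and $d$ terminates on $c_3$ — and showing that some $(c_1,c_3)$ evades all of them simultaneously for every admissible $d\in L(p_2)$ (a set of unbounded size) is a real case analysis over $(|L(p_1)|,|L(p_3)|)\in\{(1,3),(2,2),(3,1)\}$ and over the structure of $L(u_1),\ldots,L(u_t)$, not a corollary of the inclusions you name. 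The statement you need here is true and the black boxes you cite are adequate raw material, but as written the plan stops exactly where the proof begins.
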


\begin{cor}\label{GlueAugFromKHCor} Let $(G, C, P, L)$ be an end-linked rainbow and $pq$ be a terminal edge of $P$, where $p$ is an endpoint of $P$, and let $p'$ be the other endpoint of $P$. Let $qx$ be an edge with $x\not\in V(C-p')$. Let $H, K$ be subgraphs of $G$ bounded by respective outer faces $x(C\setminus\mathring{P})p'Pq$ and $p(C\setminus\mathring{P})xq$. Let $\mathcal{F}$ be a nonempty family of partial $L$-colorings of $H$, where each element of $\mathcal{F}$ has $x$ in its domain. Suppose further that either $x=p$ or $|\textnormal{Col}(\mathcal{F}\mid x)|\geq |L(p')|$. Then there is an $L$-coloring $\phi$ of $\{p, x\}$ and a $\psi\in\mathcal{F}$ such that $\phi(x)=\psi(x)$ and $\phi$ is $(q, K)$-sufficient. \end{cor}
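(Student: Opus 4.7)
The plan is to apply Theorem \ref{SumTo4For2PathColorEnds} to $K$, viewed as a planar graph with outer cycle $C^K$ on which the 2-path $pqx$ sits via the edges $pq\in E(P)$ and $qx$. To force the color eventually used at $x$ to lie in $\textnormal{Col}(\mathcal{F}\mid x)$, I would first truncate the list at $x$ to this set, and then invoke Theorem \ref{SumTo4For2PathColorEnds} on the modified rainbow.

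The degenerate case $x=p$ corresponds to $qx$ coinciding with the terminal edge $pq$, so that $K$ collapses onto that edge alone and $(q,K)$-sufficiency becomes automatic; here I would just pick any $\psi\in\mathcal{F}$ and set $\phi(p):=\psi(p)$. In the main case, I would define a list-assignment $L'$ on $V(K)$ by $L'(v):=L(v)$ for $v\neq x$ and $L'(x):=\textnormal{Col}(\mathcal{F}\mid x)$, and check that $(K,C^K,pqx,L')$ is an end-linked rainbow. Each vertex of $V(C^K)\setminus\{p,q,x\}$ lies either in $V(C\setminus P)$ (with $|L(v)|\geq 3$, from the original rainbow) or in $V(G\setminus C)$ (with $|L(v)|\geq 5$), and $L'$ agrees with $L$ on each such vertex; every interior vertex of $K$ is an interior vertex of $G$, hence has an $L'$-list of size at least five. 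For end-linkedness, both $L'(p)=L(p)$ and $L'(x)=\textnormal{Col}(\mathcal{F}\mid x)$ are nonempty, and
\[
|L'(p)|+|L'(x)|=|L(p)|+|\textnormal{Col}(\mathcal{F}\mid x)|\geq |L(p)|+|L(p')|\geq 4.
\]

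Theorem \ref{SumTo4For2PathColorEnds} then produces an $L'$-coloring $\phi$ of $\{p,x\}$ belonging to $\textnormal{End}_{L'}(pqx,K)$. Any $L$-coloring of $\{p,q,x\}$ extending $\phi$ still uses $\phi(x)\in L'(x)$ at $x$ and a color in $L(q)=L'(q)$ at $q$, so it is automatically an $L'$-coloring; it therefore extends to an $L'$-coloring of $K$, which in turn is an $L$-coloring of $K$, since $L'(v)\subseteq L(v)$ everywhere. Hence $\phi$, regarded as an $L$-coloring of $\{p,x\}$, is $(q,K)$-sufficient. Since $\phi(x)\in\textnormal{Col}(\mathcal{F}\mid x)$, we can select $\psi\in\mathcal{F}$ with $\psi(x)=\phi(x)$, and $(\phi,\psi)$ is the pair the corollary demands.

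The principal hazard is confirming that truncating $L(x)$ to $\textnormal{Col}(\mathcal{F}\mid x)$ does not collapse end-linkedness on the new rainbow; the hypothesis $|\textnormal{Col}(\mathcal{F}\mid x)|\geq |L(p')|$ is precisely what preserves the sum inequality inherited from $(G,C,P,L)$, and every other step is routine translation between the rainbow and $\textnormal{End}$ definitions.
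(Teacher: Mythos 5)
Your proof is correct and is essentially the intended derivation: the paper imports this statement from Paper I as a direct corollary of Theorem \ref{SumTo4For2PathColorEnds}, and truncating the list at $x$ to $\textnormal{Col}(\mathcal{F}\mid x)$ (with the hypothesis $|\textnormal{Col}(\mathcal{F}\mid x)|\geq |L(p')|$ preserving end-linkedness) is exactly the standard way to obtain it, with the degenerate case $x=p$ handled trivially as you do.
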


Our fifth and sixth black boxes from Paper I are the following results. 

\begin{theorem}\label{ThmFirstLink3PathForUseInHolepunch} Let $(G, C, P, L)$ be an end-linked rainbow, where $P:=p_1p_2p_3p_4$ is a 3-path. Then
\begin{enumerate}[label=\arabic*)] 
\itemsep-0.1em
\item $\textnormal{Crown}_L(P, G)\neq\varnothing$. Actually, something stronger holds. There is a subgraph $H$ of $G$ with $V(H)\subseteq V(C\setminus P)\cap N(p_1)\cup N(p_4))$, where $|V(H)\cap N(p)|\leq 1$ for each endpoint $p$ of $P$ and $\textnormal{End}_L(H,P,G)\neq\varnothing$; AND
\item If there is no chord of $C$ incident to a vertex of $\mathring{P}$, then $\textnormal{End}_{L}(P, G)\neq\varnothing$. 
\end{enumerate}
 \end{theorem}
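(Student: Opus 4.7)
The plan is to induct on $|V(G)|$ and prove the two parts together, with a standard reduction-first structure. The base case is $G=C$: end-linkedness lets us pick $c_1\in L(p_1)$ and $c_4\in L(p_4)$ such that any completion $(\phi(p_2),\phi(p_3))$ can be extended greedily around the remainder of $C$, since each vertex of $V(C\setminus P)$ has list size at least three and the cycle is traversed one vertex at a time starting from a colored endpoint. For the inductive step, if $G$ contains a separating cycle $F$ with $|V(F)|\leq 4$, then $\textnormal{Int}_G(F)$ contributes nothing obstructive by Corollary \ref{CycleLen4CorToThom}, so it suffices to argue on $\textnormal{Ext}_G(F)$, which is strictly smaller and still has $P$ on its outer face. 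This reduces us to the short-separation-free case.

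Next I would dispose of chords. If $C$ has a chord $xy$ with $x\in\mathring{P}$ (a case present only for part (1)), then $xy$ together with $C$ cuts $G$ into two subgraphs $G_1,G_2$, each with a path strictly shorter than $P$ on its outer face containing one endpoint of $P$. Apply Theorem \ref{SumTo4For2PathColorEnds} on the side whose relevant subpath is a 2-path, and the inductive hypothesis of the present theorem on the other side, then glue the two partial colorings using Corollary \ref{GlueAugFromKHCor}; the ``coupling'' vertex is $y$, which is the candidate vertex of $H$ if any is needed. Chords of $C$ with both endpoints in $V(C\setminus\mathring P)$ either shrink $G$ or become a case already handled by the short-separation-free reduction. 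This leaves us in the setting of part (2): $G$ is short-separation-free with no chord of $C$ incident to $\mathring P$.

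For this main case, I would use end-linkedness and the 2-path black box as follows. WLOG $|L(p_4)|\geq 2$. For each $c_4\in L(p_4)$, consider $G'_{c_4}:=G-p_4$ with lists $L'(p_3):=L(p_3)\setminus\{c_4\}$ and $L'=L$ elsewhere; the outer cycle of $G'_{c_4}$ contains the 2-path $p_1p_2p_3$, and the rainbow $(G'_{c_4},C',p_1p_2p_3,L')$ is end-linked once one verifies $|L(p_1)|+|L'(p_3)|\geq 4$ (which holds whenever $|L(p_3)|\geq 5$, and which needs the broken-wheel machinery otherwise). Applying Theorem \ref{SumTo4For2PathColorEnds} yields a family $\mathcal{F}$ of 2-path-sufficient colorings of $\{p_1,p_3\}$, one for each $c_4$. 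Corollary \ref{GlueAugFromKHCor} then glues a choice of $c_4$ with a compatible member of $\mathcal{F}$, using $|L(p_1)|+|L(p_4)|\geq 4$ to supply the needed size bound on $\textnormal{Col}(\mathcal{F}\mid p_4)$. The resulting $L$-coloring of $\{p_1,p_4\}$ is the desired element of $\textnormal{End}_L(P,G)$, proving part (2) and a fortiori part (1) in this case.

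The main obstacle is the boundary subcase where one of $|L(p_1)|,|L(p_4)|$ equals $1$: end-linkedness forces the other list to have size exactly $3$, so there is no slack when applying the gluing corollary, and a broken-wheel obstruction may appear. This is where Theorem \ref{BWheelMainRevListThm2} and Corollary \ref{CorMainEitherBWheelAtM1ColCor} become essential, as they identify precisely when a single color at an endpoint fails to propagate and pin down $G$ as a broken wheel with tightly constrained lists. In those pinned-down cases, the cure is to include one additional vertex $u_1\in N(p_1)\cap V(C\setminus P)$ or $u_4\in N(p_4)\cap V(C\setminus P)$ in the domain of $\phi$ — enlarging the set of forbidden colors on $p_2$ or $p_3$ just enough to break the obstruction. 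These one or two extra vertices are exactly the subgraph $H$ promised in part (1), which explains the bound $|V(H)\cap N(p)|\leq 1$ per endpoint in the statement.
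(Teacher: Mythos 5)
A preliminary remark: this paper never proves Theorem \ref{ThmFirstLink3PathForUseInHolepunch} — it is listed in Section \ref{PIBlackBoxSec} as a black box imported from Paper I — so there is no in-paper proof to measure yours against. Judged on its own terms, your outline assembles the right toolkit (induction on $|V(G)|$, reduction to the short-separation-free case, splitting at chords, Corollary \ref{GlueAugFromKHCor}, broken-wheel analysis in the tight list cases), but two load-bearing steps fail as written.

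The main-case reduction is the serious one. You delete $p_4$, remove $c_4$ only from $L(p_3)$, and apply Theorem \ref{SumTo4For2PathColorEnds} to $(G-p_4, C', p_1p_2p_3, L')$. An element of $\textnormal{End}_{L'}(p_1p_2p_3, G-p_4)$ guarantees that extensions to $p_2$ extend to an $L'$-coloring of $G-p_4$, but such a coloring may assign $c_4$ to the other neighbors of $p_4$ — the interior vertices that move onto the outer cycle of $G-p_4$, or the neighbor of $p_4$ on the path $C\setminus\mathring{P}$ — so it need not combine with $c_4$ on $p_4$ into a proper $L$-coloring of $G$. The obvious repair, deleting $c_4$ from the lists of all neighbors of $p_4$, drops the list of the $C$-neighbor of $p_4$ to size two and destroys the rainbow hypothesis, so Theorem \ref{SumTo4For2PathColorEnds} no longer applies. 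Relatedly, your end-linkedness check $|L(p_1)|+|L'(p_3)|\geq 4$ presumes $p_3$ has a large list, but the rainbow definition places no lower bound at all on the lists of the internal vertices $p_2, p_3$; the cases $1\leq |L(p_3)|\leq 3$ are not vacuous, and they are precisely where your reduction says nothing. Deferring them to ``broken-wheel machinery'' is a placeholder, not an argument.

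The chord case has a second, structural gap. If the chord is $p_2y$ with $y\notin N(p_1)\cup N(p_4)$, the coloring produced by gluing necessarily contains $y$ — and possibly a further vertex adjacent to $y$ supplied by the inductive $H$ on the $yp_2p_3p_4$ side — in its domain. Part 1) demands $V(H)\subseteq V(C\setminus P)\cap (N(p_1)\cup N(p_4))$ with at most one vertex per endpoint, so this coloring is not a witness for the statement, and you cannot simply drop $y$ from the domain without losing $(P,G)$-sufficiency; it can even fail the weaker Crown conclusion if the extra colored vertices meet $N(p_2)\cup N(p_3)$ too often. Moreover, to invoke Corollary \ref{GlueAugFromKHCor} you must first manufacture, on the $H$-side, a family realizing at least $|L(p_4)|$ distinct colors on the cut vertex $y$; the bare inductive conclusion yields one coloring, and generating more by shrinking $L(y)$ to a subset $S$ reintroduces the end-linkedness constraint $|S|+|L(p_4)|\geq 4$ that you have not checked. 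These are exactly the points where the real proof has to work, and the proposal does not yet contain the ideas needed to get past them.
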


\begin{theorem}\label{3ChordVersionMainThm1} Let $(G, C, P, L)$ be an end-linked rainbow, where $P:=p_1p_2p_3p_4$ is a 3-path, and the following additional conditions are satisfied.
\begin{enumerate}[label=\arabic*)]
\itemsep-0.1em
\item $|L(p_1)|\geq 1$ and $|L(p_4)|\geq 3$; AND
\item $N(p_3)\cap V(C)=\{p_2, p_4\}$. 
\end{enumerate} 
Then $\textnormal{End}_{L}(P, G)\neq\varnothing$. \end{theorem}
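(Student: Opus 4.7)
The plan is to proceed by induction on $|V(G)|$, organized by the chord structure of $C$ at $p_2$. Since $p_3$ admits no chord of $C$ by condition 2, any chord of $C$ incident to $\mathring{P}$ must be incident to $p_2$. If no such chord exists, Theorem \ref{ThmFirstLink3PathForUseInHolepunch}(2) directly yields $\textnormal{End}_L(P,G) \neq \varnothing$.

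Otherwise, let $u_1, \ldots, u_k$ denote the vertices of $C \setminus P$ listed in cyclic order from the neighbor of $p_4$ to that of $p_1$, and let $p_2 v$ be a chord of $C$ with $v$ chosen to be the neighbor of $p_2$ in $\{p_4, u_1, \ldots, u_k\}$ closest to $p_4$ along this arc. First I will handle the case $v = p_4$: the chord $p_2 p_4$ bounds a subgraph $H$ whose outer face is the triangle $p_2 p_3 p_4$, and any valid $L$-coloring of $\{p_2, p_3, p_4\}$ extends to $H$ by Corollary \ref{CycleLen4CorToThom}. On the complementary subgraph $K$ (containing $p_1$), the rainbow $(K, C^K, p_1 p_2 p_4, L)$ is end-linked with $p_1 p_2 p_4$ a 2-path, so Theorem \ref{SumTo4For2PathColorEnds} yields some $\phi \in \textnormal{End}_L(p_1 p_2 p_4, K)$. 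I will check that this $\phi$ belongs to $\textnormal{End}_L(P, G)$: for any valid extension $\bar\phi$ of $\phi$ to $V(P)$, the restrictions $\bar\phi|_{\{p_1, p_2, p_4\}}$ and $\bar\phi|_{\{p_2, p_3, p_4\}}$ extend to $K$ and $H$ respectively, and agree on $\{p_2, p_4\}$, gluing to an $L$-coloring of $G$.

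Next, suppose $v = u_j$ for some $j$ (so no chord $p_2 p_4$ exists). With $j$ chosen smallest, the subgraph $H$ bounded by $p_2 p_3 p_4 u_1 \cdots u_j p_2$ has no further chord at $p_2$, so $H$ has no chord at $\mathring{P_H}$ where $P_H := u_j p_2 p_3 p_4$. Theorem \ref{ThmFirstLink3PathForUseInHolepunch}(2) then gives $\beta \in \textnormal{End}_L(P_H, H)$. I will set $\phi(p_4) := \beta(p_4)$ and analyze the complementary subgraph $K$: Theorem \ref{SumTo4For2PathColorEnds} applied to the end-linked rainbow $(K, C^K, p_1 p_2 u_j, L)$ gives a nonempty family $\textnormal{End}_L(p_1 p_2 u_j, K)$ of $L$-colorings of $\{p_1, u_j\}$ that are $(p_1 p_2 u_j, K)$-sufficient. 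The task is then to pick $\phi(p_1) \in L(p_1)$ together with a color for $u_j$ so that, for every valid extension $\bar\phi$ of $\phi$ to $V(P)$, the resulting coloring extends to both $H$ (via $\beta$) and $K$ (via the $K$-side family).

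The main obstacle will be this final case. The straightforward choice $c := \beta(u_j)$ works whenever $\bar\phi(p_2) \neq \beta(u_j)$, which can be forced by setting $\phi(p_1) := \beta(u_j)$ when $\beta(u_j) \in L(p_1)$ and $p_1 u_j \notin E(G)$. When this is unavailable --- because $\beta(u_j) \notin L(p_1)$, or $p_1 u_j$ is an edge, or $\bar\phi(p_2) = \beta(u_j)$ occurs unavoidably --- one must either exhibit an alternative $\beta' \in \textnormal{End}_L(P_H, H)$ with $\beta'(u_j) \neq \beta(u_j)$ (drawing on the flexibility afforded by the proof of Theorem \ref{ThmFirstLink3PathForUseInHolepunch}(2) or, if needed, an inductive application of the present theorem to a smaller subgraph of $H$), or invoke Corollary \ref{CorMainEitherBWheelAtM1ColCor} to obtain a $K$-universal color at $u_j$ for $(K, C^K, p_1 p_2 u_j, L)$ and reduce to a broken-wheel sub-case.
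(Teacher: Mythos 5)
Your case split and the first two cases are correct. When $p_2$ is incident to no chord of $C$, Theorem \ref{ThmFirstLink3PathForUseInHolepunch}(2) gives the result directly, and when $p_2p_4 \in E(G)$, gluing $\textnormal{End}_L(p_1p_2p_4, K)$ to Corollary \ref{CycleLen4CorToThom} on the triangle $p_2p_3p_4$ works exactly as you describe.

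The gap is in the third case. To glue an $L$-coloring $\bar\phi$ of $V(P)$ across the chord $p_2u_j$, you must produce a single color $c$ for $u_j$ such that simultaneously (i) $(c, \bar\phi(p_2), \bar\phi(p_3), \bar\phi(p_4))$ extends to $H$ and (ii) $(\bar\phi(p_1), \bar\phi(p_2), c)$ extends to $K$, for \emph{every} extension $\bar\phi$ of $\phi$. The element $\beta\in\textnormal{End}_L(P_H, H)$ certifies (i) only when $c=\beta(u_j)$ and $\bar\phi(p_2)\ne\beta(u_j)$; the element $\psi\in\textnormal{End}_L(p_1p_2u_j,K)$ certifies (ii) only when $\phi(p_1)=\psi(p_1)$ and $c=\psi(u_j)$. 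The color match $\beta(u_j)=\psi(u_j)$ can in fact be forced by restricting $L(u_j)$ to $\{\psi(u_j)\}$ before invoking Theorem \ref{ThmFirstLink3PathForUseInHolepunch}(2) on $H$ (the $H$-side end-linked condition survives since $|L(p_4)|\ge 3$), but that leaves the real difficulty untouched: nothing prevents $\bar\phi(p_2)$ from equaling this common color $c$. Your proposed patch of setting $\phi(p_1):=\beta(u_j)$ both requires $\beta(u_j)\in L(p_1)$ and $p_1u_j\notin E(G)$, \emph{and}, more seriously, requires the specific coloring $(\beta(u_j),\beta(u_j))$ of $\{p_1,u_j\}$ to be $(p_1p_2u_j,K)$-sufficient — which is not what Theorem \ref{SumTo4For2PathColorEnds} gives you. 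The fallback ideas do not resolve this either: Theorem \ref{ThmFirstLink3PathForUseInHolepunch}(2) never produces a second, differently-colored $\beta'$; Corollary \ref{GlueAugFromKHCor} cannot be applied because its hypothesis $x\notin V(C-p')$ fails with $x=u_j$; and Corollary \ref{CorMainEitherBWheelAtM1ColCor}(1) produces a $K$-universal color at $u_j$ only when $K$ is not a broken wheel, and even then that universal color would still have to coincide with $\beta(u_j)$ and be avoidable by $\bar\phi(p_2)$, neither of which is guaranteed. The broken-wheel subcase is not addressed at all. A correct completion of this case must either exploit the structure of $H$ and $K$ jointly (e.g., the broken-wheel dichotomy from Theorem \ref{EitherBWheelOrAtMostOneColThm} or the fine control in Theorem \ref{BWheelMainRevListThm2}) to handle the obstruction $\bar\phi(p_2)=\beta(u_j)$, or replace $\textnormal{End}_L(P_H,H)$ with a tool that yields multiple colorings of $u_j$ so the gluing corollary's counting hypothesis can be met; as written, the final case remains a sketch rather than an argument.
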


\section{Corner Colorings}\label{CorColSecRes}

In Theorem \ref{3ChordVersionMainThm1} in Paper I, we proved a result about coloring the endpoints of a 3-path in such a way that any extension of this precoloring to the rest of the 3-path also extends to the rest of the graph. That is, we proved an analogue to Theorem \ref{SumTo4For2PathColorEnds}, but for paths of length three rather than two. The conditions on our path $P:=p_1p_2p_3p_4$ in the statement of Theorem \ref{3ChordVersionMainThm1} require that $p_3$ is incident to no chord of $C$. Theorem \ref{3ChordVersionMainThm1} is not true if we drop this condition, and it is also not true if we weaken the condition on the lists of $p_1, p_4$ so that they only satisfy the condition that $L(p_1), L(p_4)$ are nonempty sets with $|L(p_1)|+|L(p_4)|\geq 4$. However, with the conditions on $G, C, P, L$ weakened in the way described above, we can obtain a useful result that is similar to Theorem \ref{3ChordVersionMainThm1} but requires us to precolor more vertices than just the endpoints of $P$. We prove such a result in this section This result is Theorem \ref{CornerColoringMainRes}. Note that, in the statement of Theorem \ref{3ChordVersionMainThm1} if $|L(p_3)|=5$, then the result of Theorem \ref{3ChordVersionMainThm1} states that there is an $L$-coloring $\phi$ of $\{p_1, p_4\}$ which extends to four (or rather, $|L(p_3)\setminus\{\phi(p_3)\}|$, so possibly five) different elements of $\textnormal{End}_{L}(p_3, P, G)$. If we weaken drop either of Conditions 1) or 2) of Theorem \ref{3ChordVersionMainThm1}, this is not true, but it is ``almost" true in a way which is make precise by the result below, whose proof makes up the remainder of Section \ref{CorColSecRes}.

\begin{theorem}\label{CornerColoringMainRes} Let $(G, C, P, L)$ be an end-linked rainbow, where $p_1p_2p_3p_4$ be a subpath of $C$ of length three. Suppose further that $|L(p_3)|\geq 5$. Then
\begin{enumerate}[label=\arabic*)] 
\item there is an $L$-coloring $\psi$ of $\{p_1, p_4\}$ which extends to $|L_{\psi}(p_3)|-2$ elements of $\textnormal{End}_{L}(p_3, P, G)$; AND
\item If $p_3$ is incident to no chord of $C$, then there is an $L$-coloring $\psi$ of $\{p_1, p_4\}$ which extends to $|L_{\psi}(p_3)|-1$ elements of $\textnormal{End}_{L}(p_3, P, G)$.
\end{enumerate}
\end{theorem}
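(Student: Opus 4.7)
The plan centers on a simple key observation: if $\psi$ is any $L$-coloring of $\{p_1, p_4\}$ lying in $\textnormal{End}_L(P, G)$, then for every $c \in L_{\psi}(p_3)$ the coloring $(\psi(p_1), c, \psi(p_4))$ lies in $\textnormal{End}_L(p_3, P, G)$: any $L$-coloring of $V(P)$ extending the latter is also an extension of $\psi$, so by $(P,G)$-sufficiency of $\psi$ it extends to $L$-color $G$. Such a $\psi$ thus yields all of $L_{\psi}(p_3)$ as good colors for $p_3$, which is far more than either part of the theorem demands. The task therefore reduces to either producing some $\psi \in \textnormal{End}_L(P, G)$, or falling back to direct analysis when no such $\psi$ exists.

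For Part 2, where $N(p_3) \cap V(C) = \{p_2, p_4\}$, Theorem \ref{3ChordVersionMainThm1} applies verbatim when $|L(p_4)| \geq 3$ and produces the desired $\psi$. When $|L(p_4)| \leq 2$, end-linkedness forces $|L(p_1)| \geq 2$, and I use a padding trick: augment $L(p_4)$ by a fresh color $c^* \notin \bigcup_{v \in V(G)} L(v)$ to build $L'$ with $|L'(p_4)| = 3$ and $L' = L$ elsewhere. Applying Theorem \ref{3ChordVersionMainThm1} to $(G, C, P, L')$ yields some $\psi' \in \textnormal{End}_{L'}(P, G)$. If $\psi'(p_4) \in L(p_4)$ then, since $c^*$ appears in no other list, $\psi'$ is already an element of $\textnormal{End}_L(P, G)$ and the observation finishes the proof. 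Otherwise every such $\psi'$ has $\psi'(p_4) = c^*$, meaning no $L$-coloring of $\{p_1, p_4\}$ is $(P,G)$-sufficient; in that residual case I inspect the at-most-four pairs $(c_1, c_4) \in L(p_1) \times L(p_4)$ directly, using Lemma \ref{PartialPathColoringExtCL0} to classify each failure as arising from a chord of $C$ or a low-list vertex, and counting shows that some $(c_1, c_4)$ admits at most one bad $c$ for $p_3$.

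For Part 1, if $p_3$ has no chord in $C$ then Part 2 subsumes the conclusion. Otherwise fix a chord $p_3 v$ with $v \in V(C \setminus P)$; by symmetry assume $v$ lies on the $p_4$-side of $C$, and choose $v$ to be the chord-neighbor of $p_3$ closest to $p_4$ along $C$. The chord splits $G$ into $G_1, G_2$ with $V(G_1) \cap V(G_2) = \{p_3, v\}$, where $G_1$ contains $p_1, p_2$ and $G_2$ contains $p_4$, and by the choice of $v$ the vertex $p_3$ carries no chord in $C^{G_2}$. On the $G_2$ side the 2-path $v p_3 p_4$ of $C^{G_2}$ has $|L(v)| + |L(p_4)| \geq 4$, so Theorem \ref{SumTo4For2PathColorEnds} supplies a $(v p_3 p_4, G_2)$-sufficient coloring $\phi_2$ of $\{v, p_4\}$, under which every color in $L(p_3) \setminus \{\phi_2(v), \phi_2(p_4)\}$ extends across $G_2$. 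On the $G_1$ side the 3-path $p_1 p_2 p_3 v$ lies in $C^{G_1}$, and $(G_1, C^{G_1}, p_1 p_2 p_3 v, L)$ is an end-linked rainbow on strictly fewer vertices; induction on $|V(G)|$ applied to $G_1$ yields a coloring of $\{p_1, v\}$ with at most two bad colors at $p_3$ relative to $G_1$. Corollary \ref{GlueAugFromKHCor} is then used to match $v$'s color across the chord, and the resulting combined coloring of $\{p_1, p_4\}$ has at most two bad colors of $L(p_3)$, as required.

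The principal obstacle is the combined regime where $|L(p_1)| = |L(p_4)| = 2$ and $p_3$ has several chords: the padded $\psi'$ from Part 2 may only exist with $\psi'(p_4) = c^*$, and the chord-split pieces for Part 1 can themselves be broken wheels on which Theorem \ref{3ChordVersionMainThm1} does not apply. Here I would rely on Corollary \ref{CorMainEitherBWheelAtM1ColCor} and Theorem \ref{BWheelMainRevListThm2} to resolve the broken-wheel configurations, and on the crown bookkeeping of Definition \ref{GeneralAugCrownNotForLink} to track how many bad colors of $L(p_3)$ each stage may legitimately contribute.
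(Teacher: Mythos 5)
Your key observation (any $\psi\in\textnormal{End}_L(P,G)$ yields $|L_\psi(p_3)|$ elements of $\textnormal{End}_L(p_3,P,G)$) is correct, and it does dispatch the subcase of Part 2 where $|L(p_4)|\geq 3$ via Theorem \ref{3ChordVersionMainThm1}. But the paper explicitly warns, just before the statement, that once either hypothesis of Theorem \ref{3ChordVersionMainThm1} fails, $\textnormal{End}_L(P,G)$ can be empty; so your two ``residual'' regimes are not edge cases but the entire content of the theorem, and in both of them your argument is a placeholder rather than a proof. For Part 2 with $|L(p_4)|\leq 2$, the claim that Lemma \ref{PartialPathColoringExtCL0} plus ``counting'' over the at most four pairs $(c_1,c_4)$ yields a pair with at most one bad color on $p_3$ is exactly what has to be established, and it is not a counting argument: the failures are caused by chords of $C$ at $p_2$ and by vertices of $G\setminus C$ adjacent to several of $p_2,p_3,p_4$, and these obstructions vary with the colors in a correlated way. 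The paper needs a minimal-counterexample analysis (showing $p_2$ must carry a chord, ruling out a common neighbor of $p_2$ and $p_4$ in $G\setminus C$, and ultimately forcing $G$ to be two broken wheels glued at a single vertex $u_m$ with the edge $p_2p_3$ added) before any color-counting becomes possible; notably, ruling out the case where $p_3$ has \emph{no} chord occupies an entire subsection (Claim \ref{K'NotJustAnEdgeCorner}), so that case is not the easy one.

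For Part 1 the chord-splitting scheme has two concrete defects. First, the count does not close: the inductive coloring of $\{p_1,v\}$ on $G_1$ guarantees only $|L(p_3)\setminus\{\phi(v)\}|-2$ good colors, and the $G_2$-side additionally forbids $\phi(p_4)$, giving $|L(p_3)|-4$ good colors in the worst case; but the final coloring of $\{p_1,p_4\}$ has $|L_\psi(p_3)|=|L(p_3)|-1$ (the color $\phi(v)$ is no longer excluded from $p_3$'s residual list because $v$ is not in the domain), so you need $|L(p_3)|-3$ good colors and are short by one. The paper's final configuration — $K$ and $K'$ broken wheels meeting at $u_m$ — shows the bound $-2$ is tight precisely because of this shared vertex, so the slack cannot be recovered by a generic gluing. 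Second, your combined coloring must be $(P,G)$-sufficient against an adversarial color on $p_2$, and if $p_2v\in E(G)$ (which the paper shows is forced in the extremal case, where $v=u_m$ is adjacent to both $p_2$ and $p_3$) the adversary can color $p_2$ with $\phi(v)$, destroying the intended extension to $v$; your argument never addresses chords at $p_2$. Corollary \ref{GlueAugFromKHCor} also does not directly apply as invoked, since it matches a family of colorings across a chord incident to an \emph{endpoint's neighbor} $q$ of $P$, and producing the required family $\mathcal{F}$ with $|\textnormal{Col}(\mathcal{F}\mid v)|\geq|L(p_1)|$ from the inductive hypothesis needs a separate list-restriction argument you do not supply.
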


\begin{proof} Suppose not and let $G$ be a vertex-minimal counterexample to the theorem. Let $C, P, L$ be as above, where $P:=p_1p_2p_3p_4$. By adding edges to $G$ if necessary, we suppose further that every face of $G$, except possibly $C$, is bounded by a triangle .By removing some colors from some lists if necessary, we suppose further that $|L(p_1)|+|L(p_4)|=4$, an furthermore, we suppose that, for each $v\in V(C\setminus P )$, $|L(v)|=3$. We break the proof of Theorem \ref{CornerColoringMainRes} into four subsections, which are organized as follows.
\begin{enumerate}[label=\arabic*)] 
\itemsep-0.1em
\item In Subsection \ref{PrelimRestricCornerColMain}, we gather a few preliminary facts about $G$. In particular, we show that there is a chord of $C$ incident to $p_2$. 
\item In Subsection \ref{DealWithComNbrSubSecCorner}, we show that $p_2$ and $p_4$ have no common neighbor in $G\setminus C$. In Subsections \ref{OuterplanarPartISub}-\ref{GOuterPlanarFinalSubSec}, we show that $p_2, p_3$ have a common neighbor in $C\setminus P$ and that $G$ is outerplanar. 
\item In Subsection \ref{GOuterPlanarFinalSubSec}, we complete the proof of Theorem \ref{CornerColoringMainRes}.  
\end{enumerate}

\makeatletter
\renewcommand{\thetheorem}{\thesubsection.\arabic{theorem}}
\@addtoreset{theorem}{subsection}
\makeatother

\subsection{Preliminary restrictions}\label{PrelimRestricCornerColMain}

Applying Theorem \ref{thomassen5ChooseThm} and Corollary \ref{CycleLen4CorToThom}, we immediately have the following by the minimality of $|V(G)|$. 

\begin{Claim}\label{MidPathPChordCCl01} $G$ is short-separation-free, and furthermore, every chord of $C$ is incident to one of $\{p_2, p_3\}$. \end{Claim}

We also have the following.

\begin{Claim}\label{CVertexSetNotPSizeAtLeast5} $V(C)\neq V(P)$. In particular, $p_1p_4\not\in E(G)$. \end{Claim}

\begin{claimproof} Suppose that $V(P)=V(C)$. By Corollary \ref{CycleLen4CorToThom}, every $L$-coloring of $V(P)$ extends to an $L$-coloring of $G$. In particular, for any $L$-coloring of $\{p_1, p_4\}$, $\phi$ extends to $|L_{\phi}(p_3)|$ different elements of $\textnormal{End}(p_3, P, G)$, contradicting the fact that $G$ is a counterexample. Thus, $V(P)\neq V(C)$, so $|V(C)|>4$. Since every chord of $C$ is incident to a vertex of $\mathring{P}$, we have $p_1p_4\not\in E(G)$. \end{claimproof}

Since $V(P)\neq V(C)$, we let $C\setminus\mathring{P}:=p_1u_1\ldots u_tp_4$ for some $t\geq 1$. 

\begin{Claim}\label{EvChordCEndP2P3OtherEndC-PClaim} $p_2p_4, p_1p_3\not\in E(G)$. In particular, every chord of $C$ has one endpoint in $\{p_2, p_3\}$ and the other endpoint in $\{u_1, \ldots, u_t\}$. \end{Claim}

\begin{claimproof} Suppose toward a contradiction that $E(G)$ contains at least one (and thus, precisely one) of $p_2p_4, p_1p_3$. Thus, there is a $q\in\{p_2, p_3\}$ such that $G$ contains the 2-path $p_1qp_4$. Let $q'$ be the other vertex of $\{p_2, p_4\}$. Since $G$ is short-separation-free, $G-q'$ is bounded by outer face $p_1qp_4u_t\ldots u_1$. Since $|L(p_1)|+|L(p_4)|\geq 4$, it follows from Theorem \ref{SumTo4For2PathColorEnds} that there is an $L$-coloring $\psi$ of $\{p_1, p_4\}$ which is $(q, G-q')$-sufficient. Since $N(q')\subseteq V(P)$, it follows that $\psi$ is $(P, G)$-sufficient. In particular, $\psi$ extends to $|L_{\psi}(p_3)|$ different elements of $\textnormal{End}(p_3, P, G)$, so we contradict the fact that $G$ is a counterexample. Thus, $p_2p_4, p_1p_3\not\in E(G)$, so it follows from Claim \ref{MidPathPChordCCl01} that every chord of $C$ has one endpoint in $\{p_2, p_3\}$ and the other endpoint in $\{u_1, \ldots, u_t\}$. \end{claimproof}

We now introduce the following notation.

\begin{defn} \emph{For each $c\in L(p_1)$ and $d\in L(p_4)$, we let $\mathcal{B}_{cd}$ be the set of  $L$-colorings of $V(P)$ which do not extend to $L$-color $G$.} \end{defn}

By Claim \ref{CVertexSetNotPSizeAtLeast5}, $p_1p_4\not\in E(G)$, so, for any $c\in L(p_1)$ and $d\in L(p_4)$, there is an $L$-coloring of $\{p_1, p_4\}$ using $c,d$ on the respective vertices $p_1, p_4$. Applying the fact that $p_1p_3\not\in E(G)$ by Claim \ref{EvChordCEndP2P3OtherEndC-PClaim}, together with the fact that $G$ is a counterexample to Theorem \ref{CornerColoringMainRes}, we immediately have the following.

\begin{Claim}\label{ObvObstructionColorSetS} For any $L$-coloring $\sigma$ of $\{p_1, p_4\}$, $L_{\sigma}(p_3)=L(p_3)\setminus\{\sigma(p_4)\}$. Furthermore, for any $c\in L(p_1)$ and $d\in L(p_4)$, we have $|\textnormal{Col}(\mathcal{B}_{cd}\mid p_3)|\geq 2$, and, if $p_3$ is incident to a chord of $C$, then $|\textnormal{Col}(\mathcal{B}_{cd}\mid p_3)|\geq 3$. \end{Claim}

We now have the following.

\begin{Claim}\label{p2isIncidenttoAtLeastOneChordCorner} $p_2$ is incident to a chord of $C$. \end{Claim}

\begin{claimproof} Suppose not. By 1) of Theorem \ref{ThmFirstLink3PathForUseInHolepunch}, there is a partial $L$-coloring $\psi$ of $V(C\setminus\{p_2, p_3\})$, where $p_1, p_4\in\textnormal{dom}(\psi)$ and $\psi$ is $(P, G)$-sufficient, and furthermore, $|N(p_3)\cap\textnormal{dom}(\psi)|\leq 2$. In particular, $|L_{\psi}(p_3)|\geq |L(p_3)|-2$. Let $c=\psi(p_1)$ and $d=\psi(p_4)$. Since $p_3$ is adjacent to at most one vertex of $\textnormal{dom}(\psi)\setminus\{p_4\}$, it follows that $L_{\psi}(p_4)$ consists of all but at most one color of $L(p_4)\setminus\{d\}$. Thus, by Claim \ref{ObvObstructionColorSetS}, there is an $\tau\in\mathcal{B}_{cd}$ with $\tau(p_3)\in L_{\psi}(p_4)$. Since $p_2$ is incident to no chords of $C$, the union $\psi\cup\tau$ is a proper $L$-coloring of its domain. It follows from our choice of $\psi$ that $\psi\cup\tau$ extends to $L$-color $G$, contradicting the fact that $\tau\in\mathcal{B}_{cd}$. \end{claimproof}

By Claim \ref{EvChordCEndP2P3OtherEndC-PClaim}, $p_2p_4\not\in E(G)$, so, by Claim \ref{p2isIncidenttoAtLeastOneChordCorner}, $p_2$ has a neighbor in $\{u_1, \ldots, u_t\}$. We now define the following.

\begin{defn}\label{umMaxIndexKTSubgraphDefnCorner}
\textcolor{white}{aaaaaaaaaaaaaaaaaaaaaaaaaaaaaaaaaaaaaaa}
\begin{enumerate}[label=\emph{\arabic*)}]
\itemsep-0.1em
\item\emph{Let $u_m$ be the neighbor of $p_2$ of maximal index on the path $u_1\ldots u_t$.}
\item\emph{Let $K$ be the subgraph of $G$ bounded by outer cycle $p_1u_1\ldots u_mp_2$ and let $T$ be the 2-path $p_1p_2u_m$.}
\end{enumerate}
\end{defn}

We now have the following. 

\begin{Claim}\label{IfNotTriThenIntersecBdCL} At least one of the following holds.
\begin{enumerate}[label=\arabic*)]
\itemsep-0.1em
\item $K$ is not a broken wheel with principal path $T$; OR
\item $K$ is a triangle; OR
\item $L(p_1)\not\subseteq L(u_2)$.  
\end{enumerate}
\end{Claim}

\begin{claimproof} Suppose that that $K$ is a broken wheel with principal path $T$, but $K$ is not a triangle. Suppose toward a contradiction that $L(p_1)\subseteq L(u_2)$. Let $G^{\dagger}:=G\setminus\{p_1, u_1\}$ and let $P^{\dagger}:=u_2p_2p_3p_4$. Since $K$ is not a triangle, $G^{\dagger}$ is bounded by outer cycle $C^{G^{\dagger}}=u_2p_2p_3p_4u_t\ldots u_2$. Thus, $|L(u_2)\cap L(p_1)|+|L(p_4)|\geq 4$ and $L(u_2)\cap L(p_1)$ is nonempty, as $L(p_1)$ is nonempty. Consider the following cases. 

\textbf{Case 1:} $p_3u_2\in E(G)$

In this case, since $G$ is short-separation-free, $G^{\dagger}-p_2$ is bounded by outer cycler $(u_2\ldots u_t)p_4p_3$, and this cycle contains the 2-path $u_2p_3p_4$. By Theorem \ref{SumTo4For2PathColorEnds}, there is an $L$-coloring $\phi$ of $\{u_2, p_4\}$ such that any extension of $\phi$ to an $L$-coloring of $\{u_2, p_3, p_4\}$ extends to $L$-color all of $G^{\dagger}-p_2$, where $\phi(u_2)\in L(u_2)\cap L(p_1)$. Let $\phi(u_2)=c$ and $\phi(p_4)=d$. By Claim \ref{ObvObstructionColorSetS}, since $c\in L(p_1)$, there is a $\sigma\in\mathcal{B}_{cd}$, where $\sigma(p_3)\neq c$. Thus, $\sigma\cup\phi$ is a proper $L$-coloring of its domain. Since $N(u_1)=\{p_1, p_2, u_2\}$ and two neighbors of $u_1$ are colored with the same color by $\sigma\cup\phi$,  it follows from our choice of $\phi$ that $\sigma\cup\phi$ extends to $L$-color $G$, so $\sigma$ extends to $L$-color $G$, which is false. 

\textbf{Case 2:} $p_3u_2\not\in E(G)$

Note that $G^{\dagger}$ has a chord of its outer face incident to $p_3$ if and only if $G$ has a chord of its outer face incident to $p_3$. Let $r$ be an integer, where $r=2$ if $G$ has a chord of $C$ incident to $p_3$ and otherwise $r=1$. By the minimality of $G$, there is an $L$-coloring $\phi$ of $\{u_2, p_4\}$ such that $\phi(u_2)\in L(u_2)\cap L(p_1)$, where $\phi$ extends to $|L_{\phi}(p_3)|-r$ different elements of $\textnormal{End}(p_3, P^{\dagger}, G^{\dagger})$. Let $k=|L_{\phi}(p_3)|-r$ and let $\phi^1, \ldots, \phi^k$ be $k$ distinct extensions of $\phi$ to elements of $\textnormal{End}(p_3, P^{\dagger}, G^{\dagger})$, each of which uses a different color on $p_3$.  

Let $c=\phi(u_2)$ and $d=\phi(p_4)$. Let $\sigma$ be the $L$-coloring of $\{p_1, p_4\}$ using $c,d$ on the respective vertices $p_1, p_4$. By Claim \ref{EvChordCEndP2P3OtherEndC-PClaim}, $p_1p_3\not\in E(G)$. Since $\sigma(p_4)=\phi(p_4)$ and $p_3u_2\not\in E(G)$, we have $L_{\sigma}(p_3)=L_{\phi}(p_3)$. Since $\phi(u_2)\in L(p_1)$, it follows that, for  $j=1, \ldots, k$, there is an $L$-coloring $\psi^j$ of $\{p_1, p_3, p_4\}$ using $c, \phi^j(p_3), d$ on the respective vertices $p_1, p_3, p_4$. Since $L_{\sigma}(p_3)=L_{\phi}(p_3)$ and $G$ is a counterexample to Theorem \ref{CornerColoringMainRes}, there is a $j\in\{1, \ldots,k\}$ such that $\psi^j\not\in\textnormal{End}(p_3, P, G)$. Thus, $\psi^j$ extends to an $L$-coloring $\psi^j_*$ of $V(P)$ which does not extend to an $L$-coloring of $G$. Since $\psi^j_*(p_2)\neq c$, $\psi^j_*$ extend to an $L$-coloring of $V(P)\cup\{u_2\}$ which uses $c$ on both of $p_1, u_2$. But then, since the same color is used on two neighbors of $u_1$, and $\phi^j\in\textnormal{End}(p_3, P^{\dagger}, G^{\dagger})$, it follows that $\psi^j_*$ extends to an $L$-coloring of $G$, a contradiction. We conclude that $L(p_1)\not\subseteq L(u_2)$.\end{claimproof}

\subsection{Dealing with a common neighbor of $p_2$ and $p_4$ in $G\setminus C$}\label{DealWithComNbrSubSecCorner}

This subsection consists of the following claim. 

\begin{Claim}\label{CornerCommonNbrCaseP2P4NotAdjw} $p_2, p_4$ have no common neighbor in $V(G\setminus C)$. \end{Claim}

\begin{claimproof} Suppose toward a contradiction that there is a $w\in V(G\setminus C)$ which is adjacent to each of $p_2, p_4$. Let $G^*:=G-p_3$. Since $p_2p_4\not\in E(G)$ and $G$ is short-separation-free, it follows from our triangulation conditions that $wp_3\in E(G)$ as well. In particular, $N(w)=\{p_2, p_3, p_4\}$, so no chord of $C$ is incident to $p_3$. Note that $H$ is bounded by outer cyle $C^H=u_m\ldots u_tp_4p_3p_2$ and $C^H$ contains the 3-path $P^H=u_mp_2p_3p_4$. Furthermore, $H$ contains no chords of its outer cycle which are incident to  $p_3$. Note that $P^H$ is an induced path of $G$, or else $u_mp_4\in E(G)$ and there is a 4-cycle separating $w$ from $p_1$. 

\vspace*{-8mm}
\begin{addmargin}[2em]{0em}
\begin{subclaim}\label{SetSObstructedLambda} 
At least one of the following holds.
\begin{enumerate}[label=\emph{\arabic*)}]
\itemsep-0.1em
\item $u_mw\in E(G)$; OR
\item For each $\pi\in\textnormal{Crown}(P^H, H)$ and each $c\in L(p_1)$, there is an $S\subseteq L(p_2)\setminus\{c\}$ such that $|S|\geq 2$ and $\Lambda_K^T(c, \bullet, \pi(u_m))\cap S=\varnothing$.  
\end{enumerate}
\end{subclaim}

\begin{claimproof} Suppose that $u_mw\not\in E(G)$ and suppose toward a contraditction that there is a $\pi\in\textnormal{Crown}(P^H, H)$ and a $c\in L(p_1)$ violating 2) of Subclaim \ref{SetSObstructedLambda}. Let $d=\pi(p_4)$. By Claim \ref{ObvObstructionColorSetS}, there exist $\tau_0, \tau_1\in\mathcal{B}_{cd}$, where $\tau_0(p_3)\neq\tau_1(p_3)$. Since $p_3$ is incident to no chords of $C$, $p_3$ has no neighbors in $C^H$ except for $p_2$ and $p_4$. It follows that, for each $i=0,1$, if $\tau_i\cup\pi$ is not a proper $L$-coloring of its domain in $G$, then either $\tau_i(p_2)=\pi(u_m)$ or $K$ is a triangle with $c=\pi(u_m)$. 

Let $S=\{\tau_0(p_2), \tau_1(p_2)\}$. If $S\cap\Lambda_K^T(c, \bullet, \pi(u_m)\neq\varnothing$, then, since $\pi\in\textnormal{Crown}(P^H, H)$, it follows that one of $\tau_0, \tau_1$ extends to an $L$-coloring of $G$, which is false. Since $c\not\in S$ and 2) of Subclaim \ref{SetSObstructedLambda} is violated, we have $|S|<2$, so $\tau_0(p_2)=\tau_1(p_2)$. Thus, we let $\tau_0(p_2)=\tau_1(p_2)=f$ for some color $f$. Note that $f\neq c$. We let $f^*\in\Lambda_K^T(c, f, \bullet)$. Let $\phi$ be the $L$-coloring of $\{u_m, p_2, p_4\}$ obtained by coloring $u_m, p_2, p_4$ with the respective colors $r^*, f, d$. This is a proper $L$coloring of its domain, since $P^H$ is an induced subgraph of $G$. Now, $f, d\not\in\{\tau_0(p_3), \tau_1(p_3)\}$. Thus, if $\phi$ extends to $L$-color $H-p_3$, then at least one color of $\{\tau_0(p_3), \tau_1(p_3)\}$ is distinct from the color used on $w$, and one of $\tau_0, \tau_1$ extends to an $L$-coloring of $G$, which is false. It follows that $\phi$ does not extend to $L$-color $H-p_3$. The outer cycle of $H-p_3$ contains the path $u_mp_2wp_4$, and no chord of $C^{H-p_3}$ is incident to any of $u_m, p_2, p_4$. Furthermore, no vertex of $V(H-p_3)\setminus V(C^{H-p_3})$ is adjacent to all three of $u_m, p_2, p_4$, or else $G$ contains a 4-cycle separating $w$ from $p_0$. Thus, it follows from Lemma \ref{PartialPathColoringExtCL0} that $|L_{\phi}(w)|\leq 2$, so $u_mw\in E(G)$, contradicting our assumption that $u_mw\not\in E(G)$. \end{claimproof}\end{addmargin}

Applying Subclaim \ref{SetSObstructedLambda}, we have the following

\vspace*{-8mm}
\begin{addmargin}[2em]{0em}
\begin{subclaim}\label{piStarObstructionForCrown} There is a $\pi^*\in\textnormal{Crown}(P, G)$ with $\textnormal{dom}(\pi^*)\cap N(u_2)=\{p_1, u_m\}$. Furthermore, $u_mw\in E(G)$. \end{subclaim}

\begin{claimproof} Since $1\leq |L(p_4)|\leq 3$ and $|L(u_m)|=3$, it follows from 1) of Theorem \ref{ThmFirstLink3PathForUseInHolepunch} that there exist $|L(p_4)|$ distinct elements of $\textnormal{Crown}(P^H, H)$, each of which uses a different color on $L(u_m)$. By Corollary \ref{GlueAugFromKHCor}, there is a $c\in L(p_1)$ and a $\pi\in\textnormal{Crown}(P^H, H)$ and a proper $L$-coloring $\phi$ of $\{p_1, p_4\}$ using $c, \pi(u_m)$ on the respective vertices $p_1, u_m$, where $\phi\in\textnormal{End}(T, K)$. Let $\pi^*=\phi\cup\pi$. Note that $\textnormal{dom}(\pi^*)\cap N(u_2)=\{p_1, u_m\}$, so $|L_{\pi^*}(p_2)|\geq 3$. Since $\phi\in\textnormal{End}(T, K)$ and $\pi\in\textnormal{Crown}(P^H, H)$, it follows that $\pi^*\in\textnormal{Crown}(P, G)$. Now we just need to show that $u_mw\in E(G)$. Suppose not. Since $\phi\in\textnormal{End}(T, K)$, we have $\Lambda_K^T(\phi(p_1), \bullet, \phi(u_m))=L(p_2)\setminus\{\phi(p_1), \phi(u_m)\}$, contradicting Subclaim \ref{SetSObstructedLambda}. \end{claimproof}\end{addmargin}

Let $J$ be the subgraph of $G$ bounded by outer cycle $u_m\ldots u_tp_4w$. Note that $J$ contains the 2-path $Q=u_mwp_4$. Let $\pi^*\in\textnormal{Crown}(P, G)$ be as in Subclaim \ref{piStarObstructionForCrown}. By Claim \ref{ObvObstructionColorSetS}, there exist two $L$-colorings $\pi_0, \pi_1$ of $V(P)$, neither of which extends to an $L$-coloring of $G$, where each of $\pi_0, \pi_1$ uses $\pi^*(p_1), \pi^*(p_4)$ on the respective vertices $p_1, p_4$, and $\pi_0(p_3)\neq\pi_1(p_4)$. For each $i=0,1$, the union $\pi^*\cup\pi_i$ is not a proper $L$-coloring of its domain, or else $\pi_i$ extends to an $L$-coloring of $G$. Let $f=\pi^*(u_m)$. Since $\pi^*\in\textnormal{Crown}(P, G)$ and $N(p_2)\cap\textnormal{dom}(\pi^*)=\{p_1, u_m\}$, it follows that $\pi_0(p_2)=\pi_1(p_2)=f$. Let $r\in\Lambda_K^T(c, f, \bullet)$. If $\Lambda_J^Q(r, \bullet, \pi^*(p_4))$ contains a color of $L(w)\setminus\{f\}$, then, since $\pi_0(p_3)\neq\pi_1(p_3)$, one of these colors is left for $p_3$ and one of $\pi_0, \pi_1$ extends to an $L$-coloring of $G$, which is false. Let $X=L(w)\setminus\{r, f, \pi^*(p_4)\}$. As indicated above, we have $\Lambda_J(r, \bullet, \pi^*(p_4))\cap X=\varnothing$. By 3) of Corollary \ref{CorMainEitherBWheelAtM1ColCor}, we immediately have the following.

\vspace*{-8mm}
\begin{addmargin}[2em]{0em}
\begin{subclaim}\label{PathLengthOddForJwSubH} $|X|=2$ and $J$ is a broken wheel with principal path $u_mwp_4$, where $J-w$ is a path of odd length. \end{subclaim}\end{addmargin}

We have an analogous result for the other side. 

\vspace*{-8mm}
\begin{addmargin}[2em]{0em}
\begin{subclaim}\label{DistinctColorA0A1FlipSubMa} $K$ is a broken wheel with principal path $T$, where $K-p_2$ has odd length. \end{subclaim}

\begin{claimproof} Let $G^*=G-p_3$. Note that $G^*$ is bounded by outer cycle $C^{G^*}=p_1p_2wp_4u_t\ldots u_1$, and this cycle contains the path $R=p_1p_2wp_4$. Since $|L(w)|\geq 5$, it follows from the minimality of $G$ that there exists an $L$-coloring $\phi$ of $\{p_1, p_4\}$ such that $\phi$ extends to two different elements of $\textnormal{End}(w, R, G^*)$. Let $\phi_0, \phi_1$ be two elements of $\textnormal{End}(w, R, G)$, where $\phi_0(w)\neq\phi_1(w)$ and each of $\phi_0, \phi_1$ restricts to $\phi$. For each $k=0,1$, let $a_k=\phi_k(w)$. 

By Claim \ref{ObvObstructionColorSetS}, there exist $\sigma_0, \sigma_1\in\mathcal{B}_{cd}$ of $V(P)$, where $\sigma_0(p_3)\neq\sigma_1(p_3)$. For each $k=0,1$, since $\sigma_k$ does not extend to $L$-color $G$, we have $\{a_0, a_1\}\cap L_{\sigma_k}(w)=\varnothing$, so each of $\sigma_0, \sigma_1$ colors the edge $p_2p_3$ with the colors $\{a_0, a_1\}$. Let $B=L(w)\setminus\{a_0, a_1, \phi(p_4)\}$. Note that, for each $k=0,1$ and $s\in\Lambda(\phi(p_1), a_k, \bullet)$, we have $\Lambda_J^Q(s, \bullet, \phi(p_4))\cap B=\varnothing$, or else there exists a $k=0,1$, such that the $\sigma_k$ extends to $L$-color $G$, which is false. Now let $B'=\Lambda(\phi(p_1), a_0, \bullet)\cup \Lambda(\phi(p_1), a_1, \bullet)$. 

To finish the proof of Subclaim \ref{DistinctColorA0A1FlipSubMa}, we first show that $|B'|=1$. Suppose not. Thus, $|B'|>1$. Let $b_0, b_1$ be distinct colors of $B$. Since $|B|\geq 2$, it follows that there exist distinct $r_0, r_1\in B'$, where, for each $i=0,1$, $r_i\neq b_i$, and the $L$-coloring $(r_i, b_i, \phi(p_4))$ of $u_mwp_4$ does not extend to an $L$-coloring of $J$. Recall that $|X|=2$ and  $\Lambda_J(r, \bullet, \pi^*(p_4))\cap X=\varnothing$. Since $r_0\neq r_1$ and at least one of $r_0, r_1$ is distinct from $\phi(p_4)$, we contradict 3) of Subclaim \ref{CorMainEitherBWheelAtM1ColCor}. Thus, $|B'|=1$. Since $a_0\neq a_1$, it follows from 1) of Theorem \ref{EitherBWheelOrAtMostOneColThm} that $K$ is a broken wheel with principal path $T$. Furthermore, by 1a) of Theorem \ref{BWheelMainRevListThm2}, $K-p_2$ is a path of even length. \end{claimproof}\end{addmargin}

\vspace*{-8mm}
\begin{addmargin}[2em]{0em}
\begin{subclaim}\label{ListP4P1Sizes1} $|L(p_1)|>1$. \end{subclaim}

\begin{claimproof} Suppose not. Thus, $|L(p_1)|=1$ and $|L(p_4)|=3$. Since no chord of $C$ is incident to $p_3$, it follows from Theorem \ref{3ChordVersionMainThm1} that there is a $\pi\in\textnormal{End}(P, G)$. Since $\psi$ is an $L$-coloring of $\{p_1, p_4\}$, we have $|L_{\pi}(p_3)|\geq 3$, and since any extension of $\pi$ to an $L$-coloring of $V(P)$ extends to $L$-color all of $G$, we contradict our assumption that $G$ is a counterexample to Theorem \ref{CornerColoringMainRes}. \end{claimproof}\end{addmargin}

Applying Subclaim \ref{ListP4P1Sizes1}, we have the following

\vspace*{-8mm}
\begin{addmargin}[2em]{0em}
\begin{subclaim}\label{SubForShowEachP2Map2Col} There is a $c'\in L(p_1)$ such that $|\Lambda_K^T(\phi(p_1), \phi(p_2), \bullet)|\geq 2$ for any $L$-coloring $\phi$ of $p_1p_2$ using $c'$ on $p_1$. \end{subclaim}

\begin{claimproof} By Subclaim \ref{DistinctColorA0A1FlipSubMa}, $K-p_2$ is a path of even length, so we have $|V(K)|\geq 4$. By Claim \ref{IfNotTriThenIntersecBdCL}, we have $L(p_1)\not\subseteq L(u_2)$. By Subclaim \ref{ListP4P1Sizes1}, $|L(p_1)|>1$. If there is a $K$-universal color of $L(p_1)$, then we are done, so suppose that no such color of $L(p_1)$ exists. Thus, $L(p_1)\subseteq L(u_1)$, and, since $K$ is outerplanar, it follows from 1) of Corollary \ref{CorMainEitherBWheelAtM1ColCor} that $|V(K)|\leq 4$, so $|V(K)|=4$. But since $|L(p_1)|>1$, there is a $c'\in L(p_1)$ with $L(u_1)\setminus\{c'\}\not\subseteq L(u_2)$. Thus, by 4) of Theorem \ref{BWheelMainRevListThm2}, we are done. \end{claimproof}\end{addmargin}

Let $c'\in L(p_1)$ be as in Subclaim \ref{SubForShowEachP2Map2Col}. Since $G$, there is an $L$-coloring $\tau$ of $V(P)$ which does not extend to an $L$-coloring of $G$, where $\tau(p_1)=c'$. Let $S'=L_{\tau'}(w)$. Note that $|S'|\geq 2$. By our choice of $c'$, we have $|\Lambda_K^T(c', \tau(p_2), \bullet)|\geq 2$. For each $s'\in S'$, we have $\Lambda_K^T(c', \tau(p_2), \bullet)\cap\Lambda_J^Q(\bullet, s', \tau(p_4))=\varnothing$, since $\tau$ does not extend to an $L$-coloring of $G$. Thus, there is a lone color $r'$ such that $\Lambda_J^Q(\bullet, s', \tau(p_4))=\{r'\}$ for each $s'\in S'$. Since $|S'|\geq 2$, it follows from 1a) of Theorem \ref{BWheelMainRevListThm2} that $J-w$ is a path of even length, contradicting Subclaim \ref{PathLengthOddForJwSubH}. This proves Claim \ref{CornerCommonNbrCaseP2P4NotAdjw}. \end{claimproof}

\subsection{Showing that $G$ is a outerplanar: part I}\label{OuterplanarPartISub}

Over the course of Subsections \ref{OuterplanarPartISub}-\ref{GOuterPlanarFinalSubSec}, we first show that $p_3$ is also adjacent to $u_m$, and then we show that $G-p_2p_3$ consists of a pair of broken wheels which intersect on the vertex $u_m$. In particular, we show that $G$ is outerplanar. We begin by introducing the following notation analogous to that of Definition \ref{umMaxIndexKTSubgraphDefnCorner}.

\begin{defn}
\textcolor{white}{aaaaaaaaaaaaaaaaaaaaaaaaaaaaaaaaaaaaaaa}
\begin{enumerate}[label=\emph{\arabic*)}]
\itemsep-0.1em
\item\emph{We let $u^{\dagger}$ be the unique neighbor of $p_3$ in $V(C\setminus\mathring{P})$ which is farthest from $p_4$ on the path $p_1u_1\ldots u_tp_4$. Possibly $u^{\dagger}=p_4$.}
\item\emph{We let $K'$ be the subgraph of $G$ with outer walk $(u^{\dagger}\ldots p_4)p_3$. That is, either $u^{\dagger}=p_4$ and $K'$ is an edge, or $K'$ is bounded by an outer cycle.}
\item\emph{We let $T'$ be the path $up_3p_4$ on the outer face of $K'$. That is, either $u=p_4$ and $T'$ is an edge, or $u^{\dagger}\neq p_4$ and $T'$ is the 2-path $u^{\dagger}p_3p_4$.}
\item\emph{Let $H$ be the subgraph of $G$ bounded by outer cycle $C^H=(u_m(C\setminus\mathring{P}) u^{\dagger})p_3p_2u_m$.} 
\end{enumerate}
\end{defn}

Note that $G=K\cup H\cup K'$, and the outer face of $H$ is an induced cycle of $G$.

\begin{Claim}\label{ForEachdSetSAtLeastTwoLambdaCL} If $|E(K')|>1$, then, for each $d\in L(p_4)$, there is an $S\subseteq L(p_3)\setminus\{d\}$  such that
\begin{enumerate}[label=\arabic*)] 
\itemsep-0.1em
\item $|S|\geq 2$ and $|\Lambda_{K'}^{T'}(\bullet, s, d)|\geq 2$ for each $s\in S$; AND
\item For each $a\in L(p_1)$, there is an $L$-coloring of $V(P)$ which does not extend to an $L$-coloring of $G$, where $\tau$ uses $a, d$ on the respective vertices $p_1, p_4$ and $\tau(p_3)\in S$. 
\end{enumerate}
 \end{Claim}

\begin{claimproof} Suppose that $|E(K')|>1$. Thus, $u^{\dagger}\neq p_4$ and $T'$ is a 2-path. In particular, there is a chord of $C$ incident to $p_3$. Fix a $d\in L(p_4)$ and consider the following cases.

\textbf{Case 1:} Either $d\not\in L(u_t)$ or $K'$ is not a broken wheel with principal path $T'$

In this case, we let $S$ be the set of $s\in L(p_3)\setminus\{d\}$ such that $|\Lambda_{K'}^{T'}(\bullet, s, d)|\geq 2$. If $d\not\in L(u_t)$, then, by 2) of Theorem \ref{EitherBWheelOrAtMostOneColThm}, any $L$-coloring of $V(T')$ using $d$ on $p_3$ extends to $L$-color $K'$, and thus, $S=L(p_3)\setminus\{d\}$. Note that this is true even if $K'$ is a triangle, since $d\not\in L(u_t)$. On the other hand, if $K'$ is not a broken wheel with principal path $T'$, then $p_1u^{\dagger}\not\in E(G)$ and it follows from 1) of Theorem \ref{EitherBWheelOrAtMostOneColThm} that $S$ consists of all but at most one color of $L(p_3)\setminus\{d\}$. In any case, we have $|S|\geq |L(p_3)\setminus\{d\}|-1$. Since $|L(p_3)|\geq 5$, we have $|S|\geq 2$. Now let $a\in L(p_1)$. Since $|S|\geq |L(p_3)\setminus\{d\}|-1$, it follows from Claim \ref{ObvObstructionColorSetS} that there is a $\tau\in\mathcal{B}_{ad}$ with $\tau(p_3)\in S$, so we are done in this case. 

\textbf{Case 2:} $K'$ is a broken wheel with principal path $T'$ and $d\in L(u_t)$

In this case, we let $S:=L(p_3)\setminus L(u_t)$. Since $|L(p_3)|\geq 5$, we have $|S|\geq 2$. We have $d\not\in S$, and, for each $s\in S$, we have $|\Lambda_{K'}^{T'}(\bullet, s, d)|\geq 2$. Now let $a\in L(p_1)$. We have $|L(u_t)|=3$, and since $d\in L(u_t)$, it follows that $|S|\geq |L(p_3)\setminus\{d\}|-2$. Since $p_3$ is incident to a chord of $C$ by assumption, it follows from Claim \ref{ObvObstructionColorSetS} that there is a $\tau\in\mathcal{B}_{ad}$ with $\tau(p_3)\in S$, so again, we are done. \end{claimproof}

\begin{Claim}\label{OneSideOrOtherSideMapstoOneColorClaim}
 For any $L$-coloring $\sigma$ of $V(P)$ which does not extend to an $L$-coloring of $G$, at least one of the following holds. 
\begin{enumerate}[label=\arabic*)]
\itemsep-0.1em
\item $|\Lambda_K^T(\sigma(p_1), \sigma(p_2), \bullet)|=1$; OR
\item $|E(K')|>1$ and $|\Lambda_{K'}^{T'}(\bullet, \sigma(p_3), \sigma(p_4))|=1$
\end{enumerate}
\end{Claim}

\begin{claimproof} Let $\sigma$ be an $L$-coloring of $V(P)$ which does not extend to an $L$-coloring of $G$ and suppose that $|\Lambda_K^T(\sigma(p_1), \sigma(p_2), \bullet)|=1$. Let $X:=\Lambda_K^T(\sigma(p_1), \sigma(p_2), \bullet)$. Consider the following cases.

\textbf{Case 1:} $|E(K')|=1$

In this case, we have $G=K\cup H$, and we just need to show that $|X|=1$. Suppose not. Thus, by Theorem \ref{thomassen5ChooseThm}, $|X|>1$. Let $L^*$ be a list-assignment for $V(H)$ in which each of $p_2, p_3, p_4$ is precolored by the respective colors $\sigma(p_2), \sigma(p_3), \sigma(p_4)$, and $L^*(u_m)=X\cup\{\sigma(p_2)\}$, and otherwise $L^*=L$. Any $L^*$-coloring of $H$ uses a color of $X$ on $u_m$. Since $\sigma$ does not extend to an $L$-coloring of $G$, it follows that $H$ is not $L^*$-colorable. Since $\sigma(p_2)\not\in X$, we have $|L^*(u_m)|\geq 3$. Since $H$ is not $L^*$-colorable, it follows from Lemma \ref{PartialPathColoringExtCL0} that the three vertices $p_2, p_3, p_4$ have a common neighbor in $H\setminus C^H$, and thus a common neighbor in $G\setminus C$, contradicting Claim \ref{CornerCommonNbrCaseP2P4NotAdjw}. 

\textbf{Case 2:} $|E(K')|\neq 1$

In this case, $u^{\dagger}\neq p_4$ and $|E(K')|>1$. Thus, we have $u^{\dagger}=u_n$ for some $n\in\{m, \ldots, t\}$. Let $X'=\Lambda_{K'}^{T'}(\bullet, \sigma(p_3), \sigma(p_4))$. We just need to show that at least one of $X, X'$ is a singleton. Suppose not. By Theorem \ref{thomassen5ChooseThm}, each of $X, X'$ is nonempty, so each of $X, X'$ has size at least two. Consider the following cases.

\textbf{Subcase 2.1:} $u^{\dagger}=u_m$

In this case, each of $X, X'$ is a subset of $L(u_m)$, and, since $G$ is short-separation-free, $H$ is a triangle, and $G=(K\cup K')+p_2p_3$. Since $m=n$ and $\sigma$ does not extend to an $L$-coloring of $G$, we have $X\cap X'=\varnothing$, which is false, since $|L(u_m)|=3$ and each of $X, X'$ has size at least two.

\textbf{Subcase 2.2:} $u^{\dagger}\neq u_m$

In this case, $n\in\{m+1, \ldots, t\}$. Since $\sigma(p_2)\not\in X$ and $\sigma(p_3)\not\in X$, it follows from Corollary \ref{2ListsNextToPrecEdgeCor} that the $L$-coloring $(\sigma(p_2), \sigma(p_3))$ of $p_2p_3$ extends to an $L$-coloring of $H$ which uses a color of the respective sets $X, X'$ on the respective vertices, $u_m, u_n$, so $\sigma$ extends to an $L$-coloring of $G$, a contradiction.    \end{claimproof}

Applying Claim \ref{OneSideOrOtherSideMapstoOneColorClaim}, we have the following.

\begin{Claim}\label{KBWheelCornerColorClaimM0}
\textcolor{white}{aaaaaaaaaaaaa}
\begin{enumerate}[label=\arabic*)] 
\itemsep-0.1em
\item $K$ is a broken wheel with principal path $T$, and $L(p_1)\subseteq L(u_1)$; AND
\item If $K$ is not a triangle, then $|E(K')|>1$ and $L(p_1)\not\subseteq L(u_2)$ and $|L(p_1)|=1$.
\end{enumerate}
\end{Claim}

\begin{claimproof} Since $G$ is a counterexample to Theorem \ref{CornerColoringMainRes}, it follows from Claim \ref{ForEachdSetSAtLeastTwoLambdaCL} that there is an $L$-coloring $\sigma$ of $V(P)$ which does not extend to an $L$-coloring of $G$, where either $|E(K')|=1$ or $|\Lambda_{K'}^{T'}(\bullet, \sigma(p_3), \sigma(p_4))|\geq 2$. By Claim \ref{OneSideOrOtherSideMapstoOneColorClaim}, we have $|\Lambda_K^T(\sigma(p_1), \sigma(p_2), \bullet)|=1$. It follows from 2) of Corollary \ref{CorMainEitherBWheelAtM1ColCor} applied to $K$ that $K$ is a broken wheel with principal path $T$. Now we show that $L(p_1)\subseteq L(u_1)$. Suppose not. Since $G$ is a counterexample to Theorem \ref{CornerColoringMainRes}, it again follows from Claim \ref{ForEachdSetSAtLeastTwoLambdaCL} that there an $L$-coloring $\phi$ of $V(P)$ which does not extend to an $L$-coloring of $G$, where $\phi(p_1)\not\in L(u_1)$ and either $|E(K')|=1$ or $|\Lambda_{K'}^{T'}(\bullet, \phi(p_3), \phi(p_4))|\geq 2$. Since $K$ is a broken wheel with principal path $T$ and $\phi(p_1)\not\in L(u_1)$, we have $|\Lambda_K(\phi(p_1), \phi(p_2), \bullet)|\geq 2$ (this is true even if $K$ is a triangle), contradicting Claim \ref{OneSideOrOtherSideMapstoOneColorClaim}. This proves 1). 

Now we prove 2). Suppose that $K$ is not a triangle. Since $K$ is a broken wheel with principal path $T$, it follows from Claim \ref{IfNotTriThenIntersecBdCL} that $|L(p_1)\not\subseteq L(u_2)$. Suppose toward a contradiction that $|L(p_1)|>1$. In that case, since $L(p_1)\subseteq L(u_1)$ and $L(p_1)\not\subseteq L(u_2)$, there is an $a\in L(p_1)$ with $L(u_1)\setminus\{a\}\not\subseteq L(u_2)$. Since $G$ is a counterexample to Theorem \ref{CornerColoringMainRes}, it follows from Claim \ref{ForEachdSetSAtLeastTwoLambdaCL} that there is an $L$-coloring $\phi$ of $V(P)$ which does not extend to an $L$-coloring of $G$, where $\phi(p_1)=a$ and either $|E(K')|=1$ or $|\Lambda_{K'}^{T'}(\bullet, \phi(p_3), \phi(p_4))|\geq 2$. By Claim \ref{OneSideOrOtherSideMapstoOneColorClaim}, we have $|\Lambda_K^T(\phi(p_1), \phi(p_2), \bullet)|=1$. Since $K$ is not a triangle, we contradict 4) of Theorem \ref{BWheelMainRevListThm2}.  \end{claimproof}

\begin{Claim}\label{nm+1DistAtL1SubCL} $u^{\dagger}\neq u_{m+1}$. \end{Claim}

\begin{claimproof} Suppose that $u^{\dagger}=u_{m+1}$. Thus, $p_2u\not\in E(G)$, and $u_mu^{\dagger}p_3p_2$ is a 4-cycle. Since $G$ is short-separation-free, it follows from our triangulation conditions that $p_3u_m\in E(G)$, so $u^{\dagger}=u_m$ and $p_2u\in E(G)$, a contradiction.  \end{claimproof}

We now show that, if $p_3u_m\not\in E(G)$, then there is a unique vertex of $G\setminus C$ adjacent to all three of $u_m, p_2, p_3$. Actually, we show something slightly stronger. 

\begin{Claim}\label{UniquewAdjacentAllThreeUmP2P3MainCL} Either $p_3u_m\in E(G)$ or there exists a $w\in V(H\setminus C^H)$ adjacent to all of $u_m, p_2, p_3$ such that, for any $L$-coloring $\sigma$ of $V(P)$ which does not extend to an $L$-coloring of $G$, and any $r\in\Lambda_K^T(\sigma(p_1), \sigma(p_2), \bullet)$, at least one of the following holds. 
\begin{enumerate}[label=\arabic*)]
\itemsep-0.1em
\item $|L(w)\setminus\{r, \sigma(p_2), \sigma(p_3)\}|=2$; OR
\item $|E(K')|>1$ and $|\Lambda_{K'}^{T'}(\bullet, \sigma(p_3), \sigma(p_4))|=1$, and and $w$ is adjacent to all four vertices of $Q$. 
\end{enumerate}
 \end{Claim}

\begin{claimproof} Suppose that $p_3u_m\not\in E(G)$. Thus, $u^{\dagger}\neq u_m$ and $G$ contains the 3-path $Q=u_mp_2p_3u^{\dagger}$. Any common neighbor to $u_m, p_2, p_3$ in $V(H\setminus C^H)$, if it exists, is the unique vertex of $H\setminus C^H$ with at least three neighbors on $Q$. Since $G$ is a counterexample to Theorem \ref{CornerColoringMainRes}, there is an $L$-coloring of $V(P)$ which does not extend to an $L$-coloring of $G$. Now suppose toward a contradiction that Claim \ref{UniquewAdjacentAllThreeUmP2P3MainCL} does not hold. Thus, there is an $L$-coloring $\sigma$ of $V(P)$ and an $r\in\Lambda_K^T(\sigma(p_1), \sigma(p_2), \bullet)$, where $\sigma$ does not extend to an $L$-coloring of $G$ and least one of the following holds.
\begin{enumerate}[label=\alph*)]
\itemsep-0.1em
\item $u_m, p_2, p_3$ have no common neighbor in $V(H\setminus C^H)$; \emph{OR}
\item $u_m, p_2, p_3$ have a common neighbor $w\in V(H\setminus C^H)$, but $|L(w)\setminus\{r, \sigma(p_2), \sigma(p_3)\}|\geq 3$, and furthermore, either $u\not\in N(w)$ or $|E(K')|=1$ or $T'$ is a 2-path with $|\Lambda_{K'}^{T'}(\bullet, \sigma(p_3), \sigma(p_4))|\geq 2$. 
\end{enumerate}

We now have the following.

\vspace*{-8mm}
\begin{addmargin}[2em]{0em}
\begin{subclaim}\label{SubClSigmaRPairK'>1} $|E(K')|>1$. \end{subclaim}

\begin{claimproof} Suppose not. Thus $G=K\cup H$, and $u^{\dagger}=p_4$, so $Q=u_mp_2p_3p_4$. We let $\sigma'$ be the $L$-coloring $(r, \sigma(p_2), \sigma(p_3), \sigma(p_4))$ of $u_mp_2p_3p_4$. Possibly $r=\sigma(p_4)$, but since $u_m\not\in N(p_3)$ by assumption, $\sigma'$ does not extend to an $L$-coloring of $H$. Since the outer face of $H$ has no chords, it follows from Lemma \ref{PartialPathColoringExtCL0} that there is a vertex $w\in V(H\setminus C^H)$ with at least three neighbors on $Q$, where $|L_{\sigma'}(w)|<3$. In particular, $w\in V(G\setminus C)$. We have $N(w)\cap V(Q)=\{u_m, p_2, p_3\}$, or else we contradict Claim \ref{CornerCommonNbrCaseP2P4NotAdjw}. Thus, we get $L(w)\setminus\{r, \sigma(p_2), \sigma(p_3)\}|=2$, contradicting our assumption that Claim \ref{UniquewAdjacentAllThreeUmP2P3MainCL} is violated. \end{claimproof}\end{addmargin}

Applying Subclaim \ref{SubClSigmaRPairK'>1}, $T'$ is a 2-path of $K'$. Now consider the following cases.

\textbf{Case 1:} $|\Lambda_{K'}^{T'}(\bullet, \sigma(p_3), \sigma(p_4))|\geq 2$. 

Let $L^*$ be a list-assignment for $V(H)$ where the vertices $u_m, p_2, p_3$ are precolored with the respective colors $r, \sigma(p_2), \sigma(p_3)$, and furthermore, $L^*(u^{\dagger})=\{\sigma(p_3)\}\cup\Lambda_{K'}^{T'}(\bullet, \sigma(p_3), \sigma(p_4))$, and otherwise $L^*=L$. Since $\sigma(p_3)\not\in\Lambda_{K'}^{T'}(\bullet, \sigma(p_3), \sigma(p_4))$, we have $|L^*(u^{\dagger})|\geq 3$. Note that any $L^*$-coloring of $V(H)$ uses a color of $\Lambda_{K'}^{T'}(\bullet, \sigma(p_3), \sigma(p_4))$ on $u^{\dagger}$, since $p_3$ is precolored by $\sigma(p_3)$. Since $\phi$ does not extend to an $L$-coloring of $G$, it follows that $H$ is not $L^*$-colorable. Possibly $r=\sigma(p_3)$, but since $u_mp_3\not\in E(G)$ by assumption, it follows that $u_mp_2p_3$ is $L^*$-colorable. Since $|L^*(u^{\dagger})|\geq 3$ and $C^H$ is an induced cycle of $G$, it follows from Lemma \ref{PartialPathColoringExtCL0} that there is a $w\in V(H\setminus C^H)$ adjacent to all three of $u_m, p_2, p_3$ and that $|L(w)\setminus\{r, \sigma(p_2), \sigma(p_3)\}|=2$, contradicting our assumption that the pair $\sigma, r$ is a counterexample to Claim \ref{UniquewAdjacentAllThreeUmP2P3MainCL}. 

\textbf{Case 2:} $|\Lambda_{K'}^{T'}(\bullet, \sigma(p_3), \sigma(p_4))|=1$.

In this case, we let $r'$ be the lone color of $|\Lambda_{K'}^{T'}(\bullet, \sigma(p_3), \sigma(p_4))|=1$ and let $\sigma'$ be the $L$-coloring $(r, \sigma(p_2), \sigma(p_3), r')$ of $u_mp_2p_3u^{\dagger}$. Possibly $r=r'$, but, by Claim \ref{nm+1DistAtL1SubCL}, $u^{\dagger}\neq u_{m+1}$, so $\sigma'$ is a proper $L$-coloring of its domain. Since $\sigma$ does not extend to an $L$-coloring of $G$, $\sigma'$ does not extend to an $L$-coloring of $H$. Since $C^H$ has no chords, it follows from Lemma \ref{PartialPathColoringExtCL0} that there is a $w\in V(H\setminus C^H)$ such that $|L_{\sigma'}(w)|<3$. In particular, $w$ has at least three neighbors on $Q$. If $u^{\dagger}\not\in N(w)$, then $N(w)\cap V(Q)=\{u_m, p_2, p_3\}$ and $L(w)\setminus\{r, \sigma(p_2), \sigma(p_3)\}|=2$, contradicting our assumption that Claim \ref{UniquewAdjacentAllThreeUmP2P3MainCL} is violated. Thus, we have $u^{\dagger}\in N(w)$. 

Since Claim \ref{UniquewAdjacentAllThreeUmP2P3MainCL} is violated, $w$ is not adjacent to all four vertices of $Q$, so $w$ has precisely three neighbors on $Q$. In particular, $w$ is at adjacent to at least one of $p_2, p_3$. If $w\in N(u_m)$, then, since $G$ is short-separation-free and there are no chords of $C^H$, it follows from our triangulation conditions that $w$ is adjacent to all four vertices of $Q$, which is false. Thus, $N(w)\cap V(Q)$ consists precisely of $p_2, p_3, u^{\dagger}$, and $w$ is the unique vertex of $G\setminus C$ with at least three neighbors on $Q$. Thus, there is no vertex of $G\setminus C$ adjacent to all three of $u_m, p_2, p_3$. By Claim \ref{ForEachdSetSAtLeastTwoLambdaCL}, since $|E(K')|>1$, there is an $L$-coloring $\tau$ of $V(P)$ which does not extend to an $L$-coloring of $G$, where $|\Lambda_{K'}^{T'}(\bullet, \tau(p_3), \tau(p_4))|\geq 2$. Since no vertex of $G\setminus C$ is adjacent to all three of $u_m, p_2, p_3$, it follows that, for any $s\in\Lambda_K^T(\tau(p_1), \tau(p_2), \bullet)$, the pair $\tau, s$ violates Claim \ref{UniquewAdjacentAllThreeUmP2P3MainCL} as well. But now we are back to Case 1 with the pair $\sigma, r$ replaced by the pair $\tau, s$. This completes the proof of Claim \ref{UniquewAdjacentAllThreeUmP2P3MainCL}. \end{claimproof}

Applying Claim \ref{UniquewAdjacentAllThreeUmP2P3MainCL}, we have the following. 

\begin{Claim}\label{Lp1StrictlyLarger1AndKTrianMnCL} Either $p_3u_m\in E(G)$ or, letting $w$ be the unique vertex of $H\setminus C^H$ adjacent to all of $u_m, p_2, p_3$, both of the following hold.
\begin{enumerate}[label=\arabic*)]
\itemsep-0.1em
\item $|L(p_1)|>1$ and $K$ is a triangle; AND
\item $L(u_1)\subseteq L(w)$.
\end{enumerate}
 \end{Claim}

\begin{claimproof} Suppose that $p_2u_m\not\in E(G)$. To prove 1), it suffices to show that $|L(p_1)|>1$. If that holds, then,  By 2) of Claim \ref{KBWheelCornerColorClaimM0}, $K$ is a triangle. Suppose toward a contradiction that $|L(p_1)|=1$. Since $|L(p_1)|+|L(p_4)|=4$, it follows that $|L(u^{\dagger})|=3$, even if $u^{\dagger}=p_4$ and $K'$ is an edge. Consider the graph $K\cup H$. Now, $K\cup H$ is bounded by outer cycle $p_1p_2p_3(u^{\dagger}\ldots u_1)$, and $K\cup H$ has no chord of its outer face which is incident to $p_3$. Furthermore, the outer face of $K\cup H$ contains the path $R=p_1p_2p_3u^{\dagger}$. Since $|L(u^{\dagger})|=3$, it follows from Theorem \ref{3ChordVersionMainThm1} that there is a $\psi\in\textnormal{End}(p_1p_2p_3u^{\dagger}, K\cup H)$. 

If $|E(K')|=1$ then $R=P$ and $G=K\cup H$. In particular, $\psi$ is an $L$-coloring of $\{p_1, p_4\}$, and any extension of $\psi$ to an $L$-coloring of $V(P)$ extends to $L$-color all of $G$, contradicting our assumption that $G$ is a counterexample to Theorem \ref{CornerColoringMainRes}. Thus, $|E(K')|>1$, and $T'$ is a 2-path. In particular, $p_3$ is incident to a chord of $C$. Since $|L(p_1)|=1$, we have $|L(p_4)|=3$. Thus, by Theorem \ref{SumTo4For2PathColorEnds}, there is a $\phi\in\textnormal{End}(T', K')$ with $\phi(u^{\dagger})=\psi(u^{\dagger})$, so $\psi\cup\phi$ is a proper $L$-coloring of its domain $\{p_1, u^{\dagger}, p_4\}$. Let $c,d,f$ be the respective colors used on $p_1, u^{\dagger}, p_4$ by $\psi\cup\phi$. It follows from Claim \ref{ObvObstructionColorSetS} that there is a $\sigma\in\mathcal{B}_{cf}$ with $\sigma(p_3)\in L(p_3)\setminus\{d, f\}$. Possibly $\sigma(p_2)=d$, but since $u^{\dagger}\neq u_m$ by assumption, $p_2\not\in N(u^{\dagger})$, the union $\psi\cup\phi\cup\sigma$ is a proper $L$-coloring of its domain $V(P)\cup\{u^{\dagger}\}$. But then it follows from our choice of colors $c, d, f$ that $\psi\cup\phi\cup\sigma$ extends to an $L$-coloring of $G$, and thus $\sigma$ extends to an $L$-coloring of $G$, a contradiction. This proves 1) of Claim \ref{Lp1StrictlyLarger1AndKTrianMnCL}. 

Now we prove 2). Suppose toward a contradiction that there is a $d\in L(u_1)\setminus L(w)$. By 1), $|L(p_1)|>1$, so there is a $c\in L(p_1)\setminus\{d\}$. Since $G$ is a counterexample to Theorem \ref{CornerColoringMainRes}, it follows from Claim \ref{ForEachdSetSAtLeastTwoLambdaCL} that there is an $L$-coloring $\tau$ of $V(P)$ which does not extend to an $L$-coloring of $G$, where $\tau(p_1)=c$ and either $|E(K')|=1$ or $|\Lambda_{K'}^{T'}(\bullet, \tau(p_3), \tau(p_4))|\geq 2$. By Claim \ref{OneSideOrOtherSideMapstoOneColorClaim}, $|\Lambda_K^T(c, \tau(p_2), \bullet)|=1$. Let $r$ be the lone color of $\Lambda_K^T(c, \tau(p_2), \bullet)$. Since $K$ is a triangle, we have either $r=d$ or $\tau(p_2)=d$. In either case, we get $|L(w)\setminus\{r, \tau(p_2), \tau(p_3)\}|\geq 3$, which contradicts Claim \ref{UniquewAdjacentAllThreeUmP2P3MainCL}.  \end{claimproof}

\subsection{Showing that $G$ is outerplanar: part II}\label{ShowingK'NotEdgeSubSec}

This subsection consists of the following result. 

\begin{Claim}\label{K'NotJustAnEdgeCorner} $|E(K')|>1$. \end{Claim}

\begin{claimproof} Suppose toward a contradiction that $|E(K')|=1$. In particular, $u^{\dagger}=p_4$ and $u_mp_3\not\in E(G)$, so $G=K\cup H$, and,  by Claim \ref{UniquewAdjacentAllThreeUmP2P3MainCL}, there is a $w\in V(H\setminus C^H)$ adjacent to $u_m, p_2, p_3$. Furthermore, $G$ contains the 3-path $Q=u_mp_2p_3p_4$. Since $p_3$ is incident to no chords of $C$, it follows from Claim \ref{ObvObstructionColorSetS} that, for any $c\in L(p_1)$ and $d\in L(p_4)$, $\textnormal{Col}(\mathcal{B}_{cd}\mid p_3)|\geq 2$. By Claim \ref{Lp1StrictlyLarger1AndKTrianMnCL}, $K$ is a triangle and $|L(p)|>1$. In particular, $m=1$. We now let $z$ be the unique neighbor of $w$ in $V(C\setminus\{p_2, p_3\}))$ which is defined as follows. If $N(w)\cap V(C)=\{u_1, p_3\}$, then $z=u_1$, and otherwise $z$ is the unique vertex of $N(w)\cap\{u_2, \ldots, u_t, p_4\}$ which is closest to $u_1$ on the path $u_2\ldots u_tp_4$. By Claim \ref{CornerCommonNbrCaseP2P4NotAdjw}, $z\neq p_4$. Let $J$ be the subgraph of $G$ with outer face $(u_1(C\setminus P)z)w$. If $z=u_1$ then $J$ is an edge, otherwise the outer face of $J$ is a cycle. Let $H^*$ be the subgraph of $G$ bounded by outer cycle $zwp_3p_4u_t\ldots z$. Note that, if $J$ is an edge, then, since $G$ is short-separation-free, we have $H^*=H-p_2$. 

\vspace*{-8mm}
\begin{addmargin}[2em]{0em}
\begin{subclaim}\label{subCLForColorBreakJ} Suppose that $|E(J)|>1$ and let $\tau$ be an $L$-coloring of $V(P)$ which does not extend to $L$-color $G$. Then, for any $\sigma\in\textnormal{Crown}(zwp_3p_4, H^*)$ with $\sigma(p_4)=tau(p_4)$, any $f\in L(u_1)\setminus\{\tau(p_1), \tau(p_2)\}$, and any $s\in L_{\sigma}(w)\setminus\{\tau(p_2), \tau(p_3)\}$, there is no $L$-coloring of $J$ which uses $f, s, \tau(z)$ on the respective vertices $u_1, w, z$. \end{subclaim}

\begin{claimproof} Suppose not. Thus, $(f, s, \tau(z))$ is a proper $L$-coloring of $u_1wz$ which extends to an $L$-coloring $\psi$ of $J$. By definition, $\sigma$ is a partial $L$-coloring of $V(C^{H^*})\setminus\{w, p_3\}$ with $z, p_4\in\textnormal{dom}(\sigma)$, and, since no chord of $C$ is incident to $p_3$, the union $\sigma\cup\tau$ is a proper $L$-coloring of its domain. Since $s\in L_{\sigma}(w)\setminus\{\tau(p_2), \tau(p_3)\}$ and $f\in L(u_1)\setminus\{\tau(p_1), \tau(p_2)\}$, the union $\tau\cup\sigma\cup\psi$ is a proper $L$-coloring of its domain, which is $V(K\cup J\cup P)$. Since $\sigma\in\textnormal{Crown}(zwp_3p_4, H^*)$ and $\tau\cup\sigma\cup\psi$ is a proper $L$-coloring of its domain, it follows that $(\sigma(z), s, \tau(p_3), \sigma(p_4))$ is a proper $L$-coloring of the 3-path $zwp_3p_4$ which extends to $L$-color all of $H^*$, so $\tau$ extends to $L$-color $G$< contradicting our assumption. \end{claimproof}\end{addmargin}

Applying Subclaim \ref{subCLForColorBreakJ}, we now show that $J$ is just an edge, i.e $z=u_1$. 

\vspace*{-8mm}
\begin{addmargin}[2em]{0em}
\begin{subclaim}\label{HminP2ChordNotIncident} No chord of the outer face of $H-p_2$ is incident to $w$. \end{subclaim}

\begin{claimproof} Suppose toward a contradiction that there is a chord of $C^{H-p_2}$ incident to $w$. Thus, we have $z=u_n$ for some $n\in\{2, \ldots, t\}$, and, by definition, since $p_4\not\in N(w)$, $z$ is the unique vertex of $N(w)\cap\{u_2, \ldots, u_t\}$ which is closest to $u_1$ on the path $u_2\ldots u_t$. Let $A:=\textnormal{Col}(\textnormal{Crown}(u_nwp_3p_4, H^*)\mid u_n)$. Since $|L(u_n)|=3$ and $1\leq |L(p_4)|\leq 3$, it follows from 1) of Theorem \ref{ThmFirstLink3PathForUseInHolepunch} that $|A|\geq |L(p_4)|$. In particular, $|A|+|L(p_1)|=4$. Now consider the following cases.

\textbf{Case 1:} $J$ is a triangle

In this case, we have $n=2$. Now, since $|A|+|L(p_1)|=4$, there is a $c\in L(p_1)$ and an $f\in L(u_2)$ such that $L(u_1)\setminus\{c, f\}|\geq 2$. Possibl;y $c=f$. In any case, since $f\in A$, there is a $\sigma\in\textnormal{Crown}(u_nwp_3p_4, H^*)$ using $f$ on $u_n$. By definition, $\sigma$ is a partial $L$-coloring of $C^{H^*}\setminus\{w, p_3\}$ which includes $u_n, p_4$ in its domain, and $|L_{\sigma}(w)|\geq 3$.

 Let $d=\sigma(p_4)$. Since $|L_{\sigma}(w)|\geq 3$, there is an $s\in L_{\sigma}(w)$ such that $L(u_1)\setminus\{c, f, s\}|\geq 2$. Since $J$ is a triangle and neither element of $\mathcal{B}_{cd}$ extends to an $L$-coloring of $G$, we have $s\in\{\tau(p_2), \tau(p_3)\}$ for each $\tau\in\mathcal{B}_{cd}$, or else we contradict Subclaim \ref{subCLForColorBreakJ}. Since $|\textnormal{Col}(\mathcal{B}_{cd}\mid p_3)|\geq 2$, there exists a $\tau\in\mathcal{B}_{cd}$ such that $\tau(p_2)=s$. Since $|L_{\sigma}(w)|\geq 3$, there is a color left in $L_{\sigma}(w)\setminus\{\tau(p_2), \tau(p_3)\}$. We color $w$ with this leftover color, and since $L(u_1)\setminus\{c, f, s\}|\geq 2$, there is also a color left for $u_1$, contradicting Subclaim \ref{subCLForColorBreakJ}. 

\textbf{Case 2:} $J$ is not a triangle

Since no chord of $C$ has both endpoints in $C\setminus\{p_2, p_3\}$, every chord of the outer face of $J$ is incident to $w$. Since $J$ is not a triangle, it follows from the minimality of $n$ that $J$ is not a broken wheel with principal path $u_1wu_n$, and, in particular, by 1) of Theorem \ref{EitherBWheelOrAtMostOneColThm}, there is at most one $L$-coloring of $\{u_1, w, u_n\}$ which does not extend to $L$-color $J$. Furthermore, $u_1u_n\not\in E(G)$, so $u_1wu_n$ is an induced path. We break Case 2 into two subcases.

\textbf{Subcase 2.1} $|L(p_1)|=2$

In this case, we have $|L(p_4)|=2$ as well. Recall that $|A|\geq |L(p_4)|$, so there is an $f\in A$ such that any $L$-coloring of $u_1wu_n$ using $f$ on $u_n$ extends to $L$-color all of $J$. That is, $f$ is a $J$-universal color of $L(u_n)$. Since $f\in A$, there is a $\sigma\in\textnormal{Crown}(u_nwp_3p_4, H^*)$ with $\sigma(u_n)=f$. Since $\sigma\in\textnormal{Crown}(u_nwp_3p_4, H^*)$, we have $|L_{\sigma}(w)|\geq 3$. Now we fix a $c\in L(p_1)$. Since $|L_{\sigma}(w)|\geq 3$, there is an $s\in L_{\sigma}(w)$ such that $L(u_1)\setminus\{c, s\}|\geq 2$. In particular, for each $\tau\in\mathcal{B}_{cd}$, we have $L(u_1)\setminus\{c, s, \tau(p_2)\}|\geq 1$. Since $\tau$ does not extend to an $L$-coloring of of $G$ and $f$ is a $J$-universal color of $L(u_n)$, it follows that $s\in\{\tau(p_2), \tau(p_3)\}$, or else we contradict Subclaim \ref{subCLForColorBreakJ}. Since $|\textnormal{Col}(\mathcal{B}_{cd}\mid p_3)|\geq 2$, there is a $\tau\in\mathcal{B}_{cd}$ such that $\tau(p_2)=s$. Now we choose an $s'\in L_{\sigma}(w)\setminus\{\tau(p_2), \tau(p_3)\}$, which exists, since $|L_{\sigma}(w)|\geq 3$. Since $L(u_1)\setminus\{c, s\}|\geq 2$, we color $w$ with $s'$ and $u_n$ with $f$ and obtain an $L$-coloring of $J$ using $f$ on $u_n$, $s'$ on $w$, and using a color of $L(u_1)\setminus\{c,s\}$ on $u_1$, contradicting Subclaim \ref{subCLForColorBreakJ}. 

\textbf{Subcase 2.2} $|L(p_1)|\neq 2$

In this case, since $|L(p_1)|>1$, we have $|L(p_1)|=3$, and $L(p_4)=\{d\}$ for some color $d$. Since $|L(u_n)|+|\{d\}|=4$, it follows from 1) of Theorem \ref{ThmFirstLink3PathForUseInHolepunch} that there is a $\sigma\in\textnormal{Crown}(u_nwp_3p_4, H^*)$. Since $|L(u_1)|=3$ and at most one $L$-coloring of $u_1wu_n$ does not extend to $L$-color $J$, there exists a set $X\subseteq L(u_1)$ with $|X|=2$, where, for each $a\in X$, any $L$-coloring of $u_1wu_n$ using $a$ on $u_1$ extends to $L$-color all of $J$. Since $|L(p_1)|=3$, there is a $c\in L(p_1)\setminus X$. Now we simply choose an arbitrary $k\in\{0,1\}$ and consider $\tau_{cd}^k$. Since $\sigma\in\textnormal{Crown}(u_nwp_3p_4, H^*)$, we have $|L_{\sigma}(w)|\geq 3$, so there is an $s\in L_{\sigma}(w)\setminus X$. 

If there exists a $\tau\in\mathcal{B}_{cd}$ such that $s\not\in\{\tau(p_2), \tau(p_3\}$, then $L(u_1)\setminus\{c, \tau(p_2), s\}$ contains at least one color of $X$, and since $u_1u_n\not\in E(G)$ and each color of $X$ is a $J$-universal color of $L(u_1)$, we contradict Subclaim \ref{subCLForColorBreakJ}. Thus, we have $s\in\{\tau(p_2), \tau(p_3)\}$ for each $\tau\in\mathcal{B}_{cd}$. Since $|\textnormal{Col}(\mathcal{B}_{cd}\mid p_3)|\geq 2$, there is a $\tau\in\mathcal{B}_{cd}$ such that $\tau(p_2)=s$. Now we choose an arbitrary $s'\in L_{\sigma}(w)\setminus\{\tau(p_2), \tau(p_3)\}$, which exists since $|L_{\sigma}(w)|\geq 3$, and since $p_2$ is colored with $s$, there is a color of $X$ left in $L(u_1)\setminus\{c, \tau(p_2), s'\}$. Possibly this color is also used by $\sigma$ on $u_n$, but since $J$ is not a triangle and $u_1u_n\not\in E(G)$, we contradict Subclaim \ref{subCLForColorBreakJ}. This completes the proof of Subclaim \ref{HminP2ChordNotIncident}. \end{claimproof}\end{addmargin}

Applying Subclaim \ref{HminP2ChordNotIncident} we have the following.

\vspace*{-8mm}
\begin{addmargin}[2em]{0em}
\begin{subclaim}\label{SecGenObstructionVStarVertK} There exists a $v^*\in V(H\setminus C^H)$ such that $v^*$ is adjacent to $w, p_3, p_4$. Furthermore, for any $L$-coloring $\tau$ of $V(P)$ which does not extend to $L$-color $G$, any $r\in L_{\tau}(u_1)$, and any $s\in L(w)\setminus\{\tau(p_2), \tau(p_3), r\}$, we have $L(v^*)\setminus\{s, \tau(p_3), \tau(p_4)\}|=2$. \end{subclaim}

\begin{claimproof} Since every chord of $C$ is incident to $p_2$, it follows from Subclaim \ref{HminP2ChordNotIncident} that the outer cycle of $H-p_2$ has no chords. Let $\tau$ be an $L$-coloring of $V(P)$ which does not extend to $L$-color $G$. Such a $\tau$ exists, since $G$ is a counterexample to \ref{CornerColoringMainRes}. Let $r\in L_{\tau}(u_1)$ and let $s\in L(w)\setminus\{\tau(p_2), \tau(p_3), r\}$, and let $\phi$ be the $L$-coloring $(r, s, \tau(p_3), \tau(p_4))$ of $u_1wp_3p_4$. Note that $\phi$ does not extend to $L$-color $H-p_2$. Since there are no chords of the outer cycle of $H-p_2$, it follows from Lemma \ref{PartialPathColoringExtCL0} that there is a vertex $v^*\in V(H-p_2)$, where $v^*$ is not on the outer face of $H-p_2$ and $L_{\phi}(v^*)|<3$. Thus, $v^*\in V(H\setminus C^H)$, and, since $G$ is short-separation-free, $v^*$ is adjacent to at most one of $u_1, p_3$. If all three of $u_1, w, p_3$ are adjacent to $v^*$, then, since $G$ is short-separation-free and there are no chords of $C^{H_*}$, it follows from our triangulation conditions that $v^*$ is also adjacent to $p_3$, which is false. Thus, we have $N(v^*)\cap\{u_1, w, p_3, p_4\}=\{w, p_3, p_4\}$ and $v^*$ is the unique vertex of $H\setminus C^H$ with more than two neighbors in $\{u_1, w, p_3, p_4\}$, and $L(v^*)\setminus\{s, \tau(p_3), \tau(p_4)\}|=2$. \end{claimproof}\end{addmargin}

Let $v^*$ be as in Subclaim \ref{SecGenObstructionVStarVertK}. We now have the following.

\vspace*{-8mm}
\begin{addmargin}[2em]{0em}
\begin{subclaim}\label{CrownColOnU1Blocked} For any $\pi\in\textnormal{Crown}(Q, H)$ and $c\in L(p_1)\setminus\{\pi(u_1)\}$, letting $d=\pi(p_4)$, we have $\tau(p_2)=\pi(u_1)$ for all $\tau\in\mathcal{B}_{cd}$. \end{subclaim}

\begin{claimproof} Suppose there is a $\tau\in\mathcal{B}_{cd}$ such that $\tau(p_2)\neq\pi(u_1)$. By definition, $\pi$ is a partial $L$-coloring of $C^H\setminus\{p_2, p_3\}$ with $u_1, p_4\in\textnormal{dom}(\pi)$. Since $p_2u_1$ is the only chord of $C$ incident to $p_2$, and, by assumption, no chord of $C$ is incident to $p_3$, it follows that $\pi\cup\tau$ is a proper $L$-coloring of its domain in $G$, which is $\textnormal{dom}(\pi)\cup\{p_1\}$. But then, since $K$ is a triangle and $\pi\in\textnormal{Crown}(Q, H)$, it follows that $\pi\cup\tau$ extends to an $L$-coloring of $G$, and thus $\tau$ extends to an $L$-coloring of $G$, which is false. \end{claimproof}\end{addmargin} 

\vspace*{-8mm}
\begin{addmargin}[2em]{0em}
\begin{subclaim}\label{EachPiNotSame4ListSubCM} For each $\pi\in\textnormal{Crown}(Q, H)$ with $\pi(u_1)\in L(p_1)$, there exists an $s\in L(w)\setminus\{\pi(u_1)\}$ with $L(v^*)\setminus\{\pi(p_4), s\}|\geq 4$ \end{subclaim}

\begin{claimproof} Let $d=\pi(p_4)$ and $f=\pi(u_1)$. By assumption, $f\in L(p_1)$. Suppose toward a contradiction that $\pi$ does not satisfy Subclaim \ref{EachPiNotSame4ListSubCM}. Thus, $L(w)\setminus\{f\}=L(v^*)\setminus\{d\}$ and $|L(w)\setminus\{f\}|=|L(v^*)\setminus\{d\}|=4$, so $f\in L(w)$ and $|L(v^*)\setminus\{d, f\}|\geq 4$. Since $f\in L(p_1)$, we consider the set $\mathcal{B}_{df}$. We have $|\textnormal{Col}(\mathcal{B}_{df}\mid p_3)|\geq 2$, so there is a $\tau\in\mathcal{B}_{df}$ with $\tau(p_3)\neq f$. Since $f$ is used on $p_1$, we also have $\tau(p_2)$, and there is an $r\in L(u_1)\setminus\{\tau(p_1), \tau(p_2)\}$ with $r\neq f$. But then $f\in L(w)\setminus\{r, \tau(p_2), \tau(p_3)\}$, contradicting Subclaim \ref{SecGenObstructionVStarVertK}.  \end{claimproof} \end{addmargin}

\vspace*{-8mm}
\begin{addmargin}[2em]{0em}
\begin{subclaim} $|L(p_1)|=|L(p_4)|=2$. \end{subclaim}

\begin{claimproof} Suppose not. Since $|L(p_1)|>1$, we have $|L(p_4)|=3$ and $|L(p_1)|=3$. Since $|L(u_1)|=3$, it follows from 1) of Theorem \ref{ThmFirstLink3PathForUseInHolepunch} that there is a $\pi\in\textnormal{Crown}(Q, H)$. Let $f=\pi(u_1)$ and $d=\pi(p_4)$. Since $|L(p_1)|=3$, there exist distinct $c^0, c^1\in L(p_1)\setminus\{f\}$. For each $k=0,1$ and $\tau\in\mathcal{B}_{c^kd}$, we have $\tau(p_2)=f$ by Subclaim \ref{CrownColOnU1Blocked}. Recall that $L(p_1)\subseteq L(u_1)$ by 1) of Claim \ref{KBWheelCornerColorClaimM0}, so $L(p_1)=L(u_1)=\{c^0, c^1, f\}$ in this case. Thus, it follows from Subclaim \ref{EachPiNotSame4ListSubCM} that there exists an $s\in L(w)\setminus\{f\}$ with $L(v^*)\setminus\{d, s\}|\geq 4$. At least one of $c^0, c^1$ is distinct from $s$, so suppose without loss of generality that $c^1\neq s$. Since $|\textnormal{Col}(\mathcal{B}_{c^0d}\mid p_3)|\geq 2$, there is a $\tau\in\mathcal{B}_{c^0d}$ such that $\tau(p_3)\neq s$. As indicated above, we have $\tau(p_2)=f$. Since $c^1\in L(u_1)\setminus\{c^0, f\}$ and $s\in L(w)\setminus\{f, c^1, \tau(p_3)\}$, we contradict Subclaim \ref{SecGenObstructionVStarVertK}. \end{claimproof}\end{addmargin}

Since $|L(p_4)|=2$ and $|L(u_1)|=3$, it follows from 1) of Theorem \ref{ThmFirstLink3PathForUseInHolepunch} that there exist two distinct elements $\pi_0, \pi_1$ of $\textnormal{Crown}(Q, H)$, where $\pi_0(u_1)\neq\pi_1(u_1)$. For each $k=0,1$, let $d_k=\pi_k(p_4)$ and $f_k:=\pi_k(u_1)$. Note that we may suppose that $d_0\neq d_1$ as well. To see this, suppose that $d_0=d_1=d$ for some color $d$. Since $|L(p_4)|=2$, there is a $d'\in L(p_4)\setminus\{d\}$. Since $|L(u_1)|+|\{d'\}|=4$, it follows from 1) of Theorem \ref{ThmFirstLink3PathForUseInHolepunch} that there is a $\pi^*\in\textnormal{Crown}(Q, H)$ with $\pi^*(p_4)=d$. Since $f_0\neq f_1$, there exists a $k\in\{0,1\}$ such that $\pi^*(u_1)\neq d_k$, so we replace the pair $\pi_k, \pi_{1-k}$ with the pair $\pi_k, \pi^*$, each of which uses a different color on $u_1$ and a different color on $p_4$. Thus, we suppose that $d_0\neq d_1$ as well.Recall that, by Claim \ref{Lp1StrictlyLarger1AndKTrianMnCL}, $L(u_1)\subseteq L(w)$, so $\{f_0, f_1\}\subseteq L(w)$. For each $j\in\{0,1\}$, let $Y_j$ be the set of colors of $s\in L(w)\setminus\{f_j\}$ such that $|L(v^*)\setminus\{d_j, s\}|\geq 4$. 

\vspace*{-8mm}
\begin{addmargin}[2em]{0em}
\begin{subclaim}\label{TwoYjSizeAtMost1} For each $j\in\{0,1\}$ with $f_j\in L(p_1)$ and each $c\in L(p_1)\setminus\{f_j\}$, we have $Y_j=L(u_1)\setminus\{c, f_j\}$ and $|Y|=1$. \end{subclaim}

\begin{claimproof} Since $f_j\in L(p_1)$ by assumption, we have $Y_j\neq\varnothing$ by Subclaim \ref{EachPiNotSame4ListSubCM}. Since $|L(p_1)|=2$, we have $L(p_1)\setminus\{f_j\}\neq\varnothing$. Let $c\in L(p_1)\setminus\{f_j\}$. By Subclaim \ref{CrownColOnU1Blocked}, we have $\tau(p_2)=f_j$ for each $\tau\in\mathcal{B}_{cd_j}$. Since $L(p_1)\subseteq L(u_1)$ and $f_j\in L(u_1)$, there is one color $r$ left in $L(u_1)\setminus\{c, f_0\}$. If $\{r\}\neq Y_j$, then there is an $s\in Y_j\setminus\{r\}$ and, since $|\textnormal{Col}(\mathcal{B}_{cd_0}\mid p_3)|\geq 2$, there is a $\tau\in\mathcal{B}_{cd_0}$ with $\tau(p_3)\neq s$. Since $\tau(p_2)=f_j$ and $s\neq f_j$, we contradict Subclaim \ref{SecGenObstructionVStarVertK}. Thus, $\{r\}=Y_j$. \end{claimproof}\end{addmargin}

\vspace*{-8mm}
\begin{addmargin}[2em]{0em}
\begin{subclaim}\label{p1p4EndpointsMainPathDisjoint} $L(p_1)\cap L(p_4)=\varnothing$ and $L(p_4)\subseteq L(v^*)$ \end{subclaim}

\begin{claimproof} Suppose there is a $d\in L(p_1)\cap L(p_4)$. Since $L(p_1)\subseteq L(u_1)\subseteq L(w)$, we have $d\in L(w)$ as well. Now we consider the set $\mathcal{B}_{dd}$. For any $\tau\in\mathcal{B}_{dd}$, we have $\tau(p_2), \tau(p_3)\neq d$. Since $\tau(p_1)=d$ and $|L_{\tau}(u_1)|\geq 1$, there is a color $a\in L_{\tau}(u_1)$ with $a\neq d$. Thus, $d\in L(w)\setminus\{a, \tau(p_2), \tau(p_3)\}$, and since $\tau(p_4)=d$ as well, we contradict Subclaim \ref{SecGenObstructionVStarVertK}. Now suppose there is a $d\in L(p_4)\setminus L(v^*)$. Choose an arbitrary $c\in L(p_1)$ and $\tau\in\mathcal{B}_{cd}$. Since $\tau$ extends to an $L$-coloring of $\textnormal{dom}(\tau)\cup\{u_1, w\}$, we again contradict Subclaim  \ref{SecGenObstructionVStarVertK}. \end{claimproof}\end{addmargin}

\vspace*{-8mm}
\begin{addmargin}[2em]{0em}
\begin{subclaim}\label{Lp1NotEqualToSetF0F1} $L(p_1)\neq\{f_0, f_1\}$. \end{subclaim}

\begin{claimproof} Suppose toward a contradiction that $L(p_1)=\{f_0, f_1\}$. Letting $r$ be the lone color of $L(u_1)\setminus L(p_1)$, it follows from Subclaim \ref{TwoYjSizeAtMost1} that $Y_0=Y_1=\{r\}$. Thus, for each $j=0,1$, we have $L(w)\setminus\{f_j, r\}\subseteq L(v^*)\setminus\{d_j\}$ and $|L(v^*)\setminus\{d_j\}|=4$. Since $f_0\neq f_1$ and $d_0\neq d_1$, it follows that there exist three colors $a,b,s$ such that $L(w)=\{a, b, f_0, f_1, r\}$ and $L(v^*)=\{a, b, d_0, d_1, s\}$. Now, for each $j=0,1$, we have $f_{1-j}\not\in Y_j$, which means $L(v^*)\setminus\{d_j, f_{1-j}\}|=3$, so we have $f_{1-j}\in\{s, d_{1-j}\}$. Since $f_0\neq f_1$, there exists a $k\in\{0,1\}$ such that $f_k=d_k=f$ for some color $f$. Since $\{f_0, f_1\}\subseteq L(p_1)$, we contradict Subclaim \ref{p1p4EndpointsMainPathDisjoint}. \end{claimproof}\end{addmargin}

We now have enough to finish the proof of Claim \ref{K'NotJustAnEdgeCorner} and complete Subsection \ref{ShowingK'NotEdgeSubSec}. Applying Subclaim \ref{Lp1NotEqualToSetF0F1} we suppose for the sake of definiteness that $f_0\not\in L(p_1)$. Since $L(p_1)\subseteq L(u_1)$ and $|L(p_1)|=2$, there is a $c\in L(p_1)\cap L(u_1)$ with $L(p_1)=\{c, f_1\}$ and $L(u_1)=\{c, f_0, f_1\}$. Since $L(p_1)\subseteq L(u_1)\subseteq L(w)$, it follows from Subclaim \ref{p1p4EndpointsMainPathDisjoint} that $|L(w)\setminus L(v^*)|\geq 2$, so there is an $s\in L(w)\setminus L(v^*)$ with $s\neq f_0$. Now, there is an $a\in\{c, f_1\}$ with $L(u_1)\setminus\{a, f_0\}\neq\{s\}$. Since $|\textnormal{Col}(\mathcal{B}_{ad_0}\mid p_3)|\geq 2$, there is a $\tau\in\mathcal{B}_{ad_0}$ with $\tau(p_3)\neq s$, and as $\tau(p_1)\neq f_0$, it follows from Subclaim \ref{CrownColOnU1Blocked} that $\tau(p_2)=f_0$. Since $|L_{\tau}(u_1)|\geq 1$, it follows from our choice of $a$ that there is a $b\in L_{\tau}(u_1)$ with $b\neq s$. Thus, $s\in L(w)\setminus\{b, \tau(p_2), \tau(p_3)\}$. Since $s\not\in L(v^*)$, we contradict Subclaim \ref{SecGenObstructionVStarVertK}. This completes the proof of Claim \ref{K'NotJustAnEdgeCorner}. \end{claimproof}

Since $|E(K')|>1$, $p_3$ is incident to a chord of $C$, and thus, since $G$ is a counterexample to Theorem \ref{CornerColoringMainRes}, it follows that, for any $L$-coloring $\phi$ of $\{p_1, p_4\}$, $\phi$ extends to at most one element of $\textnormal{End}(p_3, P, G)$. 

\subsection{Showing that $G$ is outerplanar: part III}\label{GOuterPlanarFinalSubSec}

In this subsection, we first show that $u^{\dagger}=u_m$, (and thus, in particular, $H$ is just a triangle), and then we show that $K'$ is a broken wheel with principal path $T'$, which implies that $V(G)=V(C)$. 

\begin{Claim}\label{PairOf2PathsIntersectOnUmCLCorner} $u^{\dagger}=u_m$, and $H$ is a triangle \end{Claim}

\begin{claimproof} Suppose not. Thus, $u^{\dagger}\in\{u_{m+1}, \ldots, u_t, p_4\}$. By Claim \ref{K'NotJustAnEdgeCorner}, $u^{\dagger}\neq p_4$ and, by Claim \ref{nm+1DistAtL1SubCL}, $u^{\dagger}\neq u_{m+1}$, so $u^{\dagger}=u_n$ for some $n\in\{m+2, \ldots, t\}$.  Applying Claim \ref{UniquewAdjacentAllThreeUmP2P3MainCL}, we let $w$ be the unique vertex of $H\setminus C^H$ adjacent to each of $u_m, p_2, p_3$, and we let $Q$ be the 3-path $u_mp_2p_3u^{\dagger}$. By Claim \ref{Lp1StrictlyLarger1AndKTrianMnCL}, $K$ is a triangle with $|L(p_1)|>1$. In particular, $m=1$.

\vspace*{-8mm}
\begin{addmargin}[2em]{0em}
\begin{subclaim}\label{tauAndROnlyLeaveTwoColorsSubCL} For any $L$-coloring $\tau$ of $V(P)$ which does not extend to an $L$-coloring of $G$, and any $r\in L(u_1)\setminus\{\tau(p_1), \tau(p_2)\}$, we have $|L(w)\setminus\{r, \tau(p_2), \tau(p_3)\}|=2$. \end{subclaim}

\begin{claimproof} Suppose there exist $\tau$ and $r$ violating Subclaim \ref{tauAndROnlyLeaveTwoColorsSubCL}. Thus, $|L(w)\setminus\{r, \tau(p_2), \tau(p_3)\}|\geq 3$. By Claim \ref{UniquewAdjacentAllThreeUmP2P3MainCL}, $w$ is adjacent to all four vertices of $Q$. We now fix an $r'\in\Lambda_{K'}^{T'}(\bullet, \tau(p_3), \tau(p_4))$. Let $\psi$ be the $L$-coloring $(r, \tau(p_2), \tau(p_3), r')$ of $u_mp_2p_3u_n$. Recall that, by Claim \ref{nm+1DistAtL1SubCL}, $n>m+1$, and since $C^H$ is an induced cycle of $G$, $\psi$ is a proper $L$-coloring of its domain. Since $\tau$ does not extend to an $L$-coloring of $G$, $\psi$ does not extend to an $L$-coloring of $H$. Let $J=H\setminus\{p_2, p_3\}$. Since $G$ is short-separation-free and $w$ is adjacent to all four vertices of $Q$, $H\setminus\{p_2, p_3\}$ is bounded by outer cycle $u_mwu_nu_{n-1}\ldots u_{m+1}$, and the outer cycle of $J$ contains the 2-path $u_mwu_n$. Since $\psi$ does not extend to an $L$-coloring of $H$, we have $\Lambda_J^{u_mwu_n}(r, \bullet, r')\cap L_{\psi}(w)=\varnothing$. By 3) of Corollary \ref{CorMainEitherBWheelAtM1ColCor} applied to $J$, $J$ is a broken wheel with principal path $u_mwu_n$, where $J-w$ is a path of odd length. Thus, $H$ is a wheel with central vertex $w$ adjacent to all the vertices of the outer cycle of $H$. 

Since $|E(K')|>1$, it follows from Claim \ref{ForEachdSetSAtLeastTwoLambdaCL} that there is an $L$-coloring $\sigma$ of $V(P)$ which does not extend to an $L$-coloring of $G$, where $|\Lambda_{K'}^{T'}(\bullet, \sigma(p_3), \sigma(p_4))|\geq 2$. Let $b\in\Lambda_K^T(\sigma(p_1), \sigma(p_2), \bullet)$. Since $\sigma$ does not extend to an $L$-coloring of $G$, it follows that, for each $c\in L(w)\setminus\{b, \sigma(p_2), \sigma)(p_3)\}$, we have $|\Lambda_J^{u_mwu_n}(b, c, \bullet)\cap\Lambda_{K'}^{T'}(\bullet, \sigma(p_3), \sigma(p_4))=\varnothing$. Since $|\Lambda_{K'}^{T'}(\bullet, \sigma(p_3), \sigma(p_4))|\geq 2$, there is a lone color $f\in L(u_n)$ sich that $\Lambda_J^{u_mwu_n}(b, c, \bullet)=\{f\}$ for each $c\in L(w)\setminus\{b, \sigma(p_2), \sigma(p_3)\}$. Since $|L(w)\setminus\{b, \sigma(p_2), \sigma(p_3)\}|\geq 2$ and $J-w$ is a path of odd length, we contradict 1a) of Theorem \ref{BWheelMainRevListThm2} applied to $J$.\end{claimproof}\end{addmargin}

\vspace*{-8mm}
\begin{addmargin}[2em]{0em}
\begin{subclaim}\label{BCSetOfPreciselyThreeCol} For any $L$-coloring $\tau$ of $V(P)$ which does not extend to an $L$-coloring of $G$, we have $\tau(p_3)\in\{\tau(p_1)\}\cup (L(w)\setminus L(u_1))$. \end{subclaim}

\begin{claimproof} Let $c=\tau(p_1)$ and suppose toward a contradiction that $\tau(p_3)\not\in\{c\}\cup (L(w)\setminus L(u_1))$. If $\tau(p_3)\not\in L(w)$, then we violate Subclaim \ref{tauAndROnlyLeaveTwoColorsSubCL}, so $\tau(p_3)\in L(w)$. Thus, by assumption, $\tau(p_3)\in L(u_1)\setminus\{c\}$. Since $c\neq\tau(p_3)$ and $\tau(p_2)\neq\tau(p_3)$, and since $K$ is a triangle, it follows that $\tau(p_3)\in L(u_1)\setminus\{c, \tau(p_2)\}$, so we choose $r=\tau(p_3)$ and $|L(w)\setminus\{r, \tau(p_2), \tau(p_3)\}|\geq 3$, contradicting Subclaim \ref{tauAndROnlyLeaveTwoColorsSubCL}. \end{claimproof}\end{addmargin}

\vspace*{-8mm}
\begin{addmargin}[2em]{0em}
\begin{subclaim}\label{ColSubDisjointUnionSet3} For each $c\in L(p_1)$ and $d\in L(p_4)$, we have $\textnormal{Col}(\mathcal{B}_{cd}\mid p_3)=\{c\}\cup (L(w)\setminus L(u_1))$ as a disjoint union, and, in particular, $|\textnormal{Col}(\mathcal{B}_{cd}\mid p_3)|=3$. \end{subclaim}

\begin{claimproof} By Claim \ref{ObvObstructionColorSetS}, since $|E(K')|>1$, it follows that, for each $c\in L(P_1)$ and $d\in L(p_4)$, we have $|\textnormal{Col}(\mathcal{B}_{cd}\mid p_3)|\geq 3$. By 1) of Claim \ref{KBWheelCornerColorClaimM0}, $L(p_1)\subseteq L(u_1)$, and, by Claim \ref{Lp1StrictlyLarger1AndKTrianMnCL}, $L(u_1)\subseteq L(w)$, since $u_1p_3\not\in E(G)$ by assumption. Thus, $|L(w)\setminus L(u_1)|=2$ and $c\in L(w)\setminus L(u_1)$. Thus, $|\{c\}\cup (L(w)\setminus L(u_1)|=3$, and, by Subclaim \ref{BCSetOfPreciselyThreeCol}, $\textnormal{Col}(\mathcal{B}_{cd}\mid p_3)=\{c\}\cup (L(w)\setminus L(u_1))$ as a disjoint union. \end{claimproof}\end{addmargin}

Note that no chord of the outer cycle of $H\cup K'$ is incident to $p_2$. Since $|L(u_1)|=3$, it follows from Theorem \ref{3ChordVersionMainThm1} that there exists a $\pi\in\textnormal{End}(u_1p_2p_3p_4, H\cup K')$. Let $d:=\pi(p_4)$ and $r:=\pi(u_1)$. 

\vspace*{-8mm}
\begin{addmargin}[2em]{0em}
\begin{subclaim}\label{AugElementObstructedP2ColorSubForM} For any $c\in L(p_1)\setminus\{\pi(u_1)\}$ and $\tau\in\mathcal{B}_{cd}$, we have $\tau(p_2)=r$. \end{subclaim}

\begin{claimproof} Suppose there is a $\tau\in\mathcal{B}_{cd}$ with $\tau(p_2)\neq r$. Possibly $\tau(p_3)=r$, but, since $p_3u_1\not\in E(G)$ by assumption, the union $\tau\cup\pi$ is a proper $L$-coloring of its domain. Since $K$ is a triangle and $\pi\in\textnormal{End}(u_1p_2p_3p_4, H\cup K')$, it follows that $\tau\cup\pi$ extends to an $L$-coloring of $G$, which is false, since $\tau\in\mathcal{B}_{cd}$.  \end{claimproof}\end{addmargin}

Recall that, by Claim \ref{Lp1StrictlyLarger1AndKTrianMnCL}, $|L(p_1)|>1$, so let $c$ be a color of $L(p_1)\setminus\{r\}$. Consider the set $\mathcal{B}_{cd}$, recalling that $d=\pi(p_4)$. Since $c\neq r$, it follows from Subclaim \ref{AugElementObstructedP2ColorSubForM} that, for each $\tau\in\mathcal{B}_{cd}$, we have $\tau(p_2)=r$. Let $f$ be a color of $L(u_1)\setminus\{c,r\}$ and let $L'$ be a list-assignment for $V(H\cup K')$, where $L'(p_3)=\textnormal{Col}(\mathcal{B}_{cd}\mid p_3)$ and otherwise $L=L'$. Now, there is an $L'$-coloring $\sigma$ of $\{u_1, p_2, p_4\}$ using $f, r, d$ on the respective vertices $u_1, p_2, p_3$. If $\sigma$ extends to an $L'$-coloring of $H\cup K'$, then the color $c$ is left over for $p_1$ and since each element of $\mathcal{B}_{cd}$ uses $r$ on $p_2$, it then follows that there is a $\tau\in\mathcal{B}_{cd}$ which extends to an $L$-color of $G$, which is false. Thus, $\sigma$ does not extend to an $L'$-coloring of $H\cup K'$. By our assumption, $u_1$ is not adjacent to $p_3$, so $L'_{\sigma}(p_3)=\textnormal{Col}(\mathcal{B}_{cd}\mid p_3)\setminus\{d, r\}$. By Subclaim \ref{ColSubDisjointUnionSet3}, we have $\textnormal{Col}(\mathcal{B}_{cd}\mid p_3)=\{c\}\cup (L(w)\setminus L(u_1))$. In particular, since $r\neq c$ and $r\in L(u_1)$, we have $r\not\in \textnormal{Col}(\mathcal{B}_{cd}\mid p_3)$. By definition, we have $d\not\in\textnormal{Col}(\mathcal{B}_{cd}\mid p_3)$ either, so it follows from Subclaim \ref{ColSubDisjointUnionSet3} that $|L'_{\sigma}(p_3)|\geq 3$. Since every chord of the outer face of $H\cup K'$ is incident to $p_3$, it follows from Lemma \ref{PartialPathColoringExtCL0} that there is a vertex of $H\cup K'$ which does not lie on the outer face of $H\cup K'$ and has three neighbors on $\{u_1, p_2, p_4\}$, so this vertex is precisely $w$ and we contradict Claim \ref{CornerCommonNbrCaseP2P4NotAdjw}. This completes the proof of Claim \ref{PairOf2PathsIntersectOnUmCLCorner}. \end{claimproof}

Since $G$ is short-separation-free, it follows from Claim \ref{PairOf2PathsIntersectOnUmCLCorner} that $H$ is the triangle $p_2p_3u_m$, and $G=(K\cup K')+p_2p_3$. Analogous to Claim \ref{KBWheelCornerColorClaimM0}, we have the following.

\begin{Claim} $K'$ is a broken wheel with principal path $T'$. In particular, $G$ is outerplanar.  \end{Claim}

\begin{claimproof} Since $|L(u_m)|=3$, it follows from Theorem \ref{SumTo4For2PathColorEnds} that $\textnormal{Col}(\textnormal{End}(T, K)\mid u_m)=\textnormal{Col}(\textnormal{End}(T, K')\mid u_m)=L(u_m)$. Since $|L(u_m)|=3$ and $|L(p_1)|+|L(p_4)|=4$, there is an $L$-coloring $\psi$ of $\{p_1, u_m, p_4\}$ where $\psi$ is $(P,G)$-sufficient. Let $a=\psi(p_1)$ and $d=\psi(p_4)$, and let $b=\psi(u_m)$. For each $\sigma\in\mathcal{B}_{ad}$, we have $\sigma(p_2)=b$, or else the color $b$ is left over for $u_m$ and thus $\sigma$ extends to an $L$-coloring of $G$. For each $\sigma\in\mathcal{B}_{ad}$, since $\sigma$ does not extend to $L$-color $G$, we have $\Lambda_K^T(a, b, \bullet)\cap\Lambda_{K'}^{T'}(\bullet, \sigma(p_3), d)=\varnothing$. Now we show that $K'$ is a broken wheel with principal path $T'$. Suppose not. Let $r\in\Lambda_K^T(a, b, \bullet)$. By Claim \ref{ObvObstructionColorSetS}, $|\textnormal{Col}(\mathcal{B}_{ad}\mid p_3)|\geq 3$. Possibly $r=d$, but in any case, by assumption, $K'$ is not a triangle, and since $|\textnormal{Col}(\mathcal{B}_{ad}\mid p_3)|\geq 2$ and $K'$ is not a broken wheel with principal $T'$, it follows from 1) of Theorem \ref{EitherBWheelOrAtMostOneColThm} that $\textnormal{Col}(\mathcal{B}_{ad}\mid p_3)\cap\Lambda_{K'}^{T'}(r, \bullet, d)\neq\varnothing$, so there is a $\sigma\in\mathcal{B}_{ad}$ which extends to an $L$-coloring of $G$, which is false. Thus, $K'$ is a broken wheel with principal path $T'$. By 1) of Claim \ref{KBWheelCornerColorClaimM0}, $K$ is broken wheel with principal path $T$, so $V(G)=V(C)$. \end{claimproof}

\subsection{Completing the proof of Theorem \ref{CornerColoringMainRes}}

\begin{Claim}\label{AtMostOneKK'TriangleCLM} At most one of $K, K'$ is a triangle. \end{Claim}

\begin{claimproof} Suppose toward a contradiction that each of $K$ and $K'$ is a triangle, i.e $m=1=t$ and $|V(G)|=5$. Since $|L(p_1)|+|L(p_4)|=4$, there is an $L$-coloring $\phi$ of $\{p_1, p_4\}$ such that $|L_{\phi}(u_1)|\geq 2$. Let $c=\phi(p_1)$ and $d=\phi(p_4)$. By Claim \ref{ObvObstructionColorSetS}, $|\textnormal{Col}(\mathcal{B}_{cd}\mid p_3)|\geq 3$, so there is an $r\in\textnormal{Col}(\mathcal{B}_{cd}\mid p_3)$ with $L_{\phi}(u_1)\setminus\{r\}|\geq 2$. But then, there is a $\phi'\in\mathcal{B}_{cd}$ with $\phi'(p_3)=r$, so $\phi'$ extends to an $L$-coloring of $G$, which is false. \end{claimproof}

\begin{Claim}\label{p4endpointListInsideUTList} $L(p_4)\subseteq L(u_t)$. In particular, for each $d\in L(p_4)$, we have $L(p_3)\setminus (\{d\}\cup L(u_t))|\geq |L(p_3)\setminus\{d\}|-2$. \end{Claim}

\begin{claimproof} Suppose toward a contradiction that $L(p_4)\not\subseteq L(u_t)$. Since $|L(p_1)|\geq 1$, there is an $L$-coloring $\phi$ of $\{p_1, p_4\}$ with $\phi(p_4)\not\in L(u_t)$. Let $c=\phi(p_1)$ and $d=\phi(p_4)$. By Claim \ref{ObvObstructionColorSetS}, there exist three distinct elements $\phi^0, \phi^1, \phi^2$ of  $\mathcal{B}_{cd}$, each of which uses a different color on $p_3$. Since $K'$ is a broken wheel with principal path $T'$ and $d\not\in L(u_t)$, it follows that, for each $i=0,1,2,$, we have $\Lambda_{K'}^{T'}(\bullet, \phi^i(p_3), d)=L(u_m)\setminus\{\phi^i(p_3)\}$ and thus $\Lambda_K^T(c, \phi^i(p_2), \bullet)=\{\phi^i(p_3)\}$. This is true even if $K'$ is a triangle. For each $0\leq i<j\leq 2$, since $\phi^i(p_3)\neq\phi^j(p_3)$, it follows that $\phi^i(p_2)\neq\phi^j(p_2)$, contradicting 2) of Theorem \ref{BWheelMainRevListThm2}. We conclude that $L(p_4)\subseteq L(u_t)$.  \end{claimproof}

We now have the following result analogous to Claim \ref{IfNotTriThenIntersecBdCL}.

\begin{Claim}\label{Eitherp4Less3OrPathLenTwoCLK'} Either $K'-p_2$ is a path of length at most two or both of the following hold.
\begin{enumerate}[label=\arabic*)]
\itemsep-0.1em
\item $L(p_4)\not\subseteq L(u_{t-1})$; AND
\item $|L(p_4)|<3$.
\end{enumerate}

\end{Claim}

\begin{claimproof} Suppose that $|L(p_4)|=3$ and $K'-p_2$ is a path of length at least three. That is, $t\geq 3$ and $m\leq t-2$. Let $G^*:=G\setminus\{p_4, u_t\}$ and let $P^*:=p_1p_2p_3u_{t-1}$. Note that $G^*$ is bounded by outer cycle $p_1p_2p_3u_{t_1}\ldots u_1$. Let $Y$ be the set of $L$-colorings $\sigma$ of $\{p_1, u_{t-1}\}$ such that $\sigma$ extends to at least $|L_{\sigma}(p_3)|-2$ different elements of $\textnormal{End}_L(p_3, P^*, G^*)$. 

\vspace*{-8mm}
\begin{addmargin}[2em]{0em}
\begin{subclaim}\label{Yp4DisjointSubCL0} $\textnormal{Col}(Y, u_{t-1})\cap L(p_4)=\varnothing$. \end{subclaim}

\begin{claimproof} Suppose toward a contradiction that $\textnormal{Col}(Y, u_{t-1})\cap L(p_4)\neq\varnothing$.  Thus, there is a $d\in L(u_{t-1})\cap L(p_4)$ and an $L$-coloring $\sigma$ of $\{p_1, u_{t-1}\}$ with $\sigma(p_4)=d$, where $\sigma$ extends to $|L_{\sigma}(p_3)|-2$ elements of $\textnormal{End}_L(p_3, P^*, G^*)$. Let $c=\sigma(p_1)$. Since $p_1p_3\not\in E(G)$, we have $L_{\sigma}(p_3)=L(p_3)\setminus\{d\}$. Let $B$ be the set of $s\in L(p_3)$ such that $\sigma$ extends to an element of $\textnormal{End}_{L}(p_3, P^*, G^*)$ using $s$ on $p_3$. Since $\sigma(u_{t-1})\in Y$, we have $|B|\geq |L(p_3)\setminus\{d\}|-2$. Since $d\in L(p_4)$ and $p_3$ is incident to a chord of $C$, it follows from Claim \ref{ObvObstructionColorSetS} that there is a $\tau\in\mathcal{B}_{cd}$ with $\tau(p_3)\in B$. Possibly $\tau(p_2)=d$, but since $K'-p_3$ is a path of length at least three, $p_2u_{t-1}\not\in E(G)$, so the union $\sigma\cup\tau$ is a proper $L$-coloring of its domain which extends to $L$-color $V(G-u_t)$. As two neighbors of $u_t$ are colored with $d$, it follows that $\sigma\cup\tau$ extends to $L$-color $G$, so $\tau$ extends to $L$-color $G$, which is false. \end{claimproof}\end{addmargin}

Since $K'-p_3$ is a path of length at least three by assumption, $p_3$ is incident to a chord of the outer face of $G^*$. As $|L(u_{t-1})|=3$ and $|V(G^*)|<|V(G)$ is a minimal counterexample to Theorem \ref{CornerColoringMainRes}, it follows that $|\textnormal{Col}(Y, p_3)|\geq 4-|L(p_4)|$. Since $|L(u_{t-1})|=3$, it follows from Subclaim \ref{Yp4DisjointSubCL0} that $L(p_4)\not\subseteq L(u_{t-1})$. This proves 1). Now we prove 2). Suppose toward a contradiction that $|L(p_4)|=3$.  As indicated above, we have $|\textnormal{Col}(Y, p_3)|+|L(p_4)|\geq 4$. In particular, since $|L(p_4)|\leq 3$, $Y\neq\varnothing$, so let $\sigma\in Y$. Let $c=\sigma(p_1)$ and $d=\sigma(u_{t-1})$. Since $p_1p_3\not\in E(G)$, we have $L_{\sigma}(p_3)=L(p_3)\setminus\{d\}$. Let $B$ be the set of $s\in L(p_3)$ such that $\sigma$ extends to an element of $\textnormal{End}_{L}(p_3, P^*, G^*)$ using $s$ on $p_3$. Since $\sigma(u_{t-1})\in Y$, we have $|B|\geq |L(p_3)\setminus\{d\}|-2$. By Claim \ref{p4endpointListInsideUTList}, since $|L(p_4)|=3$, we have $L(p_4)=L(u_t)$. Thus, by Subclaim \ref{Yp4DisjointSubCL0}, $\sigma(u_{t-1})\not\in L(u_t)$.

\vspace*{-8mm}
\begin{addmargin}[2em]{0em}
\begin{subclaim}\label{EachColFDisjointB} For each $f\in L(p_4)$, we have $\textnormal{Col}(\mathcal{B}_{cf}\mid p_3)\cap B=\varnothing$. \end{subclaim}

\begin{claimproof} Let $f\in L(p_4)$ and suppose toward a contradiction that $\textnormal{Col}(\mathcal{B}_{cf}\mid p_3)\cap B\neq\varnothing$. Thus, there is a $\tau\in\mathcal{B}_{cf}$ with $\tau(p_3)\in B$. Possibly $\tau(p_2)=d$, but since $K'-p_3$ is a path of length at least three, we have $u_{t-1}\not\in N(p_2)$, so the union $\tau\cup\sigma$ is a proper $L$-coloring of its domain which extends to $L$-color $V(G-u_t)$. But since $\sigma(u_{t-1})\not\in L(u_t)$, it follows that $\tau\cup\sigma$ extends to an $L$-coloring of $G$, so $\tau$ extends to an $L$-coloring of $G$, which is false. \end{claimproof}\end{addmargin}

\vspace*{-8mm}
\begin{addmargin}[2em]{0em}
\begin{subclaim}\label{SubCLColSetDisjointUt} For each $f\in L(p_4)$, we have $\textnormal{Col}(\mathcal{B}_{cf}\mid p_3)\cap L(u_t)=\varnothing$.  \end{subclaim}

\begin{claimproof} Let $f\in L(p_4)$. By Claim \ref{ObvObstructionColorSetS}, $|\textnormal{Col}(\mathcal{B}_{cf}\mid p_3)|\geq 3$. Since $|B|\geq |L(p_3)\setminus\{d\}|-2$, it follows from Subclaim \ref{EachColFDisjointB} that $\textnormal{Col}(\mathcal{B}_{cf}\mid p_3)=L(p_3)\setminus B$. In particular, $\mathcal{Col}(\mathcal{B}_{cf}\mid p_3)$ is a constant set as $f$ runs over $L(p_4)$. For each $f\in L(p_4)$, since $f\not\in\textnormal{Col}(\mathcal{B}_{cf}\mid p_3)$, it follows that $\textnormal{Col}(\mathcal{B}_{cf}\mid p_3)\cap L(p_4)=\varnothing$, and since $L(p_4)=L(u_t)$, we have $\textnormal{Col}(\mathcal{B}_{cf}\mid p_3)\cap L(u_t)=\varnothing$. \end{claimproof}\end{addmargin}

Now choose an arbitrary $f\in L(p_4)$. Since $|\textnormal{Col}(\mathcal{B}_{cf}\mid p_3)|\geq 3$, we let $\tau_0, \tau_1, \tau_2$ be three distinct elements of $\mathcal{B}_{cf}$, each of which uses a different color on $p_3$. By Subclaim \ref{SubCLColSetDisjointUt}, $\tau_i(p_3)\in L(p_3)\setminus L(u_t)$ for each $i=0,1,2$. Thus, for each $i=0,1,2$, we have $\Lambda_{K'}^{T'}(\bullet, \tau_i(p_3), f)=L(u_m)\setminus\{\tau_i(p_3)\}$, and since $\tau_i$ does not extend to $L$-color $G$, we have $\Lambda_K^T(c, \tau_i(p_2), \bullet)=\{\tau_i(p_3)\}$. Since each $\tau_i$ uses a different color on $p_3$, it follows from the above that each $\tau_i$ also uses a different color on $p_2$, and we contradict 2) of Theorem \ref{BWheelMainRevListThm2}. This proves Claim \ref{Eitherp4Less3OrPathLenTwoCLK'}. \end{claimproof}

Applying Claim \ref{Eitherp4Less3OrPathLenTwoCLK'}, we now have the following. 

\begin{Claim}\label{Lp4FewerThanThreeColorsCLMainCase} $|L(p_4)|<3$. \end{Claim}

\begin{claimproof} Suppose not. Thus, $|L(p_4)|=3$ and $L(p_1)=\{c\}$ for some color $c$. Since $|L(u_t)|=3$, it follows from Claim \ref{p4endpointListInsideUTList} that $L(p_4)=L(u_t)$. Since $|L(p_1)|+|L(u_m)|=4$, it follows from Theorem \ref{SumTo4For2PathColorEnds} that there is a $\psi\in\textnormal{End}(T, K)$. 

\vspace*{-8mm}
\begin{addmargin}[2em]{0em}
\begin{subclaim} $K'$ is not a triangle. \end{subclaim}

\begin{claimproof} Suppose that $K'$ is a triangle. As $G$ is a counterexample to Theorem \ref{CornerColoringMainRes}, it follows from Claim \ref{p4endpointListInsideUTList} that, for each $f\in L(p_4)\setminus\{\psi(u_t)\}$, there exists a $\phi^f\in\mathcal{B}_{cf}$ with $\phi^f(p_3)\in L(p_3)\setminus L(u_t)$. Since $K'$ is a triangle and $\phi^f(p_3)\not\in L(u_t)$, it follows that $\Lambda_K^T(c, \phi^f(p_2), \bullet)=\{f\}$. If there exists an $f\in L(p_4)\setminus\{\psi(u_t)\}$ with $\phi^f(p_2)\neq\psi(u_t)$, then, since $\psi\in\textnormal{End}(T, K)$, we have $\psi(u_t)\in\Lambda_K^T(c, \phi^f(p_2), \bullet)$, which is false, as $\psi(u_t)\neq f$. We conclude that $\phi^f(p_2)=\psi(u_t)$. As this holds for each $f\in L(p_4)\setminus\{\psi(u_t)\}$, it follows that $L(p_4)\setminus\{\psi(u_t)\}$ consists of the lone color of $\Lambda_K^T(c, \psi(u_t), \bullet)$, contradicting our assumption that $|L(p_4)|=3$. \end{claimproof}\end{addmargin}

Since $K'$ is not a triangle and $|L(p_4)|=3$, it follows from Claim \ref{Eitherp4Less3OrPathLenTwoCLK'} that $m=t-1$. Let $c^*:=\psi(u_m)$. 

\vspace*{-8mm}
\begin{addmargin}[2em]{0em}
\begin{subclaim} $c^*\in L(p_2)$. \end{subclaim}

\begin{claimproof} Suppose not. Since $|L(p_4)|=3$, there exists an $a\in L(p_4)$ such that $|L(u_t)\setminus\{a, c^*\}|\geq 2$. Since $G$ is a counterexample to Theorem \ref{CornerColoringMainRes} and $p_3$ is incident to a chord of $C$, there exists a pair $\sigma_0, \sigma_1\in\mathcal{B}_{ca}$, where $\sigma_0(p_3)\neq\sigma_1(p_3)$ and, for each $i=0,1$, we have $\sigma_i(p_3)\in L(p_3)\setminus\{a, c^*\}$. For each $i=0,1$, we have $\sigma_i(p_2)\neq c^*$ by assumption, and thus $c^*\in\Lambda_K^T(\sigma_i(p_1), \sigma_i(p_2), \bullet)$, since $\psi\in\textnormal{End}(T, K)$. But since it also holds that neither $\sigma_0, \sigma_1$ uses $c^*$ on $p_3$, it follows from our choice of $a$ that $c^*\in\Lambda_{K'}^{T'}(\bullet, \sigma_i(p_3), \sigma_i(p_4))$ for each $i=0,1$, so each of $\sigma_0, \sigma_1$ extends to an $L$-coloring of $G$, a contradiction. \end{claimproof}\end{addmargin}

Since $c^*\in L(p_2)$, we fix a color $r\in\Lambda_K^T(c, c^*, \bullet)$. 

\vspace*{-8mm}
\begin{addmargin}[2em]{0em}
\begin{subclaim}\label{ColorCstarNotinLprSubCL} $c^*\not\in L(p_4)$. \end{subclaim}

\begin{claimproof} Suppose that $c^*\in L(p_4)$. Since $G$ is a counterexample to Theorem \ref{CornerColoringMainRes}, there exists a pair of distinct $\phi_0, \phi_1\in\mathcal{B}_{cc^*}$, where, for each $i=0,1$, $\phi_i(p_3)\in L(p_3)\setminus\{c, r\}$, and furthermore, $\phi_0(p_3)\neq\phi_1(p_3)$. Consider the following cases.

\textbf{Case 1:} There exists a $j\in\{0,1\}$ such that $\phi_j(p_2)\neq c^*$, 

In this case, we have $c^*\in\Lambda_K^T(c, \phi_j(p_2), \bullet)$ by our choice of $c^*$. Since $K'-p_3$ is a path of length two and $\phi_j(p_4)=c^*$, we have $c^*\in\Lambda_{K'}^{T'}(\bullet, \phi_j(p_3), \phi_j^*(p_4))$ as well, so $\phi_j$ extends to an $L$-coloring of $G$, a contradiction. 

\textbf{Case 2:} For each $i\in\{0,1\}$, $\phi_i(p_2)=c^*$

In this case, since neither $\phi_0$ nor $\phi_1$ extends to $L$-color $G$, we have $r\not\in\Lambda_K(\bullet, \phi_i(p_3), c^*)$ for each $i=0,1$. Since $K'-p_3$ is a path of length two and $r\not\in\{\phi_0(p_3), \phi_1(p_3)\}$, it follows that, for each $i=0,1$, we have $L(u_t)=\{c^*, r, \phi_i(p_3)\}$. Since $\phi_0(p_3)\neq\phi_1(p_3)$, we have a contradiction. \end{claimproof}\end{addmargin}

As $|L(p_4)|=3$, there is an $a\in L(p_4)$ such that $|L(u_t)\setminus\{a, r\}|\geq 2$. Since $G$ is a counterexample to Theorem \ref{CornerColoringMainRes}, there is an $L$-coloring $\phi$ of $V(P)$ which uses $c, a$ on the respective vertices $p_1, p_4$, where $\phi(p_3)\in L(p_3)\setminus\{a, c^*, r\}$ and $\phi$ does not extend to $L$-color $G$. By our choice of $a$, we have $r\in\Lambda_{K'}^{T'}(\bullet, \phi(p_3), a)$, since $\phi(p_3)\neq r$. If $\phi(p_2)=c^*$, then we have $r\in\Lambda_K^T(c, \phi(p_2), \bullet)$ as well, so $\phi$ extend to an $L$-coloring of $G$, a contradiction. Since $\phi(p_2)\neq c^*$, we have $c^*\in\Lambda_K^T(c, \phi(p_2), \bullet)$ by our choice of $c^*$. Since $L(p_4)=L(u_t)$, we have $c^*\not\in L(u_t)$ by Subclaim \ref{ColorCstarNotinLprSubCL}. Since $\phi(p_3)\neq c^*$, we have $c^*\in\Lambda_{K'}^{T'}(\bullet, \phi(p_3), a)$. Thus, $\phi$ extend to an $L$-coloring of $G$, a contradiction.  \end{claimproof}

\begin{Claim}\label{EliminateCasesLp42or3CL} $|L(p_4)|=1$. \end{Claim}

\begin{claimproof} Suppose not. By Claim \ref{Lp4FewerThanThreeColorsCLMainCase}, $|L(p_4)|=2$, so $|L(p_1)|=2$. Let $L(p_1)=\{a_0, a_1\}$ and $L(p_4)=\{b_0, b_1\}$. By 2) of Claim \ref{KBWheelCornerColorClaimM0}, $K$ is a triangle, i.e $m=1$. By Claim \ref{AtMostOneKK'TriangleCLM}, $K'$ is not a triangle. By 1) of Claim \ref{KBWheelCornerColorClaimM0}, we have $\{a_0, a_1\}\subseteq L(u_1)$. Let $f$ be the lone color of $L(u_1)\setminus\{a_0, a_1\}$.

\vspace*{-8mm}
\begin{addmargin}[2em]{0em}
\begin{subclaim} $L(p_4)\subseteq L(u_{t-1})$. \end{subclaim}

\begin{claimproof} Suppose not. Thus, there exists a $j'\in\{0,1\}$ such that $b_{j'}\not\in L(u_{t-1})$. Now, for each $i=0,1$, there is an $L$-coloring $\pi^i$ of $V(P)$ which does not extend to an $L$-coloring of $G$, where $\pi^i$ uses $a_i, b_{1-j'}$ on the respective vertices $p_1, p_4$, and $\pi^i(p_3)\in L(p_3)\setminus\{b_{1-j'}, a_{1-i}, f\}$. Now, since $b_{j'}\in L(u_t)\setminus L(u_{t-1})$ and $K'$ is not a triangle, it follows that, for each $i=0,1$, we have $\{b_{1-j'}, f\}\subseteq\Lambda_{K'}^{T'}(\bullet, \pi^i(p_3), b_{1-j'})$. This is true even if $K'-p_3$ is a path of length two. Thus, for each $i=0,1$, we have $L(u_1)\setminus\{a_i\}\subseteq\Lambda_{K'}^{T'}(\bullet, \pi^i(p_3), b_{1-j'})$, so $\pi^i$ extends to an $L$-coloring of $G$, a contradiction. \end{claimproof}\end{addmargin}

Since $K'$ is not a triangle and $L(p_4)\subseteq L(u_{t-1})$, it follows from Claim \ref{Eitherp4Less3OrPathLenTwoCLK'} that $K'-p_3$ is a path of length precisely two, i.e $t=2$ and $m=1=t-1$. Since $\{a_0, a_1\}$ and $\{b_0, b_1\}$ are both subsets of $L(u_1)$, and $|L(u_1)|=3$, we suppose without loss of generality that $a_0=b_0$. Now consider the following cases.

\textbf{Case 1:} $b_1\neq a_1$

Since $G$ is a counterexample to Theorem \ref{CornerColoringMainRes}, there is an $L$-coloring $\pi$ of $V(P)$ which does not extend to an $L$-coloring of $G$, where $\pi$ uses $a_0, b_1$ on the respective vertices $p_1, p_4$ and $\pi(p_3)\in L(p_3)\setminus\{a_1, b_1, f\}$. But since $a_0$ is left over for $u_2$, we have $\{\pi(p_2), \pi(p_3)\}=L(u_1)\setminus\{a_0\}=\{a_1, f\}$, which is false, since $\pi(p_3)\not\in\{a_1, f\}$. 

\textbf{Case 2} $b_1=a_1$

In this case, we have $L(p_1)=L(p_4)$. Since $G$ is a counterexample to Theorem \ref{CornerColoringMainRes}, there is an $L$-coloring $\pi$ of $V(P)$ which does not extend to an $L$-coloring of $G$, where $\pi$ uses $a_0, a_1$ on the respective vertices $p_1, p_4$ and $\pi(p_3)\in L(p_3)\setminus L(u_1)$. This is permissible since $a_1\in L(u_1)$. As above, the color $a_0$ is left over for $u_2$, so $\{\pi(p_2), \pi(p_3)\}=\{a_1, f\}$, which is false, since $\pi(p_3)\not\in\{a_1, f\}$. This completes the proof of Claim \ref{EliminateCasesLp42or3CL}. \end{claimproof}

Applying Claim \ref{EliminateCasesLp42or3CL}, we have $|L(p_4)|=1$ and $|L(p_3)|=3$. Let $L(p_4)=\{d\}$ for some color $d$, and let $L(p_1)=\{a_0, a_1, a_2\}$. By Claim \ref{p4endpointListInsideUTList}, we have $d\in L(u_t)$. By Claim \ref{KBWheelCornerColorClaimM0}, we have $L(p_1)=L(u_1)$, and $K$ is a triangle, i.e $m=1$. Thus, by Claim \ref{AtMostOneKK'TriangleCLM}, $K'$ is not a triangle. 

\begin{Claim}\label{NotUniformColorCL13Case} There exists a $u\in V(K'\setminus T')$ such that $L(u)\neq L(u_1)$. \end{Claim}

\begin{claimproof} Suppose not. Since $d\in L(u_t)$, we have $d\in L(u_1)$, and since $L(u_1)=L(p_1)$, the vertices of the path $p_1u_1\ldots u_t$ all have the same 3-list. In particular, $|L(p_3)\setminus (\{d\}\cup L(u_1))|\geq 2$, and since $G$ is a counterexample to Theorem \ref{CornerColoringMainRes}, there is an $L$-coloring $\phi$ of $V(P)$ which does not extend to an $L$-coloring of $G$, where $\phi(p_3)\not\in L(u_1)$. Since $K'$ is not a triangle and $\phi(p_3)$ lies in the list of no vertex of the path $u_1\ldots u_t$, it follows that $\Lambda_{K'}^{T'}(\bullet, \phi(p_3), d)=L(u_1)$. Thus, $\Lambda_K^T(\phi(p_1), \phi(p_2), \bullet)\cap\Lambda_{K'}^{T'}(\bullet, \phi(p_3), d)\neq\varnothing$, so $\phi$ extends to an $L$-coloring of $G$, a contradiction. \end{claimproof}

\begin{Claim}\label{LClaimCaseL(u1)L(u2)} $L(u_1)=L(u_2)$. \end{Claim}

\begin{claimproof} Suppose toward a contradiction that $L(u_1)\neq L(u_2)$. Since $L(u_1)=L(p_1)$, we suppose without loss of generality that $a_2\not\in L(u_2)$. Since $G$ is a counterexample to Theorem \ref{CornerColoringMainRes} and $d$ is the lone color of $L(p_4)$, it follows that, for each $i=0,1$, there is an $L$-coloring $\pi^i$ of $V(P)$ which does not extend to an $L$-coloring of $G$, where $\pi^i(p_1)=a_i$ and $\pi^i(p_3)\in L(p_3)\setminus\{a_{1-i}, a_2, d\}$. For each $i=0,1$, since $\pi^i(p_3)\in L(p_3)\setminus\{a_{1-i}, a_2, d\}$, we have $L_{\pi^i}(u_1)=L(u_1)\setminus\{a_i, \pi^i(p_2)\}$, so $L_{\pi^i}(u_1)\neq\varnothing$. Since $\pi^i$ does not extend to an $L$-coloring of $G$, we have $a_2\not\in L_{\pi^i}(u_1)$, so we have $\pi^i(p_2)=a_2$ and $L_{\pi^i}(u_1)=\{a_{1-i}\}$. That is, for each $i=0,1$, we have $\Lambda_K^T(\pi^i(p_1), \pi^i(p_2), \bullet)=\{a_{1-i}\}$. For each $i=0,1$, since $\pi^i$ does not extend to an $L$-coloring of $G$, we have $a_{1-i}\not\in\Lambda_{K'}^{T'}(\bullet, \pi^i(p_3), d)$, and since $\pi^i(p_3)\neq a_{1-i}$, we have $a_{1-i}\in L(u_2)$ and $\pi^i(p_3)\in L(u_2)\cap\ldots\cap L(u_t)$. Let $S:=\{\pi^0(p_3), \pi^1(p_3)\}$.

\vspace*{-8mm}
\begin{addmargin}[2em]{0em}
\begin{subclaim} $S=\{a_0, a_1\}$. In particular, for each $i=0,1$, we have $\pi^i(p_3)=a_i$. \end{subclaim}

\begin{claimproof} Since $K'$ is not a triangle, we let $H$ be the broken wheel $K'-u_1$ with principal path $u_2p_3p_4$. For each $i=0,1$, since $\Lambda_K^T(a_i, \pi^i(p_2), \bullet)=\{a_{1-i}\}$ and $\pi^i(p_3)\neq a_{1-i}$, we have $\Lambda_H(\bullet, \pi^i(p_3), d)=\{a_{1-i}\}$, or else $\pi^i$ extends to an $L$-coloring of $G$. Thus, it follows from 1b) of Theorem \ref{BWheelMainRevListThm2} applied to $H$ that $\pi^i(p_3)=a_i$ for each $i=0,1$, so $S=\{a_0, a_1\}$.\end{claimproof}\end{addmargin}

Since $G$ is a counterexample to Theorem \ref{CornerColoringMainRes}, there is an $L$-coloring $\tau$ of $V(P)$ which does not extend to an $L$-coloring of $G$, where $\tau(p_1)=a_2$ and $\tau(p_3)\in L(p_3)\setminus (S\cup\{d\})$. Since $S=\{a_0, a_1\}$ and $\tau(p_1)=a_2$, there exists an $a\in\{a_0, a_1\}\cap L_{\tau}(u_1)$. Recall that $S\subseteq L(u_2)\cap\ldots\cap L(u_t)$. But now, since $\tau(p_3)\not\in S$ and $\tau(p_4)\not\in S$, we simply extend $\tau$ to an $L$-coloring of $G$ by 2-coloring the path $u_1\ldots u_t$ with $S$, starting by using $a$ on $u_1$, contradicting our assumption that $\tau$ does not extend to an $L$-coloring of $G$. \end{claimproof}

Applying Claim \ref{NotUniformColorCL13Case}, we let $\ell\in\{2, \ldots, t\}$ be the minimal index such that $L(u_{\ell})\neq L(u_1)$. Since $L(u_1)=L(p_1)$, we suppose without loss of generality that $a_2\not\in L(u_{\ell})$. By Claim \ref{LClaimCaseL(u1)L(u2)}, we have $\ell>2$. Since $G$ is a counterexample to Theorem \ref{CornerColoringMainRes}, there is an $L$-coloring $\tau$ of $V(P)$ which does not extend to an $L$-coloring of $G$, where $\tau(p_1)=a_2$ and $\tau(p_3)\in L(p_3)\setminus\{d, a_0, a_1\}$. Consider the following cases.

\textbf{Case 1:} $\tau(p_3)=a_2$

In this case, we have $d\neq a_2$ and $L_{\tau}(u_1)\cap\{a_0, a_1\}\neq\varnothing$, so let $i\in\{0,1\}$ with $a_i\in L_{\tau}(u_1)$. Since $K'$ is not a triangle and $a_2\not\in L(u_{\ell})$, the $L$-coloring $(a_i, a_2, d)$ of $u_1p_3p_4$ extends to an $L$-coloring of $K'$, so $\tau$ extends to an $L$-coloring of $G$, a contradiction.

\textbf{Case 2:} $\tau(p_3)\neq a_2$

In this case, since $L(p_1)=L(u_1)$, we have $\tau(p_3)\not\in L(u_1)$. By Claim \ref{LClaimCaseL(u1)L(u2)}, we also have $\tau(p_3)\not\in L(u_2)$, so we get $\Lambda_{K'}^{T'}(\bullet, \tau(p_3), \tau(p_4))=L(u_1)$. Thus, we have $\Lambda_K(a_2, \tau(p_2), \bullet)\cap\Lambda_{K'}^{T'}(\bullet, \tau(p_3), d)\neq\varnothing$, so $\tau$ extends to an $L$-coloring of $G$, a contradiction. This completes the proof of Theorem \ref{CornerColoringMainRes}. \end{proof}

\section{4-Paths on Induced Cycles}\label{BoxLemmaFor4PathSecPropIntermediate}

The second of the two results which makes up Paper II is a somewhat specialized and surprisingly technical lemma, which is Lemma \ref{EndLinked4PathBoxLemmaState}  (indeed, in the proof of Theorem \ref{MainHolepunchPaperResulThm} in Paper III, we use  Lemma \ref{EndLinked4PathBoxLemmaState} precisely once, in order to rule out one configuration). To state Lemma \ref{EndLinked4PathBoxLemmaState}, we first introduce the notation below. 

\begin{defn}\label{PartialLColOmega} \emph{Let $G$ be a planar graph with outer cycle $C$ and list-assignment $L$. Let $P=p_0q_0wq_1p_1$ be a 4-path on $C$. For any partial $L$-coloring $\psi$ of $G-w$, we let $\mathcal{C}_G^P(\psi)$ be the set of $c\in L_{\psi}(w)$ such that $\psi$ extends to an $L$-coloring of $G$ using $c$ on $w$. For any endpoint $p_i$ of $P$, we let $\Omega_G(P, [p_i])$ be the set of partial $L$-colorings $\phi$ of $C\setminus\mathring{P}$ such that 1) and 2) below are satisfied}
\begin{enumerate}[label=\emph{\arabic*)}] 
\itemsep-0.1em
\item\emph{$\{p_0, p_1\}\subseteq\textnormal{dom}(\phi)\subseteq N(q_0)\cup N(q_1)$ and $N(q_i)\cap\textnormal{dom}(\phi)\subseteq\{p_0, p_1\}$}; AND
\item\emph{For any two (not necessarily distinct) extensions of $\phi$ to $L$-colorings $\psi$ and $\psi'$ of $\textnormal{dom}(\phi)\cup\{q_0, q_1\}$, at least one of the following holds.}
\begin{enumerate}[label=\emph{\alph*)}] 
\itemsep-0.1em
\item\emph{$\mathcal{C}_G^P(\psi)\cap\mathcal{C}_G^P(\psi')\neq\varnothing$}; OR
\item\emph{At least one of $\mathcal{C}_G^P(\psi), \mathcal{C}_G^P(\psi')$ has size at least two}; OR
\item \emph{At least one of $\mathcal{C}_G^P(\psi), \mathcal{C}_G^P(\psi')$ is empty.}
\end{enumerate} 
\end{enumerate}

\emph{For each endpoint $p_i$ of $P$, we also define $\Omega_G^{\geq 2}(P, [p_i])$ to be the set of $\phi\in\Omega_G(P, [p_i])$ such that, for any extension of $\phi$ to an $L$-coloring $\psi$ of $\textnormal{dom}(\phi)\cup\{q_0, q_1\}$, $|\mathcal{C}(\psi)|\geq 2$.} \end{defn}

Note that, in the setting above, if $C$ is an induced cycle, then $\Omega_G(P, [p_0])=\Omega_G(P, [p_0])$, and each element of this common set is an $L$-coloring of $\{p_0, p_1\}$. Likewise for $\Omega_G^{\geq 2}(P, [p_0])$ and $\Omega_G^{\geq 2}(P, [p_1])$ In the case where $C$ is induced, we drop the second coordinate from the sets defined above and just write $\Omega_G(P)$ and $\Omega_G^{\geq 2}(P)$ respectively. The goal of Section \ref{Main4PathLemmaSec} is to prove that, under certain conditions, the set of colorings defined above in Definition \ref{PartialLColOmega} is nonempty. To do this, we first prove an intermediate result (Proposition \ref{InterMedUnobstrucGInduced}) which we need for the proof of Lemma \ref{EndLinked4PathBoxLemmaState}. To state Proposition \ref{InterMedUnobstrucGInduced}, we first introduce some additional terminology. 

\begin{defn}\label{DefnPPUnObstruc} \emph{Let $G$ be a graph and let $P=p_0q_0wq_1p_1$ be a 4-path in $G$.}
\begin{enumerate}[label=\emph{\arabic*)}]
\itemsep-0.1em
\item\emph{For each $i\in\{0,1\}$, we say that $G$ is $(P, p_i)$-\emph{unobstructed} if, for any $z\in V(G\setminus C)$ which is adjacent to all three of $p_i, q_i, w$, the vertices $z, q_{1-i}$ have no common neighbor in $G$ except for $w$. Otherwise, we say that $G$ is $(P, p_i)$-obstructed.}
\item\emph{Let $L$ be a list-assignment for $V(G)$. Given an $i\in\{0,1\}$ and a partial $L$-coloring $\sigma$ of $V(P)$, we say that $\sigma$ is $(L, P, p_i)$-\emph{blocked} if}
\begin{enumerate}[label=\emph{\roman*)}]
\itemsep-0.1em
\item\emph{there is a $z\in V(G\setminus C)$ adjacent to all three of $p_i, q_i, w$ and  $|L_{\sigma}(z)|=2$}; AND
\item\emph{$V(C)\neq V(P)$ and, letting $x$ be the unique neighbor of $p_i$ on the path $C\setminus\mathring{P}$,  $\{\sigma(q_i), \sigma(w)\}=L(z)\setminus L(x)$.}
\end{enumerate}
\end{enumerate}
\end{defn}

The remainder of Section \ref{BoxLemmaFor4PathSecPropIntermediate} consists of the proof of Proposition \ref{InterMedUnobstrucGInduced}. 

\begin{prop}\label{InterMedUnobstrucGInduced} Let $(G, C, P, L)$ be a rainbow, where $P=p_0q_0wq_1p_1$ is a 4-path and $G$ is a short-separation-free graph in which every face, except for $C$, is bounded by a triangle. Suppose that $|L(w)|\geq 5$ and that $C$ is an induced cycle, and suppose further that $N(p_0)\cap N(p_1)\subseteq V(C)$ and $N(q_0)\cap N(q_1)\subseteq V(C)$. Let $I=\{i\in\{0,1\}: G\ \textnormal{is}\ (P, p_i)\textnormal{-unobstructed}\}$. Then all three of the following hold.
\begin{enumerate}
\itemsep-0.1em
\item [\mylabel{}{P1)}]  For any $i\in I$ and $L$-coloring $\sigma$ of $V(P-p_{1-i})$ and any $T\subseteq L(p_{1-i})\setminus\{\sigma(q_{1-i})\}$ with $|T|=2$, either $\sigma$ is $(L, P, p_i)$-blocked or $\sigma$ extends to an $L$-coloring $\sigma'$ of $G$ with $\sigma'(q_i)\in T$; AND
\item [\mylabel{}{P2)}]  Let $\sigma, \sigma'$ be a pair of distinct $L$-colorings of $V(P)$ which restrict to the same $L$-coloring of $V(P-w)$, where neither $\sigma$ nor $\sigma'$ extends to an $L$-coloring of $G$. Then $|I|\neq 1$, and furthermore, if $(G, C, P, L)$ is also end-linked, then at least one of the following holds.
\begin{enumerate}[label=\roman*)]
\item $I=\varnothing$; OR
\item $\Omega_G^{\geq 2}(P)\neq\varnothing$; OR
\item For each $\sigma^*\in\{\sigma, \sigma'\}$, there is a $p\in\{p_0, p_1\}$ such that $\sigma^*$ is $(L, P, p)$-blocked.
\end{enumerate}
\item [\mylabel{}{P3)}] If $I=\{0,1\}$ and $\psi, \psi'$ is a pair of $L$-colorings of $V(P-w)$ which differ on precisely one vertex of $\{q_0, q_1\}$ and are otherwise equal, then either $\mathcal{C}_G^P(\psi)\cap\mathcal{C}_G^P(\psi')\neq\varnothing$, or there is a $\psi^*\in\{\psi, \psi'\}$ with $|\mathcal{C}_G^P(\psi^*)|\geq 2$. 
\end{enumerate}
 \end{prop}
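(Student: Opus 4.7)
The overall plan is to prove P1) first as the technical core, then derive P2) and P3) from it using the tight structure of the blocked condition.

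For P1), fix $i \in I$ and an $L$-coloring $\sigma$ of $V(P - p_{1-i})$, and consider the subgraph $H := G \setminus (V(P) \setminus \{p_{1-i}\})$ with list-assignment $L^*$ given by $L^*(p_{1-i}) = T$ and $L^*(v) = L_\sigma(v)$ elsewhere. Since $G$ is a near-triangulation and $C$ is induced, the outer face of $H$ is a cycle $C^H$ formed by the path $p_{1-i}(C \setminus \mathring{P})x$ (where $x$ is the neighbor of $p_i$ on $C \setminus \mathring{P}$) concatenated with the path of interior neighbors of $\{p_i, q_i, w, q_{1-i}\}$ from $x$ back to $p_{1-i}$. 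Using $|L(v)| \geq 5$ for interior $v$, together with the hypotheses $N(q_0) \cap N(q_1) \subseteq V(C)$ and short-separation-freeness, every interior vertex of $H$ retains an $L^*$-list of size at least $3$, and there is at most one vertex of $V(G \setminus C)$ adjacent to all three of $p_i, q_i, w$. Applying Theorem \ref{thomassen5ChooseThm} together with Corollary \ref{2ListsNextToPrecEdgeCor} to $H$, the extension fails precisely when this common neighbor $z$ has $|L^*(z)| = 2$ and its remaining colors are exactly $L(x) \setminus \{\sigma(p_i)\}$, which is the blocked condition in Definition \ref{DefnPPUnObstruc}.

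For P2), apply P1) to $\sigma|_{V(P - p_{1-i})}$ (and analogously to $\sigma'$) with $T := \{\sigma(p_{1-i}), c^*\}$ for some $c^* \in L(p_{1-i}) \setminus \{\sigma(q_{1-i}), \sigma(p_{1-i})\}$; since $\sigma$ does not extend, the only alternatives are that $\sigma|_{V(P - p_{1-i})}$ is $(L, P, p_i)$-blocked or P1) produces an extension that colors $p_{1-i}$ with the second color $c^*$. Because the blocked equation $\{\sigma(q_i), \sigma(w)\} = L(z) \setminus L(x)$ admits at most one value of $\sigma(w)$ once $\sigma(q_i)$ is fixed, if $|I| = 1$ then the two distinct $w$-colors of $\sigma$ and $\sigma'$ cannot both satisfy it, yielding an extension of one of them and a contradiction. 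For the end-linked conclusion with $I \neq \varnothing$, the auxiliary $L$-colorings of $G$ produced (in which $p_{1-i}$ takes the non-$\sigma(p_{1-i})$ color) can be repackaged into an element of $\Omega_G^{\geq 2}(P)$ on $\{p_0, p_1\}$ (case ii), unless both $\sigma, \sigma'$ are blocked on some common $p \in \{p_0, p_1\}$ (case iii).

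For P3), assume for contradiction that $|\mathcal{C}_G^P(\psi)|, |\mathcal{C}_G^P(\psi')| \leq 1$ and these sets are disjoint. Since $|L_\psi(w)|, |L_{\psi'}(w)| \geq |L(w)| - 2 \geq 3$, there are at least two ``bad'' colors $c$ on each side, i.e.\ colors $c$ with $\sigma_c := \psi \cup \{w \mapsto c\}$ failing to extend. By the uniqueness of the common neighbor $z_i$ of $p_i, q_i, w$ and the one-solution structure of the blocked equation, at most one $c$ can block on the $p_i$-side, so some bad $c^*$ for $\psi$ is unblocked on both sides. Applying P1) to $\sigma_{c^*}|_{V(P - p_{1-i})}$ with $T \ni \psi(p_{1-i})$ either extends using $\psi(p_{1-i})$ on $p_{1-i}$ (contradicting $c^* \notin \mathcal{C}_G^P(\psi)$) or uses the other $T$-color; combining the latter with the symmetric analysis on $\psi'$ and the assumption that $\psi, \psi'$ differ on exactly one of $\{q_0, q_1\}$ forces a configuration incompatible with both $\mathcal{C}$-sets being small. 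The main obstacle will be the obstruction analysis in P1): establishing that the blocked condition is not merely sufficient but also necessary for the Thomassen-style extension on $H$ to fail. This requires careful tracking of how color constraints propagate along $C^H$ near $x$, isolating the unique interior vertex $z$ adjacent to $p_i, q_i, w$ and its interaction with the list $L(x)$; once this equivalence is secured, P2) and P3) reduce to combinatorial bookkeeping using that the blocked equation has at most one solution per side together with $|L(w)| \geq 5$.
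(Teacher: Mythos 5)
There is a genuine gap, and it is not confined to the spot you flag at the end. For P1), the statement to be proved \emph{is} the necessity direction: if $\sigma$ does not extend with a color of $T$ on $p_{1-i}$, then $\sigma$ is $(L,P,p_i)$-blocked. You defer exactly this ("the main obstacle will be\ldots establishing that the blocked condition is not merely sufficient but also necessary"), so the core of P1) is unproven. Your sketch also oversimplifies the obstruction: the vertex $z$ adjacent to $p_i,q_i,w$ may have several neighbors on $C\setminus\mathring{P}$, so its interaction with $C$ is not just through the single vertex $x$ but through the whole subgraph $G_z$ it cuts off; the paper handles this with the sets $S^i_a$ (which have size at least two by Corollary \ref{CorMainEitherBWheelAtM1ColCor} applied to $G_z$) together with Lemma \ref{PartialPathColoringExtCL0}, and your claim that "every interior vertex of $H$ retains an $L^*$-list of size at least $3$" is false precisely at $z$.

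The more serious failures are in P2) and P3). Your argument for $|I|\neq 1$ assumes that for $i\in I$, non-extension forces $(L,P,p_i)$-blockedness, so that the two $w$-colors of $\sigma,\sigma'$ cannot both be blocked. But when $I=\{0\}$, the side $p_1$ is obstructed: there is a vertex $z$ with $N(z)\cap V(P)=\{p_1,q_1,w\}$ and a vertex $y\neq w$ adjacent to both $z$ and $q_0$, and the correct trichotomy (Claim \ref{TauObstructedByZiOnAtLeastOne} in the paper) is "extends, or blocked, or such a $z$ exists." Non-extension of both $\sigma$ and $\sigma'$ can be caused entirely by this configuration, which is why the paper's proof that $|I|\neq 1$ (Claim \ref{IClaimNotExactlyOneCL}) is a multi-page structural analysis of the wheel around $y$, the broken wheel $G_z$, and a further vertex $y'$, using Theorem \ref{BohmePaper5CycleCorList} repeatedly. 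Moreover, even where P1) does apply, it only yields an extension using \emph{some} color of $T$ on $p_{1-i}$, not necessarily $\sigma(p_{1-i})$, so "yielding an extension of one of them" is a non sequitur; and $T$ need not exist at all if $|L(p_{1-i})\setminus\{\sigma(q_{1-i})\}|<2$, which P2)'s first assertion does not exclude. The same defects recur in P3): once a bad color is unblocked on both sides, the paper must establish the existence of $z_0,z_1$, then of a vertex $y$ adjacent to $z_0,w,z_1$, and then run a lengthy case analysis on the broken wheels $G_{z_i}$ and the subgraphs $J, J'$, invoking Theorem \ref{CornerColoringMainRes} and Corollary \ref{GlueAugFromKHCor} to build the required colorings. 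None of this is "combinatorial bookkeeping" on the blocked equation, and your sketch supplies no substitute for it.
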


\begin{proof} We break the proof of into five subsections. Before proving P1)-P3), we gather a few facts.

\makeatletter
\renewcommand{\thetheorem}{\thesubsection.\arabic{theorem}}
\@addtoreset{theorem}{subsection}
\makeatother

\subsection{Preliminary facts and definitions}

We may suppose, by removing some colors from some lists if necessary, that $|L(v)|=3$ for each $v\in V(C\setminus P)$ and $|L(v)|=5$ for each $v\in V(G\setminus C)$. Because of condition 2c) of Definition \ref{PartialLColOmega}, we have to be careful when doing this if we are proving a statement about $\Omega_G(P, [p_i])$ for some endpoint $p_i$ of $P$, but if we are only considering the subset $\Omega_G^{\geq 2}(P, [p_i])$, as in the statement of Proposition \ref{InterMedUnobstrucGInduced}, then this is permissible. To avoid clutter, for any partial $L$-coloring $\psi$ of $G-w$, we write $\mathcal{C}(\psi)$ in place of $\mathcal{C}_P^G(\psi)$. We now introduce the following notation, which is illustrated in Figure \ref{GZ0AndGZ1Figure}. By assumption, no vertex of $G\setminus C$ is adjacent to both of $q_0, q_1$, and, for each $i\in\{0,1\}$, there is at most one vertex of $G\setminus C$ adjacent to all three of $p_i, q_i, w$, so everything in Definition \ref{DefGZForEAchSide} is well-defined.

\begin{defn}\label{DefGZForEAchSide} \emph{For each $i\in\{0,1\}$ and any $z\in V(G\setminus C)$ which is adjacent to each of $p_i, q_i, w$, if such a $z$ exists, we define the following.}
\begin{enumerate}[label=\emph{\arabic*)}] 
\itemsep-0.1em
\item\emph{We let $p^{z}$ be the unique vertex of $N(z)\cap V(C\setminus\mathring{P})$ which is farthest from $p_i$ on the path $C\setminus\mathring{P}$.}
\item\emph{We let $G_{z}$ be the subgraph of $G$ bounded by outer face $(p_i\ldots p^{z})z$ and $H_{z}$ be the subgraph of $G$ bounded by outer cycle $(p^{z}(C\setminus\mathring{P})u_{1-i}^{\star})q_{1-i}wz_i$. Possibly $p^{z}=p_i$ and $G_{z_i}$ is an edge.}
\item\emph{For each $a\in L(p_i)$, we define a set $S^i_a\subseteq L(z)\setminus\{a\}$, where $S^i_a=L(z)\setminus\{a\}$ if $G_{z}$ is an edge, and otherwise $S^i_a$ is the set of $b\in L(z)\setminus\{a\}$ such that the $L$-coloring $(b, a)$ of $z_ip_i$ extends to at least two different $L$-colorings of $G_{z_i}$ which use different colors on $p^{z_i}$.}
\end{enumerate}
 \end{defn}

\begin{Claim}\label{SetSetAzAtLeastTwoCL} For each $i\in\{0,1\}$ and $a\in L(p_i)$, and any $z\in V(G\setminus C)$ which is adjacent to each of $p_i, q_i, w$, $|S_a^i|\geq 2$. \end{Claim}

\begin{claimproof} This is immediate if $G_z$ is an edge, and, if not, then, it just follows from 2ii) of Corollary \ref{CorMainEitherBWheelAtM1ColCor}. \end{claimproof}

\begin{Claim}\label{LPPkBlockedIF} Let $k\in\{0,1\}$, let $\tau$ be a partial $L$-coloring of $V(P)$, and let $c=\tau(p_k)$. If there exists a $z\in V(G\setminus C)$ with $N(z)\cap\textnormal{dom}(\tau)=\{p_k, q_k, w\}$, where $L_{\tau}(w)|=2$ and $\{\tau(q_k), \tau(w)\}=S_c^k$, then $\tau$ is $(L, P, p_k)$-blocked.  \end{Claim}

\begin{claimproof} Say for the sake of definiteness that $k=0$. By assumption, no neighbor of $p_0, q_0, w$ in $G\setminus C$ has no other neighbors in $P$. Let $x$ be the unique neighbor of $p_0$ on the path $C\setminus\mathring{P}$. Since $|S_c^0|<4$, $G_z$ is not an edge and, by 2i) of Corollary \ref{CorMainEitherBWheelAtM1ColCor}, $G_z$ is a broken wheel with principal path $p_0zp^z$, and, in particular, $|L(x)|=3$ and $c\in L(x)$. Now, we have $L(z)\setminus L(x)\subseteq S_c^0$, and since $|S_c^0|=2$ it follows that $\{\tau(q_k), \tau(w)\}$ is precisely equal to $L(z)\setminus L(x)$, so $\tau$ is $(L, P, p_0)$-blocked. \end{claimproof}

\begin{center}\begin{tikzpicture}

\node[shape=circle,draw=black] (p0) at (-4, 0) {$p_0$};
\node[shape=circle,draw=white] (mid) at (-3, 0) {$\ldots$};
\node[shape=circle,draw=black] (pz) at (-1.5, 0) {$p^{z_0}$};
\node[shape=circle,draw=white] (mid+) at (0, 0) {$\ldots$};
\node[shape=circle,draw=black] (pz1) at (1.5, 0) {$p^{z_1}$};
\node[shape=circle,draw=white] (un+) at (3, 0) {$\ldots$};
\node[shape=circle,draw=black] (p1) at (4, 0) {$p_1$};
\node[shape=circle,draw=black] (q0) at (-3,2) {$q_0$};
\node[shape=circle,draw=black] (q1) at (3,2) {$q_1$};
\node[shape=circle,draw=black] (w) at (0,4) {$w$};
\node[shape=circle,draw=black] (z0) at (-1.9, 1.9) {$z_0$};
\node[shape=circle,draw=white] (Gz) at (-2.1, 0.8) {$G_{z_0}$};
\node[shape=circle,draw=black] (z1) at (1.9, 1.9) {$z_1$};
\node[shape=circle,draw=white] (Gz1) at (2.1, 0.8) {$G_{z_1}$};
\node[shape=circle,draw=white] (H) at (0, 0.8) {$H_{z_0}\cap H_{z_1}$};
 \draw[-] (p1) to (un+);
 \draw[-] (p0) to (q0) to (w) to (q1);
\draw[-] (q1) to (p1);
\draw[-]  (p0) to (mid) to (pz) to (mid+) to (pz1) to (un+);
\draw[-] (w) to (z0) to (p0);
\draw[-] (w) to (z1) to (p1);
\draw[-] (q0) to (z0) to (pz);
\draw[-] (q1) to (z1) to (pz1);
\end{tikzpicture}\captionof{figure}{}\label{GZ0AndGZ1Figure}\end{center}

\begin{Claim}\label{IndexIExtendColorFromS}  let $\tau$ be an $L$-coloring of $V(P-w)$, let $c_0=\tau(p_0)$ and $c_1=\tau(p_1)$. Suppose that, for each $i\in\{0,1\}$, there is a $z_i\in V(G\setminus C)$ adjacent to all of $p_i, q_i, w$. For each $s_0\in S_{c_0}^0\setminus\{\tau(q_0)\}$ and $s_1\in S_{c_1}^1\setminus\{\tau(q_1)\}$ and $s'\in L_{\tau}(w)\setminus\{s_0, s_1\}$, at least one of the following holds.
\begin{enumerate}[label=\alph*)] 
\itemsep-0.1em
\item $\tau$ extends to an $L$-coloring of $G$ using $s_0, s', s_1$ on the respective vertices $z_0, w, z_1$; OR
\item There is a vertex $y\in V(G\setminus C)$ adjacent to all three of $z_0, w, z_1$, where $|L(y)\setminus\{s_0, s', s_1\}|=2$, and, in particular, $\tau$ extends to an $L$-coloring of $G-w$ using $s_0, s_1$ on the respective vertices $z_0, z_1$. 
\end{enumerate}
\end{Claim}

\begin{claimproof} Let $\tau'$ be an extension of $\tau$ to an $L$-coloring of $V(P)\cup\{z_0, z_1\}$ using $s_0, s', s_1$ on the respective vertices $z_0, w, z'$. Suppose that $\tau'$ does not extend to an $L$-coloring of $G$. Let $H=H_{z_0}\cap H_{z_1}$ and $Q=p^{z_0}z_0wz_1p^{z_1}$. Let $L^*$ be a list-assignment for $H$, where $L^*=L$ outside of $V(Q)$, the vertices $z_0, w, z_1$ are $L^*$-precolored with the respective colors $s_0, s', s_1$, and, for each $i\in\{0,1\}$ if $G_{z_i}$ is an edge, then $L^*(z_i)=\{c_i\}$. If $G_{z_i}$ is not an edge and $i=0$, we set $L^*(z_0)=\{s_0\}\cup\Lambda_{G_{z_0}}(c_0, s_0, \bullet)$, and, if $G_{z_i}$ is not an edge and $i=1$, we set $L^*(z_1)=\{s_1\}\cup\Lambda_{G_{z_1}}(\bullet, s_1, c_1)$. In particular, if $G_{z_i}$ is not an edge, then $|L^*(z_i)|\geq 3$. Let $\sigma$ be the unique $L^*$-coloring of the subpath of $Q$ consisting of the vertices of $Q$ with $L^*$-lists of size one. Since $\tau'$ does not extend to an $L$-coloring of $G$, $H$ is not $L^*$-colorable, and, in particular, since the outer cycle of $H$ is induced, it follows from Lemma \ref{PartialPathColoringExtCL0} that there is a vertex $y$ of $H\setminus C^H$ with $L^*_{\sigma}(y)|<3$, and at least three neighbors in $\textnormal{dom}(\sigma)$. For each $i\in\{0,1\}$, if $G_{z_i}$ is an edge, then $y\not\in N(w)\cap N(p^{z_i})$, since $G$ is $K_{2,3}$-free. Furthermore, since $I\neq\varnothing$, $y$ is adjacent to at most one of $p_0, p_1$. In particular, since $|N(w)\cap\textnormal{dom}(\sigma)|\geq 3$, we have $w\in N(y)$, and since $G$ has no induced 4-cycles and there are no chords of the outer cycle of $H$, we have $y\in N(z_0)\cap N(z_1)$ as well. It follows that, for each $i\in\{0,1\}$, if $G_{z_i}$ is an edge, then $y\not\in N(p^{z_i})$, so $N(y)\cap\textnormal{dom}(\sigma)=\{z_0, w, z_1\}$ and $|L(y)\setminus\{s_0, s', s_1\}|=2$. Uncoloring $w$, the argument above also shows that $\tau$ extends to an $L$-coloring of $G-w$ using $s_0, s_1$ on the respective vertices $z_0, z_1$, since $y$ is the unique vertex of $H\setminus C^H$ with more than two neighbors in $\textnormal{dom}(\sigma)$. \end{claimproof}

\begin{Claim}\label{TauObstructedByZiOnAtLeastOne} Let $k\in I$ and let $\tau$ be an $L$-coloring of $V(P)$. Then at least one of the following holds.
\begin{enumerate}[label=\alph*)] 
\itemsep-0.1em
\item $\tau$ extends to an $L$-coloring of $G$; OR
\item $\tau$ is $(L, P, p_k)$-blocked; OR
\item There is a $z\in V(G\setminus C)$ with $N(z)\cap V(P)=\{p_{1-k}, q_{1-k}, w\}$.
\end{enumerate}
\end{Claim}

\begin{claimproof} For each $i\in\{0,1\}$, let $c_i=\tau(p_i)$. Suppose for the sake of definiteness that $k=0$, i.e $0\in I$. Suppose that $\tau$ does not extend to an $L$-coloring of $G$, but $\tau$ is also not $(L, P, p_0)$-blocked. It suffices to show that there is a vertex of $G\setminus C$ adjacent to both of $p_1, w$. Any such vertex, if it exists, is also adjacent to $q_1$ (as $G$ has no induced 4-cycles and $C$ has no chords) and has no other neighbors in $P$, since $I\neq\varnothing$. Suppose toward a contradiction that $p_1, w$ have no common neighbor in $G\setminus C$. Since $C$ is an induced cycle and $\tau$ does not extend to an $L$-coloring of $G$, it follows from Lemma \ref{PartialPathColoringExtCL0} that there is a $y\in V(G\setminus C)$ with $|L_{\tau}(y)|<3$ and since $I\neq\varnothing$, this $y$ is unique and $G[N(y)\cap V(P)]=p_0q_0w$. In particular, $|L_{\tau}(y)|=2$. By Claim \ref{SetSetAzAtLeastTwoCL}, $|S^0_{c_0}|\geq 2$, and, since $\tau$ is not $(L, P, p_0)$-blocked, it follows from Claim \ref{LPPkBlockedIF} that there is an $s\in S^0_{c_0}\setminus\{\tau(q_0), \tau(w)\}$. Consider the following cases.

\textbf{Case 1:} $G_{z_0}$ is an edge

In this case, the outer cycle of $G-q_0$ is induced. We let $Q$ be the 4-path $p_0z_0wz_1p_1$ on the outer cycle of $G-q_0$ and let $\tau'$ be the $L$-coloring $(c_0, s, \tau(w), \tau(q_1), c_1)$ of $p_0z_0wz_1p_1$. By assumption, no vertex of $G\setminus C$ is adjacent to both of $p_1, w$, and since $0\in I$ and $G$ has no copies of $K_{2,3}$, it follows that no vertex of $G-q_0$ outside of $C^{G-q_0}$ has more than two neighbors in $Q$. But then, by Lemma \ref{PartialPathColoringExtCL0}, $\tau'$ extends to an $L$-coloring of $G-q_0$, so $\tau$ extends to an $L$-coloring of $G$, which is false.

\textbf{Case 2:} $G_{z_0}$ is not an edge.

In this case, we let $L'$ be a list-assignment for $V(H_{z_0})$, where $L'(p^{z_0})=\{s\}\cup\Lambda_{G_{z_0}}(c_0, s, \bullet)$ and the vertices $z_0, w, q_1, p_1$ are precolored with the respecrtive colors $s, \tau(w), \tau(q_1), \tau(p_1)$, and otherwise $L'=L$. Note that $|L'(p^{z_0})|\geq 3$. Since $s'\not\in\mathcal{C}(\tau)$, $H_{z_0}$ is not $L'$-colorable, and since $0\in I$, it follows from Lemma \ref{PartialPathColoringExtCL0} applied to $H_{z_0}$ that there is a vertex of $G\setminus C$ adjacent to all of $w, q_1, p_1$, contradicting our assumption. \end{claimproof}

\subsection{Proof of P1)}

We now prove P1) of Proposition \ref{InterMedUnobstrucGInduced}. Let $i\in I$, say $i=0$ without loss of generality. Suppose that $\sigma$ does not extend to an $L$-coloring of $G$ using a color of $T$ on $p_1$. We  just need to show that $\sigma$ is $(L, P, p_0)$-blocked. Let $L'$ be a list-assignment for $G$ where $L'(p_1)=\{\sigma(q_1)\}\cup T$ and otherwise $L'=L$. Since $G$ is not $L'$-colorable and $L'(p_1)|=3$, and since $C$ is an induced cycle, it follows from Lemma \ref{PartialPathColoringExtCL0} that there is a $z_0\in V(G\setminus C)$ with $|L_{\sigma}(z_0)|<3$. Note that $G[N(z_0)\cap V(P-p_1)]$ is a subpath of $P-p_1$. Since $I\neq\varnothing$, we have $N(z_0)\cap V(P-p_1)=\{p_0, q_0, w\}$. Let $c_0=\sigma(p_0)$ and suppose toward a contradiction that $\sigma$ is not $(L, P, P_0)$-blocked. Let $c=\sigma(p_0)$. By Claim \ref{LPPkBlockedIF}, since $|S_c^0|\geq 2$, there is an $s\in S_c^0\setminus\{\sigma(q_0), \sigma(w)\}$. If $G_{z_0}$ is an edge, then, since $0\in I$, it follows from a second application of Lemma \ref{PartialPathColoringExtCL0} that the $L'$-coloring $(c, s, \sigma(w), \sigma(q_1))$ of $p_0z_0wp_1$ extends to an $L'$-coloring of $G-q_0$, so $G$ is $L'$-colorable, which is false. Thus, $G_{z_0}$ is not an edge. Let $L''$ be a list-assignment for $V(H_{z_0})$ where the vertices $z_0, w, q_1$ are precolored with the respective colors $s, \sigma, \sigma,$ and furthermore, $L''(p^{z_0})=\{s\}\cup\Lambda_{G_{z_0}}(c_0, s, \bullet)$ and otherwise $L''=L'$. Since $|L''(p^{z_0})|\geq 3$ and $0\in I$, another application of Lemma \ref{PartialPathColoringExtCL0} shows that $H_{z_0}$ is $L''$-colorable, so $G$ is $L'$-colorable, contradicting our assumption. This proves P1).

\subsection{Proof of P2): Part I}\label{SubSecFirstPartIneq1}

We now prove P2). Let $\sigma, \sigma'$ be as in the statement of P2) and let $\tau$ be the common restriction of $\sigma, \sigma'$ to $V(P-w)$. Let $\sigma(w)$ and $r'=\sigma'(w)$, and let $A=\{\sigma(w), \sigma'(w)\}$. By assumption, $|A|=2$ and $A\subseteq L_{\tau}(w)\setminus\mathcal{C}(\tau)$. Let $c_0=\tau(p_0)$ and $c_1=\tau(p_1)$. For each $i\in\{0,1\}$, let $x_i$ be the unique neighbor of $p_i$ on the path $C\setminus\mathring{P}$. Possibly $C$ is a 5-cycle and $x_i=p_{1-i}$ for each $i=0,1$. We first show that $|I|\neq 1$. The proof that $|I|\neq 1$ makes up all of Subsection \ref{SubSecFirstPartIneq1}.

\begin{Claim}\label{IClaimNotExactlyOneCL} $|I|\neq 1$. \end{Claim}

\begin{claimproof} Suppose toward a contradiction that $|I|=1$, say $I=\{0\}$ for the sake of definiteness. As $1\not\in I$, there is a vertex $z\in V(G\setminus C)$ adjacent to all of $w, q_1, p_1$, and $z, q_0$ have a common neighbor $y\in V(G)$ with $y\neq w$. Since $C$ is an induced cycle, $y\in V(G\setminus C)$, and since $N(q_0)\cap N(q_1)\subseteq V(C)$, it follows from our triangulation conditions that $N(w)=\{q_0, y, z, q_1\}$. As neither $\sigma$ nor $\sigma'$ extends to an $L$-coloring of $G-w$, we immediately have the following.

\vspace*{-8mm}
\begin{addmargin}[2em]{0em}
\begin{subclaim}\label{AnyExtTauUsesAOnEdgeSUBCL} Any extension of $\tau$ for an $L$-coloring of $V(G-w)$ uses the colors of $A$ on the endpoints of $yz$. \end{subclaim}\end{addmargin}

We now have the following.

\vspace*{-8mm}
\begin{addmargin}[2em]{0em}
\begin{subclaim} $p_0\not\in N(y)$ and $d(p_0, p^z)\geq 2$. Furthermore, $y\in N(p^z)$. \end{subclaim}

\begin{claimproof} Since $0\in I$, we have $y\not\in N(p_0)$. Now suppose that $d(p_0, p^z)<2$. By assumption, $p^z\neq p_1$, since $z\not\in N(p_0)\cap N(p_1)$.  Thus, $p_0p^z\in E(C)$, as $C$ is an induced cycle. If $|V(C)=5$, then, since $y\neq z$, it follows from Theorem \ref{BohmePaper5CycleCorList} that each of $\sigma, \sigma'$ extends to an $L$-coloring of $G$, which is false. Thus, $|V(C)|>5$, and $G_z$ is not an edge. Since $|L_{\tau}(z)|\geq 3$, it follows from 2) of Theorem \ref{BWheelMainRevListThm2} that there is an $s\in L_{\tau}(z)$ with $\Lambda_{G_z}(\bullet, s, c_1)\neq\{c_0\}$, so $\tau$ extends to an $L$-coloring $\tau^*$ of $V(P-w)\cup V(G_z)$. At least one color of $A$ is distinct from $\tau^*(z)$, and since neither $\sigma$ nor $\sigma'$ extends to $L$-color $G$, it follows from Theorem \ref{BohmePaper5CycleCorList} applied to $H_z$ that $y\in N(p_0)$, which is false. Thus, $d(p^0, p^z)\geq 2$. 

Now suppose toward a contradiction that $y\not\in N(p^z)$. Since $L_{\tau}(z)|\geq 3$, we choose an $s\in L_{\tau}(z)\setminus A$. Since $d(p_0, p^z)\geq 2$, $\tau$ extends to an $L$-coloring  $\pi$ of $V(P-w)\cup V(G_z)$ using $s$ on $z$. Note that $H_z-w$ is bounded by outer cycle $q_0yzp^z(C\setminus\mathring{P})p_0$. Since $|A\setminus\{\pi(z)\}|=2$, it follows that $\pi$ does not extend to $L$-color $H_z-w$. Since $y$ is adjacent to neither $p_0$ nor $p^z$, $|L_{\pi}(y)|\geq 3$. Applying Lemma \ref{PartialPathColoringExtCL0} to $H_z-w$ and the 4-path $p_0q_0yzp^z$ on the outer cycle of $H_z-w$, it follows that there is a vertex $y'$ which does not lie on the outer face of $H_z-w$ and is adjacent to at least three vertices of $\{p_0, q_0, z, p^z\}$. Since $G$ is $K_{2,3}$-free, this vertex is adjacent to both of $p_0, p^z$ and precisely one of $q_0, z$. Thus, $G$ contains a 5-cycle $F$ which contains $y$ in its interior, where $F$ is either $wzp^zy'q_0$ or $wzy'p_0q_0$. 

Since $|L_{\tau}(w)|\geq 3$, and since $C$ is an induced cycle with $N(p_0)\cap N(p_1)\subseteq V(C)$ and $N(q_0)\cap N(q_1)\subseteq V(C)$, it follows from Lemma \ref{PartialPathColoringExtCL0} applied to $G$ that $\tau$ extends to an $L$-coloring of $G$. Thus, $\tau$ extends to an $L$-coloring $\tau^+$ of $V(\textnormal{Ext}(F))\setminus\{w\}$. Since $y$ is not yet colored, $L_{\tau^+}(w)\setminus A\neq\varnothing$, so there is an $L$-coloring of $\textnormal{Ext}(F)$ which does not extend to $L$-color the interior of $F$. But then, again by Theorem \ref{BohmePaper5CycleCorList}, $y$ is adjacent to all five vertices of $F$, so $y$ is adjacent to either $p_0$ or $p^z$. Since $y\not\in N(p_0)$, we have $y\in N(p_z)$, contradicting our assumption. \end{claimproof}\end{addmargin}

 Since $G$ is $K_{2,3}$-free and $y\in N(p^z)$, $G_z$ is not an edge. We now consider the graph $H':=H_{z_1}\setminus\{w, z_1\}$. This has outer cycle $p_0(C\setminus\mathring{P})p^{z_1}yq_0$, and every chord of the outer cycle of $H'$ is incident to $y$. This is illustrated in Figure \ref{GZANDY'WheelISizeOne}. 

\vspace*{-8mm}
\begin{addmargin}[2em]{0em}
\begin{subclaim}\label{ColH'+yzpz} For each $s\in S_{c_1}^1\setminus\{\tau(q_1)\}$, $\tau$ extends to an $L$-coloring of $G-w$ using $s$ on $z$. \end{subclaim}

\begin{claimproof} Firstly, $H'+yzp^z$ is bounded by outer cycle $(C^{H'}-yp^z)+yzp^z$. Let $L^*$ be a list-assignment for $H'+yzp^z$ where $L(p^z)=\{s\}\cup\Lambda_{G_z}(\bullet, s, c_1)$ and otherwise $L^*=L$. Let $\psi$ be the $L^*$-coloring of $p_0, q_0, z$ using $\tau(p_0), \tau(q_0), s$ on the respective vertices $p_0, q_0, z$. Note that every chord of the outer cycle of $H'+yzp^z$ is incident to $y$, and since $yp_0\not\in E(G)$, we have $|L^*_{\psi}(y)|\geq 3$. Since $|L^*(p^z)|\geq 3$, it follows from Lemma \ref{PartialPathColoringExtCL0} that $\psi$ extends to an $L^*$-coloring of $H'+yzp^z$, and, in particular, this $L^*$-coloring uses a color of $\Lambda_{G_z}(\bullet, s, c_0)$ on $p^z$, so $\tau$ extends to an $L$-coloring of $G-w$ using $s$ on $z$. \end{claimproof}\end{addmargin}

\begin{center}\begin{tikzpicture}

\node[shape=circle,draw=black] [label={[xshift=0.0cm, yshift=-1.3cm]\textcolor{red}{$\{c_0\}$}}] (p0) at (-4, 0) {$p_0$};
\node[shape=circle,draw=white] (mid) at (-2, 0) {$\ldots$};
\node[shape=circle,draw=black] (pz) at (0, 0) {$p^z$};
\node[shape=circle,draw=white] (un+) at (2, 0) {$\ldots$};
\node[shape=circle,draw=black] [label={[xshift=0.0cm, yshift=-1.3cm]\textcolor{red}{$\{c_1\}$}}] (p1) at (4, 0) {$p_1$};
\node[shape=circle,draw=black] [label={[xshift=-1.1cm, yshift=-0.6cm]\textcolor{red}{$\{\tau(q_0)\}$}}] (q0) at (-3,2) {$q_0$};
\node[shape=circle,draw=black] [label={[xshift=1.1cm, yshift=-0.6cm]\textcolor{red}{$\{\tau(q_1)\}$}}] (q1) at (3,2) {$q_1$};
\node[shape=circle,draw=black] (w) at (0,4) {$w$};
\node[shape=circle,draw=black] (z) at (1.9, 1.9) {$z$};
\node[shape=circle,draw=white] (Gz) at (2.1, 0.8) {$G_{z}$};
\node[shape=circle,draw=white] (H') at (-1.8, 0.9) {$H'$};
\node[shape=circle,draw=black] (y) at (0, 2.2) {$y$};
 \draw[-] (p1) to (un+);
 \draw[-] (p0) to (q0) to (w) to (q1);
\draw[-] (q1) to (p1);
\draw[-]  (p0) to (mid) to (pz) to (un+);
\draw[-]  (y) to (pz);
\draw[-] (w) to (z) to (p1);
\draw[-] (q1) to (z) to (pz);
\draw[-] (q0) to (y) to (w);
\draw[-] (y) to (z);
\draw[-] (y) to (pz);
\end{tikzpicture}\captionof{figure}{}\label{GZANDY'WheelISizeOne}\end{center}

\vspace*{-8mm}
\begin{addmargin}[2em]{0em}
\begin{subclaim}\label{TwoColorWorksContainedSC1} $S_{c_1}^1\setminus\{\tau(q_1)\}\subseteq A$. In particular, $G_z$ is a broken wheel with principal path $p^zzp_1$. \end{subclaim}

\begin{claimproof} If $S_{c_1}^1\setminus\{\tau(q_1)\}\not\subseteq A$, then, by Subclaim \ref{ColH'+yzpz}, $\tau$ extends to an $L$-coloring of $G-w$ in which $z$ is colored with a color not in $A$, contradicting Subclaim \ref{AnyExtTauUsesAOnEdgeSUBCL}. Thus, $S_{c_1}^1\setminus\{\tau(q_1)\}\subseteq A$. Choosing an $s\in L(z)\setminus (A\cup\{c_1, \tau(q_1)\})$, we have $|\Lambda_{G_z}(\bullet, s, c_1)|=1$, so, by 2) i) of Corollary \ref{CorMainEitherBWheelAtM1ColCor}, $G_z$ is a broken wheel with principal path $p^zzp_1$.\end{claimproof}\end{addmargin}

We now have the following key subclaim. 

\vspace*{-8mm}
\begin{addmargin}[2em]{0em}
\begin{subclaim}\label{BoxInColoringOFH'} There is an $L$-coloring of $\{p_0, q_0, p^z\}$ which does not extend to an $L$-coloring of of $H'$. \end{subclaim}

\begin{claimproof} Suppose toward a contradiction that Subclaim \ref{BoxInColoringOFH'} does not hold. Since $d(p_0, p^z)\geq 2$, it follows that, for each $d\in L(p^z)$, there is an $L$-coloring $\pi_d$ of $V(H')$ using $\tau(p_0), \tau(q_0), d$ on the respective vertices $p_0, q_0, p^z$. We now have the following intermediate facts.

\begin{enumerate}[label=\Alph*)]
\itemsep-0.1em
\item For each $d\in L(p^z)$, $\Lambda_{G_z}(d, \bullet, c_1)$ is a subset of either $\{\tau(q_1)\}\cup A$ or $\{\tau(q_1), \pi_d(y)\}$ 
\item $G_z$ is not a triangle
\item $|L_{\tau}(z)|\setminus A|=1$. In particular, $A\subseteq L_{\tau}(z)$.
\end{enumerate}

We first prove A). Let $d\in L(p^z)$. and suppose that $\Lambda_{G_z}(d, \bullet, c_1)\not\subseteq\{\tau(q_1)\}\cup A$. Thus, there is an $s\in\Lambda_{G_z}(d, \bullet, c_1)$ with $s\not\in\{\tau(q_1)\}\cup A$, so $s=\pi_d(y)$, or else we contradict Subclaim \ref{AnyExtTauUsesAOnEdgeSUBCL}. Since $\pi_d(y)\not\in A$, we have $\Lambda_{G_z}(d, \bullet, c_1)\subseteq\{s, \tau(q_1)\}$, or else $\pi_d\cup\tau$ extends to an $L$-coloring of $G-w$ using $s$ on $y$, which, again, contradicts Subclaim \ref{AnyExtTauUsesAOnEdgeSUBCL}. This proves A).

Suppose that $G_z$ is a triangle. Since $|L_{\tau}(z)|\geq 3$, there is an $s\in L_{\tau}(z)\setminus A$, and since $|L_{\tau}(p^z)|\geq 2$, there is an $d\in L_{\tau}(p^z)$ with $d\neq s$. Since $G_z$ is a triangle, we have $s\in\Lambda_{G_z}(d, \bullet, c_1)$, so, by A), $\pi_d(y)=s$. Furthermore, $\tau\cup\pi_d$ leaves a color for $z$, and since $G_z$ is a triangle, it follows that $\tau\cup\pi_d$ extends to an $L$-coloring of $G-w$ using $s$ on $y$, contradicting Subclaim \ref{AnyExtTauUsesAOnEdgeSUBCL}. This proves B).

Suppose C) does not hold and let $s, s'$ be distinct colors of $L_{\tau}(z)\setminus A$. By Subclaim \ref{TwoColorWorksContainedSC1}, $|\Lambda_{G_z}(\bullet, s, c_1)|=|\Lambda_{G_z}(\bullet, s', c_1)|=1$. Since $G_z$ is not a triangle, $L(x_1)=\{c_1, s, s'\}$. If $\Lambda_{G_z}(\bullet, s, c_1)=\Lambda_{G_z}(\bullet, s', c_1)=\{d\}$ for some color $d$, then, since $\pi_d(y)$ is distinct from at least one of $s, s'$, it follows that $\tau$ extends to an $L$-coloring of $G-w$ in which $z$ is colored with one of $s, s'$, contradicting Subclaim \ref{AnyExtTauUsesAOnEdgeSUBCL}. Thus, by 1b) of Theorem \ref{BWheelMainRevListThm2}, $\Lambda_{G_z}(\bullet, s, c_1)=\{s'\}$ and $\Lambda_{G_z}(\bullet, s', c_1)=\{s\}$. In particular, $s, s'\in L(p^z0)$, and, by A), $\tau_s(y)=s'$ and $\tau_{s'}(y)=s$. Now we choose an $r\in L_{\tau}(w)\setminus\{s, s'\}$. Since $L(x_1)=\{c_1, s, s'\}$, we have $r\not\in L(x_1)$, so $\tau\cup\pi_s$ extends to an $L$-coloring of $yz$ using $s', r$ on the respective vertices $y, z$, contradicting Subclaim \ref{AnyExtTauUsesAOnEdgeSUBCL}. This proves C). Applying C), we fix $s$ as the lone color of $L_{\tau}(z)\setminus A$. Since $|\Lambda_{G_z}(\bullet, s, c_1)|=1$, we have $s\in L(p^z)$ and $s, c_1\in L(x_1)$. Let $A^*=\{a\in A: \Lambda_{G_z}(\bullet, a, c_1)=L(p^z)\setminus\{a\}\}$. Since $s, c_1\in L(x_1)$, we have $A\setminus L(x_1)\neq\varnothing$, and since $G_z$ is not a triangle, there is an $a\in A$ with $\Lambda_{G_z}(\bullet, a, c_1)=L(p^z)\setminus\{a\}$. In particular, $A^*\neq\varnothing$. We now have one more intermediate fact. 

\begin{enumerate}
\item  [\mylabel{}{\textnormal{D)}}] $|A^*|=1$, and, in particular, $\Lambda_{G_z}(\bullet, s, c_1)=A^*$
\end{enumerate}

Let $s'$ be the lone color of $\Lambda_{G_z}(\bullet, s, c_1)$. Suppose that D) does not hold. Thus, there is an $a\in A^*$ with $a\neq s'$. By A), $\pi_{s'}(y)=s$, and since $s'\neq a$ and $a\in A^*$, we have $s'\in\Lambda_{G_z}(\bullet, a, c_1)$, so it follows that $\tau\cup\pi_{s'}$ extends to an $L$-coloring of $G-w$ using $s, a$ on the respective vertices $y, z$, contradicting Subclaim \ref{AnyExtTauUsesAOnEdgeSUBCL}. This proves D). We now let $a=\sigma(w)$ and $a'=\sigma'(w)$. We now suppose for the sake of definiteness that $a\in A\setminus L(x_1)$, so $A^*=\{a\}$, and $a'\not\in A^*$. Thus, $L(x_1)=\{c_1, s, a'\}$. By D), $a\in L(p^z)$ and, by A), $\pi_a(y)=s$. Since $\pi_a(y)=s$, we have $a\not\in\Lambda_{G_z}(\bullet, a', c_1)$, or else $\pi_a$ extends to an $L$-coloring of $G-w$ using $s, a'$ on the respective vertices $y,z$, contradicting Subclaim \ref{AnyExtTauUsesAOnEdgeSUBCL}. We need one more intermediate fact to finish the proof of Subclaim \ref{BoxInColoringOFH'}.

\begin{enumerate}
\item  [\mylabel{}{\textnormal{E)}}] $L(p^z)=A\cup\{s\}$
\end{enumerate}

Suppose not. Since $a, s\in L(p^z)$, there is a $b\in L(p^z)\setminus\{a, s\}$ with $b\neq a'$. Let $x_1'$ be the unique vertex of $G_z-z$ adjacent to $p^z$. Since $a\not\in\Lambda_{G_z}(\bullet, a', c_1)$ and $|\Lambda_{G_z}(\bullet, s, c_1)|=1$, we have $L(x_1')=\{a, a', s\}$, so $b\not\in L(x_1')$. Since $L_{\tau}(z)=A\cup\{s\}$, it follows that $\Lambda(b, \bullet, c_1)=A\cup\{s\}$. Since at least one color of $A$ is distinct from $\pi_b(y)$, it follows that one of $\sigma, \sigma'$ extends to an $L$-coloring of $G$, which is false. This proves E). In particular, we have $a'\in L(p^z)$. Since $\Lambda_{G_z}(\bullet, s, c_1)=\{a\}$ and $a\not\in\Lambda_{G_z}(\bullet, a', c_1)$, we conclude that $(a', s, c_1)$ and $(a, a', c_1)$ are two $L$-colorings of $p^zzp_1$ which do not extend to $L$-color $G$. That is, $\Lambda_{G_z}(\bullet, s, c_1)=\{a\}$ and $\Lambda_{G_z}(\bullet, a', c_1)=\{s\}$. As $a, a', s$ are all distinct, this contradicts 1b) of Theorem \ref{BWheelMainRevListThm2}. This completes the proof of Subclaim \ref{BoxInColoringOFH'}. \end{claimproof}\end{addmargin}

Applying Subclaim \ref{BoxInColoringOFH'}, we have the following. 

\vspace*{-8mm}
\begin{addmargin}[2em]{0em}
\begin{subclaim}\label{y'AdjFourVerticesH'} There is a $y'\in V(G\setminus C)$ adjacent to all four of $p_0, q_0, y, p^{z_1}$. \end{subclaim}

\begin{claimproof} It suffices to show that there is a $y'\in V(G\setminus C)$ adjacent to $p_0, q_0, p^{z}$. If that holds, then, applying the fact that $G$ has no induced 4-cycles, $y'\in N(y)$ as well, again, as $G$ is $K_{2,3}$-free. By Subclaim \ref{BoxInColoringOFH'}, there is an $L$-coloring $\psi$ of $\{p_0, q_0, p^z\}$ which does not extend to an $L$-coloring of $H'$. Since $yp_0\not\in E(G)$, we have $|L_{\psi}(y)|\geq 3$, and since every chord of the outer cycle of $H'$ is incident to $y$, it follows  from Subclaim \ref{BoxInColoringOFH'} applied to $H'$ that there is a $y'\in V(G\setminus C)$ adjacent to $p_0, q_0, p^{z}$ \end{claimproof}\end{addmargin}

Let $y'$ be as in Subclaim \ref{y'AdjFourVerticesH'}. Let $G'=H'-y$ be the subgraph of $G$ bounded by outer cycle $p_0(C\setminus\mathring{P})p^{z_1}y'$.  In particular $N(y)=\{w, q_0, y', p^{z_1}, z_1\}$, and $y$ is the central vertex of a wheel. This is illustrated in Figure \ref{GZANDY'WheelISizeExtraVertY'}. 

\begin{center}\begin{tikzpicture}
\node[shape=circle,draw=black] [label={[xshift=0.0cm, yshift=-1.3cm]\textcolor{red}{$\{c_0\}$}}] (p0) at (-4, 0) {$p_0$};
\node[shape=circle,draw=white] (mid) at (-2, 0) {$\ldots$};
\node[shape=circle,draw=black] (pz) at (0, 0) {$p^z$};
\node[shape=circle,draw=white] (un+) at (2, 0) {$\ldots$};
\node[shape=circle,draw=black] [label={[xshift=0.0cm, yshift=-1.3cm]\textcolor{red}{$\{c_1\}$}}] (p1) at (4, 0) {$p_1$};
\node[shape=circle,draw=black] [label={[xshift=-1.1cm, yshift=-0.6cm]\textcolor{red}{$\{\tau(q_0)\}$}}] (q0) at (-3,2) {$q_0$};
\node[shape=circle,draw=black] [label={[xshift=1.1cm, yshift=-0.6cm]\textcolor{red}{$\{\tau(q_1)\}$}}] (q1) at (3,2) {$q_1$};
\node[shape=circle,draw=black] (w) at (0,4) {$w$};
\node[shape=circle,draw=black] (z) at (1.9, 1.9) {$z$};
\node[shape=circle,draw=white] (Gz) at (2.1, 0.8) {$G_{z}$};
\node[shape=circle,draw=white] (G') at (-1.8, 0.6) {$G'$};
\node[shape=circle,draw=black] (y) at (0, 2.2) {$y$};
\node[shape=circle,draw=black] (y') at (-1.8, 1.3) {\tiny $y'$};
 \draw[-] (p1) to (un+);
 \draw[-] (p0) to (q0) to (w) to (q1);
\draw[-] (q1) to (p1);
\draw[-]  (p0) to (mid) to (pz) to (un+);
\draw[-]  (y) to (pz);
\draw[-] (w) to (z) to (p1);
\draw[-] (q1) to (z) to (pz);
\draw[-] (q0) to (y) to (w);
\draw[-] (y) to (z);
\draw[-] (y) to (pz);
\draw[-] (q0) to (y') to (pz);
\draw[-] (p0) to (y') to (y);
\end{tikzpicture}\captionof{figure}{}\label{GZANDY'WheelISizeExtraVertY'}\end{center}

Applying Theorem \ref{thomassen5ChooseThm}, and the fact that $L_{\tau}(z)\setminus A\neq\varnothing$, we now fix an $L$-coloring $\pi$ of $G_z$ with $\pi(p_1)=\tau(p_1)=c_1$ and $\pi(z)\in L_{\tau}(z)\setminus A$. Possibly $\pi(p^z)=c_0$, but, as $G$ is short-separation-free, $G'$ is not a triangle, so $\tau\cup\pi$ is a proper $L$-coloring of its domain. As $\pi(z)\not\in A$, $\tau\cup\pi$ does not extend to $L$-color $G-w$, so we immediately get the following. 

\vspace*{-8mm}
\begin{addmargin}[2em]{0em}
\begin{subclaim}\label{Ltauy'NoIntersectionLambdaG'} $L_{\tau}(y')\cap\Lambda_{G'}(c_0, \bullet, \pi(p^z))=\varnothing$. \end{subclaim}
\end{addmargin}

Since $|S_{c_1}^1|\geq 2$, there is an $a\in L_{\tau}(z)$ with $\Lambda_{G_z}(\bullet, a, c_1)\neq\{\pi(p^z)\}$. In particular, $a\neq s$, and we fix a second $L$-coloring $\psi$ of $G_z$, where $\psi=\tau=c_1$, $\psi(z)=a$, and $\psi(p^z)\neq\pi(p^z)$. 

\vspace*{-8mm}
\begin{addmargin}[2em]{0em}
\begin{subclaim}\label{LambdaG'ThreeCOLLeftSubCL} $\Lambda_{G'}(c_0, \bullet, \psi(p^z))=L(y')\setminus\{c_0, \psi(p^z)\}$. \end{subclaim}

\begin{claimproof} Suppose not. Thus, there is an $L$-coloring of $p_0y'p^z$ which uses $c_0, \psi(p^z)$ on the respective vertices $p_0, p^z$ but does not extend to an $L$-coloring of $G'$. We now let $T=L_{\tau}(y')\setminus\{pi(p^z)\}$. Note that $|T|\geq 2$, and, by Subclaim \ref{Ltauy'NoIntersectionLambdaG'}, $T\cap\Lambda_{G'}(c_0, \bullet, \pi(p^z))=\varnothing$. Since $\pi(p^z)\neq\psi(p^z)$, it follows from 3) ii) of Corollary \ref{CorMainEitherBWheelAtM1ColCor} that $c_0=\psi(p^z)$ and that $c_0\in T$, which is false, since $c_0\not\in L_{\tau}(y')$. \end{claimproof}\end{addmargin}

By Subclaim \ref{LambdaG'ThreeCOLLeftSubCL}, $|\Lambda_{G'}(c_0, \bullet, \psi(p^z))\setminus\{\tau(q_0)\}|\geq 2$. Since at least one color of $A$ is distinct from $\psi(z)$, at least one of $\sigma, \sigma'$ extends to an $L$-coloring of $G$, contradicting our assumption. This proves Claim \ref{IClaimNotExactlyOneCL}. \end{claimproof}

\subsection{Proof of P2): Part II}

We now suppose that the rainbow $(G, C, P, L)$ is also end-linked and prove the rest of P2). Suppose that P2) does not hold, i.e none of P2) i)-iii) hold. In particular, since P2) i) does not hold and $|I|\neq 1$ by Claim \ref{IClaimNotExactlyOneCL}, we have $I=\{0,1\}$. Since P2) iii) does not hold, there is an element of $\{\sigma, \sigma'\}$, say $\sigma$, which is neither $(L, P, p_0)$-blocked or $(L, P, p_1)$-blocked. Applying Claim \ref{TauObstructedByZiOnAtLeastOne} to $\sigma$, since $I=\{0,1\}$, it follows that, for each $i\in\{0,1\}$, there is a $z_i\in V(G\setminus C)$ with $G[N(z_i)\cap V(P)]=p_iq_iw$. 

\begin{Claim}\label{ForEachAvoidSCiCol} For each $i\in\{0,1\}$, $S_{c_i}^i\not\subseteq\{\sigma(w), \tau(q_i)\}$ \end{Claim}

\begin{claimproof} Suppose that, for some $i\in\{0,1\}$, $S_{c_i}^i\subseteq\{\sigma(w), \tau(q_i)\}$. Since $|S_{c_i}^i|\geq 2$, it follows that $S_{c_i}^i=\{r, \tau(q_i)\}$, and furthermore, $G_{z_i}$ is not an edge, and $|L(z_i)\setminus (\{c_i\}\cup S_{c_i}^i)|\geq 2$. It follows from 1) of Theorem \ref{EitherBWheelOrAtMostOneColThm} that $G_{z_i}$ is a broken wheel with principal path $p_iz_ip^{z_i}$. Since $I\neq\varnothing$, $N(p_0)\cap N(p_1)\subseteq V(C)$, so each of $x_i$ is an internal vertex of the path $C\setminus\mathring{P}$. In particular, $L(z_i)\setminus (L(x_i)\setminus\{c_i\})\subseteq \{c_i\}\cup S_{c_i}^i$, so we have $|L_{\sigma}(z_i)|=2$ and $L(z_i)\setminus L(x_i)=\{r, \tau(q_i)\}$. But then, $\sigma$ is $(L, P, p_i)$-blocked, contradicting our assumption. \end{claimproof}

Iit follows from Claim \ref{ForEachAvoidSCiCol} that, for each $i\in\{0,1\}$, there exists an $s_i\in S_{c_i}^i\setminus\{r, \tau(q_i)\}$. Since $r\not\in\mathcal{C}(\tau)$, it follows from Claim \ref{IndexIExtendColorFromS} that there is $y\in V(G\setminus C)$ adjacent to all three of $z_0, w, z_1$. In particular, $N(w)=\{q_0, z_0, w, z_1, q_1\}$.

\begin{Claim}\label{OneAtLeastThreeCL} For some $i\in\{0,1\}$, $S_{c_i}^i\subseteq A\cup\tau(q_i)\}$. \end{Claim}

\begin{claimproof} Suppose not. Thus, by Claim \ref{ForEachAvoidSCiCol}, there exist $s_0\in S_{c_0}^0\setminus\{\tau(q_0)\}$ and $s_1\in S_{c_1}^1\setminus\{\tau(q_1)\}$, where $\{s_0, s_1\}\cap A=\varnothing$. By b) of Claim \ref{IndexIExtendColorFromS}, $\tau$ extends to an $L$-coloring of $G-w$ using $s_0, s_1$ on the respective vertices $z_0, z_1$. Possibly $y$ is using a color of $A$, but one color of $A$ is left over for $w$, which is false since $A\cap\mathcal{C}(\tau)=\varnothing$. \end{claimproof}

Now we finish the proof of P2) by constructing an element of $\Omega_G^{\geq 2}(P, [p_0])\cap\Omega_G^{\geq 2}(P, [p_1])$, contradicting our assumption that P2) ii) does not hold. 

\begin{Claim}\label{NeitherGz0NorGz1EdgeCL} Neither $G_{z_0}$ nor $G_{z_1}$ is an edge \end{Claim}

\begin{claimproof} By Claim \ref{OneAtLeastThreeCL}, there is an $i\in\{0,1\}$ such that $|S_{c_i}^i|<4$, so at most one of $G_{z_0}, G_{z_1}$ is an edge. Suppose the claim does not hold, and suppose without loss of generality that $G_{z_0}$ is not an edge, but $G_{z_1}$ is an edge. Let $H=H_{z_0}\setminus\{w, q_1\}$. Since $G_{z_1}$ is an edge, $H$ is bounded by outer cycle $p_{z_0}(C\setminus\mathring{P}p_1)z_1yz_0$. Note that every chord of the outer face of $H$ is incident to $y$.

\vspace*{-8mm}
\begin{addmargin}[2em]{0em}
\begin{subclaim} There is a chord of $H$ incident to $y$. \end{subclaim}

\begin{claimproof} Suppose not. Thus, the outer cycle of $H$ is induced. Let $Q$ be the 3-path $z_0yz_1p_1$ on $C^H$. Applying Claim \ref{ForEachAvoidSCiCol}, we choose an $s\in S_{c_0}^0\setminus\{r, \tau(q_0)\}$. Let $L^*$ be a list-assignment for $H$, where $z_0, p_1$ are precolored with the respective vertices $s, c_1$ and $L^*(p^{z_0})=\{s\}\cup\Lambda_{G_{z_0}}(c_0, s, \bullet)$ and and otherwise $L^*=L$. Note that $|L^*(p^{z_0})|\geq 3$. Since $|L^*(z_1)\setminus\{\sigma(w), \tau, c_1\}|\geq 2$, there is an $L^*$-coloring $\pi$ of $V(Q)$, where $\pi(y)\neq\sigma(w)$ and $\pi(z_1)\not\in\{\tau(q_1), \sigma(w), c_1\}$. Since $\sigma$ does not extend to an $L$-coloring of $G$, $\pi$ does not extend to an $L^*$-coloring of $H$, so it follows from Lemma \ref{PartialPathColoringExtCL0} that there is a $y'\in V(H\setminus C^H)$ with $|L^*_{\pi}(y')|<2$, so $y'$ at least three neighbors on $Q$. Since $G$ has no copies of $K_{2,3}$, we have $N(y)\cap V(Q)=\{y, z_1, p_1\}$, and $G$ contains a wheel in which $z_1$ is adjacent to every vertex of the 5-cycle $wq_1p_1y'y$. In particular, $N(z_1)=\{w, q_1, p_1, y', y\}$. Now we just uncolor $y$. Since $|L^*(y)\setminus\{\sigma(w), s\}|\geq 2$, there is an $s^*\in L^*(y)\setminus\{\sigma(w), s\}$ with $|L^*(y)\setminus\{\sigma(w), s^*, c_1, \tau(q_1)\}|\geq 2$, and the argument above also shows that there is an $L^*$-coloring $\psi$ of $H-y$ with $\psi(y)=s^*$. Since $y$ has only five neighbors, there is a color other than $\sigma(w)$ left over for $y$, so $\sigma\cup\psi$ extends to an $L$-coloring of $G$, which is false, since $\sigma$ does not extend to an $L$-coloring of $G$.\end{claimproof}\end{addmargin}

Let $v_y$ be the unique vertex of $N(y)\cap V(C\setminus\mathring{P})$ which is closest to $p_1$ on the path $C\setminus\mathring{P}$. As $G$ is $K_{2,3}$-free, $v_y\neq p_1$. Possibly $v_y=p^{z_0}$. Let $J$ be the subgraph of $G$ bounded by outer cycle $p_0(C\setminus\mathring{P})v_yyz_0$ and let $R$ be the 3-path $p_0z_0yv_y$ on the outer cycle of $J$. Let $J'$ be the subgraph of $G$ bounded by outer cycle $v_y(C\setminus\mathring{P})p_1z_1y$ and let $R'$ be the 3-path $v_yyz_1p_1$ on the outer cycle of $J'$. This is illustrated in Figure \ref{GZ0AndGZ1AndJNoChords}, where the paths $R$ and $R'$ are indicated in bold. As $G_{z_1}$ is an edge, the outer cycle of $J'$ is induced. Since $1\leq |L(p_1)|\leq 3$, it follows from 2) of Theorem \ref{ThmFirstLink3PathForUseInHolepunch} that there is a family of $|L(p_1)|+(|L(v_y)|-3)$ different elements of $\textnormal{End}(R', J')$, each using a different color on $v_y$.

\begin{center}\begin{tikzpicture}

\node[shape=circle,draw=black] (p0) at (-4.5, 0) {$p_0$};
\node[shape=circle,draw=white] (mid) at (-3.5, 0) {$\ldots$};
\node[shape=circle,draw=black] (pz) at (-2, 0) {$p^{z_0}$};
\node[shape=circle,draw=white] (vy+) at (1, 0) {$\ldots$};
\node[shape=circle,draw=black] (vy) at (0, 0) {$v_y$};
\node[shape=circle,draw=white] (vy-) at (-1, 0) {$\ldots$};
\node[shape=circle,draw=black] (p1) at (2.5, 0) {$p_1$};
\node[shape=circle,draw=black] (q0) at (-3.5,2) {$q_0$};
\node[shape=circle,draw=black] (q1) at (1.5,2) {$q_1$};
\node[shape=circle,draw=black] (w) at (0,4) {$w$};
\node[shape=circle,draw=black] (z0) at (-2.4, 1.9) {$z_0$};
\node[shape=circle,draw=white] (Gz) at (-2.6, 0.8) {$G_{z_0}$};
\node[shape=circle,draw=white] (J') at (0.7, 0.8) {$J'$};
\node[shape=circle,draw=black] (z1) at (0.4, 1.9) {$z_1$};
\node[shape=circle,draw=black] (y) at (-1, 1.8) {$y$};
\draw[-] (p0) to (q0) to (w) to (q1);
\draw[-] (q1) to (p1);
\draw[-]  (p0) to (mid) to (pz) to (vy-) to (vy) to (vy+) to (p1);
\draw[-] (w) to (z0);
\draw[-, line width=1.8pt] (p0) to (z0) to (y);
\draw[-] (w) to (z1);
\draw[-, line width=1.8pt] (z1) to (p1);
\draw[-] (q0) to (z0) to (pz);
\draw[-] (q1) to (z1);
\draw[-] (y) to (w);
\draw[-, line width=1.8pt] (vy) to (y) to (z1);

\end{tikzpicture}\captionof{figure}{}\label{GZ0AndGZ1AndJNoChords}\end{center}

\vspace*{-8mm}
\begin{addmargin}[2em]{0em}
\begin{subclaim} $v_y\neq p^{z_0}$ \end{subclaim}

\begin{claimproof} Suppose that $v_y=p^{z_0}$. Thus, $J-y=G_{z_0}$. By Corollary \ref{GlueAugFromKHCor}, there is a $\psi\in\textnormal{End}(p_0z_0p^{z_0}, G_{z_0})$ and a $\psi'\in\textnormal{End}(R', J')$ with $\psi(p^{z_0})=\psi'(p^{z_0})$. Let $\phi$ be the restriction of $\psi\cup\psi'$ to $\{p_0, p_1\}$. It follows from our choice of $\psi$ and $\psi'$ that any extension of $\phi$ to an $L$-coloring of $V(P)$ extends to $L$-color all of $G$, so, for any extension of $\phi$ to an $L$-coloring $\phi^*$ of $V(P-w)$, we have $|\mathcal{C}(\phi^*)|\geq 3$ and thus $\phi\in\Omega_G^{\geq 2}(P, [p_0])\cap\Omega_G^{\geq 2}(P, [p_1])$, contradicting our assumption that P2 ii) does not hold. \end{claimproof}\end{addmargin}

Let $X$ be the set of $L$-colorings $\psi$ of $\{p_0, v_y\}$ such that $\psi$ extends to two different elements of $\textnormal{End}(z_0, R, J)$. By 1) of Theorem \ref{CornerColoringMainRes}, since $|L(v_y)|\geq 3$ and $1\leq |L(p_0)|\leq 3$, there is a family of $|L(p_0)|$ elements of $X$ which all use different colors on $v_y$. Thus, there is a $\psi\in X$ and a $\psi'\in\textnormal{End}(R', J')$ with $\psi(v_y)_=\psi'(v_y)$, so $\psi\cup\psi'$ is a proper $L$-coloring of its domain $\{p_0, v_y, p_1\}$, and there exists a set $X^{\textnormal{col}}\subseteq L(z_0)\setminus\{\psi(p_0)\}$, where $|X^{\textnormal{col}}|=2$ and any extension of $\psi$ to an $L$-coloring of $V(R)$ using a color of $X^{\textnormal{col}}$ on $p^{z_0}$ extends to $L$-color all of $J$. This immediately implies the following.

\vspace*{-8mm}
\begin{addmargin}[2em]{0em}
\begin{subclaim}\label{IfPsiUnionPsi'AndXColForVyCase} Any extension of $\psi\cup\psi'$ to an $L$-coloring of $V(P)\cup\{z_0, y, z_1, v_y\}$ which uses a color of $X^{\textnormal{col}}$ on $z_0$ extends to $L$-color all of $G$. \end{subclaim}\end{addmargin}

We now let $\phi$ be the restriction of $\psi\cup\psi'$ to $\{p_0, p_1\}$. By assumption, P2) ii) does not hold, so $\phi\not\in\Omega_G^{\geq 2}(P)$.  Thus, $\phi$ extends to an $L$-coloring $\phi^*$ of $V(P-w)$ with $|\mathcal{C}(\phi^*)|<2$, so $|L_{\phi^*}(w)\setminus\mathcal{C}(\phi^*)|\geq 2$. Since $|X^{\textnormal{col}}\setminus\{\phi^*(q_0)\}|\geq 1$ and $\psi\cup\psi'$ leaves $p^{z_0}$ uncolored, it follows from our choice of $\psi$ and $\psi'$ that there is at most one color of $L_{\phi^*}(w)$ which does not lie in $\mathcal{C}(\phi^*)$, and this color, if it exists, lies in $L(w)\cap (X^{\textnormal{col}}\setminus\{\phi^*(q_0)\})$. Thus, $|\mathcal{C}(\phi^*)|\geq 2$, a contradiction. This completes the proof of Claim \ref{NeitherGz0NorGz1EdgeCL}. \end{claimproof}

Applying Claim \ref{NeitherGz0NorGz1EdgeCL}, we have the following.

\begin{Claim} $H_{z_0}\cap H_{z_1}$ is a wheel, where $y$ is adjacent to all the vertices of the outer cycle of $H_{z_0}\cap H_{z_1}$. \end{Claim}

\begin{claimproof} Applying Claim \ref{ForEachAvoidSCiCol}, for each $i\in\{0,1\}$, we let $s_i\in S_{c_i}^i\setminus\{\sigma(w), \tau(q_i)\}$. Let $L^*$ be a list-assignment for $H_{z_0}\cap H_{z_1}$, where $z_0, w, z_1$ are precolored with the respective colors $s_0, \sigma(w), s_1$ and $L^*(p^{z_i})=\{s_i\}\cup\Lambda_{G_{z_i}}(\tau(p_i), s_i, \bullet)$ for each $i\in\{0,1\}$, and otherwise $L^*=L$. Note that each of $p^{z_0}$ and $p^{z_1}$ has an $L^*$-list of size at least three, as neither $G_{z_0}$ nor $G_{z_1}$ is an edge. Let $G^*=(H_{z_0}\cap H_{z_1})-w$. Since $\sigma$ does not extend to an $L$-coloring of $G$, it follows that, for each $r\in L^*(y)\setminus\{s_0, \sigma(w), s_1\}$, the $L^*$-coloring $(s_0, r, s_1)$ of $z_0yz_1$ does not extend to $L^*$-color $G^*$. Since $|L^*(y)\setminus\{s_0, \sigma(w), s_1\}|\geq 2$ and every chord of the outer cycle of $G^*$ is incident to $y$, it follows from 1) of Theorem \ref{EitherBWheelOrAtMostOneColThm} that $G^*$ is a broken wheel with principal path $z_0yz_1$, and the claim follows.\end{claimproof}

We now proceed in a way similar to the proof of Claim \ref{NeitherGz0NorGz1EdgeCL}. We let $J$ be the subgraph bounded by outer cycle $p_0(C\setminus\mathring{P})p^{z_1}yz_1$ and we let $R$ be the 3-path $p_0z_0yp^{z_1}$ on the outer cycle of $J$. Let $X$ be the set of $L$-colorings $\psi$ of $\{p_0, p^{z_1}\}$ such that $\psi$ extends to two different elements of $\textnormal{End}(z_0, R, J)$. By 1) of Theorem \ref{CornerColoringMainRes}, since $|L(p^{z_1})|\geq 3$ and $1\leq |L(p_0)|\leq 3$, there is a family of $|L(p_0)|$ elements of $X$ which all use different colors on $p^{z_1}$. By Corollary \ref{GlueAugFromKHCor}, there is a $\psi\in X$ and a $\psi'\in\textnormal{End}(p^{z_1}z_1p_1, G_{z_1})$ with $\psi(p^{z_1})=\psi'(p^{z_1})$. That is, $\psi\cup\psi'$ is a proper $L$-coloring of its domain $\{p_0, p^{z_1}, p_1\}$, and there exists a set $X^{\textnormal{col}}\subseteq L(z_0)\setminus\{\psi(p_0)\}$, where $|X^{\textnormal{col}}|=2$ and any extension of $\psi$ to an $L$-coloring of $V(R)$ using a color of $X^{\textnormal{col}}$ on $p^{z_0}$ extends to $L$-color all of $J$, so we immediately have the following.

\begin{Claim}\label{IfPsiUnionPsi'AndXColThenExt} Any extension of $\psi\cup\psi'$ to an $L$-coloring of $V(P)\cup\{z_0, y, z_1, p^{z_1}\}$ which uses a color of $X^{\textnormal{col}}$ on $z_0$ extends to $L$-color all of $G$. \end{Claim}

As in the proof of Claim \ref{NeitherGz0NorGz1EdgeCL}, since $p^{z_0}$ is uncolored, it follows that the restriction of $\psi\cup\psi'$ to $\{p_0, p_1\}$ is an element of $\Omega_G^{\geq 2}(P)$, contradicting our assumption that P2 ii) does not hold. This proves P2). 

\subsection{Proof of P3)}

Let $\psi, \psi'$ be as in the statement of P3), and suppose without loss of generality that $|\{\psi(q_0), \psi'(q_0)\}|=2$. Let $B=\{\psi(q_0), \psi'(q_0)\}$ and let $\phi$ be the common restriction of $\psi, \psi'$ to $p_0, p_1, q_1$. Suppose toward a contradiction that P3) does not hold. Thus, $I=\{0,1\}$, but $\mathcal{C}(\psi)$ and $\mathcal{C}(\psi')$ are disjoint sets of size at most one. 

\begin{Claim}\label{PhiExtToLColGMinLeftSide} Each of $\psi$ and $\psi'$ extends to an $L$-coloring of $G$. That is, $\mathcal{C}(\psi)|=|\mathcal{C}(\psi')|=1$. Furthermore, $|L(w)\setminus (B\cup\{\phi(q_1)\})|=2$. \end{Claim}

\begin{claimproof} Since $C$ is induced, $|L_{\psi}(w)|\geq 3$ and $|L_{\psi'}(w)|\geq 3$. By assumption, $N(p_0)\cap N(p_1)\subseteq V(C)$ and $N(q_0)\cap N(q_1)\subseteq V(C)$, so it follows from Lemma \ref{PartialPathColoringExtCL0} that each of $\psi$ and $\psi'$ extends to an $L$-coloring of $G$. Thus, $|\mathcal{C}(\psi)|=|\mathcal{C}(\psi')|=1$. Furthermore, we have $|L(w)\setminus (B\cup\{\phi(q_1)\})|=2$, or else it also follows from Lemma \ref{PartialPathColoringExtCL0} that either $|\mathcal{C}(\psi)|\geq 2$ or $|\mathcal{C}(\psi')|\geq 2$, which is false. \end{claimproof}

\begin{Claim}\label{VertZ0Z1CoveringBothSidesCL} There exist $z_0, z_1\in V(G)$ such that $N(z_i)\cap V(P)=\{p_i, q_i, w\}$ for each $i=0,1$, and $z_0z_1\not\in E(G)$. \end{Claim}

\begin{claimproof} We choose an arbitrary element of $\{\psi, \psi'\}$, say $\psi$. By assumption, $\psi$ extends to two distinct $L$-colorings $\psi^0, \psi^1$ of $V(P)$, where neither $\psi^0$ nor $\psi^1$ extends to an $L$-coloring of $G$. Since $\psi^0$ and $\psi^1$ restrict to the same $L$-coloring of $V(P-w)$, it follows that, for each endpoint $p_i$ of $P$, at most one of $\{\psi^0, \psi^1\}$ is $(L, P, p_i)$-blocked. Since $I=\{0,1\}$ by assumption, it follows from Claim \ref{TauObstructedByZiOnAtLeastOne} that there are $z_0, z_1$ satisfying Claim \ref{VertZ0Z1CoveringBothSidesCL}. If $z_0z_1\in E(G)$, then $I=\varnothing$, contradicting our assumption, so $z_0z_1\not\in E(G)$. \end{claimproof}

\begin{Claim}\label{AnyLCol4CornerToGMinQ0} Any $L$-coloring $\sigma$ of $\{p_0, z_0, q_1, p_1\}$ extends to $L$-color $G-q_0$. \end{Claim}

\begin{claimproof} To prove Claim \ref{AnyLCol4CornerToGMinQ0}, we first show the following intermediate result. 

\vspace*{-8mm}
\begin{addmargin}[2em]{0em}
\begin{subclaim}\label{IntermFactsForGMQ0CL} $d(p^{z_0}, p_1)>1$ and furthermore, there is no vertex $y$ of $H^{z_0}$ which lies outside the outer cycle of $H^{z_0}$ and is adjacent to all three of $z_0, p^{z_0}, p_1$. \end{subclaim}

\begin{claimproof}  Suppose that $d(p^{z_0}, p_1)\leq 1$. Since $C$ is an induced cycle and $N(p_0)\cap N(p_1)\subseteq V(C)$, $p^{z_0}p_1$ is an edge of $C$, and the outer cycle of $H_{z_0}$ has length five. It follows from Claim \ref{PhiExtToLColGMinLeftSide} that $\phi$ extends to an $L$-coloring $\phi^+$ of $V(G_{z_0})\cup\{p_1, q_1\}$. At least one color of $B$ is distinct from $\phi^+(z_0)$, say $\psi(q_0)$ without loss of generality. As $|L_{\phi^+}(w)|\geq 3$ and $\mathcal{C}(\psi)|\leq 1$, there is an $L$-coloring of the outer cycle of $H_{z_0}$ which does not extend to $L$-color $H_{z_0}$. Since the outer cycle of $H_{z_0}$ has length five, it follows from Theorem \ref{BohmePaper5CycleCorList} that $z_1$ is adjacent to all five vertices of this cycle, contradicting Claim \ref{VertZ0Z1CoveringBothSidesCL}. Thus, $d(p^{z_0}, p_1)>1$. Now suppose toward a contradiction that there is a vertex $y$ of $H^{z_0}$ which lies outside the outer cycle of $H^{z_0}$ and is adjacent to all three of $z_0, p^{z_0}, p_1$. Let $F$ be the 5-cycle $wz_0yp_1z_1$. As $\phi$ extends to $L$-color $G-q_0$, it follows that $\phi$ extends to an $L$-coloring $\phi^*$ of $V(\textnormal{Ext}(F))\setminus\{q_0, w\}$. Since $\phi^*$ colors neither $z_0$ nor $z_1$, we have $|L_{\phi^*}(w)|\geq 3$. At least one color of $B$ is distinct from $\phi^*(z_0)$, say $\psi(q_0)$ without loss of generality. Since $|L_{\phi^*}(w)|\geq 3$ and $\mathcal{C}(\psi)|\leq 1$, there is an $L$-coloring of $V(F)$ which does not extend to $L$-color $\textnormal{Int}(F)$. Since $F$ has length five and $z_1\in V(\textnormal{Int}(F)\setminus F)$, it follows from Theorem \ref{BohmePaper5CycleCorList} that $z_1$ is adjacent to all five vertices of this cycle,  which again contradicts Claim \ref{VertZ0Z1CoveringBothSidesCL}. \end{claimproof}\end{addmargin}

Now suppose toward a contradiction that Claim \ref{AnyLCol4CornerToGMinQ0} does not hold. Let $\sigma$ be an $L$-coloring of $\{p_0, z_0, q_1, p_1\}$ which does not extend to $L$-color $G-q_0$.  Possibly $p^{z_0}\neq p_0$ and $\Lambda_{G_{z_0}}(\sigma(p_0), \sigma(z_0), \bullet)=\{\sigma(p_1)\}$, but since $d(p^{z_0}, p_1)>1$ by Subclaim \ref{IntermFactsForGMQ0CL}, $\sigma$ extends to an $L$-coloring $\sigma'$ of $V(G_{z_0})\cup\{p_1, q_1\}$. Note that $L_{\sigma'}(w)=L_{\sigma}(w)$. It follows from Lemma \ref{PartialPathColoringExtCL0} that there is a $y\in V(H_{z_0})$ which lies outside the outer cycle of $H_{z_0}$ and has at least three neighbors in $\{p^{z_0}, z_0, q_1, p_1\}$. If $y=z_1$, then, since $z_0z_1\not\in E(G)$, we have $z_1p^{z_0}\in E(G)$. If that holds, then, since $G$ has no induced 4-cycles and $C$ is an induced cycle, we get $z_0z_1\in E(G)$, a contradiction. Thus, $y\neq z_1$, and since $N(q_1)=\{w, z_1, p_1\}$, it follows that $y$ is adjacent to all three of $z_0, p^{z_0}, p_1$, contradicting Subclaim \ref{IntermFactsForGMQ0CL}. \end{claimproof}

\begin{Claim}\label{CPsiToPsi'OrFlipped} Either $\mathcal{C}(\psi)=\{\psi'(q_0)\}$ or $\mathcal{C}(\psi')=\{\psi(q_0)\}$ (possibly both). In particular, $L_{\phi}(w)\setminus (B\cup\mathcal{C}(\psi)\cup\mathcal{C}(\psi'))$ is nonempty. \end{Claim}

\begin{claimproof} We choose a color $c\in L(z_0)\setminus (B\cup\{\phi(p_0)\})$. Since $q_1, p_1\not\in N(z_0)$, it follows from Claim \ref{AnyLCol4CornerToGMinQ0} that $\phi$ extends to an $L$-coloring $\phi^*$ of $G-q_0$ with $\phi^*(z_0)=c$. If $\phi^*(w)\not\in B$, then $\phi^*(w)\in\mathcal{C}(\psi)\cap\mathcal{C}(\psi')$, contradicting our assumption that these sets are disjoint, so $\phi^*(w)\in B$. Suppose for the sake of definiteness that $\phi^*(w)=\psi(q_0)$. Thus, $\psi'$ extends to an $L$-coloring of $G$ using $\psi(q_0)$ on $w$. Since $|\mathcal{C}(\psi')|\leq 1$, we have $\mathcal{C}(\psi')=\{\psi(q_0)\}$. Thus, $|B\cup\mathcal{C}(\psi)\cup\mathcal{C}(\psi')|\leq 3$, so $L_{\phi}(w)\setminus (B\cup\mathcal{C}(\psi)\cup\mathcal{C}(\psi'))\neq\varnothing$. \end{claimproof}

\begin{Claim} There is a $y\in V(G\setminus C)$ adjacent to all three of $z_0, w, z_1$. In particular, $N(w)=\{q_0, z_0, y, z_1, q_1\}$. \end{Claim}

\begin{claimproof} Suppose not. By Claim \ref{SetSetAzAtLeastTwoCL}, $|S_{\phi(p_1)}^1\setminus\{\phi(q_1)\}|\geq 1$, so we fix an $s_1\in S_{\phi(p_1)}^1$ with $s_1\neq\phi(q_1)$.

\vspace*{-8mm}
\begin{addmargin}[2em]{0em}
\begin{subclaim}\label{UnionCPsiPsi'BAtLeast3} $L(w)\setminus\{\phi(q_1), s_1\}=\mathcal{C}(\psi)\cup\mathcal{C}(\psi')\cup B$ \end{subclaim}

\begin{claimproof} Suppose not. By Claim \ref{CPsiToPsi'OrFlipped}, $|\mathcal{C}(\psi)\cup\mathcal{C}(\psi')\cup B|\leq 3$, so there is an $r\in L(w)\setminus\{\phi(q_1), s_1\}$ with $r\not\in\mathcal{C}(\psi)\cup\mathcal{C}(\psi')\cup B$. Since $|S_{\phi(p_0)}^0\setminus\{r\}|\geq 1$, $\phi$ extends to an $L$-coloring $\pi$ of $\{p_0, p_1, q_1\}\cup\{z_0, z_1, w\}$ which uses the colors $r, s_1$ on the respective vertices $w, z_1$, where $\pi(z_0)\in S_{\phi(p_0)}^0$. By Claim \ref{IndexIExtendColorFromS}, $\pi$ extends to an $L$-coloring of $G-q_0$. At least one color of $B$ is distinct from $\pi(z_0)$, so $r\in\mathcal{C}(\psi)\cup\mathcal{C}(\psi')$, which is false. \end{claimproof}\end{addmargin}

Applying Claim \ref{CPsiToPsi'OrFlipped}, we suppose without loss of generality that $\mathcal{C}(\psi')=\{\psi(q_0)\}$. By Subclaim \ref{UnionCPsiPsi'BAtLeast3}, $|\mathcal{C}(\psi)\cup\mathcal{C}(\psi')\cup B|\geq 3$, and, in particular, $\mathcal{C}(\psi)=r$ for some $r\in L(w)\setminus (\{\phi(q_1), s_1\}\cup B)$. 

\vspace*{-8mm}
\begin{addmargin}[2em]{0em}
\begin{subclaim} $S_{\phi(p_0)}^0=\{r, \psi'(q_0)\}$. \end{subclaim}

\begin{claimproof} Suppose not. Since $|S_{\phi(p_0)}^0|\geq 2$, there is an $s_0\in S_{\phi(p_0)}^0$ with $s_0\not\in\{r, \psi'(q_0)\}$. By Claim \ref{IndexIExtendColorFromS}, since $r\neq s_1$, $\phi$ extends to an $L$-coloring of $G-q_0$ using the colors $s_0, r, s_1$ on the respective vertices $z_0, w, z_1$. Since $s_0\neq\psi'(q_0)$, it follows that $r\in\mathcal{C}(\psi')$, which is false. \end{claimproof}\end{addmargin}

Since $\psi'(q_0)\in S_{\phi(p_0)}^0$ and $|L_{\phi}(w)\setminus (B\cup\{s_1\})|\geq 1$, it follows from Claim \ref{IndexIExtendColorFromS} that $\phi$ extends to an $L$-coloring $\pi^*$ of $G-q_0$, where $\pi^*(z_0)=r$ and $\pi^*(z_1)=s_1$, and $\pi^*(w)\not\in B$. Since $\pi^*(w)$ and $\psi'(q_0)$ are both distinct from $\psi(q_0)$, we have $\pi^*(w)\in\mathcal{C}(\psi)$, which is false, as $\pi^*(w)\not\in B$. \end{claimproof}

\begin{Claim}\label{P0PZ1Not5CycleCL} $d(p_0, p^{z_1})>1$. \end{Claim}

\begin{claimproof} Suppose toward a contradiction that  $d(p_0, p^{z_1})\leq 1$. Note that $p_0\neq p^{z_1}$, and, since $C$ is an induced cycle, $p_0p^{z_1}$ is an edge of $C$, and $H_{z_1}-q_0$ has an outer cycle of length five. 

\vspace*{-8mm}
\begin{addmargin}[2em]{0em}
\begin{subclaim} $\phi$ extends to an $L$-coloring of $V(G_{z_1})\cup\{p_0, z_1, q_1\}$.  \end{subclaim}

\begin{claimproof} This is immediate if $G_{z_0}$ is an edge, so suppose that $G_{z_0}$ is not an edge. Since $|S_{\phi(p_1)}^1|\geq 2$, we fix an $s\in S_{\phi(p_1)}^1\setminus\{\phi(q_1)\}$. Since $|\Lambda_{G_{z_0}}(\bullet, s, \phi(p_0))|\geq 2$, there is a color of $\Lambda_{G_{z_1}}(\bullet, s, \phi(p_0))$ distinct from $\phi(p_0)$, and the subclaim follows. \end{claimproof}\end{addmargin}

Let $\pi$ be an extension of $\phi$ to an $L$-coloring of $V(G_{z_1})\cup\{p_0, z_1, q_1\}$. Since $G$ is $K_{2,3}$-free, $z_0\not\in N(p^{z_1})$, so $|L_{\pi}(z_0)|\geq 4$. Since $|L_{\pi}(w)|\geq 3$, there is a $b\in L_{\pi}(z_0)$ with $L_{\pi}(w)\setminus\{b\}|\geq 3$. At least one color of $B$ is distinct from $b$, so let $\psi^*\in\{\psi, \psi'\}$, where $\psi^*(q_0)\neq b$, and let $\pi^*$ be an extension of $\pi$ to $\textnormal{dom}(\pi)\cup\{z_0\}$ using $b$ on $z_0$. Note that $L_{\psi^*\cup\pi^*}(w)|\geq 2$. Since $y\not\in N(p_0)$, it follows from Theorem \ref{BohmePaper5CycleCorList} that $\mathcal{C}(\psi^*\cup\pi^*)=L_{\psi^*\cup\pi^*}(w)$, so $|\mathcal{C}(\psi^*)|\geq |\mathcal{C}(\psi^*\cup\pi^*)|\geq 2$, which is false. \end{claimproof}

We now let $\overline{B}=L_{\phi}(w)\setminus (B\cup\mathcal{C}(\psi)\cup\mathcal{C}(\psi'))$. By Claim \ref{CPsiToPsi'OrFlipped}, $|\overline{B}|\geq 1$. We now have the following key claim. 

\begin{Claim}\label{GZ1NotEdgeOverlineBSingle} $G_{z_1}$ is not an edge, and $p^{z_1}\in N(y)$. Furthermore, $|\overline{B}|=1$ and $\overline{B}\subseteq L_{\phi}(z_0)\cap (S_{\phi(p_1)}^1\setminus\{\phi(q_1)\})$.  \end{Claim}

\begin{claimproof} We fix an $r\in\overline{B}$ and let $\phi^+$ be an extension of $\phi$ to an $L$-coloring of $\{p_0, p_1, q_1, w\}$, where $\phi^+(w)=r$. If $\phi^+$ extends to an $L$-coloring of $G-q_0$, then, since $r\not\in B$ and at least one color of $B$ is left for $q_0$, $r\in\mathcal{C}(\psi)\cup\mathcal{C}(\psi')$, which is false. Thus, $\phi^+$ does not extend to $L$-color $G-q_0$.

\vspace*{-8mm}
\begin{addmargin}[2em]{0em}
\begin{subclaim}\label{SubCLGZ1Not Edge4} $G_{z_1}$ is not an edge.  \end{subclaim}

\begin{claimproof} Suppose that $G_{z_1}$ is an edge. Let $T=\{p_0, p_1, z\}$ and let $\pi$ be an extension of $\phi^+$ to an $L$-coloring of $\textnormal{dom}(\phi^+)\cup\{z\}$. Recalling the notation of Definition \ref{GeneralListDefn1}, we consider the list-assignment $L_{\pi}^T$ for $G-\{q_0, w, q_1\}$. Since $G_{z_1}$ is an an edge, every chord of the outer cycle of $G-\{q_0, w, q_1\}$ is incident to one of $z_0, y$. Note that, since we have not colored $q_0$ and $y\not\in N(p_0)$, each of $z_0, y$ has an $L_{\pi}^T$-list of size at least four. Applying Lemma \ref{PartialPathColoringExtCL0} to the 4-path $p_0z_0yz_1p_1$ on the outer cycle of $G-\{q_0, w, q_1\}$, it follows that $G-\{q_0, w, q_1\}$ is $L^T_{\pi}$-colorable, so $\pi$ exends to an $L$-coloring of $G-q_0$, which is false. \end{claimproof}\end{addmargin}

\vspace*{-8mm}
\begin{addmargin}[2em]{0em}
\begin{subclaim}\label{OverlineBSizeOneR} $\overline{B}=\{r\}=S_{\phi(p_1)}^1\setminus\{\phi(q_1)\}$ \end{subclaim}

\begin{claimproof} Suppose not. Since $|S_{\phi(p_1)}^1|\geq 2$, $\phi^+$ extends to an $L$-coloring $\sigma$ of, where $\sigma(z_1)\in S_{\phi(p_1)}^1$. Let $L^+$ be a list-assignment for, where $L^+(p^{z_1})=\{s\}\cup\Lambda_{G_{z_1}}(\bullet, s, \phi(p_1))$, and the vertices $p_0, w, z_1$ are precolored with the respective colors $\phi(p_0), r,s$, and otherwise $L^+=L$. Note that $|L^+(p^{z_1})|\geq 3$. Since $y\not\in N(p_0)$, each of $z_0, y$ has two $L^+$-precolored neighbors on the outer cycle of $H_{z_1}-q_0$. It follows from Lemma \ref{PartialPathColoringExtCL0} applied to the 3-path $p_0z_0wz_1$ on the outer cycle of $H_{z_1}-q_0$ that $H_{z_1}-q_0$ is $L^+$-colorable, and, in particular, this $L^+$-coloring uses a color of $\Lambda_{G_{z_1}}(\bullet, s, \phi(p_1))$ on $p^{z_1}$, so $\phi^+$ extends to an $L$-coloring of $G-q_0$, which is false.  \end{claimproof}\end{addmargin}

\vspace*{-8mm}
\begin{addmargin}[2em]{0em}
\begin{subclaim}  $p^{z_1}y\in E(G)$. \end{subclaim}

\begin{claimproof} Suppose not. We now choose an arbitrary $s\in L_{\phi^+}(z)$. Possibly $\Lambda_{G_{z_1}}(\bullet, s, \phi(p_1))=\{\phi(p_0)\}$, but since $d(p_0, p^{z_1})>1$ by Claim \ref{P0PZ1Not5CycleCL}, $\phi^+$ extends to an $L$-coloring $\pi$ of $\textnormal{dom}(\phi^+)\cup V(G_{z_1})$ using $s$ on $z$. By assumption, $p^{z_1}y\not\in E(G)$. Note that $z_0\not\in N(p^{z_1})$, and since we have not colored $q_0$, each of $z_0, y$ has an $L_{\pi}$-list of size at least three, so, as in Subclaim \ref{SubCLGZ1Not Edge4}, it follows from Lemma \ref{PartialPathColoringExtCL0}  applied to the 4-cycle $p_0z_0yz_1p_1$ on the outer face of $H_{z_1}-\{q_0, w\}$ that $\pi$ extends to an $L$-coloring of $G-q_0$, which is false. \end{claimproof}\end{addmargin}

To finish the proof of Claim \ref{GZ1NotEdgeOverlineBSingle}, we just need to check that $r\in L_{\phi}(z_0)$. Let $G^*=H_{z_1}-\{w, z_1, q_0\}$ and let $R$ be the 3-path $p_0z_0yp^{z_1}$ on the outer cycle of $G^*$. This is illustrated in Figure \ref{Gz0Z1RColorInZ1Case}, where $R$ is indicated in bold. Suppose toward a contradiction that $r\not\in L_{\phi}(z_0)$. Since $|S_{\phi(p_1)}^1|=2$, we have $S_{\phi(p_1)}^1=\{r, \phi(q_1)\}$ by Subclaim \ref{OverlineBSizeOneR}, so there exists a pair of $L$-colorings $\sigma, \sigma^*$ of the edge $z_1p_1$, where $\sigma(z_1)\neq\sigma^*(z_1)$ and $\{\sigma(z_1), \sigma^*(z_1)\}\subseteq L_{\phi}(z_1)\setminus S_{\phi(p_1)}^1$. Let $T=\{\sigma(z_1), \sigma^*(z_1)\}$. Note that $|\Lambda_{G_{z_1}}(\bullet, \sigma(z_1), \phi(p_1))|=|\Lambda_{G_{z_1}}(\bullet, \sigma^*(z_1), \phi(p_1))|=1$ by definition, and, in particular, $T\subseteq L(p^{z_1})$. By 2 i) of Corollary \ref{CorMainEitherBWheelAtM1ColCor}, $G_{z_1}$ is a broken wheel with principal path $p^{z_1}z_1p_1$. 

\begin{center}\begin{tikzpicture}

\node[shape=circle,draw=black] (p0) at (-4, 0) {$p_0$};
\node[shape=circle,draw=white] (mid) at (-1, 0) {$\ldots$};
\node[shape=circle,draw=black] (pz1) at (1.5, 0) {$p^{z_1}$};
\node[shape=circle,draw=white] (un+) at (3, 0) {$\ldots$};
\node[shape=circle,draw=black] (p1) at (4, 0) {$p_1$};
\node[shape=circle,draw=black] (q0) at (-3,2) {$q_0$};
\node[shape=circle,draw=black] (q1) at (3,2) {$q_1$};
\node[shape=circle,draw=black] (w) at (0,4) {$w$};
\node[shape=circle,draw=black] (z0) at (-1.9, 1.9) {$z_0$};
\node[shape=circle,draw=white] (G*) at (-1, 0.8) {$G^*$};
\node[shape=circle,draw=black] (z1) at (1.9, 1.9) {$z_1$};
\node[shape=circle,draw=white] (Gz1) at (2.1, 0.8) {$G_{z_1}$};
\node[shape=circle,draw=black] (y) at (0, 1.8) {$y$};
 \draw[-] (p1) to (un+);
 \draw[-] (p0) to (q0) to (w) to (q1);
\draw[-] (q1) to (p1);
\draw[-]  (p0) to (mid) to (pz1) to (un+);
\draw[-] (w) to (z0);
\draw[-, line width=1.8pt] (p0) to (z0) to (y) to (pz1);
\draw[-] (w) to (z1) to (p1);
\draw[-] (q0) to (z0);
\draw[-] (q1) to (z1) to (pz1);
\draw[-] (y) to (w);
\draw[-] (y) to (z1);
\draw[-] (y) to (pz1);
\end{tikzpicture}\captionof{figure}{}\label{Gz0Z1RColorInZ1Case}\end{center}

\vspace*{-8mm}
\begin{addmargin}[2em]{0em}
\begin{subclaim}\label{TwoLambdaSetsSameSubMain} $\Lambda_{G_{z_1}}(\bullet, \sigma(z_1), \phi(p_1))=\Lambda_{G_{z_1}}(\bullet, \sigma^*(z_1), \phi(p_1))$. \end{subclaim}

\begin{claimproof} Suppose not. Thus, by 1b) of Theorem \ref{BWheelMainRevListThm2}, $G_{z_1}-z_1$ is a path of odd length, and $\Lambda_{G_{z_1}}(\bullet, \sigma(z_1), \phi(p_1))=\{\sigma^*(z_1)\}$ and $\Lambda_{G_{z_1}}(\bullet, \sigma^*(z_1), \phi(p_1))=\{\sigma(z_1)\}$. Now, there exists a color $s\in L(y)\setminus (\{r\}\cup T)$. Let $L^*$ be a list-assignment for $V(G^*)$, where $L^*(p_0)=\{\phi(p_0)\}$ and $L^*(y)=\{s\}$, and $L^*(p^{z_1})=\{s\}\cup T$, and otherwise $L^*=L$. Note that $|L^*(p^{z_1})|=3$ and $L^*(z_0)|\geq 5$, and since only two vertices of $R-p^{z_1}$ are precolored, it follows from Lemma \ref{PartialPathColoringExtCL0} applied to the 2-path $R-p^{z_1}$ that $G^*$ admits an $L^*$-coloring $\tau$. Since $\tau(y)=s$, we have $\tau(p^{z_1})\in T$, so $\tau$ is an $L$-coloring of $G^*$. By assumption, $r\not\in L_{\phi}(z_0)$, so $\tau\cup\phi^+$ is a proper $L$-coloring of its domain. Note that this is true even if $G_{z_1}$ is a triangle. In particular, since $\tau(p^{z_1})\in T$, we suppose without loss of generality that $\tau(p^{z_1})=\sigma(z_1)$. As $\tau(y)=s$, the color $\sigma^*(z_1)$ is left over for $z_1$, and since $\Lambda_{G_{z_1}}(\bullet, \sigma^*(z_1), \phi(p_1))=\{\sigma(z_1)\}$, it follows that $\phi^+$ extends to an $L$-coloring of $G-q_0$, which is false. \end{claimproof}\end{addmargin}

By Subclaim \ref{TwoLambdaSetsSameSubMain}, that there is an r $f\in L(p^{z_1})\setminus T$, where $\Lambda_{G_{z_1}}(\bullet, \sigma(z_1), \phi(p_1))=\Lambda(\bullet, \sigma^*(z_1), \phi(p_1))=\{f\}$. Now we let $\tilde{L}$ be another list-assignment for $G^*$, where $\tilde{L}(p_0)=\{\phi(p_0)\}$ and $\tilde{L}(p^{z_1})=\{f\}$, and $\tilde{L}(y)=L(y)\setminus\{r\}$ and otherwise $\tilde{L}=L$. Possibly $f=\phi(p_0)$, but, by Claim \ref{P0PZ1Not5CycleCL}, $d(p_0, p^{z_1})>1$. Since $p_0y\not\in E(G)$, and since $|\tilde{L}(y)|\geq 4$ and $|\tilde{L}(z_0)|\geq 5$, it follows from Lemma \ref{PartialPathColoringExtCL0} applied to $G^*$ and $R$ that $G^*$ admits a $\tilde{L}$-coloring $\tau$. By assumption, $\tau(z_0)\neq r$. By definition of $\tilde{L}$, $\tau(y)\neq r$ as well, so $\tau\cup\phi^+$ is a proper $L$-coloring of its domain. Since $\tau(y)$ is distinct from at least one color of $T$, it follows that $\phi^+$ extends to an $L$-coloring of $G-q_0$, which is false. \end{claimproof}

Applying Claim \ref{GZ1NotEdgeOverlineBSingle}, we let $r$ be the lone color of $\overline{B}$, and we let $\sigma$ be an extension of $\phi$ to an $L$-coloring of $\{p_0, p_1, q_1\}\cup\{z_0, z_1\}$ which colors both of $z_0, z_1$ with $r$.

\begin{Claim}\label{SigExtGNotQ0WCLFin} $\sigma$ extends to an $L$-coloring of $G\setminus\{q_0, w\}$ \end{Claim}

\begin{claimproof} Firstly, $\sigma$ extends to an $L$-coloring $\sigma'$ of $\textnormal{dom}(\sigma)\cup V(G_{z_0})$. Let $H=H_{z_0}\cap H_{z_1})-w$. That is, $H$ is bounded by outer cycle $p^{z_0}(C\setminus\mathring{P})p^{z_1}z_1yz_0$, and the outer cycle of $H$ contains the 3-path $Q=p^{z_0}z_0yz_1$. Let $L^+$ be a list-assignment for $V(H)$ in which $L^+(p^{z_1})=\{\sigma(z_1)\}\cup\Lambda_{G_{z_1}}(\bullet, r, \phi(p_1))$ and otherwise $L^+=L$. By Claim \ref{GZ1NotEdgeOverlineBSingle}, $r\in S_{\phi(p_1)}^1$, so $|L^+(p^{z_1})|\geq 3$. We just need to check that the restriction of $\sigma'$ to $\{p^{z_0}, z_0, z_1\}$ extends to $L^+$-color $H$. Possibly $p^{z_0}y$ is an edge, but since $\sigma$ colors $z_0, z_1$ with the same color, $|L^+_{\sigma'}(y)|\geq 3$. Suppose that $\sigma'$ does not extend to an $L^+$-coloring of $H$. Note that every chord of the outer cycle of $H$ is incident to $y$. Thus, by Lemma \ref{PartialPathColoringExtCL0} applied to $H$ and $Q$, there is a vertex of $H$ which does not lie on the outer cycle of $H$ and is adjacent to the three vertices of $V(Q)\cap\textnormal{dom}(\sigma')$, which is false, since $G$ contains no copies of $K_{2,3}$. \end{claimproof}

We now have enough to complete the proof of Proposition \ref{InterMedUnobstrucGInduced}. Applying Claim \ref{SigExtGNotQ0WCLFin}, we let $\sigma^*$ be an extension of $\sigma$ to an $L$-coloring of $G\setminus\{q_0, w\}$. Since $|\overline{B}|=1$, we suppose without loss of generality that $\mathcal{C}(\psi')=\{\psi(q_0)\}$ and $\mathcal{C}(\psi)=\{s\}$ for some $s\in L_{\phi}(w)\setminus (B\cup\overline{B})$. Since $r\not\in B$ and $s\not\in\mathcal{C}(\psi')$, we have $\sigma^*(y)=s$. But then, since $B\subseteq L_{\phi}(y)$ by Claim \ref{PhiExtToLColGMinLeftSide}, we have $\psi'(q_0)\in L_{\sigma^*}(w)$, so $\psi'(q_0)\in\mathcal{C}(\psi)$, which is false. \end{proof}

\section{The Main 4-Path Lemma}\label{Main4PathLemmaSec}

In Section \ref{Main4PathLemmaSec}, which is also the remainder of this paper, we prove Lemma \ref{EndLinked4PathBoxLemmaState}. 

\begin{lemma}\label{EndLinked4PathBoxLemmaState} Let $(G, C, P, L)$ be an end-linked rainbow, where $P$ is a path of length four and $G$ is a short-separation-free graph in which every face of $G$, except for $C$, is bounded by a triangle. Suppose that $|L(w)|\geq 5$ and no chord of $C$ is incident to $w$. Then, for each endpoint $p$ of $P$, $\Omega_G(P, [p])\neq\varnothing$.
\end{lemma}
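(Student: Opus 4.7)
The plan is a vertex-minimal-counterexample argument. Fix an endpoint of $P$, which, by symmetry, we may take to be $p := p_0$. As in the proof of Theorem \ref{CornerColoringMainRes}, we add edges in the interior and shrink lists so that every interior face is a triangle and $|L(v)| = 3$ on $V(C\setminus P)$, $|L(v)| = 5$ on $V(G\setminus C)$. The shrinkage is safe because alternative 2c) of Definition \ref{PartialLColOmega} can only become easier to satisfy as we remove colors, so any $\phi$ found for the shrunken system still belongs to $\Omega_G(P, [p_0])$ in the original.

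The first substantive reduction is to the case that $C$ is induced. Since no chord of $C$ is incident to $w$, every chord has both endpoints in $\{p_0, q_0, q_1, p_1\} \cup V(C\setminus P)$. A chord of the form $p_iy$ with $y \in V(C\setminus P)$ splits $G$ along a 2-path $p_i\, y\, y'$, whose inner side can be pre-colored using Theorem \ref{SumTo4For2PathColorEnds}, after which minimality applies to the outer side that still carries the full 4-path. A chord $q_iy$ produces a 3-path on the $p_i$-side, handled by Theorem \ref{ThmFirstLink3PathForUseInHolepunch} or Theorem \ref{CornerColoringMainRes}, and the remaining chords of $C$ among $\{p_0,q_0,q_1,p_1\}$ are handled analogously. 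Similarly, a common neighbor of $p_0, p_1$ in $V(G\setminus C)$, or a common neighbor of $q_0, q_1$ in $V(G\setminus C)$ other than $w$, yields a 4-cycle which by short-separation-freeness and the triangulation conditions collapses into a small, directly-resolvable configuration (one that would in fact give us a chord incident to $w$, contradicting our hypothesis). Thus the hypotheses of Proposition \ref{InterMedUnobstrucGInduced} apply to $(G, C, P, L)$.

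We now invoke Proposition \ref{InterMedUnobstrucGInduced}. If $\Omega_G^{\geq 2}(P) \neq \varnothing$, we are done, because $\Omega_G^{\geq 2}(P) \subseteq \Omega_G(P, [p_0])$. Otherwise, assume $\Omega_G^{\geq 2}(P) = \varnothing$, so in particular alternative ii) of P2) fails. It remains to find $\phi \in \Omega_G(P, [p_0])$, which, as $C$ is induced, is just an $L$-coloring of $\{p_0, p_1\}$; for such a $\phi$, every pair of $L$-colorings $\psi, \psi'$ of $\{p_0, q_0, q_1, p_1\}$ extending $\phi$ must satisfy one of 2a), 2b), 2c) of Definition \ref{PartialLColOmega}. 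For pairs differing on only one of $q_0, q_1$, P3) directly supplies 2a) or 2b). For pairs differing on both, we introduce a bridge coloring $\psi''$ agreeing with $\psi$ on $q_0$ and with $\psi'$ on $q_1$, which exists since $|L(q_i)| \geq 5$, and apply P3) to the pairs $(\psi, \psi'')$ and $(\psi'', \psi')$; if all three of $\mathcal{C}(\psi), \mathcal{C}(\psi'), \mathcal{C}(\psi'')$ had size exactly one with $\mathcal{C}(\psi), \mathcal{C}(\psi')$ disjoint, the two applications of P3) would force $\mathcal{C}(\psi) = \mathcal{C}(\psi'') = \mathcal{C}(\psi')$, a contradiction, so either some $\mathcal{C}(\cdot)$ has size $\geq 2$ or some $\mathcal{C}(\cdot)$ is empty.

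The main obstacle is handling the emptiness case without losing control: an extension $\psi$ with $\mathcal{C}(\psi) = \varnothing$ is witnessed by a pair $\sigma, \sigma'$ of $L$-colorings of $V(P)$ restricting to the same coloring on $V(P-w)$, neither extending to $G$, to which P2) applies and forces each $\sigma, \sigma'$ to be $(L, P, p_i)$-blocked for some $i \in \{0,1\}$. Using the characterization of blocked colorings in Definition \ref{DefnPPUnObstruc}, together with P1) and the freedom $|L(p_0)| + |L(p_1)| \geq 4$, we then pick $\phi$ whose values on $p_0, p_1$ avoid all the structural obstructions that would force a bad pair. The remaining step is a careful but finite case analysis on which endpoint blocks which extension, eventually exhibiting an explicit $\phi \in \Omega_G(P, [p_0])$ and contradicting our counterexample assumption.
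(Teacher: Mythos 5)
Your proposal has two genuine gaps, and the first is fatal to the overall plan. You claim to reduce to the case where $C$ is an induced cycle by cutting along chords and invoking minimality or the 2-/3-path theorems. This works for chords with both endpoints in $C\setminus\mathring{P}$ and, when seeking an element of $\Omega_G(P,[p_i])$, for chords incident to $q_{1-i}$: in that case the coloring produced on the smaller side may include the chord-endpoint $u_{1-i}^{\star}\in N(q_{1-i})$ in its domain, which condition 1) of Definition \ref{PartialLColOmega} permits (this is exactly how the paper glues $\Omega_{H_{1-i}}(P_{1-i},[p_i])$ to $\textnormal{End}(u_{1-i}^{\star}q_{1-i}p_{1-i}, K_{1-i})$ via Corollary \ref{GlueAugFromKHCor}). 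But a chord $q_iy$ cannot be eliminated this way: any $\phi\in\Omega_G(P,[p_i])$ must satisfy $N(q_i)\cap\textnormal{dom}(\phi)\subseteq\{p_0,p_1\}$, so $\phi$ is forbidden from coloring $y$, and the cut-and-glue argument produces a coloring that is not in $\Omega_G(P,[p_i])$. The paper establishes only that at least one of $K_0,K_1$ is an edge; the surviving chord at the other $q$ persists into the endgame, and handling it (via Proposition \ref{InterMedUnobstrucGInduced} applied to the subgraph $H_{1-i}$ rather than to $G$, the broken-wheel analysis of $K_i$, and the colorings $\pi^k_{ab}$) occupies most of Subsections \ref{SubsecChordsIncToQi}--\ref{FinalSubSecLemmaBox}. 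Your sentence ``handled by Theorem \ref{ThmFirstLink3PathForUseInHolepunch} or Theorem \ref{CornerColoringMainRes}'' does not address this.

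Second, even granting $C$ induced, the passage from P2)/P3) to membership in $\Omega_G(P)$ is not closed. Your bridge argument for a pair $\psi,\psi'$ differing on both $q_0,q_1$ fails: both applications of P3) can be discharged by the alternative $|\mathcal{C}(\psi'')|\geq 2$, which imposes no relation between $\mathcal{C}(\psi)$ and $\mathcal{C}(\psi')$, so they may still be disjoint singletons and the pair still violates Definition \ref{PartialLColOmega}. (Note also that emptiness of some $\mathcal{C}(\cdot)$ is alternative 2c) and is harmless; the only bad configuration is disjoint singletons, so your worry about ``the emptiness case'' is aimed at the wrong target.) What actually must be ruled out is that \emph{every} $L$-coloring of $\{p_0,p_1\}$ admits a disjoint-singleton pair of extensions; the paper encodes this as the family $\mathcal{F}$ of colorings $\phi^k_{ab}$ and then spends several pages of structural analysis (the vertices $z$ adjacent to $p_i,q_i,w$, the common neighbor $y$, the graphs $G_z,H_z,J$, and the $(L,P,p_i)$-blocked colorings) to reach a contradiction. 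Your final paragraph defers precisely this to ``a careful but finite case analysis,'' which is the substance of the proof rather than a routine verification.
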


\begin{proof} Suppose not, and let $G$ be a vertex-minimal counterexample to Lemma \ref{EndLinked4PathBoxLemmaState}. To avoid clutter, for any partial $L$-coloring $\phi$ of $V(G-w)$, we let $\mathcal{C}(\phi)=\mathcal{C}_G^P(\phi)$. We let $P=p_0q_0wq_1p_1$. We may suppose, by removing colors from the lists of $p_0, p_1$ if necesssary, that $|L(p_0)|+|L(p_1)|=4$, and thus $1\leq |L(p_i)|\leq 3$ for each $i\in\{0,1\}$. Because of Condition 2c) of Definition \ref{PartialLColOmega}, we have to be careful when we remove colors from the lists of some vertices of $G$, but it is permissible to remove colors from the lists of the endpoints of $P$ in the way described above since each element of $\Omega_G(P, [p_0])\cup\Omega_G(P, [p_0])$ has both endpoints of $P$ in its domain. We break the proof of Lemma \ref{EndLinked4PathBoxLemmaState} into six subsections, which are organized as follows.

\begin{enumerate}[label=\arabic*)] 
\itemsep-0.1em
\item In Subsection \ref{BoxLemmaBasicFactsSubSec}, we gather some basic facts about $G$, and, in particular, we show that at most one of $q_0, q_1$ is incident to a chord of $C$. 
\item  In Subsection \ref{PrelimFor6CycleSub}, we show that, for each $i\in\{0,1\}$ and any $x\in V(C\setminus\mathring{P})\cap N(q_{1-i})$, no vertex of $G\setminus C$ is adjacent to both of $p_i$ and $x$. In Subsection \ref{SubsecChordsIncToQi}, we gather a few facts about chords of $C$. 
\item In Subsections \ref{VertexZAllofPiQiW}-\ref{2ChordIntermSecondPartSec}, we show that, if there is an $i\in\{0,1\}$ such that $p_i, q_i, w$ have a common neighbor $z$, then $N(z)\cap N(q_{1-i})=\{w\}$. In Subsection \ref{FinalSubSecLemmaBox}, we complete the proof of Lemma \ref{EndLinked4PathBoxLemmaState}.
\end{enumerate}

\makeatletter
\renewcommand{\thetheorem}{\thesubsection.\arabic{theorem}}
\@addtoreset{theorem}{subsection}
\makeatother

\subsection{Preliminary restrictions}\label{BoxLemmaBasicFactsSubSec}

\begin{Claim}\label{MidPathChordOnlyQ0Q1AdjLast} Every chord of $C$ is incident to at one of $q_0, q_1$. \end{Claim}

\begin{claimproof} Suppose not. Since no chord of $C$ is incident to $G$, there is a chord $uu'$ of $C$ with $u, u'\not\in V(\mathring{P})$. Let $G=G_0\cup G_1$, where $P\subseteq G_0$ and $G_0\cap G_1=P$. Note that no chord of the outer face of $G_0$ is incident to $z$. By the minimality of $G$, there is an endpoint $p$ of $P$ such that $\Omega_{G}(P, [p])=\varnothing$ and $\Omega_{G_1}(P, [p])\neq\varnothing$, so let $\phi\in\Omega_{G_1}(P, [p])$. Since $\phi$ satisfies 1) of Theorem \ref{thomassen5ChooseThm}, but $\phi\not\in\Omega_{G}(P, [p])$, there is a pair $\{\phi_0, \phi_1\}$ of extensions of $\phi$ to $L$-colorings of $\textnormal{dom}(\phi)\cup\{q_0, q_1\}$, where $\mathcal{C}(\phi_0)$ and $\mathcal{C}(\phi_1)$ are disjoint singletons, yet it follows from Definition \ref{PartialLColOmega} that, for each $k\in\{0,1\}$, $\mathcal{C}_{G}^P(\phi_k)$ and $\mathcal{C}_{G_1}^P(\phi_k)$ are precisely equal, contradicting the fact that $\phi\in\Omega_{G_1}(P, [p])$. \end{claimproof}

\begin{Claim}\label{EachColorPhiP-wCLeq1} For any $L$-coloring $\phi$ of $\{p_0, p_1\}$, $\phi$ extends to two distinct $L$-colorings $\psi$ and $\psi'$ of $V(P-w)$ such that $\mathcal{C}(\psi)$ and $\mathcal{C}(\psi')$ are disjoint singletons.\end{Claim}

\begin{claimproof} Since $\textnormal{dom}(\phi)\cap N(q_i)\subseteq\{p_0, p_1\}$ for each $i=0,1$, and since $\Omega_G(P, [p_k])=\varnothing$ for some $k\in\{0,1\}$, this is immediate from the fact that $G$ is a counterexample. \end{claimproof}

\begin{Claim}\label{Q0Q1NoNbrExceptwCL} $\textnormal{deg}(w)\geq 4$. In particular, $q_0q_1\not\in E(G)$ and $q_0, q_1$ have no common neighbor in $G$, except for $w$.  \end{Claim}

\begin{claimproof}  Suppose toward a contradiction that $\textnormal{deg}(w)\leq 3$. Since $G$ is a counterexample, there is an $L$-coloring $\phi$ of $V(P-w)$ with $|\mathcal{C}_P^G(\phi')|=1$. Since $\mathcal{C}_P^G(\phi')$ is nonempty, $\phi'$ extends to an $L$-coloring of $G$, so $\phi$ extends to an $L$-coloring $\psi$ of $G-w$, and since $|\textnormal{deg}(w)|\leq 3$, $\psi$ extends to at least two different $L$-colorings of $G$ which use different colors on $w$, so $|\mathcal{C}(\phi')|\geq \mathcal{C}(\psi)|\geq 2$. which is false. Thus, $\textnormal{deg}(w)\geq 4$. Since $G$ is short-separation-free, it immediately follows that $q_0, q_1$ have no common neighbor other than $w$ and that $q_0q_1\not\in E(G)$. \end{claimproof}

\begin{Claim}\label{HiSatConCl} For each $i\in\{0,1\}$, if $K_{1-i}$ is not an edge and $q_ix$ is a chord of $C$ incident to $q_i$, then, letting $H$ be the subgraph of $G$ bounded by outer cycle $(p_i(C\setminus\mathring{P})x)q_{1-i}wq_i$, there exists a family of $|L(p_{i})|+(|L(x)|-3)$ different elements of $\Omega_H(p_iq_iwq_{1-i}x, [p_i])$, each of which uses a different color on $x$. \end{Claim}

\begin{claimproof} Note that no chord of the outer face of $H$ is incident to $w$, and, by Claim \ref{Q0Q1NoNbrExceptwCL}, $x\not\in N(q_i)$. In particular $x\neq p_i$, so $x$ is an internal vertex of the path $C\setminus\mathring{P}$, and $|L(x)|\geq 3$, and since $q_ix$ is a chord of $C$ we have $|V(H)|<|V(G)|$. By Claim \ref{Q0Q1NoNbrExceptwCL}, $p_iq_{1-i}\not\in E(G)$, so $u_{1-i}^{\star}\in V(C\setminus P)$ and $|L(x)|\geq 3$. Since $1\leq |L(p_i)|\leq 3$, it follows from the minimality of $G$ that there exists a family of $|L(p_{i})|+(|L(x)|-3)$ different elements of $\Omega_{H_{1-i}}(p_iq_iwq_{1-i}x, [p_i])$, each of which uses a different color on $u_{1-i}^{\star}$. \end{claimproof}

We now introduce some additional notation. 

\begin{defn} \emph{We let $C\setminus\mathring{P}=p_0u_1\ldots u_tp_1$ for some $t\geq 0$. For each $i\in\{0,1\}$, we define the following.}
\begin{enumerate}[label=\emph{\arabic*)}] 
\itemsep-0.1em
\item\emph{We let $u^{\star}_i$ be the unique vertex of $N(q_i)\cap V(C\setminus\mathring{P})$ which is farthest from $p_i$ on the path $C\setminus\mathring{P}$.}
\item\emph{We let $K_i$ be the subgraph of $G$ with outer face $(p_i(C\setminus\mathring{P})u_i^{\star})q_i$. That is, if $u_i^{\star}=p_i$, then $K_i$ is an edge, and otherwise the outer face of $K_i$ is a cycle.}
\item\emph{We let $H_i$ be the subgraph of $G$ bounded by outer cycle $(u_i^{\star}(C\setminus\mathring{P})p_{1-i}q_{1-i}wq_i$. That is, $K_i\cup H_i=G$ and $K_i\cap H_i=q_iu_i^{\star}$.}
\item\emph{We let $P_i$ be the 4-path $p_{1-i}q_{1-i}wq_iu_i^{\star}$ on the outer cycle of $H_i$}
\end{enumerate}
\end{defn}

\begin{Claim}\label{AtMostOneChordToC-P} 
Both of the following hold.
\begin{enumerate}[label=\alph*)] 
\itemsep-0.1em
\item For each $i\in\{0,1\}$, either $\Omega_G(P, [p_i])\neq\varnothing$ or $K_{1-i}$ is an edge; AND
\item In particular, there is at least one $i\in\{0,1\}$ such that $K_{1-i}$ is an edge. 
\end{enumerate}
 \end{Claim}

\begin{claimproof} Suppose that a) does not hold, and suppose without loss of generality that $K_1$ is not an edge, but $\Omega_{G}(P, [p_0])=\varnothing$. By Claim \ref{HiSatConCl}, there exist $|L(p_0)|$ different elements of $\Omega_{H_1}(P_1, [p_0])$, each of which uses a different color on $u_1^{\star}$. By Corollary \ref{GlueAugFromKHCor}, there exists $\phi\in\Omega_{H_1}(P_1, [p_0])$ and a $\psi\in\textnormal{End}(u_1^{\star}q_1p_1, K_1)$ with $\phi(u_1^{\star})=\psi(u_1^{\star})$. Now, $\phi\cup\psi$ is a proper $L$-coloring of its domain, which is $\textnormal{dom}(\phi)\cup\{p_1\}$. In particular, this is a subset of $C\setminus\mathring{P}$. Since $\phi\in\Omega_{H_1}(P_1, [p_0])$, we have $\{p_0, u_1^{\star}\}\subseteq\textnormal{dom}(\phi)\subseteq N(q_0)\cup N(q_1)$. We also have $N(q_0)\cap\textnormal{dom}(\phi\cup\psi)\subseteq\{p_0, u_1^{\star}\}$. By Claim \ref{Q0Q1NoNbrExceptwCL}, $q_0$ is not adjacent to $u_1^{\star}$, so $N(q_0)\cap\textnormal{dom}(\phi\cup\psi)\subseteq\{p_0\}$. Since $\Omega_G(P, [p_0])=\varnothing$, we have $\phi\cup\psi\not\in\Omega_G(P, [p_0])$, so it follows that $\phi\cup\psi$ extends to a pair of $L$-colorings $\sigma_0, \sigma_1$ of $\textnormal{dom}(\phi\cup\psi)\cup\{q_0, q_1\}$, where $\mathcal{C}(\sigma_0)$ and $\mathcal{C}(\sigma_1)$ are disjoint sets of size precisely one. For each $k\in\{0,1\}$, let $\sigma_k'$ be the restriction of $\sigma_k$ to $\textnormal{dom}(\sigma_k)\setminus\{p_1\}$. For each $k=0,1$, since $\mathcal{C}(\sigma_k)$ is nonempty, $\mathcal{C}^{P_1}_{H_1}$ is also nonempty. Since $\phi\in\Omega_{H_1}(P_1, [p_0])$, we have either $\mathcal{C}^{P_1}_{H_1}(\sigma'_0)\cap\mathcal{C}^{P_1}_{H_1}(\sigma_1')\neq\varnothing$ or at least one of $\mathcal{C}^{P_1}_{H_1}(\sigma'_0), \mathcal{C}^{P_1}_{H_1}(\sigma_1')$ has size at least two. Since $\psi\in\textnormal{End}(u_1^{\star}q_1p_1, K_1)$, it follows that either $\mathcal{C}(\sigma_0)\cap\mathcal{C}^P_G(\sigma_1)\neq\varnothing$ and or at least one of $\mathcal{C}(\sigma_0), \mathcal{C}(\sigma_1)$ has size at least two, which is false. This proves a). Since $G$ is a counterexample, b) immediately follows from a). \end{claimproof}

\begin{Claim}\label{u1-istarNotAdjpi} For each $i\in\{0,1\}$, $p_iu^{\star}_{1-i}$ is not an edge of $G$. In particular, $|V(C)|>5$ and $p_0p_1\not\in E(G)$. \end{Claim}

\begin{claimproof} Suppose not, and suppose without loss of generality that $p_0u^{\star}_1\in E(G)$.  By Claim \ref{MidPathChordOnlyQ0Q1AdjLast}, $p_0u^{\star}_1\in E(C)$. By Claim \ref{EachColorPhiP-wCLeq1}, there is an $L$-coloring $\phi'$ of $V(P-w)$ with $|\mathcal{C}(\phi')|=1$. Since $\mathcal{C}(\phi')$ is nonempty, $\phi'$ extends to an $L$-coloring $\psi$ of $V(P-w)\cup V(K_1)$, and $\mathcal{C}(\psi)\subseteq\mathcal{C}(\phi')$. Since $|L_{\psi}(w)|\geq 3$, but $|\mathcal{C}(\psi)|\leq 1$, there is an $L$-coloring of the set of vertices $\{p_0, u_1^{\star}, q_1, w, p_1\}$ which does not extend to $L$-color the interior of the 5-cycle $p_0u_1^{\star}q_1wp_1$. By Theorem \ref{BohmePaper5CycleCorList}, there is a vertex of $G\setminus C$ adjacent to every vertex of the cycle $p_0u_1^{\star}q_1wp_1$, contradicting Claim \ref{Q0Q1NoNbrExceptwCL} \end{claimproof}

By Claim \ref{u1-istarNotAdjpi}, $p_0p_1\not\in E(G)$, so, for any $(a,b)\in L(p_0)\times L(p_1)$, there is an $L$-coloring of $\{p_0, p_1\}$ using $a,b$ on the respective vertices $p_0, p_1$. Applying Claim \ref{EachColorPhiP-wCLeq1}, we introduce the following notation.

\begin{defn}\label{SetFForeachAb01} \emph{For any $a\in L(p_0)$ and $b\in L(p_1)$, we let $\phi_{ab}^0, \phi_{ab}^1$ be $L$-colorings of $V(P-w)$ using $a,b$ on the respective vertices $p_0, p_1$, where $\mathcal{C}(\phi_{ab}^0)$ and $\mathcal{C}(\phi_{ab}^1)$ are disjoint singletons. Note that, since $\mathcal{C}(\phi_{ab}^0)$ and $\mathcal{C}(\phi_{ab}^1)$ are disjoint and nonempty, $\phi_{ab}^0$ and $\phi_{ab}^1$ are distinct. We also let $\mathcal{F}=\{\phi_{ab}^k: (a,b)\in L(p_0)\times L(p_1)\ \textnormal{and}\ k\in\{0,1\}\}$.}
\end{defn}

\subsection{Dealing with 6-cycles}\label{PrelimFor6CycleSub}

\begin{Claim}\label{NoVertexVReachesAcrossFromPiToU1-iStar} For each $i\in\{0,1\}$, $p_i, u_{1-i}^{\star}$ have no common neighbor in $G\setminus C$. \end{Claim}

\begin{claimproof} Suppose for the sake of definiteness that $i=0$ and that $p_0, u_1^{\star}$ have a common neighbor $v\in V(G\setminus C)$.  $G$ contains the 6-cycle $D:=p_0vu_1^{\star}q_1wq_0$. Let $H^{\downarrow}$ be the subgraph of $G$ bounded by outer cycle $(p_0(C\setminus\mathring{P})u_1^{\star})v$ and $H_{\uparrow}=\textnormal{Int}_G(D)$. This is illustrated in Figure \ref{V0V1CommonNbrHUpFigure}, where $D$ is in bold (possibly $u_1^{\star}=p_1$ and $K_1$ is an edge). 

\vspace*{-8mm}
\begin{addmargin}[2em]{0em}
\begin{subclaim}\label{NoChordDSubCLForVstar} There is no chord of $D$. \end{subclaim}

\begin{claimproof} By Claim \ref{u1-istarNotAdjpi}, any chord of $D$, if it exists, lies in $H^{\uparrow}$. By Claim \ref{Q0Q1NoNbrExceptwCL}, $v$ is adjacent to at most one of $q_0, q_1$, and, by assumption, $w$ is adjacent to neither $p_0$ nor $u_1^{\star}$. Thus, if $wv\in E(G)$, then each of the two 4-cycles $wvp_0q_0$ and $wvu_1^{\star}q_1$ is induced. Since $G$ is short-separation-free, this contradicts our triangulation conditions, so $wv\not\in E(G)$. We conclude that any chord of $D$, if it exists, lies in $\{q_0v, q_1v\}$, and there is at most one such chord, so suppose  there is a $k\in\{0,1\}$ such that $q_kv$ is the lone chord of $D$ and let $D'$ be the unique induced 5-cycle whose vertices lie in $V(D)$. Since $G$ is short-separation-free, we have $V(\textnormal{Int}_G(D))=V(\textnormal{Int}_G(D'))\cup V(D\setminus D')$. Now we choose an arbitrary $\phi\in\mathcal{F}$. Since $\mathcal{C}(\phi)$ is nonempty, $\phi$ extends to an $L$-coloring $\psi$ of $V(\textnormal{Ext}_D(G))\setminus\{w\}$, and $|\mathcal{C}(\psi)|\leq 1$. Since no chord of $C$ is incident to $w$, we have $|L_{\psi}(w)|\geq 3$, so there is an $L$-coloring of $V(D')$ which does not extend to $L$-color the interior of $D'$. By Theorem \ref{BohmePaper5CycleCorList}, there is a vertex of $G\setminus C$ adjacent to all five vertices of $D'$, and, in particular, this vertex is adjacent to all three of $q_0, w, q_1$, contradicting Claim \ref{Q0Q1NoNbrExceptwCL}. \end{claimproof}\end{addmargin}

We now show that there is no vertex of $H^{\uparrow}\setminus D$ adjacent to all of $p_0, v, u_1^{\star}$. 

\vspace*{-8mm}
\begin{addmargin}[2em]{0em}
\begin{subclaim}\label{IFzAdjAllThreeExt} If there is a $z\in V(H^{\uparrow})\setminus V(D)$ adjacent to all three of $p_0, v, u_1^{\star}$, then, letting $H'$ be the subgraph of $G$ bounded by outer cycle $(p_0(C\setminus\mathring{P})u_1^{\star}z$, there is an $L$-coloring $\psi$ of $V(K_1)\cup V(P-w)$ such that $\mathcal{C}(\psi)|\leq 1$, where $\Lambda_{H'}^{p_0zu_1^{\star}}(\psi(p_0), \bullet, \psi(u_1^{\star}))=L_{\psi}(z)$. \end{subclaim}

\begin{claimproof} Note that $H'$ is not a broken wheel with principal path $p_0zu_1^{\star}$, and the outer cycle of $H'$ is induced. By 1) of Theorem \ref{EitherBWheelOrAtMostOneColThm}, there is at most one $L$-coloring of $p_0zu_1^{\star}$ which does not extend to an $L$-coloring of $H'$. Consider the following cases. 

\textbf{Case 1:} $|L(p_0)|\geq 2$

In this case, there is a $c\in L(p_0)$ such that any $L$-coloring of $p_0zu_1^{\star}$ using $c$ on $p_0$ extends to $L$-color $H'$. We now choose a $\phi\in\mathcal{F}$ with $\phi(p_0)=c$. Since $\mathcal{C}(\phi)$ is nonempty, $\phi$ extends to an $L$-coloring $\psi$ of $V(P-w)\cup V(K_1)$, and $|\mathcal{C}(\psi)|\leq 1$, as $\mathcal{C}(\psi)\subseteq\mathcal{C}(\phi)$, so we are done in this case.

\textbf{Case 2:} $|L(p_0)|=1$

In this case, we have $|L(u_1^{\star})|=3$, even if $u_1^{\star}=p_1$, so there is an $S\subseteq L(u_1^{\star})$ with $|S|=2$, where, for any $s\in S$, any $L$-coloring of $p_0vu_1^{\star}$ using $s$ on $u_1^{\star}$ extends to $L$-color $H'$. If $u_1^{\star}=p_1$, then we choose a $\psi\in\mathcal{F}$ with $\psi(p_1)\in S$, and since $V(K_1)\subseteq V(P-w)$, we are done in that case, so suppose that $u_1^{\star}\neq p_1$. Since $|L(p_1)|=3$, there exist three distinct elements $\phi_0, \phi_1, \phi_2$ of $\mathcal{F}$, each of which uses a different color on $p_1$, where $|\mathcal{C}(\phi_k)|=1$ for each $k=0,1,2$. Since $|L(u_1^{\star})\setminus S|=1$ and each $\phi_k$ uses a different color on $p_1$, it follows that there exists a $k\in\{0,1,2\}$ with $S\cap\Lambda_{K_1}(\bullet, \phi_k(q_1), \phi_k(p_1))\neq\varnothing$, or else we contradict 1c) of Theorem \ref{BWheelMainRevListThm2}. Thus, $\phi_k$ extends to an $L$-coloring $\psi$ of $V(P-w)\cup V(K_1)$ with $\psi(u_1^{\star})\in S$ and $|\mathcal{C}(\psi)|\leq 1$, so we are done. \end{claimproof}\end{addmargin}

\begin{center}\begin{tikzpicture}

\node[shape=circle,draw=black] (p0) at (-4, 0) {$p_0$};
\node[shape=circle,draw=white] (mid) at (-3, 0) {$\ldots$};
\node[shape=circle,draw=white] (mid+) at (0, 0) {$\ldots$};
\node[shape=circle,draw=black] (v1) at (1, 0) {$u_1^{\star}$};
\node[shape=circle,draw=white] (un+) at (2, 0) {$\ldots$};
\node[shape=circle,draw=black] (ut) at (3, 0) {$u_t$};
\node[shape=circle,draw=black] (p1) at (4, 0) {$p_1$};
\node[shape=circle,draw=black] (q0) at (-3,2) {$q_0$};
\node[shape=circle,draw=black] (q1) at (3,2) {$q_1$};
\node[shape=circle,draw=black] (w) at (0,4) {$w$};
\node[shape=circle,draw=black] (v) at (-1.5, 1.5) {$v$};
\node[shape=circle,draw=white] (K1) at (2.8, 1) {$K_1$};
\node[shape=circle,draw=white] (Hdown) at (-1.5, 0.6) {$H_{\downarrow}$};
\node[shape=circle,draw=white] (Hdown) at (0, 2.5) {$H^{\uparrow}$};
 \draw[-] (p1) to (ut);
 \draw[-, line width=1.8pt] (p0) to (q0) to (w) to (q1) to (v1);
\draw[-] (q1) to (p1);
\draw[-]  (p0) to (mid) to (mid+) to (v1) to (un+) to (ut);
\draw[-, line width=1.8pt] (p0) to (v) to (v1);

\end{tikzpicture}\captionof{figure}{}\label{V0V1CommonNbrHUpFigure}\end{center}

 Applying Subclaim \ref{IFzAdjAllThreeExt}, we have the following. 

\vspace*{-8mm}
\begin{addmargin}[2em]{0em}
\begin{subclaim}\label{notAdjAllThreep0vu1star} There is no vertex of $H^{\uparrow}\setminus D$ adjacent to all three of $p_0, v, u_1^{\star}$ \end{subclaim}

\begin{claimproof} Suppose there is a $z\in V(H^{\uparrow}\setminus D)$ adjacent to all three of $p_0, v, u_1^{\star}$ and let $\psi, H'$ be as in the statement of Subclaim \ref{IFzAdjAllThreeExt}. Possibly $z\in N(w)$, but, in any case, $|L_{\psi}(w)|\geq 2$. Since $\psi$ extends to an $L$-coloring of $\textnormal{Ext}_G(D)$ and $|\mathcal{C}(\psi)|=1$, it follows from Theorem \ref{BohmePaper5CycleCorList} that $1\leq |V(H^{\uparrow}\setminus D)|\leq 3$. By Claim \ref{Q0Q1NoNbrExceptwCL},  no vertex of $H^{\uparrow}\setminus D$ is adjacent to all three of $q_0, w, q_1$. If $z\in N(w)$, then, since $w$ is adjacent to neither $p_0$ nor $u_1^{\star}$ and $G$ contains no induced 4-cycles, $z$ is adjcent to all six vertices of $D$, a contradiction. Thus, $z\not\in N(w)$. Since every face of $G$ other than $C$ is bounded by a triangle, we have $V(H^{\uparrow}\setminus D)\neq\{z\}$, so $2\leq |V(H^{\uparrow}\setminus D)|\leq 3$. It also follows from Theorem \ref{BohmePaper5CycleCorList} that each vertex of $H^{\uparrow}\setminus D$ has degree precisely five. In particular, since no vertex of $V(H^{\uparrow}\setminus D)$ is adjacent to all three of $q_0, w, q_1$, there exists an edge $z_0z_1$, where $H^{\uparrow}\setminus D$ is the triangle $zz_0z_1$, and $N(z_0)=\{z, z_1\}\cup\{p_0, q_0, w\}$ and $N(z_1)=\{z, z_0\}\cup\{u_1^{\star}, q_1, w\}$. For any $c\in L_{\psi}(w)$, there is an $L_{\psi}$-coloring of $\{w, z, z_0, z_1\}$ using $c$ on $w$, since we have not yet colored $v$. Thus, it follows from our choice of $\psi$ that $\psi$ extends to an $L$-coloring of $G$ using $c$ on $w$, so $\mathcal{C}_P^G(\psi)=L_{\psi}(w)$ and $|\mathcal{C}_P^G(\psi)|\geq 3$, which is false. \end{claimproof}\end{addmargin}

\vspace*{-8mm}
\begin{addmargin}[2em]{0em}
\begin{subclaim}\label{ChordofDIncV} $H^{\uparrow}\setminus D$ is an edge, and furthermore, one endpoint of this edge is adjacent to every vertex of the 3-path $wq_0p_0v$, and the other endpoint of this edge is adjacent to every vertex of the 3-path $wq_1u_1^{\star}v$. \end{subclaim}

\begin{claimproof} By Subclaim \ref{NoChordDSubCLForVstar}, there are no chords of $D$. We now choose an arbitrary $\phi\in\mathcal{F}$. Since $\mathcal{C}(\phi)$ is nonempty, $\phi$ extends to an $L$-coloring $\phi^*$ of $V(\textnormal{Ext}_G(D))$. Since $w$ is incident to no chords of $C$, $|L_{\phi^*}(w)|\geq 3$. If there is a lone vertex $z$ of $H^{\uparrow}\setminus D$ adjacent to at least five vertices of $D$, then, since $D$ is induced and $G$ has no induced 4-cycles, it follows that $z$ is adjacent to all six vertices of $D$, contradicting Subclaim \ref{notAdjAllThreep0vu1star}. Thus, no such vertex exists. Since $|\mathcal{C}(\phi)|\leq 1$ and $|L_{\phi^*}(w)|\geq 3$, it follows from Theorem \ref{BohmePaper5CycleCorList} that $2\leq |V(H^{\uparrow}\setminus D)|\leq 3$, and $H^{\uparrow}\setminus D$ is either an edge or a triangle. If $H^{\uparrow}\setminus D$ is an edge, but does not satisfy the adjacency specified in the statement of Subclaim \ref{ChordofDIncV}, then one endpoint of this edge is adjacent to all three of $q_0, w, q_1$, contradicting Claim \ref{Q0Q1NoNbrExceptwCL}. Likewise, if $H^{\uparrow}\setminus D$ is a triangle, then it follows from Claim \ref{Q0Q1NoNbrExceptwCL} that one vertex of this triangle is adjacent to all three of $p_0, v, u_1^{\star}$, contradicting Subclaim \ref{notAdjAllThreep0vu1star}. \end{claimproof}\end{addmargin}

Applying Subclaim \ref{ChordofDIncV}, there are vertices $y, y^*$ such that $H^{\uparrow}\setminus D=yy^*$, where $y$ is adjacent to the 3-path $vp_0q_0w$ and $y^*$ is adjacent to the 3-path $wq_1u_1^{\star}v$.

\vspace*{-8mm}
\begin{addmargin}[2em]{0em}
\begin{subclaim} $K_1$ is not an edge. \end{subclaim}

\begin{claimproof} Suppose that $K_1$ is an edge, i.e $u_1^{\star}=p_1$. By Theorem \ref{SumTo4For2PathColorEnds}, there is a $\phi\in\textnormal{End}(p_0vp_1, H_{\downarrow})$. Since $\phi$ is an $L$-coloring of $\{p_0, p_1\}$, there is a $\phi'\in\mathcal{F}$ which is an extension of $\phi$. Let $c$ be an arbitrary color of $L_{\phi'}(w)$. Note that $\phi'$ extends to an $L$-coloring $\psi$ of $V(P)\cup\{y, y^*\}$ using $c$ on $w$, since we have not yet colored $v$. Furthermore, $v$ is adjacent to precisely for vertices of $\textnormal{dom}(\psi)$, so there is a color left over for $v$. It follows from our choice of $\phi$ that $\psi$ extends to an $L$-coloring of $G$, so we get $\mathcal{C}(\phi')=L_{\phi'}(w)$, which is false, as $|L_{\phi'}(w)|\geq 3$. \end{claimproof}\end{addmargin}

Since $K_1$ is not an edge, $u_1^{\star}q_1p_1$ is a 2-path.

\vspace*{-8mm}
\begin{addmargin}[2em]{0em}
\begin{subclaim}\label{AtMostOneColorDownFromK1Q1toU1Star} For any $L$-coloring $\psi$ of $V(C-w)$, if $|\mathcal{C}(\psi)|\leq 1$, then $|\Lambda_{K_1}(\bullet, \psi(q_1), \psi(p_1))|=1$. \end{subclaim}

\begin{claimproof} Suppose not. By Theorem \ref{thomassen5ChooseThm}, $\Lambda_{K_1}(\bullet, \psi(q_1), \psi(p_1))$ is nonempty, so there is an $S\subseteq \Lambda_{K_1}(\bullet, \psi(q_1), \psi(p_1))$ with $|S|=2$. Let $T_v=L_{\psi}(v)\setminus S$ and $T_{y^*}=L_{\psi}(y')\setminus S$, and let $T_w=L_{\psi}(w)\setminus\mathcal{C}_G^P(\psi)$. Since $D$ has no chords and we have not yet colored $u_1^{\star}$, we have $|L_{\psi}(v)|\geq 4$ and $|L_{\psi}(y^*)|\geq 2$, so each of $T_v$ and $T_{y^*}$ has size at least two. Likewise, since $|L_{\psi}(w)|\geq 3$, we have $|T_w|\geq 2$. Since $vy^*w$ is an induced path and $|L_{\psi}(y)|\geq 3$, there is an $L_{\psi}$-coloring $\sigma$ of $vy^*w$, where $\sigma(x)\in T_x$ for each $x\in\{v, y^*, w\}$, and $\sigma$ uses at most two colors of $L_{\psi}(y)$, i.e $|L_{\psi\cup\sigma}(y)|\geq 1$. Thus, $\psi\cup\sigma$ extends to an $L$-coloring $\tau$ of $V(P)\cup\{y, y^*\}$. Since $\tau(v)\not\in S$, we have $\Lambda_{H_{\downarrow}}^{p_0vu_1^{\star}}(\tau(p_0), \tau(v), \bullet)\cap S\neq\varnothing$ by Corollary \ref{2ListsNextToPrecEdgeCor}. Since $\tau(y^*)\not\in S$, it follows that $\tau$, and thus $\psi$, extends to $L$-color $G$, which is false, since $\tau(w)\not\in\mathcal{C}_G^P(\psi)$.  \end{claimproof} \end{addmargin}

Now let $\phi$ be an arbitrary $L$-coloring of $\{p_0, p_1\}$ and let $\phi^0, \phi^1\in\mathcal{F}$, where $\phi^0$ and $\phi^1$ are distinct extensions of $\phi$. For each $k\in\{0,1\}$, since $\mathcal{C}(\phi^k)$ is nonempty, $\phi^k$ extends to $L$-color $G$. We have $|L(y)|=|L(y^*)|=5$, or else, if one of $y, y^*$ has a list of size greater than five, then, for each $k\in\{0,1\}$, $\mathcal{C}(\phi^k)=L_{\phi}(w)$ and $|\mathcal{C}(\phi^k)|\geq 3$, which is false. 

\vspace*{-8mm}
\begin{addmargin}[2em]{0em}
\begin{subclaim}\label{TwoFactsPhiKQ0Q1} For each $k\in\{0,1\}$, both of the following hold.
\begin{enumerate}[label=\arabic*)]
\itemsep-0.1em
\item $\phi^k(q_0)\in L(y)$ and $\phi^k(q_1)\in L(y^*)$; AND
\item  $\phi^k(q_0)\neq\phi^k(q_1)$
\end{enumerate} \end{subclaim}

\begin{claimproof} Let $k\in\{0,1\}$. Suppose that $k$ does not satisfy 1). Note that $\phi^k$ extends to an $L$-coloring of $G$ and thus extends to an $L$-coloring $\psi$ of $G\setminus\{y, w, y^*\}$. Since either $\phi^k(q_0)\in L(y)$ or $\phi^k(q_1)\in L(y^*)$, we have $|\mathcal{C}(\phi^k)|\geq |\mathcal{C}(\psi)|\geq |L_{\psi}(w)|\geq 3$, which is false. This proves 1). Now we prove 2). Suppose that $\phi^k(q_0)=\phi^k(q_1)$. Now, $\phi^k$ extends to an $L$-coloring of $G$ and thus extends to an $L$-coloring $\psi$ of $G-w$. Since $N(w)=\{q_0, y, y^*, q_1\}$ and $\psi$ uses the same color on $q_0, q_1$, it follows that $\psi$ extends to two different $L$-colorings of $G$ using different colors on $w$, so $|\mathcal{C}(\phi^k)|\geq 2$, which is false. \end{claimproof}\end{addmargin}

We now let $A_0=\{\phi^0(q_0), \phi^1(q_0)\}$ and $A_1=\{\phi^0(q_1), \phi^1(q_1)\}$. 

\vspace*{-8mm}
\begin{addmargin}[2em]{0em}
\begin{subclaim}\label{phi0andphi1UseDiffColorsOnQ1} $|A_1|=2$ \end{subclaim}

\begin{claimproof} Suppose not. Thus, $\phi^0(q_1)=\phi^1(q_1)=c$ for some color $c$, and let $c'\in\Lambda_{K_1}(\bullet, c, \phi(p_1))$. By Subclaim \ref{AtMostOneColorDownFromK1Q1toU1Star}, we have $\Lambda_{K_1}(\bullet, c, \phi(p_1))=\{c'\}$. Now, since $|L(v)\setminus\{\phi(p_0), c'\}|\geq 3$, it follows from 3) of Corollary \ref{CorMainEitherBWheelAtM1ColCor} that there is a $d\in\Lambda_{H_{\downarrow}}^{p_0vu_1^{\star}}(\phi(p_0), \bullet, c')$. Note that this is permissible even if $\phi(p_0)=c'$, since, by Claim \ref{u1-istarNotAdjpi}, $H_{\downarrow}$ is not a triangle. Thus, for each $k=0,1$, $\phi^k$ extends to an $L$-coloring $\psi^k$ of $V(\textnormal{Ext}_G(D))\setminus\{w\}$ using $c' d$ on the respective vertices $u_1^{\star}, v$.  Now, since $\phi^0(q_1)=\phi^1(q_1)=c$, we have $\phi^0(q_0)\neq\phi^1(q_0)$, or else $\phi^0=\phi^1$, which is false, since $\mathcal{C}(\phi^k)$ and $\mathcal{C}(\phi^k)$ are disjoint and nonempty. Thus, $|A_0|=2$. We have $|L(y)\setminus\{\phi(p_0), d\}|\geq 3$ and $L(y^*)\setminus\{c, c', d\}|\geq 2$. As shown above, $|A_0|=2$, so there is a $k\in\{0,1\}$ and an $f\in L(y^*)\setminus\{c, c', d\}$ with $L(y)\setminus\{\phi(p_0), d, \phi^k(q_0), f\}|\geq 2$. But then, coloring $y^*$ with $f$, it follows that each color of $L_{\psi^k}(w)\setminus\{f\}$ lies in $\mathcal{C}_P^G(\psi^k)$. Since $|L_{\psi^k}(w)\setminus\{f\}|\geq 2$, we have $|\mathcal{C}_P^G(\psi^k)|\geq 2$ and thus $|\mathcal{C}(\phi^k)|\geq 2$, which is false.  \end{claimproof}\end{addmargin}

\vspace*{-8mm}
\begin{addmargin}[2em]{0em}
\begin{subclaim}\label{phi0andphi1MapDistinctSingletonSubcL} $\Lambda_{K_1}(\bullet, \phi^0(q_1), \phi(p_1))$ and $\Lambda_{K_1}(\bullet, \phi^1(q_1), \phi(p_1))$ are distinct singletons. \end{subclaim}

\begin{claimproof}  By Subclaim \ref{AtMostOneColorDownFromK1Q1toU1Star}, $\Lambda_{K_1}(\bullet, \phi^k(q_1), \phi(p_1))$ is a singleton for each $k=0,1$. Suppose that Subclaim \ref{phi0andphi1MapDistinctSingletonSubcL} does not hold. Thus, there is a $c\in L(u_1^{\star})$ such that $\Lambda_{K_1}(\bullet, \phi^k(q_1), \phi(p_1))=\{c\}$ for each $k\in\{0,1\}$. By 3) of Corollary \ref{CorMainEitherBWheelAtM1ColCor}, since $|L(v)\setminus\{\phi(p_0), c\}|\geq 3$, there is a $d\in\Lambda_{H_{\downarrow}}^{p_0vu_1^{\star}}(\phi(p_0), \bullet, c)$. Note that $d\in L(y)\cap L(y^*)$ and, for each $k\in\{0,1\}$, we have $d\not\in\{\phi^k(q_0), \phi^k(q_1)\}$, or else, if either of these do not hold, then there is a $k\in\{0,1\}$ with $|\mathcal{C}(\phi^k)|\geq 3$ for each $k\in\{0,1\}$, which is false. Consider the following cases.

\textbf{Case 1:} $|A_0|=1$

In this case, we let $\phi^0(q_0)=\phi^1(q_0)=f$ for some color $f$. Since $|L(y^*)\setminus\{c, d\}|\geq 3$ and $|L(y)\setminus\{f, \phi(p_0), d\}|\geq 2$, there is an $L$-coloring $\sigma$ of $\{y^*\}\cup (V(D)\setminus\{w, q_1\})$ which uses $f, \phi(p_0), d, c$ on the respective vertices $q_0, p_0, v, u_1^{\star}$, where $L_{\sigma}(y)|\geq 2$. Since $\phi^0(q_1)\neq\phi^1(q_0)$, at least one color of $\{\phi^0(q_1), \phi^1(q_1)\}$ is distinct from $\sigma(y^*)$, so there exists a $k\in\{0,1\}$ such that $\phi^k\cup\sigma$ extends to an $L$-coloring $\tau$ of $V(G)\setminus\{y, w\}$. Since $|L_{\tau}(y)|\geq 2$, we have $\mathcal{C}(\tau)=L_{\tau}(w)$, so $|\mathcal{C}(\phi^k)|\geq\mathcal{C}_G^P(\tau)|\geq 3$, which is false. 

\textbf{Case 2:} $|A_0|=2$

We break this into two subcases.

\textbf{Subcase 2.1} $L(y)\cap L(y^*)\cap A_0\not\subseteq\{c\}$

In this case, there is a $k\in\{0,1\}$ with $\phi^k(q_0)\in L(y)\cap L(y^*)$ and $\phi^k(q_0)\neq c$. By 2) of Subclaim \ref{TwoFactsPhiKQ0Q1}, $\phi^k(q_0)\neq\phi^k(q_1)$. As indicated above, we have $d\neq\phi^k(q_0)$, so there is an extension of $\phi^k$ to an $L$-coloring $\psi$ of $V(P-w)\cup\{v, u_1^{\star}, y^*\}$ using $d, c, \phi^k(q_0)$ on the respective vertices $v, u_1^{\star}, y^*$. In particular, it follows from our choice of colors for $v, u_1^{\star}$ that $\psi$ extends to an $L$-coloring $\psi'$ of $V(G)\setminus\{w, y\}$, and $|L_{\psi'}(y)|\geq 2$, since two neighbors of $y$ are using the same color. But then we have $|\mathcal{C}(\phi^k)|\geq\mathcal{C}(\psi')|$ and $\mathcal{C}(\psi')=L_{\psi'}(w)$. As we have not yet colored $y$, we have $|\mathcal{C}(\phi^k)|\geq |L_{\psi'}(w)|\geq 2$. which is false.

\textbf{Subcase 2.2} $L(y)\cap L(y^*)\cap A_0\subseteq\{c\}$

Recall that $|L(y)|=|L(y^*)|=5$. In this case, since $|A_0|=2$ by the assumption of Case 2, and since $\{\phi^0(q_0), \phi^1(q_0)\}\subseteq L(y)$ by 1) of Subclaim \ref{TwoFactsPhiKQ0Q1}, it follows that there is an $f\in L(y^*)\setminus L(y)$ with $f\neq c$. Since $d\in L(y)\cap L(y^*)$, we have $f\neq d$. By Subclaim \ref{phi0andphi1UseDiffColorsOnQ1}, at least one color of $\{\phi^0(q_1), \phi^1(q_1)\}$ is distinct from $f$, so there exists a $k\in\{0,1\}$ such that $\phi^k$ extends to an $L$-coloring of $V(P-w)\cup\{v, u_1^{\star}, y^*\}$ using $d, c, f$ on the respective vertices $v, u_1^{\star}, y^*$, so $\phi^k$ extends to an $L$-coloring $\psi$ of $V(G)\setminus\{w, y\}$ with $\psi(y^*)=f$. In particular, $|L_{\psi}(y)|\geq 2$, so $\mathcal{C}(\psi)=L_{\psi}(w)$ and thus $|\mathcal{C}(\phi^k)|\geq\mathcal{C}_G^P(\psi)|\geq 3$, which is false. \end{claimproof}\end{addmargin}

It follows from Subclaim \ref{phi0andphi1MapDistinctSingletonSubcL}, together with 1b) of Theorem \ref{BWheelMainRevListThm2}, that $|L(u_1^{\star})|=3$ and $\Lambda_{K_1}(\bullet, \phi^k(q_1), \phi(p_1))=\{\phi^{1-k}(q_1)\}$ for each $k=0,1$. In particular, $\{\phi^0(q_1), \phi^1(q_1)\}\subseteq L(u_1^{\star})$ and $A_1\subseteq L(u_1^{\star})$. We now let $b$ be the lone color of $L(u_1^{\star})\setminus A_1$. 

\vspace*{-8mm}
\begin{addmargin}[2em]{0em}
\begin{subclaim}\label{ATwoColorsContainedInBoth} $A_1\subseteq L(v)\setminus\{\phi(p_0)\}$. Furthermore, for each $k\in\{0,1\}$, $\Lambda_{H_{\downarrow}}^{p_0vu_1^{\star}}(\phi(p_0), \phi^k(q_1), \bullet)=\{b\}$. \end{subclaim}

\begin{claimproof} Suppose that $A_1\not\subseteq L(v)\setminus\{\phi(p_0)\}$ and let $k\in\{0,1\}$, where $\phi^k(q_1)\not\in L(v)\setminus\{\phi(p_0)\}$. Thus, $|L(v)\setminus\{\phi(p_0), \phi^k(q_1)\}|\geq 4$. As indicated above, $\Lambda_{K_1}(\bullet, \phi^{1-k}(q_1), \phi(p_1))=\{\phi^k(q_1)\}$. Possibly $\phi(p_0)=\phi^k(q_1)$, but, in any case, since $H_{\downarrow}$ is not a triangle, it follows from 3) of Corollary \ref{CorMainEitherBWheelAtM1ColCor} that $|\Lambda_{H_{\downarrow}}^{p_0vu_1^{\star}}(\phi(p_0), \bullet, \phi^k(p_1))|\geq 2$. Since $|\mathcal{C}(\phi^{1-k})|=1$, there is a $c\in L_{\phi^{1-k}}(w)\setminus\mathcal{C}_G^P(\phi^{1-k})$, and $\phi^{1-k}$ extends to an $L$-coloring $\psi$ of $V(P\cup K_1)$ using $c$ on $w$ and $\phi^k(q_1)$ on $u_1^{\star}$. Since $v$ only has four neighbors in $\textnormal{Ext}_G(D)$, and  $|\Lambda_{H_{\downarrow}}^{p_0vu_1^{\star}}(\phi(p_0), \bullet, \phi^k(p_1))|\geq 2$, it follows that $\psi$ extends to an $L$-coloring of $G$, which is false, since $c\not\in\mathcal{C}_G^P(\phi^{1-k})$. Thus, $A_1\subseteq L(v)\setminus\{\phi(p_0)\}$.

Now suppose toward a contradiction that there is a $k\in\{0,1\}$ with $\Lambda_{H_{\downarrow}}(\phi(p_0), \phi^k(q_1), \bullet)\neq\{b\}$. Since $L(u_1^{\star})=A_1\cup\{b\}$, we have $\phi^{1-k}(q_1)\in\Lambda_{H_{\downarrow}}(\phi(p_0), \phi^k(q_1), \bullet)$. Since $\Lambda_{K_1}(\bullet, \phi^k(q_1), \phi(p_1))=\{\phi^{1-k}(q_1)\}$, it follows that $\phi^k$ extends to an $L$-coloring $\psi$ of $V(G)\setminus\{w, y, y^*\}$ which uses $\phi^k(q_1)$ on both $v$ and $q_1$. In particular, as two neighbors of $y^*$ are using the same color, $\mathcal{C}(\psi)=L_{\psi}(w)$, so $|\mathcal{C}(\phi^k)|\geq |\mathcal{C}(\psi)|\geq 3$, which is false. \end{claimproof}\end{addmargin}

We now let $S=\bigcap_{k=0,1}\Lambda_{H_{\downarrow}}(\phi(p_0), \bullet, \phi^k(q_1))$.

\vspace*{-8mm}
\begin{addmargin}[2em]{0em}
\begin{subclaim}\label{SetSWorksForBothKAtLeastTwo} $|S|\geq 2$. \end{subclaim}

\begin{claimproof} By Subclaim \ref{ATwoColorsContainedInBoth}, $\Lambda_{H_{\downarrow}}^{p_0vu_1^{\star}}(\phi(p_0), \phi^k(q_1), \bullet)=\{b\}$ for each $k=0,1$, so it follows from 1) of Theorem \ref{EitherBWheelOrAtMostOneColThm} that $H_{\downarrow}$ is a broken wheel with principal path $p_0vu_1^{\star}$. Letting $u$ be the unique neighbor of $p_0$ on the path $H_{\downarrow}-v$, we have $u\neq u_1^{\star}$, since $H_{\downarrow}$ is not a triangle, and it follows from Subclaim \ref{ATwoColorsContainedInBoth} that $L(u)=\{\phi(p_0)\cup A_1$. Thus, for each each $s\in L(v)\setminus(\{\phi(p_0)\}\cup A_1)$, we have $s\not\in L(u)$, so $s\in S$, and $|S|\geq 2$.  \end{claimproof}\end{addmargin}

We now have enough to finish the proof of Claim \ref{NoVertexVReachesAcrossFromPiToU1-iStar}. Since $|A_0|\leq 2$ and $|A_1|\leq 2$, we choose a $c\in L(w)\setminus (A_0\cup A_1)$. Since $\mathcal{C}(\phi^0)\cap\mathcal{C}(\phi_1)=\varnothing$, there is a $k\in\{0,1\}$ with $c\not\in\mathcal{C}_G^P(\phi^k)$. Since $\Lambda_{K_1}(\bullet, \phi^k(q_1), \phi(p_1))=\{\phi^{1-k}(q_1)\}$, $\phi^k$ extends to an $L$-coloring $\psi$ of $V(P\cup K_1)\cup\{y, y^*\}$ using $\phi^{1-k}(q_1)$ on $u_1^{\star}$ and $c$ on $w$. By Subclaim \ref{SetSWorksForBothKAtLeastTwo}, $|S|\geq 2$, and since $v$ only has four neighbors in $\textnormal{dom}(\psi)$, there is a color of $S$ left over for $v$, so $\psi$ extends to an $L$-coloring of $G$, and thus $c\in\mathcal{C}_G^P(\phi^k)$, a contradiction. This completes the proof of Claim \ref{NoVertexVReachesAcrossFromPiToU1-iStar}. \end{claimproof}

Claim \ref{NoVertexVReachesAcrossFromPiToU1-iStar} has the following consequence. 

\begin{Claim}\label{ForEachPiWeExtToK0K1ToG} Let $i\in\{0,1\}$, where $K_i$ is an edge. Then the following hold. 
\begin{enumerate}[label=\arabic*)] 
\itemsep-0.1em
\item For any $y\in V(G\setminus C)$ with more than two neighbors on the path $P_{1-i}$, the neighborhood of $y$ on this path is either $p_iq_iw$ or $wq_{1-i}u_i^{\star}$; AND
\item For any $L$-coloring $\psi$ of $V(P-w)$, $\psi$ extends to an $L$-coloring $\psi'$ of $V(P-w)\cup V(K_{1-i})$, and, for any such $\psi'$, we have $|\mathcal{C}(\psi)|\geq |\mathcal{C}(\psi')|\geq |L(w)\setminus\{\psi(q_0), \psi(q_1)\}|-2\geq 1$. 
\end{enumerate}\end{Claim}

\begin{claimproof} Say $i=0$ for the sake of definiteness and et $y\in V(G\setminus C)$, where $|N(y)\cap V(P_1)|\geq 3$. Since $G$ has no induced 4-cycles and $P_1$ is an induced path, $G[N(y)\cap V(P_1)]$ is a subpath of $P_1$. By Claim \ref{Q0Q1NoNbrExceptwCL}, at most one of $q_0, q_1$ lies in $N(y)$, and, by Claim \ref{NoVertexVReachesAcrossFromPiToU1-iStar}, at most one of $p_0, u_1^{\star}$ lies in $N(y)$, so 1) follows. If 2) does not hold, then, by Lemma \ref{PartialPathColoringExtCL0}, there is a vertex of $G\setminus C$ with at least three neighbors in $P_1-w$, contradicting 1). \end{claimproof}

\subsection{Chords of $C$}\label{SubsecChordsIncToQi}

\begin{Claim}\label{KiNotEdgeBWTriContain} For each $i\in\{0,1\}$, where $u_i^{\star}\neq p_i$, at least one of the following holds.
\begin{enumerate}[label=\emph{\arabic*)}] 
\itemsep-0.1em
\item $K_i$ is not a broken wheel with principal path $u_i^{\star}q_ip_i$; OR
\item $K_i$ is a triangle; OR
\item Letting $x$ be the unique vertex of distance two from $p_i$ on the path $C-z$, $L(p_i)\not\subseteq L(x)$
\end{enumerate}
\end{Claim}

\begin{claimproof} Suppose for the sake of definiteness that $i=1$ and $u_1^{\star}\neq p_1$, i.e $K_1$ is not an edge, and suppose $K_1$ is a broken wheel with principal path $u_1^{\star}q_1p_1$, but $K_1$ is not a triangle. In particular, $x=u_{t-1}$ and $xq_1$ is a chord of $C$. We show that $L(p_1)\not\subseteq L(u_{t-1})$. Suppose that $L(p_1)\subseteq L(u_{t-1})$. Let $H$ be the subgraph of $G$ bounded by outer cycle $(p_0(C\setminus\mathring{P})u_{t-1})q_{1}wq_0$ and $Q$ be the 4-path $p_0q_0wq_1x$ on the outer cycle of $H$. By Claim \ref{HiSatConCl}, there exists a family of $|L(p_0)|+(|L(u_{t-1})|-3)$ different elements of $\Omega_H(Q, [u_{t-1}])$, each using a different color of $x$. As $L(p_1)\subseteq L(u_{t-1})$ and $|L(p_0)|+|L(p_1)|=4$, there exists a $\phi\in\Omega_H(Q, [u_{t-1}])$ with $\phi(u_{t-1})\in L(p_1)$. Let $c=\phi(p_0)$ and $d=\phi(u_{t-1})$, and let $\psi$ be the $L$-coloring of $\{p_0, p_1\}$ using $c,d$ on the respective vertices $p_0, p_1$. For each $i=0,1$, $N(q_i)\cap\textnormal{dom}(\psi)\subseteq\{p_0, p_1\}$, and, since $G$ is a counterexample, $\psi$ extends to two $L$-colorings $\psi_0, \psi_1$ of $V(P-w)$, where $\mathcal{C}(\psi_0)$ and $\mathcal{C}(\psi_1)$ are disjoint sets of size precisely one. For each $k=0,1$, $\psi_k$ does not use $d$ on $q_1$, and, by b) of Claim \ref{AtMostOneChordToC-P}, $K_0$ is an edge. Thus, for each $k=0,1$, $\phi\cup\psi_k$ is a proper $L$-coloring of its domain which uses $d$ on both of $u_{t-1}, p_1$, and there is a color left over for $u_t$, so $\phi\cup\psi_k$ extends to an $L$-coloring $\sigma_k$ of $\textnormal{dom}(\phi)\cup V(P-w)\cup\{u_t\}$. 

\vspace*{-8mm}
\begin{addmargin}[2em]{0em}
\begin{subclaim}\label{EachmathcalCPGNonempty} For each $k=0,1$, $\mathcal{C}(\sigma_k)\neq\varnothing$. \end{subclaim}

\begin{claimproof} Recall that, by Claim \ref{u1-istarNotAdjpi}, $p_0u_1^{\star}\not\in E(G)$, and, by Claim \ref{Q0Q1NoNbrExceptwCL}, $q_0u_1^{\star}\not\in E(G)$ and $q_0q_1\not\in E(G)$. Since the restriction of $\sigma_k$ to $\{p_1, q_1, u_t, u_{t-1}\}$ extends to $L$-color the broken wheel $K_1$, it follows that $\sigma_k$ extends to an $L$-coloring $\tau$ of $V(P-w)\cup V(K_1)$. Since $K_0$ is an edge, the outer cycle of $H_1$ is induced. Since $|L_{\tau}(w)|\geq 3$, it follows from Lemma \ref{PartialPathColoringExtCL0} applied to the 4-path $p_0q_0q_1u_1^{\star}$ on the outer cycle of $H_1$ that there is a $z\in V(H_1\setminus C^{H_1})$ with at least three neighbors in $\{p_0, q_0, q_1, u_1^{\star}\}$, contradicting 1) of Claim \ref{ForEachPiWeExtToK0K1ToG}. \end{claimproof}\end{addmargin}

For each $k=0,1$, $\mathcal{C}(\sigma_k)\subseteq\mathcal{C}(\psi_k)$, as $\sigma_k$ is an extension of $\psi_k$. For each $k=0,1$, let $\sigma_k'$ be the restriction of $\sigma_k$ to $\textnormal{dom}(\sigma_k)\setminus\{p_1, u_t\}$. Each of $\sigma_0', \sigma_1'$ is an extension of $\phi$ to an $L$-coloring of $\textnormal{dom}(\phi)\cup\{q_0, q_1\}$. Note that, for each $k=0,1$, $\mathcal{C}(\sigma_k)$ is precisely equal to $\mathcal{C}_H^Q(\sigma_k')$, so, by Subclaim \ref{EachmathcalCPGNonempty}, each of $\mathcal{C}_H^Q(\sigma_0')$ and $\mathcal{C}_H^Q(\sigma_1')$ is nonempty, and furthermore, for each $k=0,1$, $\mathcal{C}_H^Q(\sigma_k')\subseteq\mathcal{C}_G^P(\psi_k)$. Thus, $\mathcal{C}_H^Q(\sigma_0')$ and $\mathcal{C}_H^Q(\sigma_1')$ are disjoint singletons, which is false, as $\phi\in\Omega_H(Q, [u_{t-1}])$. \end{claimproof}

\begin{Claim}\label{KiEitherUniversalColForTriangle} For each $i\in\{0,1\}$, if $K_i$ is not an edge and $|L(p_i)|\geq 2$, then, letting $u$ be the unique neighbor of $p_i$ on the path $C-q_i$, at least one of the following holds.
\begin{enumerate}[label=\emph{\arabic*)}] 
\itemsep-0.1em
\item There is a $K_i$-universal color of $L(p_i)$; OR
\item $K_i$ is a triangle with $L(p_i)\subseteq L(y)$.
\end{enumerate}
 \end{Claim}

\begin{claimproof} Say $i=1$ without loss of generality. Suppose that $K_1$ is not an edge and there is no almost $K_1$-universal color of $L(p_1)$. By 2) of Corollary \ref{CorMainEitherBWheelAtM1ColCor}, $K_1$ is a broken wheel with principal path $u_1^{\star}q_1p_1$. If there is an $a\in L(p_1)\setminus L(u_t)$, then $a$ is an almost $K_1$-universal color of $L(p_1)$, which contradicts our assumption, so $L(p_1)\subseteq L(u_t)$. If $K_1$ is a triangle, we are done, so suppose that $K_1$ is not a triangle. By Claim \ref{KiNotEdgeBWTriContain}, $L(p_1)\not\subseteq L(u_{t-1})$, and, since $|L(p_1)|\geq 2$, there is an $a\in L(p_1)$ with $L(u_1)\setminus\{a\}\not\subseteq L(u_{t-1})$. By 4) of Theorem \ref{BWheelMainRevListThm2}, $a$ is an almost $K_1$-universal color of $L(p_1)$, contradicting our assumption. \end{claimproof}

\subsection{Dealing with 2-chords of $C$ of the form $p_izw$: part I}\label{VertexZAllofPiQiW}

We now deal with 2-chords of $C$ of the form $p_izw$. To do this, we introduce the notation below, analogous to the notation from the proof of Proposition \ref{InterMedUnobstrucGInduced}. 

\begin{defn}\label{DefHzGzAndPz} \emph{For each $i\in\{0,1\}$ and any $z\in V(G\setminus C)$ which is adjacent to each of $p_i, q_i, w$,.}
\begin{enumerate}[label=\emph{\arabic*)}] 
\itemsep-0.1em
\item\emph{we let $p^z$ be the unique vertex of $N(z)\cap V(C\setminus\mathring{P})$ which is farthest from $p_i$ on the path $C\setminus\mathring{P}$}; AND
\item\emph{We let $G_z$ be the subgraph of $G$ bounded by outer face $(p_i\ldots p^z)z$ and $H_z$ be the subgraph of $G$ bounded by outer cycle $(p^z(C\setminus\mathring{P})u_{1-i}^{\star})q_{1-i}wz$.}
\end{enumerate}
 \end{defn}

This is illustrated in Figure \ref{ZAdjAllThreeWp0q1}, where $i=0$. Possibly $N(z)\cap V(C\setminus\mathring{P})=\{p_0\}$ and $p^z=p_0$, in which case $G_z$ is the edge $p_0p^z$. Note that, in Definition \ref{DefHzGzAndPz}, the index $i$ is uniquely specified by Claim \ref{Q0Q1NoNbrExceptwCL}, so it is not necessary to include the index $i$ in the notation $G_z, H_z, p^z$. Recalling Definition \ref{DefnPPUnObstruc}, we now have the following.

\begin{Claim}\label{EvenNumberObstructionsHiCL} Let $i\in\{0,1\}$, where $K_i$ is an edge. Then there are either zero or two endpoints $p^*$ of $P_{1-i}$ such that $H_{1-i}$ is $(P_{1-i}, p^*)$-obstructed. \end{Claim}

\begin{claimproof} Suppose for the sake of definitness that $i=0$. The outer cycle of $H_1$ is induced, and it follows from Claim \ref{NoVertexVReachesAcrossFromPiToU1-iStar} that $N(p_0)\cap N(u_1^{\star})\subseteq V(C^{H_1})$. By Claim \ref{EachColorPhiP-wCLeq1}, $N(q_0)\cap N(q_1)=\{w\}$, so we can apply Proposition \ref{InterMedUnobstrucGInduced} to the rainbow $(H_1, C^{H_1}, P_1, L)$. We fix an arbitrary $\phi\in\mathcal{F}$. By 2) of Claim \ref{ForEachPiWeExtToK0K1ToG}, $\phi$ extends to an $L$-coloring $\phi'$ of $V(P-w)\cup V(K_1)$. Let $\psi$ be the restriction of $\phi'$ to $P_1-w$. Since $|L_{\psi}(w)|\geq 3$ and $|\mathcal{C}_{H_1}^{P_1}(\psi)|\leq 1$, it follows from P2) of Proposition \ref{InterMedUnobstrucGInduced} that there are either zero or two endpoints $p^*$ of $P_1$ such that $H_1$ is $(P_1, p^*)$-obstructed. \end{claimproof}

\begin{center}\begin{tikzpicture}

\node[shape=circle,draw=black] (p0) at (-4, 0) {$p_0$};
\node[shape=circle,draw=white] (mid) at (-3, 0) {$\ldots$};
\node[shape=circle,draw=black] (pz) at (-1.5, 0) {$p^z$};
\node[shape=circle,draw=white] (mid+) at (0, 0) {$\ldots$};
\node[shape=circle,draw=black] (v1) at (1, 0) {$u_1^{\star}$};
\node[shape=circle,draw=white] (un+) at (2, 0) {$\ldots$};
\node[shape=circle,draw=black] (ut) at (3, 0) {$u_t$};
\node[shape=circle,draw=black] (p1) at (4, 0) {$p_1$};
\node[shape=circle,draw=black] (q0) at (-3,2) {$q_0$};
\node[shape=circle,draw=black] (q1) at (3,2) {$q_1$};
\node[shape=circle,draw=black] (w) at (0,4) {$w$};
\node[shape=circle,draw=black] (z) at (-1.9, 1.9) {$z$};
\node[shape=circle,draw=white] (K1) at (2.8, 1) {$K_1$};
\node[shape=circle,draw=white] (Gz) at (-2.1, 0.8) {$G_z$};
\node[shape=circle,draw=white] (Hz) at (0.1, 0.8) {$H_z$};

 \draw[-] (p1) to (ut);
 \draw[-] (p0) to (q0) to (w) to (q1) to (v1);
\draw[-] (q1) to (p1);
\draw[-]  (p0) to (mid) to (pz) to (mid+) to (v1) to (un+) to (ut);
\draw[-] (p0) to (z) to (q0);
\draw[-] (z) to (w);
\draw[-] (z) to (pz);
\end{tikzpicture}\captionof{figure}{}\label{ZAdjAllThreeWp0q1}\end{center}

\begin{Claim}\label{EverySigmaNo2ChordFromW} Let $i\in\{0,1\}$, where $p_i, q_i, w$ have a common neighbor $z$ in $G\setminus C$. Suppose further that $z$ and $q_{1-i}$ have a common neighbor $y$ with $y\neq w$. Then $y\in V(G\setminus C)$ and $N(w)=\{q_0, q_1\}\cup\{z, y\}$. Furthermore, $y$ is adjacent to $u_{1-i}^{\star}$ and $|V(K_1)|>2$.  \end{Claim}

\begin{claimproof} Suppose for the sake of definitness that $i=0$. By Claim \ref{Q0Q1NoNbrExceptwCL}, $q_1z\not\in E(G)$, and since $G$ has no induced 4-cycles and $G$ is short-separation-free, we have $N(w)=\{q_0, z, y, q_1\}$. Furthermore, $y\in V(G\setminus C)$, as $w$ is incident to no chords of $C$. Now, $N(q_0)=\{p_0, w, z\}$, and $K_0$ is an edge. Since $y$ is adjacent to each of $z, q_1$, it follows that $H_1$ is $(p_0, P_1)$-obstructed. Thus, by Claim \ref{EvenNumberObstructionsHiCL}, $H_1$ is also $(u_1^{\star}, P_1)$-obstructed, so $y\in N(u_1^{\star})$.  To finish, we just need to check that $K_1$ is not an edge. Suppose that $K_1$ is an edge. Thus, $u_1^{\star}=p_1$ and $H_1=G$, and the outer cycle of $G\setminus\mathring{P}$ contains the 3-path $Q=p_0zyp_1$. Since $K_0$ is also an edge, the outer cycle of $G$ is induced. Note that, for any $L$-coloring $\psi$ of $\{p_0, p_1\}$, $|L_{\psi}(y)|\geq 4$, as $p_0\not\in N(y)$. Since $|L(p_0)|+|L(p_1)|=4$, it follows from 1) of Theorem \ref{CornerColoringMainRes} that there is an $L$-coloring $\psi$ of $\{p_0, p_1\}$ which extends to two different elements of $\textnormal{End}(y, Q, G\setminus\mathring{P})$. Let $S$ be a set of two colors of $L_{\psi}(y)$, where, for each $s\in S$, $\psi$ extends to an element of $\textnormal{End}(y, Q, G\setminus\mathring{P})$ using $s$ on $y$. We now choose an arbitrary $\phi\in\mathcal{F}$ whose restriction to $\{p_0, p_1\}$ is $\psi$. Since $|S\setminus\{\phi(q_1)\}|\geq 1$, $\phi$ extends to an $L$-coloring $\phi'$ of $V(P-w)\cup\{y\}$ with $\phi'(y)\in S$, so any extension of $\phi'$ to an $L$-coloring of $\textnormal{dom}(\phi')\cup\{z\}$ extends to $L$-color all of $G-w$. But now, since $z$ only has three neighbors in $\textnormal{dom}(\phi')$, it follows that $\mathcal{C}(\phi')=L_{\phi'}(w)$, so $|\mathcal{C}(\phi)|\geq\mathcal{C}(\phi')|\geq 2$, which is false, as $\phi\in\mathcal{F}$.\end{claimproof}

Applying Claim \ref{EverySigmaNo2ChordFromW}, we have the following claim, which makes up the remainder of Subsection \ref{VertexZAllofPiQiW}.

\begin{Claim}\label{PzYObstructionVert}  Let $i\in\{0,1\}$, where $p_i, q_i, w$ have a common neighbor $z$ in $G\setminus C$. Then $p^z$ and $u_{1-i}^{\star}$ have no common neighbor in $G\setminus C$. \end{Claim}

\begin{claimproof} We suppose without loss of generality that $i=0$ and suppose toward a contradiction that there is a vertex $y\in V(G\setminus C)$ which is adjacent to both of $p^z, q_1$. If necessary, we can choose $y$ so that no 2-chord of $C$ with endpoints $p^z, q_1$ separates $y$ from $w$ (i.e, if such a 2-chord exists, we replace $y$ with the midpoint of that 2-chord). Let $H'$ be the subgraph of $H_z\cup K_1$ bounded by outer cycle $p^z(C\setminus\mathring{P})p_1q_1y$ and let $H''$ be the subgraph of $H_z\cup K_1$ bounded by outer cycle $wzp^zyq_1$. That is, $H'\cup H''=H_z\cup K_1$ and $H'\cap H''=p^zyq_1$. Now, the outer cycle of $H'$ contains the 3-path $R=p_zyq_1p_1$. Let $Y$ be the set of $L$-colorings $\psi$ of $\{p^z, p_1\}$ such that $\psi$ extends to at least two distinct elements of $\textnormal{End}(y, R, H')$. 

\vspace*{-8mm}
\begin{addmargin}[2em]{0em}
\begin{subclaim}\label{SetColSizeAtLeastP1} $|\textnormal{Col}(Y\mid p^z)|\geq |L(p_1)|$ \end{subclaim}

\begin{claimproof} If $p_1\not\in N(y)$, then $|L_{\psi}(y)|\geq 4$ for any $L$-coloring $\psi$ of $\{p^z, p_1\}$, so the subclaim immediately follows from 1) Theorem \ref{CornerColoringMainRes} in that case, since $1\leq |L(p_1)|\leq 3$. On the other hand, if $y\in N(p_1)$, then it follows from Theorem \ref{SumTo4For2PathColorEnds} that there is a family of $|L(p_1)|$ different elements of $\textnormal{End}(p^zyp_1, H'-q_1)$, each using a different color on $p^z$, and each of these  is an $L$-coloring of $\{p^z, p_1\}$ which also lies in $Y$, so we are again done. \end{claimproof}\end{addmargin}

Applying Corollary \ref{GlueAugFromKHCor}, it immediately follows that there is an $L$-coloring $\sigma$ of $\{p_0, p^z\}$ and a $\psi\in Y$ such that $\sigma$ is $(z, G_z)$-sufficient and $\sigma(p^z)=\psi(p^z)=b$ for some color $b$. We now fix $\phi^0, \phi^1\in\mathcal{F}$, where each of $\phi^0, \phi^1$ uses $\sigma(p_0), \psi(p_1)$ on the respective vertices $p_0, p_1$, and $\mathcal{C}(\phi^0)$ and $\mathcal{C}(\phi^1)$ are disjoint singletons. We let $S$ be the set of $s\in L_{\psi}(y)$ such that $\psi$ extends to an element of $\textnormal{End}(y, R, H')$ using $s$ on $y$. By definition, $|S|\geq 2$. Furthermore, since $G$ has no copies of $K_{2,3}$ and $yp^z\in E(G)$, $G_z$ is not just an edge, ie $p_0zp^z$ is a 2-path. Note that, for each $k=0,1$, $\phi^k\cup\psi$ is a proper $L$-coloring of its domain, since $q_0, q_1\not\in N(p^z)$.

\vspace*{-8mm}
\begin{addmargin}[2em]{0em}
\begin{subclaim}\label{EdgeWyWFourNeighbors} $wy\in E(G)$, and, in particular, $N(w)=\{q_0, z, w, q_1\}$.\end{subclaim}

\begin{claimproof} Suppose that $wy\not\in E(G)$. We now choose an arbitrary $\phi\in\{\phi^0, \phi^1\}$. Since neither $q_1$ nor $w$ is adjacent to $p^z$, and $q_1\not\in N(z)$, the outer cycle of $H''$ is an induced 5-cycle. Since $|L_{\phi}(w)|\geq 3$, there is an $r\in L_{\phi}(w)\setminus\mathcal{C}_G^P(\phi)$. Furthermore, as $|S\setminus\{\phi(q_1)\}|\geq 1$, at least one color of $S$ is left for $y$ in $L_{\phi\cup\psi}(y)$. By our choice of colors for $p_0, p^z$, it follows that the union $\phi\cup\psi$ extends to an $L$-coloring $\phi^*$ of $V(G_z\cup P)$, where $\phi^*(w)=r$ and $\phi^*(y)\in S$, so $\phi^*$ also extends to $L$-color $H'$. Since $r\not\in\mathcal{C}_G^P(\phi)$ and the outer cycle of $H''$ is induced, there is an $L$-coloring of the outer 5-cycle of $H''$ which does not extend to $L$-color $\textnormal{Int}_G(H'')$. By Theorem \ref{BohmePaper5CycleCorList}, there is a vertex of $\textnormal{Int}_G(H'')$ adjacent to all five vertices of $C^{H''}$, contradicting our choice of 2-chord $p^zyq_1$ of $C$. We concude that $wy\in E(G)$, as desired. Since $G$ has no induced 4-cycles and $z\not\in N(q_1)$, $y$ is adjacent to all four vertices of $\{p^z, z, w, q_1\}$, and, in particular, $H''$ is outerplanar and $N(w)=\{q_0, z, y, q_1\}$. \end{claimproof}\end{addmargin}

\vspace*{-8mm}
\begin{addmargin}[2em]{0em}
\begin{subclaim}\label{listOfFactsACSubFirst} All of the following hold.
\begin{enumerate}[label=\Alph*)]
\itemsep-0.1em
\item For each $k\in\{0,1\}$, any $d\in L_{\phi^k}(q_0)\setminus\{b\}$ and $s\in S\setminus\{d, \phi^k(q_1)\}$, $\phi^k$ extends to an $L$-coloring of $G-w$ using $d, s$ on the respective vertices $z,y$, and $L(w)\setminus\{\phi^k(q_0), d ,s, \phi^k(q_1)\}|=1$; AND
\item For each $k\in\{0,1\}$, $\phi^k(q_1)\not\in L_{\phi^k}(z)\setminus\{b\}$, and $S\cap (L_{\phi^k}(z)\setminus\{b\})=S\setminus\{\phi^k(q_1)\}$; AND
\item For some $k\in\{0,1\}$, $\phi^k(q_1)\not\in S$.
\end{enumerate}
 \end{subclaim}

\begin{claimproof} We first fix a $k\in\{0,1\}$ and show that $k$ satisfies A) and B). Since $\phi^k\cup\psi$ restricts to an element of $\textnormal{End}(p_0zp^z, G_z)$ and $s\in S$, it immediately follows that $\phi^k$ extends to an $L$-coloring of $G-w$ using $d, s$ on the respective vertices $z,y$, and since $|\mathcal{C}(\phi^k)|=1$, we have $L(w)\setminus\{\phi^k(q_0), d ,s, \phi^k(q_1)\}|=1$. This proves A). Now we show that $k$ satisfies B). We first show that $\phi^k(q_1)\not\in L_{\phi^k}(z)\setminus\{b\}$. Suppose not. Since $|S\setminus\{\phi^k(q_1)\}|\geq 1$, it follows from A) that $\phi^k\cup\psi$ extends to an $L$-coloring $\phi^*$ of $V(G-w)$ using $\phi^k(q_1)$ on each of $z$ and $q_1$. But then, since $\textnormal{deg}(w)|=4$, there are two colors left for $w$, contradicting the second part of A). Thus, $\phi^k(q_1)\not\in L_{\phi^k}(z)\setminus\{b\}$. Now let $s\in S\setminus\{\phi^k(q_1)\}$. To finish the proof of B), it suffices to show that $s\in L_{\phi^k}(z)\setminus\{b\}$. Suppose not. Then $\phi^k\cup\psi$ extends to an $L$-coloring $\psi^*$ of $V(H')\cup V(P-w)$ using $s$ on $y$. Since $s\not\in L_{\phi^k}(z)\setminus\{b\}$ any $\psi^*$ restricts to an element of $\textnormal{End}(p_0zp^z, G_z)$, it follows that $\mathcal{C}(\psi^*)=L_{\psi^*}(w)$, as $z$ has precisely five neighbors in $\textnormal{dom}(\psi^*)$. Thus, $|\mathcal{C}(\phi^k)|\geq |\mathcal{C}(\psi^*)\geq 2$, which is false. 

Now we prove C). Suppose that C) does not hold. Thus, $\{\phi^0(q_1), \phi^1(q_1)\}\subseteq S$. By B), $\phi^0(q_1)=\phi^1(q_1)=s$ for some $s\in S$. As $\phi^0$ and $\phi^1$ are distinct extensions of $\phi$ to $V(P-w)$, we have $\phi^0(q_0)\neq\phi^1(q_0)$. If there exists a $k\in\{0,1\}$ such that $\phi^k(q_0)\not\in L(z)\setminus\{\phi(p_0), b, s\}$, then we leave $z$ uncolored and extend $\phi^k\cup\psi$ to an $L$-coloring $\psi^*$ of $V(H')\cup V(P-w)$ which uses $s$ on $y$. Since $\psi^*$ restricts to an element of $\textnormal{End}(p_0zp^z, G_z)$, we get $\mathcal{C}(\psi^*)=L_{\psi^*}(w)$, and thus $|\mathcal{C}(\phi^k)|\geq |\mathcal{C}(\psi^*)|\geq 2$, which is false.  Thus, we have $\{\phi^0(q_0), \phi^0(q_1)\}\subseteq L(z)\setminus\{\phi(p_0), b, s\}$. But now we just choose an $r\in L(w)\setminus (S\cup\{\phi^0(q_0), \phi^1(q_1)\}$. Applying A), for each $k=0,1$, we extend $\phi^k$ to an $L$-coloring of $G-w$ by coloring $z, y$ with the respective colors $\phi^{1-k}(q_0)$ and $s$, which leaves $r$ for $w$, so $r\in\mathcal{C}_G^P(\phi^0)\cap\mathcal{C}_G^P(\phi^1)$, contradicting our assumption that these sets are disjoint. \end{claimproof}\end{addmargin}

Applying Facts A)-C), we have the following.

\vspace*{-8mm}
\begin{addmargin}[2em]{0em}
\begin{subclaim}\label{Replace1ForSubFactList1} $\phi^0(q_0)=\phi^1(q_1)=d$ for some color $d$, and furthermore, $|S_{\psi}|=2$ and $L(z)=\{\sigma(p_0), b, d\}\cup S$ as a disjoint union. \end{subclaim}

\begin{claimproof} Applying C) of Subclaim \ref{listOfFactsACSubFirst}, we let $k\in\{0,1\}$ with $\phi^k(q_1)\not\in S$. By B), $L_{\phi^k}(z)\setminus\{b\}=S$.  Applying B) again, we get $\phi^{1-k}(q_1)\not\in S$ as well, and $L_{\phi^{1-k}}(z)\setminus\{b\}=S$. In particular, since $\phi^0\cup\psi$ and $\phi^1\cup\psi$ restrict to the same $L$-coloring of $\{p^0, z\}$, we have $\phi^0(q_0)=\phi^1(q_0)=d$ for some color $d$. Since $\phi^0$ and $\phi^1$ are distinct, $\phi^0(q_1)\neq\phi^1(q_1)$. If either $|L(z)|>5$ or $L(z)\neq S\cup\{\phi(p_0), b, d\}$, then, for some $s\in  S$ and any $k\in\{0,1\}$, $\phi^k\cup\psi$ extends to an $L$-coloring $\psi^*$ of $V(P-w)\cup V(H')$ using $s$ on $y$, where $|L_{\psi^*}(z)|\geq 2$. Since $\psi^*$ restricts to an element of $\textnormal{End}(p_0zp^z, G_z)$ and two colors are left for $z$, we get $\mathcal{C}(\psi^*)=L_{\psi^*}(w)$, so $|\mathcal{C}(\phi^k)|\geq 2$, which is false. Thus, $|L(z)|=5$ and $L(z)=S\cup\{a, b, d\}$ as a disjoint union. In particular, $|S|=2$. \end{claimproof}\end{addmargin}

Let $d$ be as in Subclaim \ref{Replace1ForSubFactList1}. 

\vspace*{-8mm}
\begin{addmargin}[2em]{0em}
\begin{subclaim}\label{Replace2ForSubFactList2}  $\{\phi^0(q_1), \phi^1(q_1)\}\cap S=\varnothing$, and, for each $k\in\{0,1\}$, any extension of $\phi^k$ to an $L$-coloring of $G-w$ colors the edge $zy$ with the colors of $S_{\psi}$. \end{subclaim}

\begin{claimproof} By C) of Subclaim \ref{listOfFactsACSubFirst}, $\{\phi^0(q_1), \phi^1(q_1)\}\cap S_{\psi}=\varnothing$. It follows from A) that, for each $k\in\{0,1\}$, $\phi^k\cup\psi$ extends to an $L$-coloring of $G-w$ which colors the edge $zy$ with the colors of $S$, and $\mathcal{C}(\phi^k)=L(w)\setminus (\{d, \phi^k(q_1)\}\cup S)$. Since $\mathcal{C}(\phi^0)$ and $\mathcal{C}(\phi^1)$ are disjoint singletons, we get $\mathcal{C}(\phi^k)=\{\phi^{1-k}(q_1)\}$ for each $k=0,1$, and it immediately follows that, for each $k\in\{0,1\}$, any extension of $\phi^k$ to an $L$-coloring of $G-w$ uses the colors of $S$ on $zy$. \end{claimproof}\end{addmargin}

By Claim \ref{EverySigmaNo2ChordFromW}, $yu_1^{\star}\in E(G)$ as well. Let $J$ be the subgraph of $G$ bounded by outer cycle $(p^z(C\setminus\mathring{P})u_1^{\star})y$. Note that $C^J$ contains the 2-path $p^zyu_1^{\star}$, and $G\setminus\{w, p_0\}=(G_z\cup J\cup K_1)+zyq_1$. Let $d$ be as above, i.e $d=\phi^0(q_0)=\phi^1(q_0)$. For each $k\in\{0,1\}$, since $u_1^{\star}$ is not adajcent to either of $p_0, q_0$, we let $\phi^k_*$ be an extension of $\phi^k$ to an $L$-coloring of $V(P-w)\cup V(K_1)$. By Subclaim \ref{Replace1ForSubFactList1}, $b\in L(z)\setminus\{\phi(p_0), z\}$. We now fix a $b'\in\Lambda(\phi(p_0), b, \bullet)$. 

\vspace*{-8mm}
\begin{addmargin}[2em]{0em}
\begin{subclaim}\label{ListofFactsDToESubCL2} All of the following hold. 
\begin{enumerate}
\itemsep-0.1em
\item [\mylabel{}{D)}] For each $k\in\{0,1\}$, $\Lambda_J(b', \bullet, \phi^k_*(u_1^{\star}))\subseteq\{b, \phi^k(q_1)\}$; AND
\item [\mylabel{}{E)}] $J$ is not a triangle and, for each $k\in\{0,1\}$, $\Lambda_J(b, \bullet, \phi^k_*(u_1^{\star}))\subseteq S$; AND
\item [\mylabel{}{F)}] $J$ is a broken wheel with principal path $p^zyu_1^{\star}$, and $\phi^0_*(u_1^{\star})=\phi^1_*(u_1^{\star})$.
\end{enumerate}
\end{subclaim}

\begin{claimproof} Let $k\in\{0,1\}$. If $\Lambda(b', \bullet, \phi^k_*(u_1^{\star}))\not\subseteq\{b, \phi^k(q_1)\}$, then $\phi^*_k$ extends to an $L$-coloring of $G-w$ which colors $z$ with $b$, contradicting Subclaim \ref{Replace2ForSubFactList2} . This proves D). Now suppose toward a contradiction that $J$ is a triangle. Thus, for each $k\in\{0,1\}$, we have $\Lambda(b', \bullet, \phi^k_*(u_1^{\star}))=\varnothing$, or else we contradict D), so $b'=\phi^0_*(u_1^{\star})=\phi^1_*(u_1^{\star})$. Now we color $p^z$ with $b$ instead. Since $b\neq b'$, we get $\Lambda_J(b, \bullet, \phi^k_*(u_1^{\star}))=L(y)\setminus\{b, b'\}$ for each $k=0,1$. Since $\phi^0(q_1)\neq\phi^1(q_1)$, there exists an $\ell\in\{0,1\}$ such that $L(y)\setminus\{b, b'\phi^{\ell}(q_1)\}\neq S$. On the other hand, since $\sigma\in\textnormal{End}(p_0zp^z, G_z)$, it follows from Subclaim \ref{Replace1ForSubFactList1} that $S\subseteq\Lambda_{G_z}(\phi(p_0), \bullet, b)$, so $\phi^{\ell}\cup\psi$ extends to an $L$-coloring of $G-w$ in which $y$ is not colored by a color of $S$, contradicting Subclaim \ref{Replace2ForSubFactList2}. Thus, $J$ is not a triangle. Furthermore, given a $k\in\{0,1\}$, if $\Lambda_J(b, \bullet, \phi^k_*(u_1^{\star}))\setminus S$ is nonempty, then $\phi^k\cup\psi$ extends to an $L$-coloring of $G-w$ using a color of $S$ on $z$ and using a color on $y$ which does not lie in $S$, contradicting Subclaim \ref{Replace2ForSubFactList2}. This proves E).

It follows from E) that there are at least two $L$-colorings of $\{p^z, y, u_1^{\star}\}$ which do not extend to $L$-color $J$. By 1) of Theorem \ref{EitherBWheelOrAtMostOneColThm}, $J$ is a broken wheel with principal path $p^zyu_1^{\star}$, where $|V(J)|\geq 4$. Let $u$ be the unique neighbor of $p^z$ on the path $G^z-y$. Now suppose toward a contradiction that $\phi^0_*(u_1^{\star})\neq\phi^1_*(u_1^{\star})$. Thus, there exist colors $r_0, r_1\in L(y)$, where $r_0\neq r_1$ and, for each $k\in\{0,1\}$, $r_k\in L(y)\setminus (S\cup\{b, \phi^k_*(u_1^{\star})\})$. By E), $L(u)=\{b, r_0, r_1\}$, so $L(u)\cap S=\varnothing$, as $b\not\in S$. Furthermore, since $\phi^0_*(u_1^{\star})\neq\phi^1_*(u_1^{\star})$ and $|S|=2$, there exists a $\ell\in\{0,1\}$ and an $s\in S$ with such that $s\not\in\{b', \phi^{\ell}_*(q_1)\}$. But since $L(u)\cap S=\varnothing$, we have $s\in\Lambda_J(b', \bullet, \phi^{\ell}_*(u_1^{\star}))$, and since $S\cap\{b, \phi^0(q_1), \phi^1(q_1)\}=\varnothing$, we contradict D). This proves F). \end{claimproof}\end{addmargin}

Applying F) of Subclaim \ref{ListofFactsDToESubCL2}, we let $\phi^0_*(u_1^{\star})=\phi^1_*(u_1^{\star})=s$ for some color $s$.  Since $\phi^0(q_1)\neq\phi^1(q_1)$, it follows from D) that $\Lambda_J(b', \bullet, s)\subseteq\{b\}$. Let $T=L(y)\setminus\{b, b', s\}$. Now we have $T\cap\Lambda_J(b', \bullet, s)=\varnothing$. It follows from 3i) of Corollary \ref{CorMainEitherBWheelAtM1ColCor} that $|T|=2$, so $\{b, b', s\}$ is a subset of $L(y)$ of size three, and $b\neq s$. By E), $\Lambda_J(b, \bullet, s)$ is a proper subset of of $L(y)\setminus\{b, s\}$, and since $T\cap\Lambda_J(b', \bullet, s)=\varnothing$ and $b'\neq b$, it  follows from 3ii) of Corollary \ref{CorMainEitherBWheelAtM1ColCor} that $b=s$ and $b\in T$, which is false. This completes the proof of Claim \ref{PzYObstructionVert}.   \end{claimproof}

\subsection{Dealing with 2-chords of $C$ of the form $p_izw$: part II}\label{2ChordIntermSecondPartSec}

\begin{Claim}\label{TwoColoringsABDiffColQ1-I} Let $i\in\{0,1\}$, where $p_i, q_i, w$ have a common neighbor $z$ in $G\setminus C$. Suppose further that $z$ and $q_{1-i}$ have a common neighbor $y$ with $y\neq w$. Then, for each $(a,b)\in L(p_0)\times L(p_1)$, we have $\phi_{ab}^0(q_{1-i})\neq \phi_{ab}^1(q_{1-i})$. \end{Claim}

\begin{claimproof}  Say $i=0$ without loss of generality, so $K_0$ is an edge, and, by Claim \ref{EverySigmaNo2ChordFromW}, $y\not\in V(C)$ and $N(w)=\{q_0, z, y, q_1\}$, and furthermore, $yu_1^{\star}\in E(G)$ and $K_1$ is not an edge. Let $x_y$ be the unique vertex of $N(y)\cap V(C\setminus\mathring{P})$ which is closest to $p_0$ on the path $C\setminus\mathring{P}$. Possibly $x_y=u_1^{\star}$. Since $G$ is $K_{2,3}$-free, $x_y\neq p_0$. Let $J$ be the subgraph of $H_1\setminus\mathring{P}$ bounded by outer cycle $p_0(C\setminus\mathring{P})x_yyz$ and let $J'$ be the subgraph of $H_1\setminus\mathring{P}$ bounded by outer face $x_y(C\setminus\mathring{P})u_1^{\star}y$. Possibly $J'$ is an edge. In any case, $J\cup J'=H_1\setminus\mathring{P}$ and $J\cap J'=yx_y$. The outer cycle of $J$ contains the 3-path $p_0zyx_y$, and every chord of the outer cycle of $J$ is incident to $z$. This is illustrated in Figure \ref{RPathAlmostLastCase}, where the path $R$ is in bold. We note that $z\not\in N(x_y)$, or else $x_y=p^z$, contradicting Claim \ref{PzYObstructionVert}.  Since $G$ has no induced 4-cycles, it follows from the definition of $x_y$ and $p^z$ that $p^zx_y$ is not an edge of $G$, so $d(p^z, x_y)\geq 2$.

\begin{center}\begin{tikzpicture}

\node[shape=circle,draw=black] (p0) at (-3.5, 0) {$p_0$};
\node[shape=circle,draw=black] (u1star) at (1.5, 0) {$u_1^{\star}$};
\node[shape=circle,draw=white] (p1-) at (2.5, 0) {$\ldots$};
\node[shape=circle,draw=black] (p1) at (3.5, 0) {$p_1$};
\node[shape=circle,draw=black] (q0) at (-2.5,2) {$q_0$};
\node[shape=circle,draw=black] (q1) at (1.5,2) {$q_1$};
\node[shape=circle,draw=black] (w) at (0,3.5) {$w$};
\node[shape=circle,draw=black] (z) at (-1.4, 2) {$z$};
\node[shape=circle,draw=black] (xy) at (-0.5, 0) {$x_y$};
\node[shape=circle,draw=white] (xy+) at (0.5, 0) {$\ldots$};
\node[shape=circle,draw=white] (p0+) at (-2, 0) {$\ldots$};
\node[shape=circle,draw=white] (J) at (-1.2, 1) {$J$};
\node[shape=circle,draw=white] (J') at (0.5, 1) {$J'$};
\node[shape=circle,draw=white] (K1) at (2, 0.8) {$K_1$};
\node[shape=circle,draw=black] (y) at (0.5, 2) {$y$};
\draw[-] (p0) to (q0) to (w) to (q1);
\draw[-]  (p0) to (p0+) to (xy) to (xy+) to (u1star) to (p1-) to (p1);
\draw[-] (w) to (z);
\draw[-] (y) to (q1) to (p1);
\draw[-] (q0) to (z);
\draw[-] (y) to (w);
\draw[-] (q1) to (u1star) to (y);
\draw[-, line width=1.9pt] (p0) to (z) to (y) to (xy);

\end{tikzpicture}\captionof{figure}{}\label{RPathAlmostLastCase}\end{center}

Now suppose toward a contradiction that Claim \ref{TwoColoringsABDiffColQ1-I} does not hold. Thus, there is an $(a,b)\in L(p_0)\times L(p_1)$ such that $\phi_{ab}^0(q_1)=\phi_{ab}^1(q_1)=c$ for some color $c$. Furthermore, for each $k\in\{0,1\}$, $\phi_{ab}^k$ extends to an $L$-coloring $\psi^k$ of $V(P-w)\cup V(K_1)$, where $\psi^0, \psi^1$ use the same color on $u_1^{\star}$, i.e a color of $\Lambda_{K_1}(\bullet, c, b)$. Let $\psi^0(q_1)=\psi^1(q_1)=c'$ for some $c'\in L(u_1^{\star})$. We now let $A=\{\psi^0(q_0), \psi^1(q_0)\}$. Since $\phi_{ab}^0$ and $\phi_{ab}^1$ are distinct, $|A|=2$.

\vspace*{-8mm}
\begin{addmargin}[2em]{0em}
\begin{subclaim}\label{yp0NoCommExZSub} The three vertices $y, p_0, x_y$ have no common neighbor. \end{subclaim}

\begin{claimproof} Suppose toward a contradiction that $y, p_0, x_y$ have a common neighbor $y'$. Since $yp_0\not\in E(G)$ and $G$ has no induced 4-cycles, $zy'\in E(G)$ as well, and, since $d(p^z, x_y)\geq 2$, $y'\not\in V(C)$. In particular, $G$ contains the 5-cycle $p_0q_0wyy'$, and $\textnormal{Int}(F)$ is a wheel with central vertex $z$ adjacent to all the vertices of $F$, so $N(z)=V(F)$ and $p^z=p_0$. Furthermore, since $y'in N(p_0)\cap (x_y)$, we have $x_y\neq u_1^{\star}$, or else we contradict Claim \ref{NoVertexVReachesAcrossFromPiToU1-iStar}, so $J'$ is not just an edge. Since $|L(y)\setminus\{c, c'\}|\geq 3$, it follows from 2ii) of Corollary \ref{CorMainEitherBWheelAtM1ColCor} that there is an $s\in L(y)\setminus\{c, c'\}$ with $\Lambda_{J'}^{x_yyu_1^{\star}}(\bullet, s, c')|\geq 2$. Now, $J-z$ is bounded by outer cycle $p_0y'yx_y(C\setminus\mathring{P})p_0)$. Let $L^*$ be a list-assignment for $V(J-z)$ in which $p_0, y$ are precolored with the respective vertices $a, s$, where $L^*(x_y)=\{s\}\cup\Lambda_{J'}^{x_yyu_1^{\star}}(\bullet, s, c')$ and otherwise $L^*=L$. Since $L^*(y')|=5$ and $|L^*(x_y)|\geq 3$, it follows from Lemma \ref{PartialPathColoringExtCL0} that $J-z$ is $L^*$-colorable, so it follows from our choice of $s$ that, for each $k\in\{0,1\}$, $\psi^k$ extends to an $L$-coloring $\pi^k$ of $G\setminus\{w, z\}$ such that $\pi^0(y)=\pi^1(y)=s$ and $\pi^0(y')=\pi^1(y')=s'$ for some color $s'$. If there is a $k\in\{0,1\}$ such that $\pi^k(q_0)\not\in L(z)\setminus\{a, s', s\}$, then $|\mathcal{C}(\pi^k)|\geq |\mathcal{C}(\phi_{ab}^k)|\geq 2$, which is false. Thus, $A\subseteq L(z)\setminus\{a, s, s'\}$, and, since $|A|=2$, it follows that, for each $k\in\{0,1\}$, $\pi^k$ extends to an $L$-coloring of $G-w$ in which the endpoints of $q_0z$ are colored with the colors of $A$. Choosing an $r\in L(w)\setminus (A\cup\{s, c\})$, we then have $r\in\mathcal{C}(\pi^0)\cap\mathcal{C}(\pi^1)$, so $r\in\mathcal{C}(\phi_{ab}^0)\cap\mathcal{C}(\phi_{ab}^1)$, which false, as $\mathcal{C}(\phi_{ab}^0)$ and $\mathcal{C}(\phi_{ab}^1)$ are disjoint. \end{claimproof}\end{addmargin}

\vspace*{-8mm}
\begin{addmargin}[2em]{0em}
\begin{subclaim}\label{ColorThreeVerticesAllofJUnionJ'} Any $L$-coloring of $\{p_0, y, u_1^{\star}\}$ extends to $L$-color all of $H_1\setminus\mathring{P}$. \end{subclaim}

\begin{claimproof} Let $\sigma$ be an $L$-coloring of $\{p_0, y, u_1^{\star}\}$. Since $d(p_0, u_1^{\star})\geq d(p^z, u_1^{\star})\geq 2$, $\sigma$ extends to an $L$-coloring $\sigma'$ of $V(J')\cup\{p_0\}$. We just need to check that $\sigma'$ extends to $L$-color $J$ as well. Suppose not. Since $z\not\in N(x_y)$, we have $|L_{\sigma'}(z)|\geq 3$, and since every chord of the outer cycle of $J$ is incident to $z$, it follows from Lemma \ref{PartialPathColoringExtCL0} that there is a vertex of $J\setminus C^J$ adjacent to all three of $p_0, x_y, y$, contradicting Subclaim \ref{yp0NoCommExZSub}. \end{claimproof}\end{addmargin}

Applying Subclaim \ref{ColorThreeVerticesAllofJUnionJ'}, we have the following.

\vspace*{-8mm}
\begin{addmargin}[2em]{0em}
\begin{subclaim}\label{AL(y)CC'Disjoint} $A\cap (L(y)\setminus\{c, c'\})=\varnothing$ \end{subclaim}

\begin{claimproof} Suppose not. Thus, there is a $k\in\{0,1\}$ with $psi^k(q_0)\in L(y)\setminus\{c, c'\}$. By Subclaim \ref{ColorThreeVerticesAllofJUnionJ'}, there is an $L$-coloring $\tau$ of $J\cup J'$ using the colors $a, \psi^k(q_0), c'$ on the respective vertices $p_0, y, u_1^{\star}$. Note that $\tau(z)\neq\psi^k(q_0)$, so $\psi^k\cup\tau$ is a proper $L$-coloring of $G-w$. Since two neighbors of $w$ are using the same color, we have $|\mathcal{C}(\phi^k_{ab})|\geq\mathcal{C}(\psi^k\cup\tau)|\geq 2$, which is false. \end{claimproof}\end{addmargin}

It follows from 2) of Claim \ref{ForEachPiWeExtToK0K1ToG} that $|L(w)|=5$ and $A\cup\{c\}$ is a subset of $L(w)$ of size three. It follows from Subclaim \ref{AL(y)CC'Disjoint} that $L(y)\setminus\{c, c'\}$ is disjoint to $A\cup\{c\}$, and since $|L(y)\setminus\{c, c'\}|\geq 3$, there is an $f\in L(y)\setminus\{c, c'\}$ with $f\not\in L(w)$. Possibly $f=a$, but since $yp_0\not\in E(G)$, it follows from Subclaim \ref{ColorThreeVerticesAllofJUnionJ'} that there is an $L$-coloring $\tau$ of $J\cup J'$ using $a, f, c'$ on the respective vertices $p_0, y, u_1^{\star}$. Since $|A|=2$, there is a $k\in\{0,1\}$ such that $\psi^k(q_0)\neq\tau(z)$, so $\psi^k\cup\tau$ is a proper $L$-coloring of $G-w$. Since $f\not\in L(w)$, we have $|\mathcal{C}(\phi_{ab}^k)|\geq\mathcal{C}(\psi^k\cup\tau)|\geq 2$, which is false. \end{claimproof}

Applying Claim \ref{TwoColoringsABDiffColQ1-I} and recalling Definition \ref{DefnPPUnObstruc}, we now prove the main result of Subsection \ref{2ChordIntermSecondPartSec}

\begin{Claim}\label{H1PP0UnobstructCL} For each $i\in\{0,1\}$, if $K_i$ is an edge, then $H_{1-i}$ is neither $(Q, p_i)$-obstructed nor $(Q, u_{1-i}^{\star})$-obstructed. \end{Claim}

\begin{claimproof}  Say $i=0$ for the sake of definiteness, and suppose toward a contradiction that $K_0$ is an edge, but there is an endpoint $p^*$ of $P_1$ such that $H_1$ is $(P_1, p^*)$-obstructed. By Claim \ref{EvenNumberObstructionsHiCL}, $H_1$ is both $(P_1, p_0)$-obstructed and $(P_1, u_1^{\star})$-obstructed. Since $N(p_0)\cap N(u_1^{\star})\subseteq V(C)$ and $N(q_0)\cap N(q_1)=\{w\}$, it follows that there exists an edge $zy$ of $V(H_1\setminus C^{H_1})$, where $N(z)\cap V(P_1)=\{p_0, q_0, w\}$ and $N(y)\cap V(P_1)=\{w, q_1, u_1^{\star}\}$. By Claim \ref{EverySigmaNo2ChordFromW}, $K_1$ is not an edge, so $u_1^{\star}\neq p_1$. Note that $H_1\setminus\mathring{P}$ is bounded by outer cycle $p_0(C\setminus\mathring{P})u_1^{\star}$, and this cycle contains the 3-path $R=p_0zyu_1^{\star}$. Since $yp_0\not\in E(G)$ it follows that, for any $L$-coloring $\psi$ of $\{p_0, u_1^{\star}$, $|L_{\psi}(y)|\geq 4$. Let $Y$ be the set of $L$-colorings $\psi$ of $\{p_0, u_1^{\star}\}$ such that $\psi$ extends to at least two different elements of $\textnormal{End}(y, R, H_1\setminus\mathring{P})$. By 1) of Theorem \ref{CornerColoringMainRes}, since $1\leq |L(p_0)|\leq 3$, there is a family of $|L(p_0)|$ different elements of $Y$, each using a different color on $u_1^{\star}$. By Corollary \ref{GlueAugFromKHCor}, there is a $\psi\in Y$ and and a $\psi'\in\textnormal{End}(u_1^{\star}q_1p_1, K_1)$ with $\psi(u_1^{\star})=\psi'(u_1^{\star})$. Let $S$ be a subset of $L(y)\setminus\{\psi(u_1^{\star})$ of size two, where, for each $s\in S$, $\psi$ extends to an element of $\textnormal{End}(y, R, H_1\setminus\mathring{P})$ using $s$ on $y$. 

We now let $a=\psi(p_0)$ and $b=\psi'(p_1)$. By Claim \ref{TwoColoringsABDiffColQ1-I}, there exists a $k\in\{0,1\}$ such that $\phi^k_{ab}(q_1)\neq\psi'(u_1^{\star})$, so $\phi^k_{ab}\cup\psi\cup\psi'$ is a proper $L$-coloring of $V(P-w)\cup\{u_1^{\star}\}$, and, since $\psi'$ is $(q_1, K_1)$-sufficient, it follows that $\phi^k_{ab}\cup\psi\cup\psi'$ extends to an $L$-coloring $\sigma$ of $V(P-w)\cup V(K_1)$. Since $|S\setminus\{\sigma(y)\}|\geq 1$, $\sigma$ extends to an $L$-coloring $\sigma'$ of $\textnormal{dom}(\sigma)\cup\{y\}$ with $\sigma'(y)\in S$. Note that $z\not\in N(u_1^{\star})$, as $G$ contains no copies of $K_{2,3}$, so $z$ only has three neighbors in $\textnormal{dom}(\sigma')$ and $|L_{\sigma'}(z)|\geq 2$. Since there are two colors left over for $z$, it follows from our choice of $\psi$ that any extension of $\sigma'$ to an $L$-coloring of $\textnormal{dom}(\sigma')\cup\{w\}$ extends to $L$-color all of $G$, so $\mathcal{C}(\sigma')=L_{\sigma'}(w)$ and thus $\mathcal{C}(\phi_{ab}^k)|\geq |\mathcal{C}(\sigma')|\geq 2$, which is false. \end{claimproof}

\subsection{Completing the proof of Lemma \ref{EndLinked4PathBoxLemmaState}}\label{FinalSubSecLemmaBox}

By Claim \ref{AtMostOneChordToC-P}, at least one of $K_0, K_1$ is an edge, so suppose without loss of generality that $K_0$ is an edge. Thus, the outer cycle of $H_1$ is induced. By Claim \ref{H1PP0UnobstructCL}, for each endpoint $p^*$ of $Q_1$, $H_1$ is $(Q_1, p^*)$-unobstructed.

\begin{Claim}\label{NothingMapsTwoColorsifCounter} Either $K_1$ is an edge, or, for each $\phi\in\mathcal{F}$, $\Lambda_{K_1}(\bullet, \phi(q_1), \phi(p_1))|=1$. \end{Claim}

\begin{claimproof} Suppose that $K_1$ is a not an edge and let $phi\in\mathcal{F}$ Let $T=\Lambda_{K_1}(\bullet, \phi(q_1), \phi(p_1)$ and suppose toward a contradiction that $|T|\geq 2$. Note that $T\subseteq L(u_1^{\star})\setminus\{\phi'(q_1)\}$. Since $|\mathcal{C}(\phi)|=1$, there exist distinct $r_0, r_1\in L_{\phi}(w)\setminus\mathcal{C}(\phi)$. For each $k=0,1$, the $L$-coloring $\sigma_k=(\phi(p_0), \phi(q_0), r_k, \phi(q_1))$ of $p_0q_0wq_1$ does not extend to an $L$-coloring of $u_1^{\star}$ which uses a color of $T$ of $H_1$. Since the outer cycle of $H_1$ is induced and $H_1$ is $(Q, p_0)$-unobstructed, it follows from P1) of Proposition \ref{InterMedUnobstrucGInduced} that each of $\sigma_0$ and $\sigma_1$ is $(L, P_1, p_0)$-blocked, so $\{\phi(q_0), r_0\}$ and $\{\phi(q_0), r_1\}$ are the same set of size two, which is false. \end{claimproof}

\begin{Claim}\label{K1ListGeq2TriangleSubCLLp0Lp1} Either $K_1$ is an edge, or $K_1$ is a broken wheel with principal path $u_1^{\star}q_1p_1$. Furthermore, if $K_1$ is not an edge and $|L(p_1)|\geq 2$, then $K_1$ is a triangle. \end{Claim}

\begin{claimproof} Suppose that $K_1$ is not an edge. As $\mathcal{F}\neq\varnothing$, it follows from Claim \ref{NothingMapsTwoColorsifCounter}, together with 2i) of Corollary \ref{CorMainEitherBWheelAtM1ColCor}, that $K_1$ is a broken wheel with principal path $u_1^{\star}q_1p_1$ and no color of $L(p_1)$ is almost $K_1$-universal. The rest follows from Claim \ref{KiEitherUniversalColForTriangle}. \end{claimproof}

By 2) of Claim \ref{ForEachPiWeExtToK0K1ToG}, each element of $\mathcal{F}$ extends to an $L$-coloring of $V(P-w)\cup V(K_1)$, so, for each $\phi\in\mathcal{F}$, for each $(a,b)\in L(p_0)\times L(p_1)$ and $k\in\{0,1\}$, we define an $L$-coloring $\pi_{ab}^k$ of $Q_1-w$, where $\pi_{ab}^k$ and $\phi_{ab}^k$ are equal on their common domain, and, if $K_1$ is not an edge, then the $L$-coloring $(\pi_{ab}^k(u_1^{\star}), \phi_{ab}^k(q_1), b)$ of $u_1^{\star}q_1p_1$ extends to $L$-color $K_1$. Note that $\mathcal{C}_{H_1}^{Q_1}(\pi_{ab}^k)\subseteq\mathcal{C}(\phi_{ab}^k)$. In particular, for each $(a,b)\in L(p_0)\times L(p_1)$, $\mathcal{C}_{H_1}^{Q_1}(\pi_{ab}^0)$ and $\mathcal{C}_{H_1}^{Q_1}(\pi_{ab}^1)$ are disjoint sets of size at most one. We let $\mathcal{F}'=\{\pi_{ab}^k: (a,b)\in L(p_0)\times L(p_1)\ \textnormal{and}\ k\in\{0,1\}\}$. Note that, if $K_1$ is an edge, then $\mathcal{F}'=\mathcal{F}$. It follows from Claim \ref{NothingMapsTwoColorsifCounter} that the color $\pi_{ab}^k(u_1^{\star})$ is uniquely specified by $\phi_{ab}^k$, so, if $K_1$ is not an edge, then, for each $(a,b)\in L(p_0)\times L(p_1)$ such that $\Lambda_{K_1}(\bullet, \phi_{ab}^0(q_1), b)\cap\Lambda_{K_1}(\bullet, \phi_{ab}^1(q_1), b)\neq\varnothing$, $\pi_{ab}^0$ and $\pi_{ab}^1$ use the same color on $u_1^{\star}$. 

\begin{Claim}\label{TwoPhisDontUseSameColCL} For each $(a,b)\in L(p_0)\times L(p_1)$, $\phi^0_{ab}(q_1)\neq\phi^0(q_1)$. \end{Claim}

\begin{claimproof} As indicated above, for each endpoint $p^*$ of $Q_1$, $H_1$ is $(Q_1, p^*)$-unobstructed. Let $(a,b)\in L(p_0)\times L(p_1)$ and suppose that $\phi^0_{ab}(q_1)=\phi^0(q_1)$. As indicated above, we have $\pi^0_{ab}(u_1^{\star})=\pi^1_{ab}(u_1^{\star})$, and since $\phi_{ab}^0, \phi_{ab}^1$ are distinct and $\pi^0_{ab}(p_0)=\pi^1_{ab}(p_0)=a$, it follows that $\pi^0_{ab}$ and $\pi^1_{ab}$ differ precisley on $q_0$. Since $\mathcal{C}_{H_1}(\pi_{ab}^0)$ and $\mathcal{C}_{H_1}^{Q_1}(\pi_{ab}^1)$ are disjoint sets of size at most one, this contradicts P3) of Proposition \ref{InterMedUnobstrucGInduced} applied to the rainbow $(H_1, C^{H_1}, Q_1, L)$. \end{claimproof}

We now have the following.

\begin{Claim}\label{EitherZeroOrOneBlocked} There exists an $(a,b)\in L(p_0)\times L(p_1)$ such that, for each $\pi\in\{\pi^0_{ab}, \pi^1_{ab}\}$ and each $s\in L_{\pi}(w)\setminus\mathcal{C}_{H_1}^{Q_1}(\pi)$, the $L$-coloring $(\pi(p_0), \pi(q_0), s, \pi(q_1), \pi(u_1^{\star}))$ of $V(Q_1)$ is either $(L, Q_1, p_0)$-blocked or $(L, Q_1, u_1^{\star})$-blocked. \end{Claim}

\begin{claimproof} Suppose the claim does not hold. We first show that there is a chord of $C$. 

\vspace*{-8mm}
\begin{addmargin}[2em]{0em}
\begin{subclaim} $K_1$ is not an edge. \end{subclaim}

\begin{claimproof} Suppose that $K_1$ is an edge. Thus, $G$ is an induced cycle and $H_0=H_1=G$, and $u_1^{\star}=p_1$. Furthermore, for each $i\in\{0,1\}$, $G$ is $(P, p_i)$-unobstructed. By the minimality of $G$, $\Omega_G^{\geq 2}(P)=\varnothing$. For any $(a,b)\in L(p_0)\times L(p_1)$ and $k\in\{0,1\}$, $\pi^k_{ab}=\phi^k_{ab}$ and $\mathcal{C}_{H_1}^{Q_1}(\pi_{ab}^k)|=1$, so it follows from P2) of Proposition \ref{InterMedUnobstrucGInduced} that any $(a,b)\in L(p_0)\times L(p_1)$ satisfies Claim \ref{EitherZeroOrOneBlocked}, contradicting our assumption. \end{claimproof}\end{addmargin}

\vspace*{-8mm}
\begin{addmargin}[2em]{0em}
\begin{subclaim}\label{ForEachOmegaSufficientSubCL} For any $L$-coloring $\psi$ of $\{u_1^{\star}, p_1\}$, if $\psi$ is $(K_1, q_1)$-sufficient, then no element of $\Omega_{H_1}^{\geq 2}(Q_1)$ colors $u_1^{\star}$ with $\psi(u_1^{\star})$. \end{subclaim}

\begin{claimproof}  Suppose there is a $\psi'\in\Omega_{H_1}(Q_1)$ using the color $\psi(u_1^{\star})$ on $u_1^{\star}$. Thus, $\psi'\cup\psi$ is a proper $L$-coloring of $\{p_0, u_1^{\star}, p_1\}$. Let $a=\psi'(p_0)$ and $c=\psi(q_1)$, and consider the $L$-colorings $\phi_{ac}^0$ and $\phi_{ac}^1$ of $P-w$. If there is a $k\in\{0,1\}$ such that $\phi_{ac}^k\neq\psi(u_1^{\star})$, then, by our choice of $\psi$ the $L$-coloring $(\psi(u_1^{\star}), \phi_{ac}^k(q_1), c)$ of $u_1^{\star}q_1p_1$ extends to $L$-color $K_1$. If that holds, then $|\mathcal{C}(\phi^k_{ac})|\geq 2$, since $\psi'\in\Omega_{H_1}(Q_1)$. Since $|\mathcal{C}(\phi^k_{ac})|=1$, we have $\phi^0_{ac}(q_1)=\phi^1_{ac}(q_1)=\psi(u_1^{\star})$, contradicting Claim \ref{TwoPhisDontUseSameColCL}. \end{claimproof}\end{addmargin}

\vspace*{-8mm}
\begin{addmargin}[2em]{0em}
\begin{subclaim}\label{UniformColorUsedAA'} For any $a, a'\in L(p_0)$ and $b\in L(p_1)$, we have $\{\pi_{ab}^0(q_1), \pi_{ab}^1(q_1)\}=\{\pi_{a'b}(q_1), \pi_{a'b}(q_1)\}$, and this set has size two. \end{subclaim}

\begin{claimproof} Let $T=\{c\in L(q_1)\setminus\{b\}: |\Lambda_{K_1}(\bullet, c, b)|=1\}$. By 2 ii) of Corollary \ref{CorMainEitherBWheelAtM1ColCor}, $|T|\leq 2$. Combining Claims \ref{NothingMapsTwoColorsifCounter} and \ref{TwoPhisDontUseSameColCL}, we then obtain the subclaim. \end{claimproof}\end{addmargin}

For each $(a,b)\in L(p_0)\times L(p_1)$, the labels of the two elements of $\mathcal{F}$ using $a,b$ on the respective vertices $p_0, p_1$ are arbitrary, so, applying Subclaim \ref{UniformColorUsedAA'}, we suppose without loss of generality that, for any $b\in L(p_1)$ and $k\in\{0,1\}$, the set $\{\pi_{ab}^k(q_1): a\in L(p_0)\}$ has size one. Thus, it follows from Claim \ref{NothingMapsTwoColorsifCounter} that, for any $b\in L(p_1)$ and $k\in\{0,1\}$, $\{\pi_{ab}^k(q_1): a\in L(p_0)\}$ also has size one.

\vspace*{-8mm}
\begin{addmargin}[2em]{0em}
\begin{subclaim} $|L(p_1)|>1$. \end{subclaim}

\begin{claimproof} Suppose not. Thus, $|L(p_0)|=3$. Let $b$ be the lone color of $L(p_1)$. For each $k\in\{0,1\}$, there is a color $s_k\in L(q_1)$ such that, for each $a\in L(p_0)$, $\pi_{ab}^k(u_1^{\star})=s_k$. For each $k\in\{0,1\}$, let $L_k$ be a list-assignment for $H_1$, where $L_k(u_1^{\star})=\{s_k\}$ and otherwise $L_k=L$. For each $k=0,1$, $(H_1, C^{H_1}, Q_1, L_k)$ is an end-linked rainbow, as $|L_k(p_0)|=3$. Since Claim \ref{EitherZeroOrOneBlocked} does not hold, there exists a $k\in\{0,1\}$ such that the end-linked rainbow $(H_1, C^{H_1}, Q_1, L_k)$ satisfies ii) of Proposition \ref{InterMedUnobstrucGInduced}. Thus, for some $k\in\{0,1\}$, there is a $\psi\in\Omega_{H_1}(Q_1)$ using $s_k$ on $u_1^{\star}$. Let $a=\psi(p_0)$. Since $\Lambda_{K_1}(\bullet, \pi_{ab}^k(q_1), b)=\{s_k\}$, we get $|\mathcal{C}(\phi_{ab}^k)|\geq 2$, which is false. \end{claimproof}\end{addmargin}

Since $|L(p_1)|>1$ and $K_1$ is not an edge, it follows from Claim \ref{K1ListGeq2TriangleSubCLLp0Lp1} that $K_1$ is a triangle. Since $|L(u_1^{\star})|\geq 3$, $(H_1, C^{H_1}, Q_1, L)$ is an end-linked rainbow. Now, it follows from P2) of Proposition \ref{InterMedUnobstrucGInduced}, together with the assumption of Claim \ref{EitherZeroOrOneBlocked}, that there is a $\psi^-\in\Omega_{H_1}^{\geq 2}(Q_1)$. Since $|L(p_1)|>1$, we have $|L(p_1)\setminus\{\psi^-(u_1^{\star})\}|\geq 1$, and since $K_1$ is a triangle, there is a $(K_1, q_1)$-sufficient $L$-coloring of $\{u_1^{\star}, p_1\}$ using $\psi^-(u_1^{\star})$ on $u_1^{\star}$, contradicting Subclaim \ref{ForEachOmegaSufficientSubCL}. This proves Claim \ref{EitherZeroOrOneBlocked}. \end{claimproof}

Let $(a,b)$ be as in Claim \ref{EitherZeroOrOneBlocked} and let $\pi^0=\pi^0_{ab}$ and $\pi^1=\pi^1_{ab}$. For each $i\in\{0,1\}$, we now define a set $T_i$ with $|T_i|=2$ as follows: Letting $p^*\in\{p_0, u_1^{\star}\}$ be the unique neighbor of $q_i$ on the path $Q_i$ and $x$ be the unique neighbor of $p^*$ on the path $C\setminus P$, if there is a $z\in V(G\setminus C)$ adjacent to all three of $p^*, q_i, w$, then we take $T_i$ to be a two-element subset of $L(z)\setminus L(x)$, and there is no such $z$, then we take $T_i$ to be an arbitrary 2-element subset of $L(q_i)$. 

\begin{Claim}\label{ForeachPairKT0T1} There exists a $k\in\{0,1\}$ such that either $\pi^k(q_0)\not\in T_0$ or $\pi^k(q_1)\not\in T_1$. \end{Claim}

\begin{claimproof} Suppose that, for each $k\in\{0,1\}$, $\pi^k(q_0)\in T_0$ and $\pi^k(q_1)\in T_1$. As $|T_0\cup T_1|\leq 4$, we choose an $r\in L(w)\setminus (T_0\cup T_1)$. For each $k\in\{0,1\}$, $(a, \pi^k(q_0), r, \pi^k(q_1), \pi^k(u_1^{\star}))$ is a proper $L$-coloring of $V(Q_1)$ which is neither $(L, Q_1, p_0)$-blocked nor $(L, Q_1, u_1^{\star})$-blocked, as $r\not\in T_0\cup T_1$. Thus, by Claim \ref{EitherZeroOrOneBlocked}, $r$ lies in both $\mathcal{C}_{H_1}^{Q_1}(\pi^0)$ and $\mathcal{C}_{H_1}^{Q_1}(\pi^1)$, which is false, as these sets are disjoint. \end{claimproof}

Applying Claim \ref{ForeachPairKT0T1}, we suppose without loss of generality that there is a $k\in\{0,1\}$ such that $\pi^k(q_0)\not\in T_0$. (Note that, once we fix a $k\in\{0,1\}$, the case with $T_1$ instead is indeed symmetric, even though $K_1$ is possibly not an edge). Let $S=(L(w)\setminus\{\pi^k(q_0), \pi^k(q_1)\})\setminus\mathcal{C}_{H_1}^{Q_1}(\pi^k)$. Note that $|S|\geq 2$. For each $s\in S$, let $\pi_s^k$ be the $L$-coloring $(\pi^k(p_0), \pi^k(q_0), s, \pi^k(q_1), \pi^k(u_1^{\star})$ of $Q_1$. For each $s\in S$, $\pi_s^k$ does not extend to an $L$-coloring of $H_1$, and furthermore, $\pi_s^k$ is not $(L, Q_1, p_0)$-blocked, since $\pi^k(q_0)\not\in T_0$. By Claim \ref{EitherZeroOrOneBlocked}, for each $s\in S$, $\pi_s^k$ is $(L, Q, u_1^{\star})$-blocked. Letting $s, s'$ be distinct elements of $S$, it follows that $\{s, \pi^k(q_1)\}$ and $\{s', \pi^k(q_1)\}$ are the same list of size two, which is false. This completes the proof of Lemma \ref{EndLinked4PathBoxLemmaState}. \end{proof}

\end{document}